\newcommand{\defeq}{\colonequals}
\renewcommand{\tilde}{\widetilde}  
\DeclareMathOperator{\Gal}{Gal}
\numberwithin{equation}{subsection}
\newtheorem{theorem}[equation]{Theorem}
\newtheorem{proposition}[equation]{Proposition}
\newtheorem{lemma}[equation]{Lemma}
\newtheorem{corollary}[equation]{Corollary}
\newtheorem*{corollary*}{Corollary}
\newtheorem{assumption}[equation]{Assumption}
\newcounter{alphalabels}
\newtheorem{theoremx}[alphalabels]{Theorem}
\newtheorem{conjecturex}[alphalabels]{Conjecture}
\newtheorem{corollaryx}[alphalabels]{Corollary}
\theoremstyle{definition}
\newtheorem{definition}[equation]{Definition}
\newtheorem{notation}[equation]{Notation}
\newtheorem{convention}[equation]{Convention}
\theoremstyle{remark}
\newtheorem{remark}[equation]{Remark}
\newtheorem{example}[equation]{Example}
\DeclareFontFamily{U}{wncy}{}
\DeclareFontShape{U}{wncy}{m}{n}{<->wncyr10}{}
\DeclareSymbolFont{mcy}{U}{wncy}{m}{n}
\DeclareMathSymbol{\sha}{\mathord}{mcy}{"58}
\newcommand{\ide}[1]{\mathfrak{#1}}
\newcommand{\mbb}[1]{\mathbb{#1}}
\newcommand{\ordd}{\mathcal{O}}
\newcommand{\invs}[1]{\mathcal{#1}}
\newcommand{\opn}[1]{\operatorname{#1}}
\newcommand{\hatot}{\hat{\otimes}}
\newcommand{\mbf}[1]{\mathbf{#1}}
\newcommand{\tbyt}[4]{\left( \begin{array}{cc} #1 & #2 \\ #3 & #4 \end{array} \right)}
\newcommand{\Addresses}{{
  \bigskip
  \footnotesize

  \textsc{Institut de Math\'{e}matique d'Orsay, Universit\'{e} Paris-Saclay, F-91405 Orsay Cedex, France}\par\nopagebreak
  \textit{E-mail address}: \texttt{andrew.graham@universite-paris-saclay.fr}

}}
\newcommand{\et}{\mathrm{\acute{e}t}}
\newcommand{\Qpb}{\overline{\mathbb{Q}}_p}
\title[p-adic interpolation of uFJ periods]{On the p-adic interpolation of unitary Friedberg--Jacquet periods}
\author{Andrew Graham}
\date{}
\begin{document}
\begin{abstract}
    We establish functoriality of higher Coleman theory for certain unitary Shimura varieties and use this to construct a $p$-adic analytic function interpolating unitary Friedberg--Jacquet periods. 
\end{abstract}

\maketitle

\tableofcontents


\section{Introduction}

The conjecture of Bloch--Kato describes a precise relation between special values of $L$-functions attached to geometric Galois representations and the dimension of the associated Bloch--Kato Selmer group (which can be seen as a generalisation of the free part of the Mordell--Weil group for an abelian variety). One of the key tools in establishing cases of this conjecture is an Euler system -- a collection of group cohomology classes for the Galois representation which, under a ``non-vanishing criterion'', impose constraints on the size of the Bloch--Kato Selmer group (for example, see \cite{rubin}, \cite{MazurRubin}). The application to the Bloch--Kato conjecture then arises from a relation between this ``non-vanishing criterion'' and special values of the $L$-function; such a relation is commonly referred to as an \emph{explicit reciprocity law}.

In the setting where the Galois representation is automorphic, it is often the case that these special $L$-values can be expressed as an automorphic period for a pair of reductive groups $(\mbf{G}, \mbf{H})$. If $(\mbf{G}, \mbf{H})$ can be enhanced to a pair of Shimura data, then one can often describe this automorphic period as a pairing in coherent cohomology for the pair of associated Shimura varieties. This provides an arithmetic interpretation of the $L$-values, which can be related to the Euler system classes via a $p$-adic $L$-function (interpolating these automorphic periods and hence the $L$-values).

This present article describes the construction of a $p$-adic analytic function which should play the role of this $p$-adic $L$-function in an explicit reciprocity law for the anticyclotomic Euler system constructed in \cite{ACES} (or more precisely, its generalisation to CM fields, which will appear in forthcoming work of the author, D. Barrera and C. Williams\footnote{In fact, we also show that these Euler system classes vary in Coleman families.}). The construction crucially uses the recently developed higher Coleman theory of Boxer--Pilloni \cite{BoxerPilloni} and the strategy is similar to the work of Loeffler--Zerbes \cite{LZBK21} and Loeffler--Pilloni--Skinner--Zerbes \cite{LPSZ}. Furthermore, as a key ingredient, we $p$-adically interpolate the branching laws for representations of $\opn{GL}_{2n}$ and $\opn{GL}_{2n-1}$ restricted to $\opn{GL}_n \times \opn{GL}_n$ and $\opn{GL}_{n-1} \times \opn{GL}_n$ respectively (see Appendix \ref{AppendixBranchingLaws}), using the fact that these pairs give rise to \emph{spherical varieties}.

Unfortunately, our result is not optimal -- there is a missing variable in this $p$-adic analytic function, which would therefore lead to a sub-optimal version of an explicit reciprocity law (similar to the restriction in \cite{LZBK21}). To account for the missing variable, one would need to incorporate the $p$-adic variation of certain theta operators into the picture. This incorporation will be pursued in future work.

\subsubsection{Unitary Friedberg--Jacquet periods}

The $p$-adic analytic function we construct interpolates so-called \emph{unitary Friedberg--Jacquet periods} for certain cuspidal automorphic representations of unitary groups, which is a variant of the automorphic periods for general linear groups studied by Friedberg--Jacquet \cite{FJ93}. Although expected, it is not yet known (in general) whether these unitary Friedberg--Jacquet periods calculate $L$-values, but there has been a lot of recent work towards showing this; in particular
\begin{itemize}
    \item The ``relative trace formula approach'' in forthcoming work of Jingwei Xiao and Wei Zhang, and the work of Spencer Leslie \cite{LeslieEndoscopy}, \cite{LeslieFL}.
    \item Applications of the residue method in the work of Pollack--Wan--Zydor \cite{PWZ19}.
    \item An approach via theta correspondences in the work of Chen--Gan \cite{ChenGan}.
\end{itemize}
As a consequence of these works, we at least know that if certain values of this $p$-adic analytic function are non-vanishing then the corresponding (complex) $L$-values are also non-vanishing (see Corollary \ref{NonVanishingCorollary} below). We expect that there is an analogous version of Waldspurger's formula in this setting which will express (the square of) these values in terms of the complex $L$-values, but we do not attempt to establish such an identity in this article. Nevertheless, with these considerations in mind, we will henceforth refer to this $p$-adic analytic function as a $p$-adic $L$-function.

\subsection{Statement of the results}

Let $F$ be a CM field with maximal totally real subfield $F^+$, and fix an odd rational prime $p$ which splits completely in $F/\mbb{Q}$. We impose the following assumptions:

\begin{assumption} \label{IntroAssumption}
We assume that:
\begin{enumerate}
    \item $F^+ \neq \mbb{Q}$ and $F$ contains an imaginary quadratic number field $E/\mbb{Q}$. 
    \item $p$ does not divide the class number of $F$.
\end{enumerate}
\end{assumption}

Fix an integer $n \geq 1$. Let $W$ be a $2n$-dimensional Hermitian space over $F$ with signature $(1, 2n-1)$ at one place, and signature $(0, 2n)$ at the remaining places. Fix a decomposition $W = W_1 \oplus W_2$ of Hermitian spaces where each factor has dimension $n$, the signature of $W_1$ is $(1, n-1)$ at one place and $(0, n)$ at all remaining places, and the signature of $W_2$ is $(0, n)$ at all places. Let $\mbf{G}$ be the reductive group over $\mbb{Q}$ of unitary similitudes of $W$ with similitude in $\mbb{G}_m$. We let $\mbf{H} \subset \mbf{G}$ denote the subgroup preserving the decomposition $W = W_1 \oplus W_2$. 

Let $\pi$ be a discrete series cuspidal automorphic representation of $\mbf{G}(\mbb{A})$, and let $\chi \colon \mbb{A}_F^{\times} \to \mbb{C}^{\times}$ be an algebraic Hecke character which is \emph{anticyclotomic} (i.e. its restriction to $\mbb{A}_{F^+}^{\times}$ is trivial). Then for any $\varphi \in \pi$, we can consider the following automorphic period 
\[
\mathscr{P}_{\pi, \chi}(\varphi) \defeq \int_{[\mbf{H}]} \varphi(h) \cdot \chi \left( \frac{\opn{det}h_2}{\opn{det}h_1} \right) dh .
\]
Here $h_i$ denotes the component of $h$ corresponding to the factor $W_i$, and $[\mbf{H}] = \mbf{H}(\mbb{Q}) A_{\mbf{G}}(\mbb{A}) \backslash \mbf{H}(\mbb{A})$ with $A_{\mbf{G}}$ denoting the maximal split subtorus of the centre of $\mbf{G}$ (which can be shown to lie in $\mbf{H}$). For this to make sense, we also need to assume that the central character of $\pi$ is trivial on $A_{\mbf{G}}(\mbb{A})$. 

Let $\psi \boxtimes \Pi_0$ denote the (weak) automorphic base-change of $\pi$ to $\opn{GL}_1(\mbb{A}_E) \times \opn{GL}_{2n}(\mbb{A}_F)$, as constructed in \cite{ShinAppendix}. We have the following conjecture of Xiao--Zhang (see \cite[Conjecture 7.4]{ChenGan}):

\begin{conjecturex}
With set-up as above, assume that $\pi$ is tempered. Then there exists $\varphi \in \pi$ such that $\mathscr{P}_{\pi, \chi}(\varphi) \neq 0$ if and only if the following three conditions are satisfied:
\begin{enumerate}
    \item The standard $L$-function $L(\Pi_0 \otimes \chi, s)$ is non-vanishing at $s=1/2$.
    \item The exterior square $L$-function $L(\Pi_0, \bigwedge^2, s)$ has a pole at $s=1$.
    \item There exists an irreducible constituent $\pi_0 \subset \pi|_{\mbf{H}_0}$ such that, for every (finite) rational prime $\ell$, the Hom-space satisfies
    \[
    \opn{Hom}_{\mbf{H}_0(\mbb{Q}_\ell)}\left( \pi_{0,\ell}, \chi^{-1} \circ \nu \right) \neq 0
    \] 
    where $\mbf{H}_0 \subset \mbf{H}$ is the kernel of the similitude character and $\nu$ is the character on $\mbf{H}_0$ given by $\nu(h) = \opn{det}h_2/\opn{det}h_1$.
\end{enumerate}
\end{conjecturex}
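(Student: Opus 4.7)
My plan is to attack the conjecture via a relative trace formula (RTF) comparison in the style of Jacquet--Rallis, using the weak base change $\pi \mapsto \psi \boxtimes \Pi_0$ of \cite{ShinAppendix} to bridge the unitary and general linear sides. First I would set up two RTFs: on the unitary side, the distribution
\[
J_\pi(f) = \sum_{\varphi} \mathscr{P}_{\pi,\chi}\bigl(\pi(f)\varphi\bigr) \, \overline{\mathscr{P}_{\pi,\chi}(\varphi)}
\]
over an orthonormal basis of $\pi$, with geometric expansion into orbital integrals on the symmetric variety attached to $(\mbf{G}_0, \mbf{H}_0)$ (where the subscript $0$ denotes the kernel of the similitude); on the GL side, the analogous distribution for the Friedberg--Jacquet period of $\Pi_0 \otimes \chi$ along the Levi $\opn{GL}_n \times \opn{GL}_n \hookrightarrow \opn{GL}_{2n}$, twisted by the auxiliary character $\psi$ on $\opn{GL}_1(\mbb{A}_E)$.

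By the classical theorem of Friedberg--Jacquet \cite{FJ93}, the GL-side period is non-zero (for some choice of test vector) precisely when $L(\Pi_0 \otimes \chi, s)$ is non-vanishing at $s = 1/2$ and $L(\Pi_0, \bigwedge^2, s)$ has a pole at $s = 1$, which are exactly conditions (1) and (2). A spectral comparison between the two RTFs therefore reduces the global non-vanishing on the unitary side to this joint $L$-value condition, together with a local multiplicity requirement: since the global period $\mathscr{P}_{\pi,\chi}$ factorises through the tensor product of local $\mbf{H}_0$-invariant functionals, non-vanishing of \emph{every} local Hom-space in condition (3) is necessary (as a single local zero kills the global period) and, in combination with (1) and (2), sufficient (the RTF identity produces a non-zero global period from a factorisable test vector once every local model is realised). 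Carrying this out rigorously requires (i) smooth matching of test functions at every finite place, (ii) a fundamental lemma for the unit elements of the spherical Hecke algebras for $(\mbf{G}_0, \mbf{H}_0)$ and its GL counterpart, and (iii) archimedean character identities compatible with the signatures $(1, 2n-1)$ and $(0, 2n)$ imposed on $W$ and with $\pi_\infty$ being discrete series.

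The hardest part will be (i) and (ii), and in particular the endoscopic and similitude bookkeeping needed because $\mbf{G}$ is a unitary similitude group and $\chi$ is an anticyclotomic twist: one must compare twisted orbital integrals, and several non-trivial endoscopic groups contribute to the spectral comparison. This is essentially the content of the forthcoming work of Xiao--Zhang and of Leslie \cite{LeslieEndoscopy, LeslieFL} referenced in the introduction. Alternative routes via the residue method of Pollack--Wan--Zydor \cite{PWZ19} or the theta correspondence approach of Chen--Gan \cite{ChenGan} could sidestep the fundamental lemma altogether, but they typically prove the conjecture only under additional restrictions on $\Pi_0$ (e.g.\ that it arises as an Eisenstein residue from a smaller group, or lies in the image of an explicit theta lift); the RTF approach remains the only route I see to the full statement as given.
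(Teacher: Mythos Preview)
The statement you are trying to prove is labelled as a \emph{Conjecture} in the paper (attributed to Xiao--Zhang, via \cite[Conjecture 7.4]{ChenGan}), and the paper makes no attempt to prove it. It is stated purely as motivation: the introduction explicitly says that it is ``not yet known (in general) whether these unitary Friedberg--Jacquet periods calculate $L$-values'' and lists the RTF approach of Xiao--Zhang and Leslie, the residue method of Pollack--Wan--Zydor, and the theta correspondence approach of Chen--Gan as ongoing work \emph{towards} the conjecture. So there is no proof in the paper to compare your proposal against.

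Your proposal is a reasonable high-level sketch of the RTF strategy, and you correctly identify the Friedberg--Jacquet theorem \cite{FJ93} as the source of conditions (1) and (2) on the GL side, and the factorisation of the global period as the source of condition (3). But what you have written is a research programme, not a proof: the items you flag as ``the hardest part'' --- smooth matching, the fundamental lemma for this symmetric pair, and the archimedean identities --- are genuinely open (or at best in preprints such as \cite{LeslieFL}) and together constitute essentially the entire content of the conjecture. Saying ``this is essentially the content of the forthcoming work of Xiao--Zhang and of Leslie'' is an accurate assessment of the difficulty, but it is also an admission that you have not supplied a proof. In short: your outline is sound as a description of how experts expect the conjecture to eventually be proved, but it does not close the gap, and neither does the paper.
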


\begin{remark}
Because we are working with unitary similitudes, this conjecture is presented in a slightly different way to \cite[Conjecture 7.4]{ChenGan}. However the two statements are equivalent by Remark \ref{UFJRemark}. 
\end{remark}

Suppose that $\pi$ is ramified\footnote{In the weakest possible sense, namely there does not exist a maximal special subgroup with non-trivial fixed points on the corresponding local component of $\pi$.} only at primes which split in $E/\mbb{Q}$, the base-change $\Pi_0$ is cuspidal and $\pi$ satisfies a ``small slope condition'' at the prime $p$ (see Assumption \ref{SSSassumption}). Then, following \cite[\S 6.9]{BoxerPilloni} and \cite{LZBK21}, we show that there exists a unique family $\underline{\pi}$ of automorphic representations, passing though $\pi$ and defined over a certain open affinoid subspace $U$ of $n[F^+ : \mbb{Q}]$-dimensional weight space $\invs{W}_G$. Here, by family we mean an $\ordd(U)$-valued system of eigenvalues for a certain collection of Hecke operators (see Definition \ref{DefinitionOfFamilyThroughPi}) -- for a classical point $x \in U$, the specialisation of the family at $x$ corresponds to a cohomological cuspidal automorphic representation $\underline{\pi}_x$ of $\mbf{G}(\mbb{A})$ (see Remark \ref{TheSpecialisationRemark}).

On the other hand, by Assumption \ref{IntroAssumption}(2), we can construct a family $\underline{\chi}$ of anticyclotomic characters defined over the ($[F^+ : \mbb{Q}]-1$)-dimensional weight space $\invs{W}_H$ parameterising characters of $\left( \mbb{Z}_p^{\times} \right)^{[F^+ : \mbb{Q}]-1}$, which passes through the character $\chi$. As above, for a point $x \in \invs{W}_H$, we let $\underline{\chi}_x$ denote the specialisation of the family at $x$. The main result of the article is the following:

\begin{theoremx}[{Corollary \ref{RelationToAutoPeriods}}]
There exists a Zariski dense subset of classical weights $\Sigma^{\mathrm{int}} \subset U \times \invs{W}_H$ and a $p$-adic analytic function $\mathscr{L}_p = \mathscr{L}_p(\underline{\eta}. \underline{\chi}) \in \ordd(U \times \invs{W}_H)$ which interpolates the periods $\mathscr{P}_{\underline{\pi}_x, \underline{\chi}_x}(\varphi_x)$ for $x \in \Sigma^{\mathrm{int}}$ (where $\varphi_x \in \underline{\pi}_x$ is a certain non-zero choice of automorphic form).
\end{theoremx}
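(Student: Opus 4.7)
The strategy, which follows the template set out by Loeffler--Zerbes \cite{LZBK21} and Loeffler--Pilloni--Skinner--Zerbes \cite{LPSZ}, is to realise the period $\mathscr{P}_{\pi, \chi}(\varphi)$ as a cup product in coherent cohomology of the Shimura varieties attached to $(\mbf{G}, \mbf{H})$, and then to $p$-adically vary both inputs of this pairing using higher Coleman theory. At a classical weight, the cuspidal representation $\pi$ (being a discrete series with a given cohomological weight) contributes to a specific degree of the coherent cohomology $H^{\bullet}_{\mathrm{coh}}(S_G, \invs{V}_{\lambda})$ for an automorphic vector bundle $\invs{V}_{\lambda}$; the anticyclotomic character $\chi$ contributes to a line bundle on $S_H$. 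Composing the restriction map $\iota^* \colon H^{\bullet}_{\mathrm{coh}}(S_G, \invs{V}_{\lambda}) \to H^{\bullet}_{\mathrm{coh}}(S_H, \iota^* \invs{V}_{\lambda})$ with the branching projector coming from the $\opn{GL}_{2n} \downarrow \opn{GL}_n \times \opn{GL}_n$ branching law of Appendix \ref{AppendixBranchingLaws}, and pairing with $\chi$, should recover $\mathscr{P}_{\pi, \chi}(\varphi)$ up to an explicit (non-zero) factor; this is essentially forced by multiplicity-one for the spherical pair $(\mbf{G}, \mbf{H})$.

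The first step of the construction is to invoke the families setup of Boxer--Pilloni to produce the overconvergent class on the $\mbf{G}$-side: by the small slope assumption \ref{SSSassumption}, the Hecke eigensystem of $\pi$ lifts uniquely to an $\ordd(U)$-valued eigenclass in the overconvergent cohomology $H^{\bullet}_{\mathrm{BP}}(S_G, \invs{V}^{\dagger}_{\underline{\lambda}})$. Independently, the anticyclotomic family $\underline{\chi}$ provides an $\ordd(\invs{W}_H)$-valued class on $S_H$. The next step is to establish functoriality of higher Coleman theory for the embedding $\iota \colon S_H \hookrightarrow S_G$: one must choose compatible strata on the flag varieties of $\mbf{H}$ and $\mbf{G}$ (via the Hodge--Tate period map) so that $\iota^*$ lifts to an $\ordd$-linear map on overconvergent cohomology, and then apply the $p$-adic interpolation of the branching law (Appendix \ref{AppendixBranchingLaws}) to obtain a morphism between the overconvergent coefficient sheaves in families. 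The $p$-adic $L$-function $\mathscr{L}_p \in \ordd(U \times \invs{W}_H)$ is defined as the resulting pairing of the pulled-back family $\underline{\pi}$-class against $\underline{\chi}$.

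The interpolation property is then verified at the set $\Sigma^{\mathrm{int}} \subset U \times \invs{W}_H$ of classical points where (i) the specialisation $\underline{\pi}_x$ is a classical cuspidal automorphic representation with archimedean component realising the prescribed coherent cohomology degree, (ii) $\underline{\chi}_y$ is a classical algebraic anticyclotomic Hecke character, and (iii) the weights and slopes fall within the classicality range of Boxer--Pilloni. At such a point the overconvergent pairing coincides with its classical counterpart, which by the archimedean identification equals $\mathscr{P}_{\underline{\pi}_x, \underline{\chi}_y}(\varphi_x)$ for an explicit choice of $\varphi_x \in \underline{\pi}_x$ determined by the branching vector at $p$. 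Zariski density of $\Sigma^{\mathrm{int}}$ in $U \times \invs{W}_H$ follows from density of classical weights in weight space together with the non-emptiness of the classicality region around the point $(\pi, \chi)$.

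The main obstacle I anticipate is the functoriality step together with non-vanishing of the resulting class. The embedding $\mbf{H} \hookrightarrow \mbf{G}$ does not respect the parabolic subgroups used to set up higher Coleman theory in a naive way, so one needs a careful matching of the relevant supports/strata on the flag varieties of $\mbf{G}$ and $\mbf{H}$ under the Hodge--Tate period map; this is exactly the type of analysis carried out in \cite{LPSZ} for $\opn{GSp}_4 \times \opn{GL}_2$. The other delicate point is checking that the small-slope condition guarantees both classicality and the compatibility of the Hecke normalisations (so that the specialisation of the family at a classical weight is indeed the classical eigenclass and not an oldform). Once these two ingredients are in place, the construction of $\mathscr{L}_p$ and the interpolation formula reduce to a bookkeeping exercise combining the branching appendix with the standard formalism of higher Coleman pairings.
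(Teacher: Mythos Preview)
Your proposal is correct and follows essentially the same approach as the paper: the period is realised as a coherent cup product, transported to strata in the adic Shimura varieties via the Hodge--Tate period map, and then interpolated by pairing the Boxer--Pilloni family $\underline{\eta}$ against the anticyclotomic family $\underline{\chi}$ using the $p$-adic branching law of Appendix~\ref{AppendixBranchingLaws}. Your identification of the strata-matching as the main obstacle is exactly right; the paper resolves it not with the naive embedding $\iota$ but with a twisted embedding $\widehat{\iota} = \iota \cdot \widehat{\gamma}$ for an explicit element $\widehat{\gamma} \in \mbf{G}(\ordd)$ (see \S\ref{SomeImportantElementsSubsec}) chosen so that $\widehat{\iota}^{-1}(C^G_{w_n}) = C^H_{\mathrm{id}}$ on the flag varieties, which is what makes the pullback on higher Coleman cohomology land in the correct degree.
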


Combining this with \cite[Corollary 7.6]{ChenGan} (and the fact that regular algebraic conjugate self-dual cuspidal automorphic representations of $\opn{GL}_{2n}(\mbb{A}_F)$ are tempered \cite{Caraiani12}), we see that

\begin{corollaryx} \label{NonVanishingCorollary}
Let $x \in \Sigma^{\mathrm{int}}$ and let $\psi_x \boxtimes \opn{BC}(\underline{\pi}_x)$ denote the automorphic base-change of $\underline{\pi}_x$ to a representation of $\opn{GL}_1(\mbb{A}_E) \times \opn{GL}_{2n}(\mbb{A}_F)$. Suppose that $\opn{BC}(\underline{\pi}_x)$ is cuspidal. Then
\[
\mathscr{L}_p(x) \neq 0 \quad \Rightarrow \quad L(\opn{BC}(\underline{\pi}_x) \otimes \underline{\chi}_x, 1/2) \neq 0 .
\]
\end{corollaryx}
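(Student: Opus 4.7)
The plan is essentially to chain together Theorem A (the interpolation property of $\mathscr{L}_p$) with the one direction of the Xiao--Zhang conjecture that is already known by the work of Chen--Gan \cite{ChenGan}. The only subtlety is verifying that the hypotheses needed to invoke \cite[Corollary 7.6]{ChenGan} -- in particular temperedness of $\underline{\pi}_x$ -- are satisfied at all classical points $x \in \Sigma^{\mathrm{int}}$.

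First, I would unpack what $\mathscr{L}_p(x) \neq 0$ gives us via Theorem A: by construction the value $\mathscr{L}_p(x)$ agrees, up to the explicit non-zero interpolation factors built into the statement, with the automorphic period $\mathscr{P}_{\underline{\pi}_x, \underline{\chi}_x}(\varphi_x)$ for the specific test vector $\varphi_x \in \underline{\pi}_x$ constructed during the interpolation. Non-vanishing of $\mathscr{L}_p(x)$ therefore implies that this period is non-zero, and in particular that there exists \emph{some} $\varphi \in \underline{\pi}_x$ with $\mathscr{P}_{\underline{\pi}_x, \underline{\chi}_x}(\varphi) \neq 0$.

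Next, I would verify temperedness of $\underline{\pi}_x$. By hypothesis the base-change $\operatorname{BC}(\underline{\pi}_x)$ is cuspidal, and since $\underline{\pi}_x$ is cohomological (it sits at a classical point of the family), $\operatorname{BC}(\underline{\pi}_x)$ is a regular algebraic conjugate self-dual cuspidal automorphic representation of $\operatorname{GL}_{2n}(\mbb{A}_F)$. Caraiani's theorem \cite{Caraiani12} then gives temperedness of $\operatorname{BC}(\underline{\pi}_x)$ at every place; by strong base-change and local-global compatibility for unitary groups at split places, this forces $\underline{\pi}_x$ itself to be tempered. At this point all hypotheses of \cite[Corollary 7.6]{ChenGan} are in place.

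Finally, applying \cite[Corollary 7.6]{ChenGan} -- which is precisely the implication that non-vanishing of the unitary Friedberg--Jacquet period on $\mbf{H}$ (for some test vector) forces non-vanishing of $L(\operatorname{BC}(\underline{\pi}_x) \otimes \underline{\chi}_x, 1/2)$, provided the other local and global constituents of the Xiao--Zhang dictionary are in place -- yields the desired conclusion. The main (minor) obstacle is bookkeeping: making sure the period appearing in Theorem A is literally the unitary Friedberg--Jacquet period considered by Chen--Gan (with the correct normalization of the anticyclotomic twist $\chi(\det h_2/\det h_1)$ and the correct quotient $[\mbf{H}] = \mbf{H}(\mbb{Q}) A_{\mbf{G}}(\mbb{A}) \backslash \mbf{H}(\mbb{A})$), and that temperedness of $\underline{\pi}_x$ indeed follows -- rather than only that of its base change -- from Caraiani's result combined with the split-primes hypothesis in Assumption \ref{IntroAssumption}. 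Once these identifications are made the corollary is immediate.
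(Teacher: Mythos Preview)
Your proposal is correct and follows exactly the approach the paper takes: the paper derives the corollary directly by combining the interpolation property (Corollary~\ref{RelationToAutoPeriods}) with \cite[Corollary~7.6]{ChenGan} and Caraiani's temperedness result \cite{Caraiani12}, and your write-up simply spells out the bookkeeping (temperedness of $\underline{\pi}_x$, identification of the period with the unitary Friedberg--Jacquet period via Remark~\ref{UFJRemark}) that the paper leaves implicit.
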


The strategy we will use for constructing $\mathscr{L}_p$ consists of three key steps:
\begin{enumerate}
    \item Express the automorphic periods $\mathscr{P}_{\underline{\pi}_x, \underline{\chi}_x}(\varphi_x)$ as a cup product in the coherent cohomology of a Shimura variety associated with $\mbf{H}$ involving (the restriction to $\mbf{H}$) of a coherent cohomology class $\underline{\eta}_x$ corresponding to $\varphi_x$. 
    \item Using higher Coleman theory one can reinterpret (1) in terms of a pairing in coherent cohomology over certain strata in the adic Shimura varieties for $\mbf{G}$ and $\mbf{H}$. In particular, this interpretation is amenable to $p$-adic interpolation provided that there exist families of cohomology classes $\underline{\eta}$ and $\underline{\chi}$ passing through $\underline{\eta}_x$ and $\underline{\chi}_x$ respectively.
    \item Under the above assumptions, we construct these families $\underline{\eta}$ and $\underline{\chi}$. The $p$-adic $L$-function $\mathscr{L}_p$ is then defined as a pairing between the classes $\underline{\eta}$ and $\underline{\chi}$. 
\end{enumerate}

\begin{remark}
Assumption \ref{IntroAssumption}(1) is imposed throughout the whole article, however assumption (2) is only imposed when showing the existence of certain anticyclotomic algebraic Hecke characters for $F$ (which we expect can be removed by passing to a finite cover of weight space). In fact, it is likely that assumption (1) is not needed until \S \ref{FamiliesOfCohomologyClassesSection} when applying the automorphic base-change results in \cite{ShinAppendix}.
\end{remark}

\begin{remark}[{Example \ref{BorelImpliesSSExample}}]
The ``small slope condition'' at the prime $p$ is implied by (but more general than) a Borel-ordinarity condition on $\pi$ (i.e. there exists an eigenvalue for the action of a suitably normalised Borel $U_p$-Hecke operator on $\pi_p$ which is a $p$-adic unit). 
\end{remark}

\begin{remark}
To show the existence of the family $\underline{\pi}$ we need to implicitly use the results in \cite{Mok} and \cite{KMSW14} on the endoscopic classification for unitary groups. As far as the author is aware, this work is still conditional on the stabilisation of the twisted trace formula for $\mbf{G}_0$ and $\opn{Res}_{F/\mbb{Q}}\opn{GL}_{2n}$ and their endoscopy groups.
\end{remark}

\subsection{Notation} \label{NotationSection}

Throughout this article, we fix a totally real number field $F^+ \neq \mbb{Q}$ with a fixed embedding $\tau_0 \colon F^+ \hookrightarrow \mbb{R}$. We fix a totally imaginary quadratic extension $F/F^+$ and a CM type $\Psi$ for $F$, i.e. $\Psi$ is a set of embeddings $F \hookrightarrow \mbb{C}$ of size $[F^+ : \mbb{Q}]$, with no two embeddings being equivalent to one another. We denote by $\tau_0$ the element of $\Psi$ which extends the embedding $\tau_0 \colon F^+ \hookrightarrow \mbb{R}$. Let $F^{\mathrm{cl}}$ denote the Galois closure of $F$. We assume that $F$ contains an imaginary quadratic number field $E$.

We fix an odd prime $p$ which splits completely in $F/\mbb{Q}$, and we fix an isomorphism $\iota_p \colon \mbb{C} \xrightarrow{\sim} \Qpb$. Under this isomorphism every embedding $\tau \in \Psi$ gives rise to a prime ideal $\ide{p}_{\tau}$ of $F$, lying above $p$. 

We also fix the following notation and conventions throughout:
\begin{itemize}
    \item For any split reductive group $G$, we let $w_G^{\mathrm{max}}$ denote the element of its Weyl group of maximal length.
    \item The group law on characters will be written additively, unless specified otherwise.
    \item Let $G$ be a split reductive group with a fixed parabolic $P \subset G$ and Levi $M$, and let $T \subset P$ be a maximal torus. Then for any algebraic character $\kappa$ of $T$ which is $M$-dominant, we will write
    \[
    \kappa^{\vee} = -w_{M}^{\mathrm{max}} \kappa - 2\rho_{nc} 
    \]
    for the \emph{Serre dual} of $\kappa$, where $\rho_{nc}$ is the half-sum of positive roots not lying in $M$ (with respect to a fixed Borel containing $T$ and contained in $P$). We will also use the notation $(-)^{\vee}$ to refer to the Serre dual of a vector bundle on a scheme.
    \item We will use the terminology \emph{neat} or \emph{sufficiently small} to refer to a compact open subgroup of the finite adelic points of a reductive group satisfying \cite[Definition B.6]{ACES}.
    \item All torsors are right torsors unless specified otherwise.
\end{itemize}

\subsection{Acknowledgements}

The author would like to thank Daniel Barrera Salazar, George Boxer, David Loeffler, Vincent Pilloni, Juan Esteban Rodr\'{i}guez Camargo, Chris Williams and Sarah Zerbes for helpful comments and discussions. The author is also grateful to the anonymous referee for their feedback and pointing out a significant error in an earlier version of this article. This work was supported in part by ERC-2018-COG-818856-HiCoShiVa.


\section{Preliminaries} \label{PreliminarySection}

Let $n \geq 1$ be a positive integer. Let $W$ denote a $2n$-dimensional Hermitian space over $F$ which has signature $(1, 2n-1)$ with respect to the embedding $\tau_0$, and signature $(0, 2n)$ at $\tau \in \Psi - \{\tau_0 \}$. Fix a decomposition $W = W_1 \oplus W_2$ of Hermitian spaces, where $W_i$ is a Hermitian space over $F$ of dimension $n$ with signatures
\[
\opn{signature}(W_i \otimes_{F, \tau} \mbb{C}) = \left\{ \begin{array}{cc} (1, n-1) & \text{ if } i=1 \text{ and } \tau = \tau_0 \\ (0, n) & \text{ otherwise } \end{array} \right.
\]
Denote the Hermitian pairings on $W$ and $W_i$ by $\langle , \rangle_W$ and $\langle , \rangle_{W_i}$ respectively.

\begin{definition}
Let $\mbf{G}$ and $\mbf{H}$ denote the reductive groups over $\mbb{Q}$ whose values on $R$-points, for a $\mbb{Q}$-algebra $R$, are
\begin{align*}
    \mbf{G}(R) &= \left\{ g \in \opn{GL}(W \otimes_{\mbb{Q}} R) : \begin{array}{c}\langle g \cdot x, g \cdot y \rangle_W = c(g)\langle x, y \rangle_W \text{ for all } x, y \in W \otimes_{\mbb{Q}} R \\ \text{ and some } c(g) \in R^{\times} \end{array}\right\} \\
    \mbf{H}(R) &= \left\{ g=(g_1, g_2) \in \opn{GL}(W_1 \otimes_{\mbb{Q}} R) \times \opn{GL}(W_2 \otimes_{\mbb{Q}} R) : \begin{array}{c} \langle g_i \cdot x_i, g_i \cdot y_i \rangle_{W_i} = c(g)\langle x_i, y_i \rangle_{W_i} \\ \text{ for all } x_i, y_i \in W_i \otimes_{\mbb{Q}} R \text{ and } i=1, 2 \\ \text{ and some } c(g) \in R^{\times} \end{array} \right\}
\end{align*}
We also let $\mbf{G}_0$ (resp. $\mbf{H}_0$) denote the kernel of the similitude character $c \colon \mbf{G} \to \mbb{G}_m$ (resp. $c \colon \mbf{H} \to \mbb{G}_m$). Note that we have natural embeddings 
\[
\mbf{H}_0 \hookrightarrow \mbf{G}_0, \quad \quad \mbf{H} \hookrightarrow \mbf{G}
\]
both of which we will denote by $\iota$. 
\end{definition}

\begin{remark} \label{RemarkGLIdentifications}
If $R$ is an $F^{\mathrm{cl}}$-algebra (with fixed embedding $F^{\mathrm{cl}} \hookrightarrow R$), then we have an identification
\[
W \otimes_{\mbb{Q}} R = \bigoplus_{\tau \in \Psi} \left(W \otimes_{F, \tau} R  \oplus W \otimes_{F, \overline{\tau}} R \right)
\]
where $\tau \colon F \hookrightarrow R$ denotes the embedding obtained from precomposing the fixed embedding $F^{\mathrm{cl}} \hookrightarrow R$ with $\tau \colon F \hookrightarrow F^{\mathrm{cl}}$, and $\overline{\tau} \colon F \hookrightarrow R$ denotes its complex conjugate. Under this identification, one has
\[
\mbf{G}_{0, F^{\opn{cl}}} = \prod_{\tau \in \Psi} \opn{GL}_{2n, F^{\opn{cl}}}, \quad \quad \mbf{G}_{F^{\opn{cl}}} = \opn{GL}_{1, F^{\opn{cl}}} \times \prod_{\tau \in \Psi} \opn{GL}_{2n, F^{\opn{cl}}}
\]
where the latter is described by sending an element $g \in \mbf{G}_{F^{\opn{cl}}}(R)$ to $(c(g), g|_{W \otimes_{F, \tau} R} )_{\tau \in \Psi}$.

In particular, if $p$ is a prime which splits completely in $F/\mbb{Q}$ and we have an isomorphism $\mbb{C} \cong \Qpb$, then we obtain a distinguished embedding $F \hookrightarrow \mbb{Q}_p$ arising from $\tau_0$ (and factoring through $F^{\mathrm{cl}}$) and $\mbf{G}_{\mbb{Q}_p}$ is identified with $\opn{GL}_{1, \mbb{Q}_p} \times \prod_{\tau \in \Psi} \opn{GL}_{2n, \mbb{Q}_p}$.

Similarly, we have identifications
\[
\mbf{H}_{0,F^{\opn{cl}}} = \prod_{\tau \in \Psi} \left( \opn{GL}_{n, F^{\opn{cl}}} \times \opn{GL}_{n, F^{\opn{cl}}} \right), \quad \quad \mbf{H}_{F^{\opn{cl}}} = \opn{GL}_{1, F^{\opn{cl}}} \times \prod_{\tau \in \Psi} \left( \opn{GL}_{n, F^{\opn{cl}}} \times \opn{GL}_{n, F^{\opn{cl}}} \right)  
\]
and the embeddings $\mbf{H}_{0, F^{\opn{cl}}} \xrightarrow{\iota} \mbf{G}_{0, F^{\opn{cl}}}$ and $\mbf{H}_{F^{\opn{cl}}} \xrightarrow{\iota} \mbf{G}_{F^{\opn{cl}}}$ map the $\opn{GL}_{1, F^{\opn{cl}}}$-factor to itself, and for each $\tau \in \Psi$, map $\opn{GL}_{n, F^{\opn{cl}}} \times \opn{GL}_{n, F^{\opn{cl}}}$ into $\opn{GL}_{2n, F^{\opn{cl}}}$ block diagonally. 
\end{remark}

Using the identifications in Remark \ref{RemarkGLIdentifications}, we define the following parabolic subgroups:
\begin{itemize}
    \item Let $B_{\mbf{G}}$ (resp. $B_{\mbf{H}}$) denote the upper-triangular Borel subgroup of $\mbf{G}_{F^{\mathrm{cl}}}$ (resp. $\mbf{H}_{F^{\mathrm{cl}}}$). We let $T$ denote the standard maximal torus inside $B_{\mbf{G}}$ (which also coincides with the standard maximal torus inside $B_{\mbf{H}}$). In particular, elements of $T$ can be described as tuples 
    \[
    (x; y_{1, \tau}, \dots, y_{2n, \tau})_{\tau \in \Psi} 
    \]
    corresponding to the diagonal matrix
    \[
    x \times \prod_{\tau \in \Psi} \opn{diag}(y_{1, \tau}, \dots, y_{2n, \tau}) \in \opn{GL}_{1} \times \prod_{\tau \in \Psi} \opn{GL}_{2n} .
    \]
    \item Let $P_{\mbf{G}}$ denote the parabolic subgroup of $\mbf{G}_{F^{\mathrm{cl}}}$ containing $B_{\mbf{G}}$ with Levi given by
    \[
    M_{\mbf{G}} = \opn{GL}_{1, F^{\mathrm{cl}}} \times \left(\opn{GL}_{1, F^{\mathrm{cl}}} \times \opn{GL}_{2n-1, F^{\mathrm{cl}}} \right) \times \prod_{\tau \in \Psi - \{\tau_0\}} \opn{GL}_{2n, F^{\mathrm{cl}}} .
    \]
    Similarly, we let $P_{\mbf{H}}$ denote the parabolic of $\mbf{H}_{F^{\mathrm{cl}}}$ containing $B_{\mbf{H}}$ with Levi given by
    \[
    M_{\mbf{H}} = \opn{GL}_{1, F^{\mathrm{cl}}} \times \left(\opn{GL}_{1, F^{\mathrm{cl}}} \times \opn{GL}_{n-1, F^{\mathrm{cl}}} \times \opn{GL}_{n, F^{\mathrm{cl}}} \right) \times \prod_{\tau \in \Psi - \{\tau_0\}} \left(\opn{GL}_{n, F^{\mathrm{cl}}} \times \opn{GL}_{n, F^{\mathrm{cl}}} \right) 
    \]
    so that $P_{\mbf{H}} = P_{\mbf{G}} \cap \mbf{H}_{F^{\mathrm{cl}}}$ and $M_{\mbf{H}} = M_{\mbf{G}} \cap \mbf{H}_{F^{\mathrm{cl}}}$.
    \item Let $T_0 \subset T$ denote the sub-torus given by elements of the form
    \[
    (x; y_{1, \tau}, \dots, y_{n, \tau}, y_{n, \tau}, \dots, y_{1, \tau})_{\tau \in \Psi} .
    \]
\end{itemize}

We now describe the relevant Weyl groups that will be used throughout this article. 

\begin{definition}
For $? \in \{\mbf{G}_{F^{\mathrm{cl}}}, \mbf{H}_{F^{\mathrm{cl}}}, M_{\mbf{G}}, M_{\mbf{H}}\}$, let $W_{?}$ denote the associated Weyl group. Let ${^MW_{\mbf{G}}}$ denote the set of Kostant representatives for the quotient $W_{M_{\mbf{G}}} \backslash W_{\mbf{G}_{F^{\opn{cl}}}}$. This set comprises of $2n$ elements
\[
{^MW_{\mbf{G}}} = \{ w_0, \dots, w_{2n-1} \}
\]
where the length of $w_i$ is $i$. Similarly, ${^MW_{\mbf{H}}}$ (the set of Kostant representatives for $W_{M_{\mbf{H}}} \backslash W_{\mbf{H}_{F^{\opn{cl}}}}$) is a set $\{w_0, \dots, w_{n-1} \}$ where the length of $w_i$ is $i$. We can (and do) choose representatives for the Weyl elements $w_i$ in $\mbf{G}$ such that the embedding $\mbf{H} \hookrightarrow \mbf{G}$ identifies ${^MW_{\mbf{H}}}$ with the subset of ${^MW_{\mbf{G}}}$ of elements of lengths $0, \dots, n-1$ (which justifies the notation). In \S \ref{SomeImportantElementsSubsec}, we will make a specific choice of representative for $w_n$.
\end{definition}

We let $X^*(T) = \opn{Hom}\left(T, \mbb{G}_m \right)$ denote the abelian group of algebraic characters of $T$. We also define $X^*(T/T_0) = \opn{Hom}\left(T/T_0, \mbb{G}_m \right)$ which can naturally be viewed as a subgroup of $X^*(T)$ (by precomposing with the quotient $T \twoheadrightarrow T/T_0$). We identify elements of $X^*(T)$ with tuples of integers
\[
(c_0; c_{1, \tau}, \dots, c_{2n, \tau})_{\tau \in \Psi} 
\]
which correspond to the character mapping an element $(x; y_{1, \tau}, \dots, y_{2n, \tau})_{\tau \in \Psi} \in T$ to the quantity 
\[
x^{c_0}\prod_{\substack{\tau \in \Psi \\ i = 1, \dots, 2n}} y_{i, \tau}^{c_{i, \tau}}.
\]
With this description, elements of $X^*(T/T_0)$ are identified with tuples as above, satisfying $c_0 = 0$ and $c_{i, \tau} + c_{2n+1-i, \tau} = 0$ for all $\tau \in \Psi$ and $i=1, \dots, 2n$. We let $X^*(T)^+ \subset X^*(T)$ denote the cone of dominant characters, i.e. tuples of integers as above which satisfy $c_{1, \tau} \geq \cdots \geq c_{2n, \tau}$ for all $\tau \in \Psi$, and we set $X^*(T/T_0)^+ = X^*(T/T_0) \cap X^*(T)^+$.

The Weyl group $W_{\mbf{G}}$ naturally acts on $X^*(T)$ by the formula
\[
w \cdot \lambda(t) = \lambda(w^{-1} t w), \quad \quad w \in W_{\mbf{G}}, t \in T.
\]
In particular the set ${^MW_{\mbf{G}}}$ acts by shuffles, i.e. $w_0$ acts as the identity, and for $i=1, \dots, 2n-1$, one has the following description:
\[
w_i \cdot (c_0; c_{1, \tau}, \dots, c_{2n, \tau})_{\tau \in \Psi} = (c_0; c_{i+1, \tau_0}, c_{1, \tau_0}, \dots, c_{i, \tau_0}, c_{i+2, \tau_0}, \dots, c_{2n, \tau_0}; c_{1, \tau}, \dots, c_{2n, \tau})_{\tau \in \Psi - \{\tau_0\}}.
\]

\begin{definition} \label{DefOfHalfSum}
Let $\rho \in \frac{1}{2}X^*(T)$ denote the half-sum of the positive roots of $\mbf{G}_F$ with respect to the Borel $B_{\mbf{G}}$. Explicitly, this is given by
\[
\rho = \left( 0; \frac{2n-1}{2}, \frac{2n-3}{2}, \dots, \frac{3-2n}{2}, \frac{1-2n}{2} \right)_{\tau \in \Psi}.
\]
Let $\rho_c \in \frac{1}{2}X^*(T)$ (resp. $\rho_{nc} \in \frac{1}{2}X^*(T)$) denote the half-sum of positive roots which lie in $M_{\mbf{G}}$ (resp. do not lie in $M_{\mbf{G}}$). Explicitly, the components of $\rho_{c}$ (resp. $\rho_{nc}$) agree with $\rho$ on the $\opn{GL}_1$-factor and on the $\opn{GL}_{2n}$-factor for $\tau \neq \tau_0$ (resp. are zero on the $\tau \neq \tau_0$ factor), but the $\tau_0$-factors are given by 
\begin{align*}
    \left( 0, n-1, n-2, \dots, 2-n, 1-n \right) \\
    \left( \frac{2n-1}{2}, -\frac{1}{2}, -\frac{1}{2}, \dots, -\frac{1}{2}, -\frac{1}{2} \right)
\end{align*}
respectively.
\end{definition}

We conclude this section by introducing notation for the categories of algebraic representations of $M_{\mbf{G}}$ and $M_{\mbf{H}}$.

\begin{notation}
Let $\opn{Rep}(M_{\mbf{G}})$ (resp. $\opn{Rep}(M_{\mbf{H}})$) denote the category of finite-dimensional algebraic representations of $M_{\mbf{G}}$ (resp. $M_{\mbf{H}}$).
\end{notation}

\subsection{Shimura varieties} \label{ShimuraVarietiesSubSec}

We consider the following Shimura data for the groups $\mbf{G}$ and $\mbf{H}$. Let $\mbb{S} = \opn{Res}_{\mbb{C}/\mbb{R}}\mbb{G}_m$ denote the Deligne torus. Recall from Remark \ref{RemarkGLIdentifications} that we have an identification
\[
W \otimes_{\mbb{Q}} \mbb{C} = \bigoplus_{\tau \in \Psi} \left( W \otimes_{F, \tau} \mbb{C} \oplus W \otimes_{F, \overline{\tau}} \mbb{C} \right).
\]
For an embedding $\tau \colon F \hookrightarrow \mbb{C}$, each piece $W_{\tau} \defeq W \otimes_{F, \tau} \mbb{C}$ comes equipped with a Hermitian pairing by base-extension of $\langle, \rangle_W$. We fix a decomposition $W_{\tau} = W^+_{\tau} \oplus W^{-}_{\tau}$ into maximal subspaces where the induced pairing is positive (resp. negative) definite. We define the following Hodge structure (of type $\{(-1, 0), (0, -1)\}$)
\[
W \otimes_{\mbb{Q}} \mbb{C} = W^{(-1, 0)} \oplus W^{(0, -1)}
\]
by imposing that 
\[
W^{(-1, 0)} \defeq \bigoplus_{\tau \in \Psi} (W^+_{\tau} \oplus W^{-}_{\overline{\tau}}), \quad \quad W^{(0, -1)} \defeq \bigoplus_{\tau \in \Psi} (W^{-}_{\tau} \oplus W^+_{\overline{\tau}} ).
\]
This defines a homomorphism $h_{\mbf{G}} \colon \mbb{S} \to \mbf{G}_{\mbb{R}}$. We have a similar description for $h_{\mbf{H}}$ and we can arrange it in such a way that $h_{\mbf{G}} = \iota \circ h_{\mbf{H}}$. 

Let $\mu_{\mbf{G}}$ denote the restriction of $h_{\mbf{G}, \mbb{C}}$ to the first component in the identification $\mbb{S}_{\mbb{C}} \cong \mbb{G}_{m, \mbb{C}} \times \mbb{G}_{m, \mbb{C}}$. Then (after possibly conjugating $h_{\mbf{G}}$ by an element of $\mbf{G}(\mbb{R})$) under the identification in Remark \ref{RemarkGLIdentifications}, the cocharacter $\mu_{\mbf{G}}$ is given by
\begin{align*}
    \mu_{\mbf{G}} \colon \mbb{G}_{m, \mbb{C}} &\to \opn{GL}_{1, \mbb{C}} \times \prod_{\tau \in \Psi} \opn{GL}_{2n, \mbb{C}} \\
    z &\mapsto z \times \opn{diag}(z, 1, \dots, 1) \times \prod_{\tau \in \Psi - \{\tau_0 \}} \opn{diag}(1, \dots, 1) .
\end{align*}
In particular, $\mu_{\mbf{G}}$ is defined over $F^{\mathrm{cl}}$. Furthermore, the field of definition of the $\mbf{G}(\mbb{C})$-conjugacy class of $\mu_{\mbf{G}}$ is $F$, because of the conditions on the signatures and our assumption that $F$ contains an imaginary quadratic number field. Note that $\mu_{\mbf{G}}$ is of the form $\iota \circ \mu_{\mbf{H}}$ for a cocharacter $\mu_{\mbf{H}} \colon \mbb{G}_{m, \mbb{C}} \to \mbf{H}_{\mbb{C}}$, and this cocharacter coincides with the one obtained from $h_{\mbf{H}}$ similar to above. The field of definition of the $\mbf{H}(\mbb{C})$-conjugacy class of $\mu_{\mbf{H}}$ is also $F$, and the cocharacter $\mu_{\mbf{H}}$ is defined over $F^{\mathrm{cl}}$.

\begin{remark}
The centraliser of $\mu_{\mbf{G}}$ (resp. $\mu_{\mbf{H}}$) in $\mbf{G}_{F^{\mathrm{cl}}}$ (resp. $\mbf{H}_{F^{\mathrm{cl}}}$) is $M_{\mbf{G}}$ (resp. $M_{\mbf{H}}$).
\end{remark}

\begin{lemma}
The data $(\mbf{G}, h_{\mbf{G}})$ and $(\mbf{H}, h_{\mbf{H}})$ define Shimura--Deligne data in the sense of \cite[Appendix B]{ACES}, and additionally satisfy (SD5). The datum $(\mbf{G}, h_{\mbf{G}})$ is a Shimura datum in the usual sense. The reflex field for both of these data is $F$.
\end{lemma}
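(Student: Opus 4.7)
The plan is to verify each Shimura--Deligne axiom from \cite{ACES} Appendix~B for both data, together with axiom (SV3) needed to upgrade $(\mbf{G}, h_{\mbf{G}})$ to a Shimura datum in the usual sense, and then to identify the reflex field. All the verifications are largely routine given the explicit description of the Hodge decomposition $W\otimes_{\mbb{Q}}\mbb{C} = W^{(-1,0)}\oplus W^{(0,-1)}$ and the signatures of $W_1$ and $W_2$; I would carry them out in the order (Hodge type), (Cartan involution), (SV3) for $\mbf{G}$, (SD5), reflex field.

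For the Hodge-type axiom, the decomposition displayed above is a pure Hodge structure of type $\{(-1,0),(0,-1)\}$ on $W$, so the induced Hodge structure on $\opn{Lie}(\mbf{G}_{\mbb{R}}) \subset \opn{End}(W \otimes_{\mbb{Q}} \mbb{R})$ is of type $\{(-1,1),(0,0),(1,-1)\}$; the analogous statement for $\mbf{H}$ then follows from the compatibility $h_{\mbf{G}} = \iota \circ h_{\mbf{H}}$. For the Cartan-involution axiom it suffices to check that the form $(x,y) \mapsto \langle x, h_{\mbf{G}}(i) y \rangle_W$ on $W \otimes_{\mbb{Q}} \mbb{R}$ is symmetric and, after a fixed sign choice on each summand, positive definite; this is immediate from the recipe defining $W^{\pm}_{\tau}$ as the positive/negative definite subspaces of $W_{\tau}$. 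Axiom (SV3) for $\mbf{G}$ holds because $\mbf{G}^{\mathrm{ad}}$ is $\mbb{Q}$-simple -- being the restriction of scalars along $F^+/\mbb{Q}$ of an adjoint simple unitary group -- and acquires the non-compact real factor $U(1, 2n-1)$ at $\tau_0$. The corresponding statement for $\mbf{H}$ fails, as the unitary factor on $W_2$ (and the one on $W_1$ at embeddings different from $\tau_0$) is compact; but (SV3) is not required for a Shimura--Deligne datum. Axiom (SD5), together with the compatibility with the parabolics $P_{\mbf{G}}$ and $P_{\mbf{H}}$, can be read off from the explicit formula given for $\mu_{\mbf{G}}$ in the text and the identity $\mu_{\mbf{G}} = \iota \circ \mu_{\mbf{H}}$.

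For the reflex field I would use that the $\opn{Gal}(\overline{\mbb{Q}}/\mbb{Q})$-action on $\mbf{G}(\mbb{C})$-conjugacy classes of cocharacters $\mbb{G}_m \to \mbf{G}_{\mbb{C}}$ is parameterised by the signature data indexed by embeddings $F \hookrightarrow \overline{\mbb{Q}}$. Since the signature is $(1, 2n-1)$ at exactly one embedding $\tau_0 \in \Psi$ and $(0, 2n)$ at every other embedding of $\Psi$ (and is flipped at the complex conjugate embeddings), the stabiliser in $\opn{Gal}(F^{\mathrm{cl}}/\mbb{Q})$ of the conjugacy class of $\mu_{\mbf{G}}$ is exactly the subgroup fixing $\tau_0$; the orientation of the pair $\{\tau_0, \overline{\tau}_0\}$ is detected at the level of the imaginary quadratic subfield $E \subset F$. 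This stabiliser is $\opn{Gal}(F^{\mathrm{cl}}/\tau_0(F))$, so the reflex field equals $\tau_0(F) \cong F$; the hypothesis that $F$ contains an imaginary quadratic field is precisely what ensures the orientation is already visible over $F$ rather than over a strictly larger subfield of $F^{\mathrm{cl}}$. The same argument applied to $\mu_{\mbf{H}}$ gives reflex field $F$ for $(\mbf{H}, h_{\mbf{H}})$ as well. The only point requiring real care is cross-referencing the precise form of (SD5) in \cite{ACES} against the explicit formula for $\mu_{\mbf{G}}$; after that, everything reduces to direct inspection of the signatures.
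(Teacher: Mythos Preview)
The paper does not actually supply a proof of this lemma: it is stated without a proof environment, the axioms being treated as routine, and the only justification given is the sentence immediately preceding the lemma explaining why the field of definition of the conjugacy class of $\mu_{\mbf{G}}$ is $F$ (``because of the conditions on the signatures and our assumption that $F$ contains an imaginary quadratic number field''). Your proposal is a correct and appropriately detailed fleshing-out of exactly these routine verifications, and your reflex-field argument matches the paper's one-line reasoning; there is nothing to compare beyond noting that you have written out what the paper leaves implicit.
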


For a neat compact open subgroup $K \subset \mbf{G}(\mbb{A}_f)$, we let $S_{\mbf{G}, K}$ denote the associated Shimura variety over the reflex field $F$. Similarly, for a neat compact open subgroup $U \subset \mbf{H}(\mbb{A}_f)$, we let $S_{\mbf{H}, U}$ denote the associated Shimura--Deligne variety over the reflex field $F$ (a canonical model exists as the connected component of the PEL-type moduli problem associated with $\mbf{H}$ and $h_{\mbf{H}}$). If $\iota(U) \subset K$, then we have an induced finite unramified morphism
\[
\iota \colon S_{\mbf{H}, U} \to S_{\mbf{G}, K}.
\]
We note that $S_{\mbf{H}, U}$ and $S_{\mbf{G}, K}$ are smooth projective varieties, because we have assumed $F^+ \neq \mbb{Q}$ (for example, the conditions in \cite[\S 5.3.3]{LanArithmetic} are satisfied).

\begin{convention}
From now on, all of the Shimura--Deligne varieties we consider will be base-changed to $F^{\mathrm{cl}}$ (or a field extension of $F^{\mathrm{cl}}$) via the embedding $\tau_0 \colon F \hookrightarrow F^{\mathrm{cl}}$, but we will suppress this from the notation. 
\end{convention}

\subsection{Automorphic vector bundles} \label{AutomorphicVectorBundlesSection}

In this section, we recall the construction of automorphic vector bundles on $S_{\mbf{G}, K}$. 

Let $P_{\mbf{G}}^{\mathrm{std}}$ denote the opposite of $P_{\mbf{G}}$ with respect to the torus $T$, and consider the flag variety $\opn{FL}^{\mathrm{std}}_{\mbf{G}} \defeq \mbf{G}/P_{\mbf{G}}^{\mathrm{std}}$. Let $X_{\mbf{G}}$ denote the $\mbf{G}(\mbb{R})$-conjugacy class of homomorphisms $\mbb{S} \to \mbf{G}_{\mbb{R}}$ containing $h_{\mbf{G}}$, which is a Hermitian symmetric domain. Then we have a holomorphic embedding (the Borel embedding)
\[
\beta \colon X_{\mbf{G}} \hookrightarrow \opn{FL}_{\mbf{G}}^{\mathrm{std}}(\mbb{C}).
\]

\begin{definition} \label{DefOfAutoVectorBundle}
Let $K \subset \mbf{G}(\mbb{A}_f)$ be a sufficiently small compact open subgroup. For an algebraic representation $V$ of $P_{\mbf{G}}^{\mathrm{std}}$, let $[V]$ denote the vector bundle on $S_{\mbf{G}, K}(\mbb{C})$ defined as
\[
[V] \defeq \mbf{G}(\mbb{Q}) \backslash \beta^*(V) \times \mbf{G}(\mbb{A}_f)/K
\]
where we view $V$ as a $\mbf{G}(\mbb{C})$-homogeneous vector bundle on $\opn{FL}^{\mathrm{std}}_{\mbf{G}}(\mbb{C})$ in the usual way.
\end{definition}

\begin{remark}
One can show that $[V]$ descends to an algebraic vector bundle on $S_{\mbf{G},K}$ (see \cite[\S III]{MilneCanonicalMixed} for example).
\end{remark}

\begin{definition}
The association in Definition \ref{DefOfAutoVectorBundle} defines a functor 
\[
 {[-] = [-]_K} \colon \opn{Rep}(M_{\mbf{G}}) \to \opn{VB}(S_{\mbf{G}, K})
\]
by inflating a representation of $M_{\mbf{G}}$ to one of $P_{\mbf{G}}^{\mathrm{std}}$, where $\opn{VB}(-)$ denotes the category of vector bundles on a scheme. This functor is compatible with varying $K$, in the sense that if $g \in \mbf{G}(\mbb{A}_f)$ and $L \subset g^{-1}Kg$, then $g^*[-]_K = [-]_L$. Here $g^*$ denotes pullback under the map $S_{\mbf{G}, L} \to S_{\mbf{G}, K}$ induced from right-translation by $g$.
\end{definition}

We have a similar description of automorphic vector bundles over $S_{\mbf{H}, U}$ arising from algebraic representations of $M_{\mbf{H}}$, and one has the relation 
\[
\iota^*[V] = [V|_{M_{\mbf{H}}}]
\]
where $V$ is an algebraic representation of $M_{\mbf{G}}$ and $\iota \colon S_{\mbf{H}, U} \to S_{\mbf{G}, K}$ is the finite unramified morphism at the end of the previous section.

\begin{example}[{\cite[\S 4.2.8]{BoxerPilloni}}]
Let $V_{-2\rho_{nc}}$ denote the irreducible algebraic representation of $M_{\mbf{G}}$ with highest weight $-2\rho_{nc}$ (see Definition \ref{DefOfHalfSum}). Then $[V_{-2\rho_{nc}}] \cong \Omega^{2n-1}_{S_{\mbf{G}, K}}$.
\end{example}

\subsection{Discrete series representations} \label{DiscreteSeriesRepsSection}

Let $K_{\infty} \subset \mbf{G}(\mbb{R})$ denote the stabiliser of $h_{\mbf{G}}$ under the adjoint action. Explicitly, this has the following description. Upon base-change to $\mbb{R}$, one has the following identification
\[
W \otimes_{\mbb{Q}} \mbb{R} = \bigoplus_{\tau \in \Psi} (W \otimes_{F^+, \tau} \mbb{R})
\]
where each summand is a $2n$-dimensional Hermitian space over $\mbb{C}$. In particular, $W \otimes_{\mbb{Q}} \mbb{R}$ is a Hermitian space over $\mbb{C}$, and the fixed decomposition
\[
W \otimes_{\mbb{Q}} \mbb{C} = \left( \bigoplus_{\tau \in \Psi} W_{\tau}^+ \oplus W^+_{\bar{\tau}} \right) \oplus \left( \bigoplus_{\tau \in \Psi} W_{\tau}^{-} \oplus W^{-}_{\bar{\tau}} \right)
\]
descends to a decomposition $W \otimes_{\mbb{Q}} \mbb{R} = W^+ \oplus W^{-}$ into maximal subspaces where the Hermitian pairing is positive (resp. negative) definite. Then $K_{\infty}$ can be described as the subgroup of $\mbf{G}(\mbb{R})$ preserving the decomposition $W \otimes_{\mbb{Q}} \mbb{R} = W^+ \oplus W^{-}$. In particular, the complexification of $K_{\infty}$ is equal to $M_{\mbf{G}}(\mbb{C})$.

Let $H_{\infty}$ denote the compact (mod centre) Cartan subgroup of $K_{\infty}$ whose complexification is equal to $T(\mbb{C})$. Then algebraic characters of $H_{\infty}$ can be identified with tuples $(c_0; c_{1, \tau}, \dots, c_{2n, \tau}) \in X^*(T)$ satisfying the parity condition
\[
c_0 \equiv \sum_{\tau \in \Psi} \sum_{i=1}^{2n} c_{i, \tau} \quad \text{ modulo } 2 .
\]
For any dominant algebraic character $\lambda$ of $H_{\infty}$ and $i=0, \dots, 2n-1$, we set $\xi_i \defeq w_i \cdot (\lambda + \rho)$. Then $\xi_i$ is the Harish-Chandra parameter of a discrete series representation $\pi(\xi_i)$ of $\mbf{G}(\mbb{R})$ (see \cite[\S 3]{BHR94}) and the local $L$-packet containing this representation is of the form
\[
\{ \pi(\xi_0), \dots, \pi(\xi_{2n-1}) \} .
\]
Therefore, discrete series $L$-packets of $\mbf{G}(\mbb{R})$ are parameterised by dominant algebraic characters of $H_{\infty}$. One has a similar description for discrete series $L$-packets of $\mbf{G}_0(\mbb{R})$.

\begin{remark}
Note that discrete series $L$-packets of both $\mbf{G}_0(\mbb{R})$ and $\mbf{G}(\mbb{R})$ have size $2n$, because $K_{\infty}$ differs from the maximal compact subgroup of $\mbf{G}_{\mathrm{der}}(\mbb{R})$ by the centre of $\mbf{G}(\mbb{R})$. In particular, if $\pi(\xi_i)$ is a discrete series representation of $\mbf{G}(\mbb{R})$ as above, then
\[
\pi(\xi_i)|_{\mbf{G}_0(\mbb{R})} = \pi(\xi'_i)
\]
where $\xi'_j$ denotes the restriction of $\xi_j$ to $H_{\infty} \cap \mbf{G}_0(\mbb{R})$ and $\pi(\xi'_j)$ denotes the discrete series representation of $\mbf{G}_0(\mbb{R})$ with Harish-Chandra parameter $\xi_j'$. 
\end{remark}

For convenience, we introduce the following dictionary of weights and parameters. Let $\lambda$ be a dominant algebraic character of $H_{\infty}$. Then 
\begin{enumerate}
    \item (Harish-Chandra parameters) The Harish-Chandra parameters in the $L$-packet parameterised by $\lambda$ are given by
    \[
    \xi_i = w_i \cdot (\lambda + \rho)
    \]
    for $i=0, \dots, 2n-1$. 
    \item (Blattner parameters) The Blattner parameters associated with $\lambda$ are
    \[
    \nu_i = w_i \cdot (\lambda + 2\rho) - 2 \rho_c .
    \]
    In particular, the lowest $K_{\infty} \cap \mbf{G}_0(\mbb{R})$-type of $\pi(\xi'_i)$ has highest weight given by (the restriction to $H_{\infty} \cap \mbf{G}_0(\mbb{R})$ of) $\nu_i$. This implies that
    \[
    \opn{dim}\opn{Hom}_{K_{\infty}}(\nu_j, \pi(\xi_i)) = \left\{ \begin{array}{cc} 1 & \text{ if } j=i  \\ 0 & \text{ otherwise } \end{array} \right.
    \]
    \item (Vector bundle weights) If we let $\lambda^* = -w_G^{\mathrm{max}}\lambda$, then the vector bundle weights are
    \[
    \kappa_i = w_i \star \lambda^* \defeq w_i \cdot (\lambda^* + \rho) - \rho.
    \]
    In the notation of \cite{BoxerPilloni}, we have 
    \[
    C(\kappa_i)^{-} = \{ w \in W_{\mbf{G}} : w^{-1}(\kappa_i+\rho) \in X^*(T)_{\mbb{Q}}^+ \} = \{w_i\}
    \]
    so we expect the coherent cohomology of $[V_{\kappa_i}]$ to be concentrated in degree $\ell_{-}(w_i) = 2n-1-i$ (at least on small slope parts). Let $\ide{p} = \opn{Lie}P_{\mbf{G}}^{\mathrm{std}}$ and $\ide{m} = \opn{Lie}M_{\mbf{G}}$, then for $i=0, \dots, 2n-1$, $\bigwedge^i \ide{p}/\ide{m}$ is an irreducible algebraic representation of $M_{\mbf{G}}$ under the adjoint action. If we let $\alpha_i$ denote the highest weight of this representation, then the vector bundle weights and Blattner parameters are related by the formula:
    \[
    \nu_i = \alpha_i - w_{M_G}^{\mathrm{max}} \kappa_{2n-1-i}.
    \]
\end{enumerate}

\subsection{Some important elements} \label{SomeImportantElementsSubsec}

Recall that we have identifications 
\begin{align*} 
\mbf{G}_{F^{\mathrm{cl}}} &= \opn{GL}_{1, F^{\mathrm{cl}}} \times \prod_{\tau \in \Psi} \opn{GL}_{2n, F^{\mathrm{cl}}} \\
\mbf{H}_{F^{\mathrm{cl}}} &= \opn{GL}_{1, F^{\mathrm{cl}}} \times \prod_{\tau \in \Psi} \left(\opn{GL}_{n, F^{\mathrm{cl}}} \times \opn{GL}_{n, F^{\mathrm{cl}}} \right).
\end{align*} 
In particular $\mbf{G}_{F^{\mathrm{cl}}}$ and $\mbf{H}_{F^{\mathrm{cl}}}$ (and the algebraic subgroups considered throughout this section) have models over $\ordd \defeq \ordd_{F^{\mathrm{cl}}}$, which we will denote by the same letters.

Let $w_n \in {^MW_{\mbf{G}}}$ denote the Weyl element of length $n$. We will now make explicit a choice of representative (which we will also denote $w_n$) in $\mbf{G}(\ordd)$ which represents the element $w_n \in {^MW_{\mbf{G}}}$.

\begin{definition}
Let $w_n = 1 \times \prod_{\tau \in \Psi} (w_n)_{\tau} \in \mbf{G}(\ordd)$ be the element where $(w_n)_{\tau} = \opn{id}$ for $\tau \neq \tau_0$, and $(w_n)_{\tau_0}$ is the matrix
\[
[(w_n)_{\tau_0}]_{i, j} = \left\{ \begin{array}{cc} 1 & \text{ if } (i, j) = (1, n+1) \\ 1 & \text{ if } j=i-1, 2 \leq i \leq n+1 \\ 1 & \text{ if } i=j \geq n+2 \\ 0 & \text{ otherwise } \end{array} \right.
\]
\end{definition}

The following elements are key to the whole construction in this paper. 

\begin{definition}
Let $u_{\tau_0}' \in \opn{GL}_{2n-1}(\ordd)$ denote the matrix whose $(i, j)$-th element is
\[
(u_{\tau_0}')_{i, j} = \left\{ \begin{array}{cc} 1 & \text{ if } i=j \\ 1 & \text{ if } j=2n-i, i \leq n \\ 0 & \text{ otherwise } \end{array} \right.
\]
and we let $u_{\tau_0} = 1 \times u_{\tau_0}' \in \opn{GL}_1(\ordd) \times \opn{GL}_{2n-1}(\ordd)$. For $\tau \neq \tau_0$, we let $u_{\tau} \in \opn{GL}_{2n}(\ordd)$ denote the block matrix (with block size $(n \times n)$) given by
\[
u_{\tau} = \tbyt{1}{}{w_{\opn{GL}_n}^{\mathrm{max}}}{1}
\]
where $w_{\opn{GL}_n}^{\mathrm{max}}$ denotes the antidiagonal matrix with $1$s along the antidiagonal (which represents the longest Weyl element in $W_{\opn{GL}_n}$). We let $u \in M_{\mbf{G}}(\ordd)$ be the element $u = 1 \times \prod_{\tau \in \Psi} u_{\tau}$. 

Denote by $x_{\tau_0}$ the $(1 \times 2n-1)$-matrix whose first $n$ entries are $1$ and the rest are $0$. We let $\gamma_{\tau_0} \in \opn{GL}_{2n}(\ordd)$ denote the block matrix 
\[
\gamma_{\tau_0} = u_{\tau_0} \cdot \tbyt{1}{x_{\tau_0}}{}{1}
\]
and we set $\gamma_{\tau} = u_{\tau} \in \opn{GL}_{2n}(\ordd)$ for $\tau \neq \tau_0$. Define $\gamma \in P_{\mbf{G}}(\ordd)$ to be $\gamma \defeq 1 \times \prod_{\tau} \gamma_{\tau}$. 

Finally, we define $\widehat{\gamma} \defeq \gamma \cdot w_n \in \mbf{G}(\ordd)$ (with the specific choice of $w_n$ fixed above). 
\end{definition}

Here are some key properties of these elements. 

\begin{lemma} \label{OpenOrbitLemma}
\begin{enumerate}
    \item The orbit $M_{\mbf{H}} \cdot u \cdot B_{M_{\mbf{G}}}$ is Zariski open in $M_{\mbf{G}}$ (over $\opn{Spec}\ordd$), where $B_{M_{\mbf{G}}}$ denotes the standard Borel of $M_{\mbf{G}}$.
    \item The orbit $\mbf{H} \cdot \widehat{\gamma} \cdot B_{\mbf{G}}$ is Zariski open in $\mbf{G}$ (over $\opn{Spec}\ordd$).
\end{enumerate}
\end{lemma}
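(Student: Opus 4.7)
My approach is to reduce both claims to dimension counts and verify these by direct matrix computation, exploiting the fact that everything factorises compatibly across the embeddings $\tau \in \Psi$. In both parts, the claim has the form ``the orbit $\mbf{H}' \cdot g \cdot B'$ is Zariski-open in $\mbf{G}'$''; since all varieties involved are smooth and irreducible, this is equivalent to
\[
\dim\bigl(\mbf{H}' \cap g B' g^{-1}\bigr) = \dim \mbf{H}' + \dim B' - \dim \mbf{G}'
\]
by the orbit-stabiliser theorem.

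For Part (1), the groups $M_{\mbf{G}}$, $M_{\mbf{H}}$, $B_{M_{\mbf{G}}}$ and the element $u$ all decompose as products over $\tau \in \Psi$ (plus a common $\opn{GL}_1$ factor on which everything is trivial), so the openness can be checked factor by factor. At $\tau \neq \tau_0$, the statement is the classical fact that the Friedberg--Jacquet pair $(\opn{GL}_n \times \opn{GL}_n, \opn{GL}_{2n})$ with representative $u_\tau$ yields an open orbit on the flag variety; I would verify this by showing that $\opn{diag}(h_1, h_2) \in u_\tau B_{\opn{GL}_{2n}} u_\tau^{-1}$ forces $h_1$ to be upper triangular and $h_2 = w_{\opn{GL}_n}^{\max} h_1 (w_{\opn{GL}_n}^{\max})^{-1}$ to also be upper triangular, which compels both to be diagonal, giving an intersection of dimension $n = 2n^2 + (2n^2+n) - 4n^2$ as required. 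The case $\tau = \tau_0$ is the analogous computation for the odd-rank pair $(\opn{GL}_{n-1} \times \opn{GL}_n, \opn{GL}_{2n-1})$ with the matrix $u'_{\tau_0}$, which proceeds identically.

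For Part (2), the analogous dimension identity factorises over $\tau$ since $\widehat\gamma = \gamma \cdot w_n$ has all non-trivial behaviour concentrated at $\tau_0$: at $\tau \neq \tau_0$ one has $\widehat\gamma|_\tau = u_\tau$, so the local contribution is exactly that of Part (1). At $\tau = \tau_0$, I need to show
\[
\dim\bigl((\opn{GL}_n \times \opn{GL}_n) \cap \gamma_{\tau_0}(w_n)_{\tau_0}\,B_{\opn{GL}_{2n}}\,(w_n)_{\tau_0}^{-1}\gamma_{\tau_0}^{-1}\bigr) = n,
\]
which I would verify directly: writing $\gamma_{\tau_0} = u_{\tau_0} \cdot v$ with $v$ in the unipotent radical of the mirabolic $P_{\mbf{G}}|_{\tau_0}$, using that $(w_n)_{\tau_0}$ is the cyclic permutation $(1\,2\,\cdots\,n{+}1)$ to compute the conjugate $(w_n)_{\tau_0} B_{\opn{GL}_{2n}} (w_n)_{\tau_0}^{-1}$ explicitly, and then reading off the constraints on $(h_1, h_2)$ imposed by the requirement that the doubly-conjugated block-diagonal matrix be upper triangular, one arrives at exactly an $n$-parameter family.

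The principal technical obstacle lies in the $\tau_0$ component of Part (2): the interplay between the Weyl element $w_n$ (chosen precisely so that the orbit is open) and the unipotent twist $v$ (which compensates for the fact that $\mbf{H}$ is not in ``generic position'' relative to the cell $B_{\mbf{G}} w_n B_{\mbf{G}}$) requires careful bookkeeping through the block decomposition. A conceptual alternative would be to appeal to the sphericity of the pair $(\mbf{G}_0, \mbf{H}_0)$ alluded to in the introduction, but a direct matrix proof is both self-contained and makes the role of the specific representatives $u, v, w_n$ transparent.
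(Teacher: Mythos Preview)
Your proposal is correct and follows essentially the same strategy as the paper: reduce openness to the dimension of the stabiliser $\mbf{H}' \cap g B' g^{-1}$, factorise over $\tau \in \Psi$, and verify the count by explicit matrix computation. The only organisational difference is at the $\tau_0$-component in Part~(2): rather than computing $(w_n)_{\tau_0} B_{\opn{GL}_{2n}} (w_n)_{\tau_0}^{-1}$ directly and then unwinding the conjugations by $v$ and $u_{\tau_0}$ as you propose, the paper finds a factorisation
\[
\widehat{\gamma}_{\tau_0} = \tbyt{X}{}{}{1} \tbyt{1}{}{w_{\opn{GL}_n}^{\max}}{1} \tbyt{1}{Y}{}{Z}
\]
with $\opn{diag}(X,1) \in \opn{GL}_n \times \opn{GL}_n$ and $\tbyt{1}{Y}{}{Z}$ upper triangular. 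This immediately reduces the stabiliser calculation to one conjugation by $\tbyt{1}{}{w_{\opn{GL}_n}^{\max}}{1}$, i.e.\ exactly the $\tau \neq \tau_0$ computation you already did (with an inner twist by $U = X w_{\opn{GL}_n}^{\max}$). Your direct route would also succeed, but the paper's factorisation explains \emph{why} the particular choice of $\widehat{\gamma}$ works and avoids re-doing the bookkeeping.
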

\begin{proof}
It is enough to check that the stabiliser $M_{\mbf{H}} \cap u B_{M_{\mbf{G}}} u^{-1}$ (resp. $\mbf{H} \cap \widehat{\gamma}B_{\mbf{G}} \widehat{\gamma}^{-1}$) for the action of $M_{\mbf{H}}$ (resp. $\mbf{H}$) on the flag variety $M_{\mbf{G}}/B_{M_\mbf{G}}$ (resp. $\mbf{G}/B_{\mbf{G}}$) has the required dimension. But an explicit calculation shows that
\begin{align*}
    M_{\mbf{H}} \cap u B_{M_{\mbf{G}}} u^{-1} &= \opn{GL}_1 \times \{ \opn{diag}(x_1, x_2, \dots, x_{n+1}, x_n, \dots, x_2) \in \opn{GL}_{1} \times \opn{GL}_{n-1} \times \opn{GL}_n \} \\ &\; \; \; \times \prod_{\tau \neq \tau_0}\{ \opn{diag}(y_1, \dots, y_n, y_n, \dots, y_1) \in \opn{GL}_n \times \opn{GL}_n \} 
\end{align*}
which proves part (1). For part (2), we separate the calculation into three separate cases depending on the decomposition of $\mbf{H}$ and $\mbf{G}$ into general linear groups, namely the $\opn{GL}_1$-component, the $\tau_0$-component and the $\tau$-component for $\tau \neq \tau_0$.

There is nothing to check for the $\opn{GL}_1$-component, and the $\tau \neq \tau_0$-component follows from the computation as in part (1). So we are left to prove the lemma for the $\tau_0$-component. One can find $X, Z \in \opn{GL}_n(\ordd)$, $Y$ an $n \times n$-matrix with entries in $\ordd$, such that:
\begin{itemize}
    \item $Z$ is upper triangular
    \item $X w_{\opn{GL}_n}^{\mathrm{max}} = U$ is block upper triangular and lies in the standard parabolic of $\opn{GL}_n$ with Levi $\opn{GL}_1 \times \opn{GL}_{n-1}$. Its projection to the Levi is $1 \times w_{\opn{GL}_{n-1}}^{\mathrm{max}}$. 
    \item One has the equality
    \[
    \widehat{\gamma}_{\tau_0} = \tbyt{X}{}{}{1} \tbyt{1}{}{w_{\opn{GL}_n}^{\mathrm{max}}}{1} \tbyt{1}{Y}{}{Z} .
    \]
\end{itemize}
We therefore find that, for $h = (A, B) \in \opn{GL}_n \times \opn{GL}_n$, $\widehat{\gamma}_{\tau_0}^{-1} h \widehat{\gamma}_{\tau_0}$ lies in the standard Borel of $\opn{GL}_{2n}$ if and only if $U^{-1}AU$ (resp. $B$) is lower (resp. upper triangular) and $B = U^{-1}AU$. This gives the required dimension for the stabiliser.
\end{proof}

\subsection{Level subgroups at \texorpdfstring{$p$}{p}} \label{LevelSubgroupsAtp}

Let $p$ be a prime which splits completely in $F/\mbb{Q}$, and fix an isomorphism $\mbb{C} \cong \Qpb$. Then, as in Remark \ref{RemarkGLIdentifications}, we have identifications
\begin{align*}
    G \defeq \mbf{G}_{\mbb{Q}_p} &= \opn{GL}_{1, \mbb{Q}_p} \times \prod_{\tau \in \Psi} \opn{GL}_{2n, \mbb{Q}_p} \\
    H \defeq \mbf{H}_{\mbb{Q}_p} &= \opn{GL}_{1, \mbb{Q}_p} \times \prod_{\tau \in \Psi} \left(\opn{GL}_{n, \mbb{Q}_p} \times \opn{GL}_{n, \mbb{Q}_p} \right).
\end{align*}

\begin{remark} \label{ChoiceOfIntModelsRemark}
Note that the choice of $\ordd$-models in the previous section give rise to $\mbb{Z}_p$-models of $G$, $H$, and the various subgroups under consideration. We will denote these models by the same letters. For various objects attached to $\mbf{G}$ and $\mbf{H}$, we will use non-bold letters to indicate their analogue for the groups $G$ and $H$. For example, will write $M_G$ for $M_{\mbf{G}, \mbb{Q}_p}$.
\end{remark}

We introduce the following level subgroups:

\begin{definition}
\begin{enumerate} 
\item For $t \geq 1$, let $K^G_{\mathrm{Iw}}(p^t) \subset G(\mbb{Z}_p)$ denote the depth $t$ upper triangular Iwahori of $G$, i.e. all elements in $G(\mbb{Z}_p)$ which land in $B_G$ modulo $p^t$. We also use the same definition for $H$.
\item For $t \geq 1$, we let $K^H_{\diamondsuit}(p^t) \subset H(\mbb{Q}_p)$ denote the subgroup $H(\mbb{Q}_p) \cap \widehat{\gamma} K^G_{\opn{Iw}}(p^t) \widehat{\gamma}^{-1}$, where $\widehat{\gamma}$ is treated as an element of $G(\mbb{Z}_p)$. 
\end{enumerate}
\end{definition}

We have the following:

\begin{lemma} \label{IndexLevelSubgroup}
The subgroup $K^H_{\diamondsuit}(p^t)$ is contained in $K^H_{\opn{Iw}}(p^t)$. Furthermore, one has
\[
[K^H_{\diamondsuit}(p^t) : K^H_{\diamondsuit}(p^{t+1})] = [K^G_{\opn{Iw}}(p^t) : K^G_{\opn{Iw}}(p^{t+1})] = p^{dn(2n-1)}
\]
where $d = [F^+ : \mbb{Q}]$.
\end{lemma}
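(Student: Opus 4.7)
The plan is to prove both statements by reducing everything to an infinitesimal (Lie-algebra) calculation using the open orbit property from Lemma \ref{OpenOrbitLemma}(2). The second equality $[K^G_{\opn{Iw}}(p^t):K^G_{\opn{Iw}}(p^{t+1})]=p^{dn(2n-1)}$ is the standard Iwahori index calculation: the quotient is naturally isomorphic to $\ide{g}/\ide{b}_G$ modulo $p$ (via the map $g \mapsto p^{-t}(g-1) \pmod p$), and $\dim_{F^{\mathrm{cl}}}\ide{g}/\ide{b}_G = \dim \mathbf{G}/B_{\mathbf{G}} = dn(2n-1)$ follows from direct dimension counting on the flag variety.

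For the containment $K^H_\diamondsuit(p^t)\subset K^H_{\opn{Iw}}(p^t)$, I would first note that because $\widehat{\gamma}$ and $\widehat{\gamma}^{-1}$ both lie in $G(\mbb{Z}_p)$, the conjugate $\widehat{\gamma}K^G_{\opn{Iw}}(p^t)\widehat{\gamma}^{-1}$ is contained in $G(\mbb{Z}_p)$, so integrality gives $K^H_\diamondsuit(p^t)\subset H(\mbb{Q}_p)\cap G(\mbb{Z}_p)=H(\mbb{Z}_p)$. It then remains to verify that the reduction of any $h\in K^H_\diamondsuit(p^t)$ modulo $p^t$ lies in $B_{\mathbf{H}}$. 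Since such an $h$ reduces to an element of $\mathbf{H}(\mbb{Z}/p^t)\cap(\widehat{\gamma}B_{\mathbf{G}}\widehat{\gamma}^{-1})(\mbb{Z}/p^t)$, this follows from inspecting the stabiliser computation carried out in the proof of Lemma \ref{OpenOrbitLemma}, which shows that the scheme-theoretic intersection $\mathbf{H}\cap\widehat{\gamma}B_{\mathbf{G}}\widehat{\gamma}^{-1}$ sits inside $B_{\mathbf{H}}$ (using the explicit factorisation of $\widehat{\gamma}_{\tau_0}$ recorded there).

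For the first equality, the key input is the infinitesimal form of Lemma \ref{OpenOrbitLemma}(2): the open orbit condition translates to the Lie-algebra decomposition
\[
\ide{g} \;=\; \operatorname{Ad}(\widehat{\gamma}^{-1})\ide{h} \;+\; \ide{b}_G,
\]
which in particular gives an isomorphism $\ide{h}/\ide{s}\xrightarrow{\sim}\ide{g}/\ide{b}_G$ where $\ide{s}=\ide{h}\cap\operatorname{Ad}(\widehat{\gamma})\ide{b}_G$. Fixing a representative $h_0$ in the integral stabiliser, any $h\in K^H_\diamondsuit(p^t)$ can be written as $h_0(1+p^tY)+O(p^{t+1})$ with $Y\in\ide{h}(\mbb{Z}_p)$, and the condition for $h$ to lie in $K^H_\diamondsuit(p^{t+1})$ becomes $\operatorname{Ad}(\widehat{\gamma}^{-1})Y\in\ide{b}_G\pmod p$, i.e.\ $Y\in\ide{s}\pmod p$. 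Hence
\[
K^H_\diamondsuit(p^t)/K^H_\diamondsuit(p^{t+1}) \;\cong\; \ide{h}/(\ide{s}+p\ide{h}) \;\cong\; \ide{g}/\ide{b}_G\otimes_{\mbb{Z}_p}\mbb{F}_p,
\]
and this has cardinality $p^{dn(2n-1)}$.

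The main obstacle I anticipate is passing cleanly from the Lie-algebra statement to the statement about integral level groups: one needs to ensure that both $K^H_\diamondsuit(p^t)$ admits an Iwahori-type filtration compatible with the decomposition $\ide{g}=\operatorname{Ad}(\widehat{\gamma}^{-1})\ide{h}+\ide{b}_G$, and that this decomposition holds integrally (not merely over $\mbb{Q}_p$). The key point is that $\widehat{\gamma}\in\mathbf{G}(\ordd)$ is defined over $\mbb{Z}_p$ and the sum $\operatorname{Ad}(\widehat{\gamma}^{-1})\ide{h}+\ide{b}_G=\ide{g}$ already holds scheme-theoretically by Lemma \ref{OpenOrbitLemma}(2), so reducing modulo $p$ preserves the direct-sum-up-to-$\ide{s}$ identification. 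Once this is in place, the argument reduces to bookkeeping with the exponential-type coordinates on the $p$-adic analytic group $G(\mbb{Z}_p)$, as in standard Iwahori index calculations.
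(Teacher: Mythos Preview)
Your approach is correct and close in spirit to the paper's, which simply says the index ``follows from a direct computation using the stabiliser descriptions in Lemma~\ref{OpenOrbitLemma}.'' Both arguments ultimately reduce to the fact that the stabiliser $\mathbf{H}\cap\widehat{\gamma}B_{\mathbf{G}}\widehat{\gamma}^{-1}$ is a smooth $\mbb{Z}_p$-group of the correct dimension, so that $[K^H_{\diamondsuit}(p^t):K^H_{\diamondsuit}(p^{t+1})]=p^{\dim H-\dim S}=p^{\dim G/B_G}$. The paper reads this off from the explicit torus-like description of the stabiliser found in the proof of Lemma~\ref{OpenOrbitLemma}, whereas you package it as the tangent-space identity $\ide{g}=\operatorname{Ad}(\widehat{\gamma}^{-1})\ide{h}+\ide{b}_G$; your formulation has the advantage of making transparent why the argument works for any spherical pair with an integral open orbit. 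For the containment, your argument is the same as the paper's: one checks component by component that $\mathbf{H}\cap\widehat{\gamma}B_{\mathbf{G}}\widehat{\gamma}^{-1}\subset B_{\mathbf{H}}$ using the explicit factorisation of $\widehat{\gamma}_{\tau_0}$, the paper being slightly more explicit about why the $\tau_0$-component lands in the upper-triangular Borel (via the Levi of $U$ normalising the torus).

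Two small imprecisions to tidy up. First, the map $g\mapsto p^{-t}(g-1)\pmod p$ is not well-defined on $K^G_{\opn{Iw}}(p^t)$, since $g-1$ is not divisible by $p^t$ in general; the clean way to get the index $p^{\dim G/B_G}$ is to compare the images of $K^G_{\opn{Iw}}(p^t)$ and $K^G_{\opn{Iw}}(p^{t+1})$ in $G(\mbb{Z}/p^{t+1})$ and use smoothness of $B_G$. Second, the phrase ``fixing a representative $h_0$ in the integral stabiliser, any $h\in K^H_{\diamondsuit}(p^t)$ can be written as $h_0(1+p^tY)+O(p^{t+1})$'' conflates a choice that varies with the coset of $K^H_1(p^t)$; a cleaner phrasing is to note that $K^H_{\diamondsuit}(p^t)/K^H_1(p^{t+1})$ has order $|S(\mbb{Z}/p^t)|\cdot p^{\dim H}$ while $K^H_{\diamondsuit}(p^{t+1})/K^H_1(p^{t+1})$ has order $|S(\mbb{Z}/p^{t+1})|$, and then invoke smoothness of $S$ over $\mbb{Z}_p$ (which, as you note, follows from the open orbit over $\operatorname{Spec}\ordd$) to get $|S(\mbb{Z}/p^{t+1})|=|S(\mbb{Z}/p^t)|\cdot p^{\dim S}$.
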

\begin{proof}
For the first part, the computation for the $\opn{GL}_1$-component and $\tau \neq \tau_0$-component follows from the stabiliser computations in Lemma \ref{OpenOrbitLemma}. For the $\tau_0$-component, with notation as in Lemma \ref{OpenOrbitLemma}, we note that if $U^{-1}AU$ lies in the standard maximal torus modulo $p^t$, then $A$ lies in the depth $p^t$ Iwahori for $\opn{GL}_n$, because the Levi component of $U$ is $1 \times w_{\opn{GL}_{n-1}}^{\mathrm{max}}$ which normalises the maximal torus. The index calculation follows from a direct computation using the stabiliser descriptions in Lemma \ref{OpenOrbitLemma}.
\end{proof}

We will choose the level-at-$p$ of our Shimura varieties to be one of these subgroups; therefore we introduce the following notation.

\begin{notation}
For a fixed neat compact open subgroup $K^p \subset \mbf{G}(\mbb{A}_f^p)$, we set $S_{\mbf{G}, \mathrm{Iw}}(p^t)$ to be the Shimura variety of level $K^p K^G_{\mathrm{Iw}}(p^t)$. Similarly, for a fixed neat compact open subgroup $U^p \subset \mbf{H}(\mbb{A}_f^p)$, we let $S_{\mbf{H}, \diamondsuit}(p^t)$ and $S_{\mbf{H}, \mathrm{Iw}}(p^t)$ denote the Shimura varieties of levels $U^pK^H_{\diamondsuit}(p^t)$ and $U^pK^H_{\mathrm{Iw}}(p^t)$ respectively. If $U^p \subset K^p$, then we have a morphism
\[
\widehat{\iota} \colon S_{\mbf{H}, \diamondsuit}(p^t) \to S_{\mbf{G}, \mathrm{Iw}}(p^t)
\]
defined as the composition $\widehat{\gamma} \circ \iota$.
\end{notation}

\subsection{Branching laws} \label{PrelimBranchingLawSubSec}

To be able to construct the relevant pairing in coherent cohomology, we need to understand how representations of $M_{\mbf{G}}$ decompose after restricting them to $M_{\mbf{H}}$. For convenience, we recall that a general element of $M_{\mbf{H}}$ is of the form $(x; y_1, y_2, y_3; z_{1, \tau}, z_{2, \tau})$ where $\tau$ runs over $\Psi - \{\tau_0\}$ and
\begin{itemize}
    \item $x \in \opn{GL}_1$
    \item $y_1 \in \opn{GL}_1$, $y_2 \in \opn{GL}_{n-1}$ and $y_3 \in \opn{GL}_n$
    \item $z_{i, \tau} \in \opn{GL}_n$ for $i=1, 2$.
\end{itemize}
This description will be useful for describing characters of $M_{\mbf{H}}$. 

\begin{proposition} \label{ClassicalBranchingProp}
Let $\lambda = (c_0; c_{1, \tau}, \dots, c_{2n, \tau}) \in X^*(T/T_0)^+$ and $\kappa_n = w_n \star (-w_G^{\mathrm{max}}\lambda) = w_n \star \lambda$ as in \S \ref{DiscreteSeriesRepsSection}. Set $\kappa_n^* = -w_{M_G}^{\mathrm{max}} \kappa_n$ and let $V_{\kappa_n^*}$ denote the irreducible algebraic representation of $M_{\mbf{G}}$ with highest weight $\kappa_n^*$. Let $j = (j_{\tau})_{\tau \in \Psi - \{\tau_0\}}$ be a tuple of integers satisfying $|j_{\tau}| \leq c_{n, \tau}$. Then there exists a unique up to scaling vector $v_{\kappa_n}^{[j]} \in V_{\kappa_n^*}$ such that $M_{\mbf{H}}$ acts on $v_{\kappa_n}^{[j]}$ through the character
\begin{eqnarray}
    M_{\mbf{H}} &\to& \mbb{G}_m \nonumber \\
    (x; y_1, y_2, y_3; z_{1, \tau}, z_{2, \tau}) &\mapsto& y_1^{n+c_{n, \tau_0}} \opn{det}y_2^{c_{n, \tau_0}} \opn{det}y_3^{-(c_{n, \tau_0}+1)} \prod_{\tau \neq \tau_0} \opn{det}z_{1, \tau}^{j_{\tau}} \opn{det}z_{2, \tau}^{-j_{\tau}} . \label{InverseOfSigma}
\end{eqnarray}
\end{proposition}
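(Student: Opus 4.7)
The plan is to reduce the statement to two classical multiplicity-one branching laws for general linear groups, using the product decomposition
\[
M_{\mbf{G}} = \opn{GL}_1 \times \bigl(\opn{GL}_1 \times \opn{GL}_{2n-1}\bigr) \times \prod_{\tau \neq \tau_0} \opn{GL}_{2n},
\]
and the analogous one for $M_{\mbf{H}}$. Since $V_{\kappa_n^*}$ factorises as an external tensor product of irreducible representations of the simple factors, both the existence and uniqueness claims split into independent statements on (i) the outer $\opn{GL}_1$-factor (trivial, and which acts trivially on the target character since $c_0=0$ for $\lambda \in X^*(T/T_0)$), (ii) the $\tau_0$-component $\opn{GL}_1 \times \opn{GL}_{2n-1}$ restricted to $\opn{GL}_1 \times \opn{GL}_{n-1} \times \opn{GL}_n$, and (iii) for each $\tau \neq \tau_0$, $\opn{GL}_{2n}$ restricted to $\opn{GL}_n \times \opn{GL}_n$.

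Next I would compute the highest weight $\kappa_n^*$ explicitly in each factor. Using the shuffle description of $w_n$ on $X^*(T)$ recorded earlier, together with the fact that $-w_{M_{\mbf{G}}}^{\mathrm{max}}$ reverses-and-negates within each block (and the parity relation $c_{n+1,\tau_0} = -c_{n,\tau_0}$ coming from $\lambda \in X^*(T/T_0)$), one finds that $V_{\kappa_n^*}$ is the external tensor product of: the character $y_1 \mapsto y_1^{n + c_{n,\tau_0}}$ of the first $\opn{GL}_1$-factor at $\tau_0$; the irreducible $\opn{GL}_{2n-1}$-representation with highest weight $(c_{1,\tau_0}, \dots, c_{n-1,\tau_0},\, c_{n+1,\tau_0}-1, \dots, c_{2n,\tau_0}-1)$; and, for each $\tau \neq \tau_0$, the irreducible $\opn{GL}_{2n}$-representation with highest weight $(c_{1,\tau},\dots,c_{2n,\tau})$. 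In particular the first $\opn{GL}_1$-factor at $\tau_0$ already contributes exactly $y_1^{n+c_{n,\tau_0}}$ to \eqref{InverseOfSigma}, so only (ii) and (iii) remain.

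The remaining assertions are classical branching problems for strong Gelfand (spherical) pairs. For (iii), the pair $(\opn{GL}_{2n}, \opn{GL}_n \times \opn{GL}_n)$ is a strong Gelfand pair, and the Friedberg--Jacquet branching law (recalled and $p$-adically interpolated in Appendix \ref{AppendixBranchingLaws}) supplies a unique-up-to-scalar vector transforming under $\opn{GL}_n \times \opn{GL}_n$ by $\opn{det}^{j_\tau} \boxtimes \opn{det}^{-j_\tau}$ precisely when $|j_\tau| \leq c_{n,\tau}$. For (ii), the pair $(\opn{GL}_{2n-1}, \opn{GL}_{n-1} \times \opn{GL}_n)$ is also a strong Gelfand pair, and the analogous Gan--Gross--Prasad-type branching law (again treated in Appendix \ref{AppendixBranchingLaws}) produces, for the highest weight computed above, a unique vector on which $\opn{GL}_{n-1} \times \opn{GL}_n$ acts via $\opn{det}^{c_{n,\tau_0}} \boxtimes \opn{det}^{-(c_{n,\tau_0}+1)}$; the shift by $-1$ in the last $n$ coordinates of the highest weight is exactly what is needed to match the asymmetric exponents $c_{n,\tau_0}$ and $-(c_{n,\tau_0}+1)$ in \eqref{InverseOfSigma}. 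The main obstacle is purely bookkeeping: one must check that the dominance and parity hypotheses on $\lambda$, together with the bound $|j_\tau| \leq c_{n,\tau}$, translate into exactly the non-vanishing hypothesis of each of the two classical branching laws, so that both existence and uniqueness follow.
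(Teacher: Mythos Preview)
Your proposal is correct and follows essentially the same approach as the paper: the paper's proof simply cites \cite[Theorem 2.1]{Knapp} (applied factor by factor, as made explicit in Theorem \ref{ClassicalAppendixBranchingThm}), which is exactly the multiplicity-one branching law for the symmetric pairs $(\opn{GL}_{2n},\opn{GL}_n\times\opn{GL}_n)$ and $(\opn{GL}_{2n-1},\opn{GL}_{n-1}\times\opn{GL}_n)$ that you invoke. Your explicit computation of $\kappa_n^*$ is accurate; the only cosmetic point is that the labels ``Friedberg--Jacquet'' and ``Gan--Gross--Prasad-type'' are non-standard for these branching problems --- the paper attributes them uniformly to Knapp.
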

\begin{proof}
This follows from \cite[Theorem 2.1]{Knapp} (see also Appendix \ref{AppendixBranchingLaws}).
\end{proof}

\begin{remark}
We fix a specific model of $V_{\kappa_n^*}$ namely the space of algebraic functions $f \colon M_\mbf{G} \to \mbb{A}^1$ which transform as 
\[
f(gb) = \kappa_n(b)f(g)
\]
for all $g \in M_\mbf{G}$ and $b \in B_{M_\mbf{G}}$. The action of $m \in M_{\mbf{G}}$ is then given by $(m \cdot f)(g) = f(m^{-1} g)$. Since $M_{\mbf{H}} \cdot u \cdot B_{M_{\mbf{G}}}$ is Zariski dense in $M_{\mbf{G}}$ (Lemma \ref{OpenOrbitLemma}), we can (and do) normalise $v_{\kappa_n}^{[j]}$ so that its value on $u$ is $1$.
\end{remark}

Let $\sigma^{[j]}_n$ denote the inverse of the character in (\ref{InverseOfSigma}). Then after fixing an isomorphism $V_{\kappa_n^*} \cong V_{\kappa_n}^*$, we obtain a $M_{\mbf{H}}$-equivariant linear map
\[
V_{\kappa_n} \twoheadrightarrow \sigma^{[j]}_n .
\]
We can therefore consider the following $F^{\mathrm{cl}}$-bilinear pairing 
\[
\boxed{%
\langle , \rangle_{\mathrm{alg}} \colon \opn{H}^{n-1}\left( S_{\mbf{G}, \mathrm{Iw}}(p), [V_{\kappa_n}] \right) \times \opn{H}^0\left(S_{\mbf{H}, \diamondsuit}(p), [\sigma_n^{[j]}]^\vee \right) \to F^{\mathrm{cl}}
}
\]
defined as $\langle \eta, \chi \rangle_{\mathrm{alg}} = \opn{tr}( \widehat{\iota}^*\eta \cup \chi )$, where $\opn{tr}$ denotes the residue morphism 
\[
\opn{H}^{n-1}\left( S_{\mbf{H}, \diamondsuit}(p), \Omega^{n-1} \right) \to F^{\mathrm{cl}}.
\]
In \S \ref{ConstructionOfPadicLSection}, we will show that this recovers twisted unitary Friedberg--Jacquet periods when $\eta$ (resp. $\chi$) is associated with an automorphic representation of $\mbf{G}(\mbb{A})$ (resp. automorphic character of $\mbf{H}(\mbb{A})$). The goal of this paper is to $p$-adically interpolate this pairing.


\section{Functoriality on the flag variety}

In this section we consider the functoriality of higher Coleman theory on the level of flag varieties (over $\mbb{Z}_p$). This section is entirely local; in particular, we use notation and conventions as in \S \ref{LevelSubgroupsAtp} (so $G$ and $H$ denote the integral models in Remark \ref{ChoiceOfIntModelsRemark} for $\mbf{G}_{\mbb{Q}_p}$ and $\mbf{H}_{\mbb{Q}_p}$ respectively, etc.).

\begin{definition}
Let $\opn{FL}_G$ (resp. $\opn{FL}_H$) denote the flag variety $P_G \backslash G$ (resp. $P_H \backslash H$) over $\mbb{Z}_p$. This can be described as the space of row vectors in $\mbb{P}^{2n-1}$ (resp. $\mbb{P}^{n-1}$) with the action of $g \in G$ (resp. $h \in H$) given by 
\begin{align*}
    [x_0 : \cdots : x_{2n-1}] \star g &= [x_0 : \cdots : x_{2n-1}] \cdot {^t g^{-1}} \\ 
    [y_0 : \cdots : y_{n-1}] \star h &= [y_0 : \cdots : y_{n-1}] \cdot {^t h^{-1}} .
\end{align*}
The embedding $\opn{FL}_H \xrightarrow{\iota} \opn{FL}_G$ induced from $H \hookrightarrow G$ is described in coordinates as
\[
\iota( [y_0 : \cdots : y_{n-1}] ) = [y_0 : \cdots : y_{n-1} : 0 : \cdots : 0 ].
\]
\end{definition}

We will consider certain stratifications on these flag varieties, and relations between them. Recall that ${^MW_G}$ denotes the set of Kostant representatives for the quotient $W_{M_G} \backslash W_G$, where $W_{?}$ denotes the Weyl group of $?$. This can be described as 
\[
{^MW_G} = \{ w_0, \dots, w_{2n-1} \}
\]
where $l(w_i) = i$, and each $w_i$ corresponds to a shuffle and acts on the flag variety $\opn{FL}_G$ as 
\[
[x_0 : \cdots : x_{2n-1}] \star w_i = [x_1 : \cdots : x_{i} : x_0 : x_{i+1} : \cdots : x_{2n-1}]
\]
(the element $w_0$ acts as the identity). We have a similar description for $H$ and, as mentioned in \S \ref{PreliminarySection}, we have a map ${^MW_H} \hookrightarrow {^MW_G}$ induced from $H \hookrightarrow G$, preserving the lengths of the Weyl elements. 

\subsection{The Bruhat stratification}

For either $? = G, H$, we have the following stratification of $\opn{FL}_{?, \mbb{F}_p}$ given by the cells
\[
C^?_{w} = P_? \backslash P_? \cdot w \cdot B_?
\]
for $w \in {^MW_?}$. In coordinates, we have that $C^G_{w_i}$ is the orbit of $[0 : \cdots : 0 : 1 : 0 : \cdots : 0]$ (where the $1$ is in the $(i+1)$-th place) under the $\star$-action of $B_G$. Explicitly, this is described as the collection of tuples 
\[
[x_0 : \cdots : x_{i-1} : 1 : 0 : \cdots : 0], \quad \quad x_{j} \in \mbb{A}^1_{\mbb{F}_p} \text{ for } j=0, \dots, i-1 .
\]
Each cell $C^G_{w_i}$ has dimension $i$, and they are ordered as $C^G_{w'} \subset \overline{C^G_{w}}$ if and only if $l(w') \leq l(w)$. We have a similar description for $H$. 

\begin{definition}
For $? = G, H$ and $w \in {^MW_?}$, we set
\[
Y^?_w = \bigcup_{l(w') \geq l(w)} C^?_{w'}, \quad \quad \quad X^?_{w} = \bigcup_{l(w') \leq l(w)} C^?_{w'}. 
\]
The former is open in $\opn{FL}_{?, \mbb{F}_p}$, the latter is closed, and one has the relation $C^?_w = Y^?_w \cap X^?_w$.
\end{definition}

Recall the definition of $\widehat{\gamma}$ in \S \ref{SomeImportantElementsSubsec}, which we view as an element of $G(\mbb{Z}_p)$. Let $\widehat{\iota} \colon \opn{FL}_H \to \opn{FL}_G$ denote the map given by $P_H \cdot h \mapsto P_G \cdot h \widehat{\gamma}$. This map satisfies the following properties:

\begin{lemma} \label{GeneralPositionOfCells}
One has
\begin{enumerate}
    \item $\widehat{\iota}^{-1}\left(C^G_{w_i} \right) = \varnothing$ if $i < n$.
    \item $\widehat{\iota}^{-1}\left(C^G_{w_n} \right) = C^H_{\mathrm{id}}$.
\end{enumerate}
\end{lemma}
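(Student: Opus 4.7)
The plan is to localise at $\tau_0$ and then exploit the explicit factorisation of $\widehat{\gamma}$ produced in the proof of Lemma \ref{OpenOrbitLemma}(2). At every place $\tau \in \Psi \setminus \{\tau_0\}$ the parabolics $P_G$ and $P_H$ fill up the whole of $G$ and $H$, so the corresponding factors of the flag varieties are a single point and the Bruhat stratification is trivial. The entire content of the lemma therefore sits at the $\tau_0$-component, and I shall suppress $\tau_0$ from the notation in what follows.

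At $\tau_0$ the proof of Lemma \ref{OpenOrbitLemma}(2) produces a factorisation
\[
\widehat{\gamma} \;=\; \mathbf{X}\,\mathbf{W}\,\mathbf{B}, \qquad \mathbf{X} = \begin{pmatrix} X & 0 \\ 0 & I_n \end{pmatrix},\ \mathbf{W} = \begin{pmatrix} I_n & 0 \\ w^{\max}_{\opn{GL}_n} & I_n \end{pmatrix},\ \mathbf{B} = \begin{pmatrix} I_n & Y \\ 0 & Z \end{pmatrix},
\]
with $\mathbf{B} \in B_G$ and $U \defeq X w^{\max}_{\opn{GL}_n}$ block upper triangular of block sizes $(1, n-1)$ and with $U_{1,1} = 1$. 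Because $\mathbf{B}\in B_G$ and the cells $C^G_{w_i}$ are stable under right multiplication by $B_G$, the cell containing $\widehat{\iota}(P_H h) = P_G\, \iota(h)\, \widehat{\gamma}$ coincides with the one containing $P_G\, \iota(h)\, \mathbf{X}\mathbf{W}$. Parameterising a point $P_H h$ by $y \defeq h_1^{-1} e_1 \in \mathbb{P}^{n-1}$ (where $h=(h_1,h_2)$), and using the identification $P_G g \leftrightarrow [\text{first column of } g^{-1}]$, a short computation with $\mathbf{X}^{-1} = \opn{diag}(X^{-1}, I_n)$ and $\mathbf{W}^{-1} = \bigl(\begin{smallmatrix} I_n & 0 \\ -w^{\max}_{\opn{GL}_n} & I_n \end{smallmatrix}\bigr)$ yields
\[
\widehat{\iota}(P_H h) \ \longleftrightarrow \ [\,z_1 : z_2 : \cdots : z_n : -z_n : -z_{n-1} : \cdots : -z_1\,] \in \mathbb{P}^{2n-1}, \qquad z \defeq X^{-1}y.
\]

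Both assertions now fall out of the characterisation of $C^G_{w_i}$ as the locus where the last non-zero homogeneous coordinate sits at the $0$-indexed position $i$. Since $y \neq 0$ forces $z \neq 0$, at least one of the entries $-z_n, \ldots, -z_1$ occupying positions $n, \ldots, 2n-1$ is non-zero, so the last non-zero coordinate of $\widehat{\iota}(P_H h)$ lies at some index $\geq n$; this proves (1). For (2), the image lies in $C^G_{w_n}$ precisely when $z_1 = \cdots = z_{n-1} = 0$ and $z_n \neq 0$, i.e.\ when $z \in \mathbb{G}_m \cdot e_n$, which translates via $y = Xz$ to $y \in \mathbb{G}_m \cdot X e_n$. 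The block structure of $U$ then yields $X e_n = U \cdot (w^{\max}_{\opn{GL}_n})^{-1} e_n = U e_1 = e_1$, so $\widehat{\iota}^{-1}(C^G_{w_n})$ is the single point $[e_1] \in \mathbb{P}^{n-1}$, which is precisely $C^H_{\mathrm{id}}$. The main obstacle is really the bookkeeping required to extract the factorisation $\widehat{\gamma} = \mathbf{X}\mathbf{W}\mathbf{B}$ and to pin down the coordinate conventions on the two flag varieties; once both are in place, the lemma reduces to an elementary inspection of the position of the last non-zero entry of a projective row vector of a very specific form.
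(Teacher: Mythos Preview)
Your proof is correct and takes essentially the same approach as the paper: both compute the image of $\widehat{\iota}$ in projective coordinates and read off the Bruhat cell from the position of the last non-zero entry. The only cosmetic difference is that the paper writes down the exact coordinate formula for $\widehat{\iota}$ directly from the definition of $\widehat{\gamma}$, whereas you route the computation through the factorisation $\widehat{\gamma} = \mathbf{X}\mathbf{W}\mathbf{B}$ of Lemma~\ref{OpenOrbitLemma}(2) and discard the $B_G$-factor; the resulting analysis of the last non-zero coordinate is identical.
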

\begin{proof}
In coordinates, the map $\widehat{\iota}$ is given by
\[
\widehat{\iota}([y_0 : \dots : y_{n-1}]) = [y_1 : y_2 : \cdots : y_{n-1} : 0 : y_0 - \sum_{i=1}^{n-1}y_i, -y_{n-1} : \cdots : -y_1].
\]
The result immediately follows from this and the description of $C^G_{w_i}$ in coordinates.
\end{proof}

\subsection{Tubes in the flag variety} \label{TubesInFlagVar}

We recall some notation from \cite[\S 3.3]{BoxerPilloni} and \cite[\S 5.4]{LZBK21}. Suppose that $X/\mbb{Z}_p$ is a finite-type scheme and let $\invs{X} = X \times_{\opn{Spec}\mbb{Z}_p} \opn{Spa}(\mbb{Q}_p, \mbb{Z}_p)$ denote the associated adic space over $\opn{Spa}(\mbb{Q}_p, \mbb{Z}_p)$. Let $X_0$ denote the special fibre of $X$ over $\mbb{F}_p$. Then one has a specialisation map $\opn{sp} \colon \invs{X} \to X_0$, and for any locally closed subscheme $U \subset X_0$, we define the tube $]U[ \subset \invs{X}$ to be the interior of $\opn{sp}^{-1}(U)$.

\begin{definition}
For $m \in \mbb{Q}$, let $\invs{B}^{\circ}_m \subset \overline{\invs{B}}^{\circ}_m \subset \invs{B}_m \subset \overline{\invs{B}}_m$ denote the four flavours of ``disc'' inside the adic affine line defined as follows:
\[
\invs{B}_m = \{|\cdot | : |z| \leq |p|^m\}, \quad \overline{\invs{B}}_m = \bigcap_{m' < m} \invs{B}_{m'}, \quad \invs{B}^{\circ}_m = \bigcup_{m' > m} \invs{B}_{m'}, \quad \overline{\invs{B}}^{\circ}_m = \{ |\cdot | : |z| < |p|^m \}.
\]
\end{definition}

We let $\mathtt{FL}^G$ and $\mathtt{FL}^H$ denote the adic flag varieties (over $\opn{Spa}(\mbb{Q}_p, \mbb{Z}_p)$) associated with $\opn{FL}_G$ and $\opn{FL}_H$. For $? = G, H$, we let $\Phi^{\pm}$ denote the set of $\pm$-roots with respect to $B_{?}$, and set $\Phi^{-, M}$ to be the set of negative roots which are not contained in $M_?$. Set $\delta_H = n-1$ and $\delta_G = 2n-1$. Then, for $w \in {^MW_?}$, we set $U_{w} = C^?_{w_{\delta_?}} \cdot w_{\delta_?}^{-1}w$ which is an open set containing $C^?_{w}$. Let $\invs{U}_{w}^{\mathrm{an}}$ denote its analytification. Then, following \cite[\S 3.3.6]{BoxerPilloni}, we have an Iwahori decomposition:
\begin{align}
    \prod_{\alpha \in w^{-1} \Phi^{-, M}} \mbb{A}^{1, \mathrm{an}} &\xrightarrow{\sim} \invs{U}_w^{\mathrm{an}} \nonumber \\
    (u_{\alpha}) &\mapsto w\prod_{\alpha} u_{\alpha} . \label{IWUw}
\end{align}

\begin{definition}
Let $m, k \in \mbb{Q}$ and $w \in {^MW_?}$. We define $]C^?_w[_{m, k}$, $]C^?_w[_{\overline{m}, k}$, $]C^?_w[_{m, \overline{k}}$ and $]C^?_w[_{\overline{m}, \overline{k}}$ to be the images of 
\begin{align*}
\prod_{\alpha \in (w^{-1}\Phi^{-, M}) \cap \Phi^{-}} \invs{B}^{\circ}_m \times \prod_{\alpha \in (w^{-1}\Phi^{-, M}) \cap \Phi^{+}} \invs{B}_k \\
\prod_{\alpha \in (w^{-1}\Phi^{-, M}) \cap \Phi^{-}} \overline{\invs{B}}^{\circ}_m \times \prod_{\alpha \in (w^{-1}\Phi^{-, M}) \cap \Phi^{+}} \invs{B}_k \\
\prod_{\alpha \in (w^{-1}\Phi^{-, M}) \cap \Phi^{-}} \invs{B}^{\circ}_m \times \prod_{\alpha \in (w^{-1}\Phi^{-, M}) \cap \Phi^{+}} \overline{\invs{B}}_k \\
\prod_{\alpha \in (w^{-1}\Phi^{-, M}) \cap \Phi^{-}} \overline{\invs{B}}^{\circ}_m \times \prod_{\alpha \in (w^{-1}\Phi^{-, M}) \cap \Phi^{+}} \overline{\invs{B}}_k
\end{align*}
respectively, under the map (\ref{IWUw}).
\end{definition}

\begin{remark}
If $m, k \in \mbb{Q}_{\geq 0}$ then $]C^?_w[_{m, k} \subset ]C^?_w[$ with equality if $m=k=0$. If $m \geq k \geq 0$, then $]C^?_{w_i}[_{m, k}$ is described in coordinates as the subset of tuples
\[
[y_0 : \cdots : y_{\delta_?}]
\]
satisfying 
\[
y_j \in \left\{ \begin{array}{cc} \invs{B}_k & \text{ if } j < i \\ 1 + \invs{B}^{\circ}_m & \text{ if } j=i \\ \invs{B}^{\circ}_m & \text{ if } j > i \end{array} \right.
\]
One has a similar description for $]C^?_{w_i}[_{\overline{m}, k}$ by replacing $\invs{B}^{\circ}_m$ with $\overline{\invs{B}}^{\circ}_m$, and a similar description for $]C^?_{w_i}[_{m, \overline{k}}$ when $k > 0$ by replacing $\invs{B}_k$ with $\overline{\invs{B}}_k$ (see \cite[\S 3.3.10]{BoxerPilloni}). In particular, if $i=0$ (so $w_0 = \opn{id}$) then these tubes do not depend on $k$, so we will drop it from the notation.
\end{remark}

We will now make specific choices of tubes which will be relevant for the construction of the $p$-adic $L$-function. Throughout, we let $m, k, t$ be integers satisfying 
\begin{equation} 
0 \leq k \leq m < t, \quad \text{ with } m > k \text{ if } k \neq 0. \label{mktTriple}
\end{equation}
We also introduce the following stronger condition:
\begin{equation}
    m, k, t \text{ as in (\ref{mktTriple}) with } m > (2n-1)(k+1) \text{ and } t > m+k. \label{mktstrong}
\end{equation}
We define some tubes in $\mathtt{FL}^G$ as follows. 

\begin{definition}
Let $m, k, t$ be as in (\ref{mktTriple}).
\begin{enumerate}
    \item Let $\mathtt{U}^G_0 = ]Y_{w_n}^G[$, $\mathtt{Z}^G_0 = \overline{]X^G_{w_n}[}$ and $\mathtt{I}^G_{0, 0} = \mathtt{U}^G_0 \cap \mathtt{Z}^G_0$.
    \item We define $\mathtt{I}^G_{m, k} = ]C_{w_n}^G[_{\overline{m}, k} \cdot K^G_{\opn{Iw}}(p^t)$, which is independent of $t$ by the description in \cite[\S 3.3.10]{BoxerPilloni}.
    \item For $k \geq 1$, we define $\mathtt{U}^G_{k} = ]C_{w_n}^G[_{k, k} \cdot K^G_{\opn{Iw}}(p^t)$, which is independent of $t$ by the description in \emph{loc.cit.}. Furthermore, we have $\mathtt{I}^G_{m, k} \subset \mathtt{U}^G_{k}$.
\end{enumerate}
\end{definition}

We now define some tubes for $H$.

\begin{definition}
\begin{enumerate}
    \item For $m \geq 0$ and $t \geq 1$, one defines 
    \[
    \mathtt{Z}^H_m = ]C^H_{\mathrm{id}}[_{\overline{m}} \cdot K^H_{\diamondsuit}(p^t)
    \]
    which is equal to $]C^H_{\mathrm{id}}[_{\overline{m}}$ for $t > m$.
    \item For $k \geq 1$ and $t \geq 1$, we define
    \[
    \mathtt{U}^H_k = ]C^H_{\mathrm{id}}[_k \cdot K^H_{\diamondsuit}(p^t)
    \]
    which is equal to $]C^H_{\mathrm{id}}[_k$ for $t > k$. For $k=0$, we define $\mathtt{U}^H_0 = \mathtt{FL}^H$.
\end{enumerate}
\end{definition}

We obtain the following lemma, essentially by construction:

\begin{lemma} \label{CartesianSquareStrataLemma}
For $m, t, k$ as in (\ref{mktTriple}), one has $\mathtt{U}^H_k = \widehat{\iota}^{-1}\left(\mathtt{U}^G_k \right)$ and $\mathtt{Z}^H_m = \widehat{\iota}^{-1}\left(\mathtt{I}^G_{m,k} \right)$. Furthermore, there is a Cartesian diagram
\[
\begin{tikzcd}
\mathtt{Z}^H_m \arrow[d] \arrow[r, hook] & \mathtt{U}^H_k \arrow[d] \\
{\mathtt{I}^G_{m, k}} \arrow[r, hook]    & \mathtt{U}^G_k          
\end{tikzcd}
\]
with each map a closed embedding.
\end{lemma}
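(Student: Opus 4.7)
The plan is to establish the two set-theoretic preimage identities $\widehat{\iota}^{-1}(\mathtt{U}^G_k) = \mathtt{U}^H_k$ and $\widehat{\iota}^{-1}(\mathtt{I}^G_{m,k}) = \mathtt{Z}^H_m$; once these are in hand, the Cartesian diagram is immediate, since $m \geq k$ forces $\mathtt{Z}^H_m \subseteq \mathtt{U}^H_k$, so the pullback $\widehat{\iota}^{-1}(\mathtt{I}^G_{m,k}) \cap \mathtt{U}^H_k$ equals $\mathtt{Z}^H_m$. The closed embedding assertions follow from two observations: (i) $\widehat{\iota} \colon \mathtt{FL}^H \to \mathtt{FL}^G$ is a closed immersion of adic spaces, being the composition of the closed immersion induced by $\mbf{H} \hookrightarrow \mbf{G}$ with right translation by $\widehat{\gamma} \in G(\mbb{Z}_p)$; and (ii) for $m \geq k$ (strictly so when $k \neq 0$), a closed tube defined by conditions of the form $|y_i| < |p|^m$ sits inside the open tube defined by $|y_i| \leq |p|^k$ as a closed subspace, by the distinction between the four flavours of discs recalled in \S \ref{TubesInFlagVar}.

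The essential computational input for both preimage identities is the explicit coordinate formula for $\widehat{\iota}$ from Lemma \ref{GeneralPositionOfCells}:
\[
\widehat{\iota}([y_0 : \cdots : y_{n-1}]) = [y_1 : \cdots : y_{n-1} : 0 : y_0 - \textstyle\sum_{i=1}^{n-1} y_i : -y_{n-1} : \cdots : -y_1].
\]
In the case $k = 0$, $\mathtt{U}^H_0 = \mathtt{FL}^H$ and $\mathtt{U}^G_0 = {]Y^G_{w_n}[}$, so $\widehat{\iota}^{-1}(\mathtt{U}^G_0) = \mathtt{FL}^H$ follows from Lemma \ref{GeneralPositionOfCells}(1), which says the image of $\widehat{\iota}$ in the special fibre is contained in $Y^G_{w_n}$. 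For $k \geq 1$, substituting the coordinate formula into the cell-tube description of $]C^G_{w_n}[_{k, k}$ recalled in \S \ref{TubesInFlagVar} (pivot at position $n$ with the stated valuation bounds on the other coordinates) translates the membership condition directly into $|y_i| \leq |p|^k$ for $1 \leq i \leq n-1$ and $y_0 \in 1 + \invs{B}^{\circ}_k$, which are the defining conditions of $]C^H_{\mathrm{id}}[_k$. The identity $\widehat{\iota}^{-1}(\mathtt{I}^G_{m,k}) = \mathtt{Z}^H_m$ is proved identically, now tracking the tighter conditions $|y_i| < |p|^m$ and $y_0 \in 1 + \overline{\invs{B}}^{\circ}_m$ that cut out $]C^H_{\mathrm{id}}[_{\overline{m}}$. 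The passage from the basic tubes to their Iwahori saturations is formal: the defining relation $K^H_\diamondsuit(p^t) = H(\mbb{Q}_p) \cap \widehat{\gamma} K^G_{\mathrm{Iw}}(p^t) \widehat{\gamma}^{-1}$ translates $K^G_{\mathrm{Iw}}(p^t)$-orbits through $\widehat{\iota}$ into $K^H_\diamondsuit(p^t)$-orbits on the source.

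The main obstacle I anticipate lies in the reverse inclusions for the Iwahori-saturated identities: given $x \in \mathtt{FL}^H$ and $g \in K^G_{\mathrm{Iw}}(p^t)$ with $\widehat{\iota}(x) \star g \in ]C^G_{w_n}[_{\overline{m}, k}$, one must show there exists $g_H \in K^H_\diamondsuit(p^t)$ with $x \star g_H \in ]C^H_{\mathrm{id}}[_{\overline{m}}$, rather than the point $\widehat{\iota}(x)$ passing into the target tube only via some $g$ outside $\widehat{\gamma} K^H_\diamondsuit(p^t) \widehat{\gamma}^{-1}$. The strategy is to use the open-orbit property from Lemma \ref{OpenOrbitLemma}(2), which identifies $\widehat{\gamma} \cdot B_{\mbf{G}}$ as a flag in general position relative to $\mbf{H}$, together with the Iwahori decomposition (\ref{IWUw}), to factor $g = g_H \cdot g'$ where $g_H \in \widehat{\gamma} K^H_\diamondsuit(p^t) \widehat{\gamma}^{-1}$ and $g'$ stabilises the image point modulo the relevant valuation; the bounds $0 \leq k \leq m < t$ from (\ref{mktTriple}) then ensure the factorisation respects all valuation constraints.
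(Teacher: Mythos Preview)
Your overall architecture matches the paper's: reduce the Cartesian square to the two preimage identities, handle those by the explicit coordinate formula for $\widehat{\iota}$, and note that $\widehat{\iota}$ is a closed immersion. There are, however, two places where your argument diverges from the paper's and where what you wrote is not yet a proof.

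\textbf{Closedness of $\mathtt{I}^G_{m,k}$ in $\mathtt{U}^G_k$.} Your item (ii) explains why $]C^G_{w_n}[_{\overline{m},k}$ is closed in $]C^G_{w_n}[_{k,k}$, but $\mathtt{I}^G_{m,k}$ and $\mathtt{U}^G_k$ are the \emph{Iwahori saturations} of these tubes, and ``closed in'' is not preserved under saturating both sides by a group action. The paper handles this by writing, for $(m,k)\neq(0,0)$,
\[
\mathtt{I}^G_{m,k} \;=\; \bigl(\,]C^G_{w_n}[_{k,k}\cdot K^G_{\mathrm{Iw}}(p^t)\bigr)\;\cap\;\bigl(\,]C^G_{w_n}[_{\overline{m},\overline{0}}\cdot K^G_{\mathrm{Iw}}(p^t)\bigr),
\]
using that $K^G_{\mathrm{Iw}}(p^t)$ preserves $]C^G_{w_n}[_{\overline{m},0}$ and hence its closure $]C^G_{w_n}[_{\overline{m},\overline{0}}$ (this is \cite[Lemma~3.3.17]{BoxerPilloni}). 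The second factor is closed in $\mathtt{FL}^G$, so the intersection is closed in $\mathtt{U}^G_k$.

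\textbf{The reverse inclusion.} You correctly flag this as the crux, but the factorisation $g=g_H\cdot g'$ you sketch is not well-posed: the open-orbit statement in Lemma~\ref{OpenOrbitLemma}(2) gives a Zariski-dense decomposition of $\mbf{G}$, not a product decomposition of $K^G_{\mathrm{Iw}}(p^t)$, and the Iwahori decomposition~(\ref{IWUw}) is a chart on the flag variety, not a group factorisation. The paper avoids this entirely. It uses two facts: first, for $t>k$ one already has $\mathtt{U}^H_k=\,]C^H_{\mathrm{id}}[_k$ and $\mathtt{Z}^H_m=\,]C^H_{\mathrm{id}}[_{\overline{m}}$ with \emph{no} Iwahori saturation needed on the $H$-side; second, the coordinate formula shows $\widehat{\iota}(\mathtt{U}^H_k)\subset\,]C^G_{w_n}[_{k,k}$ and $\widehat{\iota}(\mathtt{Z}^H_m)\subset\,]C^G_{w_n}[_{\overline{m},k}$, landing in the \emph{basic} tubes rather than just their saturations. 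The reverse inclusion then comes from the explicit coordinate description of the saturated tubes in \cite[\S3.3.10]{BoxerPilloni}: intersecting that description with the image of $\widehat{\iota}$ (where $x_{n-1}=0$ and $x_{n+j}=-x_{n-1-j}$, so the conditions for $j>n$ and $j<n$ are locked together) recovers exactly $\widehat{\iota}(]C^H_{\mathrm{id}}[_k)$ respectively $\widehat{\iota}(]C^H_{\mathrm{id}}[_{\overline{m}})$. No group-theoretic factorisation is required.

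A minor point: the defining conditions for $]C^H_{\mathrm{id}}[_k$ are $y_j\in\invs{B}^{\circ}_k$ for $j\geq 1$ (open discs), not $|y_j|\leq|p|^k$ as you wrote.
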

\begin{proof}
The lemma is clear for $(m, k) = (0, 0)$ by Lemma \ref{GeneralPositionOfCells}; so assume that $(m, k) \neq (0, 0)$. Then we can express $\mathtt{I}^G_{m, k}$ as the intersection
\[
\mathtt{I}^G_{m, k} = ]C^G_{w_n}[_{k, k} \cdot K^G_{\mathrm{Iw}}(p^t) \cap ]C^G_{w_n}[_{\overline{m}, \overline{0}} \cdot K^G_{\mathrm{Iw}}(p^t) .
\]
Indeed, the group $K^G_{\mathrm{Iw}}(p^t)$ acts continuously and preserves $]C^G_{w_n}[_{\overline{m}, 0}$, so must also preserve $\overline{]C^G_{w_n}[_{\overline{m}, 0}} = ]C^G_{w_n}[_{\overline{m}, \overline{0}}$. One then follows the proof of \cite[Lemma 3.3.17]{BoxerPilloni}. 

The above description implies that $\mathtt{I}^G_{m, k}$ is closed in $\mathtt{U}^G_{k}$. Furthermore, the map $\widehat{\iota}$ is a closed embedding of flag varieties, therefore it is enough to check $\mathtt{U}^H_k = \widehat{\iota}^{-1}\left(\mathtt{U}^G_k \right)$ and $\mathtt{Z}^H_m = \widehat{\iota}^{-1}\left(\mathtt{I}^G_{m,k} \right)$. But this follows immediately from the explicit description involving coordinates, and the fact that $\widehat{\iota}(\mathtt{U}^H_k) \subset ]C^G_{w_n}[_{k, k}$ and $\widehat{\iota}(\mathtt{Z}^H_m) \subset ]C^G_{w_n}[_{\overline{m}, k}$ for $(m, k) \neq (0, 0)$.
\end{proof}


\section{Pullbacks on adic Shimura varieties} \label{PullbacksOnAdicSVs}

We now transfer the functoriality of the last section to the setting of adic Shimura varieties, via the Hodge--Tate period map. We fix a neat compact open subgroup $K^p \subset \mbf{G}(\mbb{A}_f^p)$, and let $K = K^pK_p$ for a compact open subgroup $K_p \subset G(\mbb{Q}_p)$. Let $\invs{S}_{G, K} = \invs{S}_{G, K}^{\mathrm{an}}$ denote the adic Shimura variety over $\opn{Spa}(\mbb{Q}_p, \mbb{Z}_p) = \opn{Spa}(F_{\ide{p}_{\tau_0}}, \ordd_{F_{\ide{p}_{\tau_0}}})$ associated with $S_{\mbf{G}, K}$ (note our assumption $F^+ \neq \mbb{Q}$ implies that $S_{\mbf{G}, K}$ is proper). Similarly, we fix a neat compact open subgroup $U^p \subset \mbf{H}(\mbb{A}_f^p)$ contained in $K^p$, and we let $\invs{S}_{H, U}$ denote the corresponding adic Shimura variety of level $U = U^pU_p$. If we choose $K_p = K^G_{\mathrm{Iw}}(p^t)$ or $U_p = K^H_{\mathrm{Iw}}(p^t), K^H_{\diamondsuit}(p^t)$ then we will use the notation $\invs{S}_{G, \mathrm{Iw}}(p^t)$, $\invs{S}_{H, \mathrm{Iw}}(p^t)$ and $\invs{S}_{H, \diamondsuit}(p^t)$ respectively.

\subsection{The Hodge--Tate period map}

Since $(\mbf{G}, h_{\mbf{G}})$ defines a PEL-type (and hence Hodge-type) Shimura datum, there exists a perfectoid space $\invs{S}_{G, K^p}$ over $\mbb{Q}_p$ which represents the diamond $\varprojlim_{K_p} \invs{S}_{G, K}$. In fact, the existence of such a perfectoid space does not require axiom (SV3), i.e. $\mbf{G}^{\mathrm{ad}}(\mbb{R})$ has no $\mbb{Q}$-simple factors which are $\mbb{R}$-anisotropic, provided that one has embedding into a Siegel datum. This leads to the following proposition:

\begin{proposition}
There exists a perfectoid space $\invs{S}_{H, U^p}$ over $\mbb{Q}_p$ which represents the diamond $\varprojlim_{U_p} \invs{S}_{H, U}$.
\end{proposition}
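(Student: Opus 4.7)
The strategy is to reduce to Scholze's perfectoidness theorem for Siegel modular varieties via the embedding $\iota \colon \mbf{H} \hookrightarrow \mbf{G}$. The composition of $\iota$ with any PEL Siegel embedding of $(\mbf{G}, h_{\mbf{G}})$ produces an embedding of the Shimura--Deligne datum $(\mbf{H}, h_{\mbf{H}})$ into a Siegel datum, compatibly with Deligne homomorphisms (using $h_{\mbf{G}} = \iota \circ h_{\mbf{H}}$). This is exactly the hypothesis, referred to in the paragraph preceding the proposition, that allows one to dispense with axiom (SV3) in the construction of the perfectoid infinite-level space.

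Concretely, I would construct $\invs{S}_{H, U^p}$ as a closed subspace of $\invs{S}_{G, K^p}$. At each finite level with $\iota(U_p) \subseteq K_p$, the morphism $\iota \colon \invs{S}_{H, U} \to \invs{S}_{G, K}$ is finite unramified, and in our Shimura setting it is a closed immersion onto a union of connected components of its scheme-theoretic image. Passing to the inverse limit over $U_p$ with compatible choices of $K_p$ yields a morphism of associated diamonds
\[
\varprojlim\nolimits_{U_p} \invs{S}_{H, U}^{\diamond} \longrightarrow \invs{S}_{G, K^p}^{\diamond},
\]
whose target is represented by the perfectoid space $\invs{S}_{G, K^p}$. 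The image is represented by a closed subspace of $\invs{S}_{G, K^p}$, which is automatically perfectoid since closed subspaces of perfectoid spaces are perfectoid; one then declares $\invs{S}_{H, U^p}$ to be this closed perfectoid subspace.

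The main obstacle is verifying that this candidate really represents the inverse limit, i.e.\ establishing a tilde-equivalence $\invs{S}_{H, U^p} \sim \varprojlim_{U_p} \invs{S}_{H, U}$ in the sense of Scholze--Weinstein. This reduces to two checks: (i) the tower $\{\invs{S}_{H, U}\}$ pulls back from a cofinal tower of closed immersions into $\{\invs{S}_{G, K}\}$, so that the limit of closed immersions remains a closed immersion of diamonds and the underlying topological space is the honest inverse limit; and (ii) the correct connected components of the ambient perfectoid space are selected. Point (ii) is automatic, because $S_{\mbf{H}, U}$ was defined as the connected component of the PEL moduli problem associated with $(\mbf{H}, h_{\mbf{H}})$ cut out by the signature conditions at infinity, and this is a Hecke-stable, hence limit-stable, condition. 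Point (i) is a bookkeeping exercise with the projection $\mbf{H}(\mbb{A}_f^p) \backslash \mbf{H}(\mbb{A}_f) \to \mbf{G}(\mbb{A}_f^p) \backslash \mbf{G}(\mbb{A}_f)$ together with the cofinality of the $U_p$ among compact open subgroups of $\mbf{H}(\mbb{Q}_p)$.
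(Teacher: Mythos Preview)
Your approach is essentially the same as the paper's: both reduce to Scholze's construction of perfectoid Shimura varieties of Hodge type by embedding into an ambient perfectoid space. The paper's proof is a one-line citation (``follows the proof of \cite[Theorem IV.1.1]{ScholzeTorsion} verbatim''), whereas you have sketched out what that argument actually entails; the only cosmetic difference is that you route through $\invs{S}_{G,K^p}$ rather than directly through the Siegel perfectoid space, which is harmless since the former is already known to be perfectoid and itself sits inside the latter.

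One point to tighten: your assertion that $\iota \colon \invs{S}_{H,U} \to \invs{S}_{G,K}$ is a closed immersion at each finite level is stated without justification, and the phrase ``closed immersion onto a union of connected components of its scheme-theoretic image'' is a bit muddled. The clean way to see this is to note that for suitably matched levels both $\invs{S}_{H,U}$ and $\invs{S}_{G,K}$ embed as closed subschemes of a common Siegel moduli space (as open-and-closed pieces of their respective PEL moduli problems), and the former factors through the latter; a proper monomorphism is a closed immersion, so the factorisation $\invs{S}_{H,U} \to \invs{S}_{G,K}$ is one too. With that clarification, your check of the tilde-equivalence is exactly what Scholze's argument does, and your remark that (SV3) is not needed matches the paper's parenthetical observation.
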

\begin{proof}
Although the set-up is slightly different, this follows the proof of \cite[Theorem IV.1.1]{ScholzeTorsion} verbatim. Note that we do not need a description of the connected components of $S_{\mbf{H}, U}$ in terms of Shimura data for the group $\mbf{H}^{\mathrm{der}}$ (this would require (SV3)).
\end{proof}

Both of these perfectoid spaces come equipped with a Hodge--Tate period map into a flag variety associated with the ambient Siegel datum. It is shown in \cite{CS17} that one can refine this morphism so that its image is contained in a flag variety associated with $G$ or $H$. In particular, since the same Siegel datum can be chosen for $\mbf{G}$ and $\mbf{H}$ (compatible with the embedding $\iota \colon \mbf{H} \hookrightarrow \mbf{G}$), one has a commutative diagram
\[
\begin{tikzcd}
{\invs{S}_{G, K^p}} \arrow[r, "\pi_{\mathrm{HT}, G}"]           & \mathtt{FL}^G           \\
{\invs{S}_{H, U^p}} \arrow[u] \arrow[r, "\pi_{\mathrm{HT}, H}"] & \mathtt{FL}^H \arrow[u]
\end{tikzcd}
\]
where the vertical arrows are the natural ones (induced from $\iota$) and $\pi_{\mathrm{HT}}$ denotes the Hodge--Tate period map. We will often drop the subscripts for $\pi_{\mathrm{HT}}$ when the context is clear. Since $\pi_{\mathrm{HT}, G}$ is $G(\mbb{Q}_p)$-equivariant, the twisted embedding $\widehat{\iota} \colon \invs{S}_{H, \diamondsuit}(p^t) \to \invs{S}_{G, \mathrm{Iw}}(p^t)$ commutes with the twisted morphism
\begin{align*}
    \widehat{\iota} \colon \mathtt{FL}^H/K^H_{\diamondsuit}(p^t) &\to \mathtt{FL}^G/K^G_{\mathrm{Iw}}(p^t) \\
    x K^H_{\diamondsuit}(p^t) &\mapsto \widehat{\iota}(x) K^G_{\mathrm{Iw}}(p^t)
\end{align*}
via the Hodge--Tate period morphisms. This is of course well-defined because $\widehat{\gamma}^{-1}K^H_{\diamondsuit}(p^t) \widehat{\gamma} \subset K^G_{\mathrm{Iw}}(p^t)$.

\subsection{Twisting torsors} \label{TwistingTorsorsSubSec}

In this section, we describe a general procedure for Tate-twisting pro\'{e}tale torsors and record some properties of this construction. Our choice of convention for twisting below will be consistent with our convention for the torsors on Shimura varieties (namely that they are defined via frames of relative \emph{homology} groups). 

Let $L/\mbb{Q}_p$ be a finite extension and $\invs{X}/L$ a smooth adic space. Let $\invs{T}^{\times} \to \invs{X}$ denote the pro\'{e}tale $\mbb{Z}_p^{\times}$-torsor parameterising isomorphisms (of pro\'{e}tale sheaves) $\mbb{Z}_p \xrightarrow{\sim} \mbb{Z}_p(1)$. The action of $\mbb{Z}_p^{\times}$ is given by precomposition, i.e. for $\lambda \in \mbb{Z}_p^{\times}$ and $\phi \colon \mbb{Z}_p \xrightarrow{\sim} \mbb{Z}_p(1)$, we set
\[
\phi \cdot \lambda = \phi(\lambda \cdot - ) .
\]
Let $M$ be a smooth adic group scheme over $\opn{Spa}L$ and suppose that we have a homomorphism
\[
\mu \colon \mbb{Z}_p^{\times} \to M
\]
that is central (i.e. its image is contained in the centre of $M$).

\begin{definition} \label{DefOfTwistedTorsor}
Let $\invs{M} \to \invs{X}$ be a (right) pro\'{e}tale $M$-torsor. We define the twist of $\invs{M}$ along $\mu$ to be 
\[
{^\mu \invs{M}} \defeq \invs{M} \times^{[\mbb{Z}_p^{\times}, \mu]} \invs{T}^{\times} 
\]
where the right-hand side denotes the quotient of $\invs{M} \times_{\invs{X}} \invs{T}^{\times}$ by the equivalence relation:
\[
(m \cdot \mu(\lambda), \phi) \sim (m, \phi \cdot \lambda^{-1} ), \quad \quad \text{ for all } m \in \invs{M}, \; \phi \in \invs{T}^{\times}, \; \lambda \in \mbb{Z}_p^{\times} .
\]
This defines a pro\'{e}tale $M$-torsor ${^\mu \invs{M}} \to \invs{X}$ via the action $(m, \phi) \cdot n = (m \cdot n, \phi)$, for $m \in \invs{M}$, $\phi \in \invs{T}^{\times}$ and $n \in M$, because the homomorphism $\mu$ is central.
\end{definition}

\begin{example}
Suppose that $M = \mbb{G}_m^{\opn{an}}$ and $\mu \colon \mbb{Z}_p^{\times} \to M$ is the natural inclusion. Let $\mathscr{F}$ be a locally free sheaf of rank one on the pro\'{e}tale site of $\invs{X}$. Then $\invs{M} \defeq \underline{\opn{Isom}}(\hat{\ordd}_{\invs{X}}, \mathscr{F})$ is a pro\'{e}tale $M$-torsor, and we have a natural identification
\[
{^\mu \invs{M}} = \underline{\opn{Isom}}(\hat{\ordd}_{\invs{X}}, \mathscr{F}(-1)) .
\]
\end{example}

This twisting procedure enjoys the following properties:

\begin{lemma} \label{PropertiesOfTTorsorLemma}
\begin{enumerate}
    \item The construction ${^\mu \invs{M}}$ is functorial in the (right) pro\'{e}tale torsor $\invs{M}$.
    \item If $f \colon \invs{Y} \to \invs{X}$ is a morphism of smooth adic spaces over $\opn{Spa}L$, then
    \[
    f^*\left( {^\mu \invs{M}} \right) \cong {^\mu \left(f^* \invs{M} \right)}
    \]
    canonically (i.e. we have a natural isomorphism $f^* \circ {^\mu (-)} \xrightarrow{\sim} {^\mu (-)} \circ f^*$).
    \item If $N \subset M$ is a smooth subgroup and $\mu$ factors through $N$, then for any pro\'{e}tale $N$-torsor $\invs{N} \to \invs{X}$ one has
    \[
    {^\mu \left( \invs{N} \times^N M \right)} \cong {^\mu \invs{N}} \times^N M
    \]
    canonically (i.e. it is natural in $\invs{N}$).
\end{enumerate}
\end{lemma}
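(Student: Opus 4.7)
The plan is to verify each part directly from Definition \ref{DefOfTwistedTorsor} by manipulating the equivalence relation defining the contracted product. Everything takes place in the pro\'etale topos, where quotients by equivalence relations, fibre products and pullbacks all interact cleanly; the centrality of the image of $\mu$ is what guarantees the resulting objects are right $M$-torsors and that the identifications below make sense.

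For (1), a morphism $\phi \colon \invs{M}_1 \to \invs{M}_2$ of pro\'etale $M$-torsors is by definition $M$-equivariant, so in particular commutes with right-multiplication by elements of $\mu(\mbb{Z}_p^{\times})$. The induced map $\phi \times \opn{id}_{\invs{T}^{\times}}$ on the fibre products therefore preserves the relation $(m \cdot \mu(\lambda), \varphi) \sim (m, \varphi \cdot \lambda^{-1})$ and descends to a map ${^\mu \invs{M}_1} \to {^\mu \invs{M}_2}$. One checks $M$-equivariance of the descended map directly, and compatibility with identities and compositions is immediate from the formula.

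For (2), the key observation is that $\invs{T}^{\times}$ is defined as the pro\'etale sheaf $\underline{\opn{Isom}}(\mbb{Z}_p, \mbb{Z}_p(1))$, and both constant and Tate twist sheaves pull back to their counterparts under $f$, so there is a canonical identification $f^{*}\invs{T}^{\times}_{\invs{X}} \cong \invs{T}^{\times}_{\invs{Y}}$ as pro\'etale $\mbb{Z}_p^{\times}$-torsors. Since $f^{*}$ is a left adjoint it preserves colimits (hence quotients by equivalence relations) and commutes with finite limits (hence with fibre products); applying $f^{*}$ termwise in Definition \ref{DefOfTwistedTorsor} yields the required natural isomorphism.

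For (3), one presents both sides as quotients of the triple fibre product $\invs{N} \times_{\invs{X}} M \times_{\invs{X}} \invs{T}^{\times}$: the left-hand side first collapses the $N$-relation $(nk, m, \varphi) \sim (n, km, \varphi)$ for $k \in N$ and then imposes the twisting relation via the $M$-coordinate, while the right-hand side first imposes the twisting relation via the $N$-coordinate and then collapses the $N$-relation. Because $\mu(\lambda)$ lies in the centre of $M$ and factors through $N$, one has $m \cdot \mu(\lambda) = \mu(\lambda) \cdot m$, and the $N$-relation lets one transport the twist between the $M$- and $N$-coordinates, showing that both quotients are taken with respect to the same pair of relations. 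This is the subtlest of the three verifications and is the main place where centrality of $\mu$ is essential; everything else is a bookkeeping exercise in left/right conventions.
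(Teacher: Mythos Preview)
Your proof is correct and follows the same approach as the paper, which simply states that all three properties follow immediately from tracing through the definitions. You have made this tracing explicit, and in particular your treatment of part (3)---showing that the two pairs of relations on the triple fibre product generate the same equivalence relation via centrality of $\mu$---is exactly the content that the paper leaves to the reader.
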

\begin{proof}
All of these properties follow immediately from tracing through the definitions.
\end{proof}

\subsection{Torsors on adic Shimura varieties}

We would like to recover the construction of the automorphic vector bundles in \S \ref{AutomorphicVectorBundlesSection} via the Hodge--Tate period morphism (which plays the role of the Borel embedding). This is accomplished in \cite[\S 2]{CS17}, and we give a brief review of the results. We will describe the construction for the group $\mbf{G}$ only, as the construction for $\mbf{H}$ follows the same argument. 

Let $\invs{M}_G$ denote the adic generic fibre associated with $M_G$ (the adic generic fibre of its completion along the special fibre) and $\invs{M}_G^{\mathrm{an}} = M_G^{\mathrm{an}}$. Let $\mu \colon \mbb{Z}_p^{\times} \to \invs{M}_G$ denote the (central) homomorphism induced from the Hodge cocharacter $\mu_{\mbf{G}}$ defined in \S \ref{ShimuraVarietiesSubSec}. By the results of \emph{loc.cit.}, there exists a pro\'{e}tale $\invs{M}_G^{\mathrm{an}}$-torsor $\invs{M}_{G,\mathrm{HT}}^{\mathrm{an}}$ over $\invs{S}_{G, K}$ such that its twist ${^\mu \invs{M}_{G,\mathrm{HT}}^{\mathrm{an}}}$ along $\mu$ is canonically isomorphic to $M_{G,\mathrm{dR}}$ under analytification.\footnote{See the paragraph preceding \cite[Lemma 2.3.5]{CS17} for the definition of this torsor (which in the notation of \emph{loc.cit.} would be $\invs{M}_{dR}$).} It is shown in \cite[\S 4.6]{BoxerPilloni} that $\invs{M}_{G,\mathrm{HT}}^{\mathrm{an}}$ has an integral structure, namely the pro\'{e}tale $\invs{M}_G$-torsor $\invs{M}_{G,\mathrm{HT}}$. By Lemma \ref{PropertiesOfTTorsorLemma}, this defines an integral structure ${^\mu \invs{M}_{G,\mathrm{HT}}}$ on ${^\mu \invs{M}_{G,\mathrm{HT}}^{\mathrm{an}}}$, which is an \'{e}tale $\invs{M}_G$-torsor because the morphism ${^\mu \invs{M}_{G,\mathrm{HT}}} \to \invs{S}_{G, K}$ is surjective on geometric points and smooth (as ${^\mu \invs{M}_{G,\mathrm{HT}}}$ is an open subset of ${^\mu \invs{M}_{G,\mathrm{HT}}^{\mathrm{an}}} = M_{G, \opn{dR}}^{\opn{an}}$).

On the other hand, if $N_G$ is the unipotent radical of $P_G$ with associated adic generic fibre $\invs{N}_G$, then one can consider the (right) $\invs{M}_G$-torsor
\[
\mathtt{M}^G \colon \invs{G}/\invs{N}_G \to \mathtt{FL}^G
\]
via the morphism $x \mapsto x^{-1}$. These torsors are related in the following way:

\begin{lemma} \label{PullbackOfTorsorDescription}
The pullback of $\invs{M}_{G,\mathrm{HT}}$ to the perfectoid space $\invs{S}_{G, K^p}$ is identified with $\pi_{\mathrm{HT}}^*\mathtt{M}^G$.
\end{lemma}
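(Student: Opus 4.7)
The plan is to unpack the definitions of both torsors and observe that $\pi_{\mathrm{HT}}$ was constructed precisely so as to make this compatibility tautological; the only real content lies in matching integral structures. Recall from \cite{CS17} that on the infinite-level perfectoid cover $\invs{S}_{G,K^p}$, the trivialisation of the $p$-adic Tate module coming from the tower, combined with the Hodge--Tate comparison for the universal abelian variety (and its PEL-type tensors), produces a canonical filtration of type $\mu$ on the trivialised module after tensoring with $\hat{\ordd}$. By construction, the pullback of $\invs{M}_{G,\mathrm{HT}}^{\mathrm{an}}$ to $\invs{S}_{G,K^p}$ parameterises frames compatible with this filtration and with the $G$-structure.

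On the other hand, $\pi_{\mathrm{HT}}$ sends a geometric point of $\invs{S}_{G,K^p}$ to the parabolic of type $P_G$ cut out by exactly this Hodge--Tate filtration inside the trivialised Tate module. The tautological right $\invs{M}_G$-torsor $\mathtt{M}^G = \invs{G}/\invs{N}_G \to \mathtt{FL}^G$ (via $x \mapsto x^{-1}$) assigns to a parabolic its $M_G$-torsor of splittings modulo unipotent radical, so $\pi_{\mathrm{HT}}^*\mathtt{M}^G$ parameterises exactly the same data as the pullback of $\invs{M}_{G,\mathrm{HT}}^{\mathrm{an}}$. Concretely, mapping the universal frame to itself yields a morphism of $\invs{M}_G^{\mathrm{an}}$-torsors, and any such morphism is automatically an isomorphism.

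The genuine content is then to verify that under this analytic-level identification, the open sub-torsor $\invs{M}_{G,\mathrm{HT}} \subset \invs{M}_{G,\mathrm{HT}}^{\mathrm{an}}$ from \cite[\S 4.6]{BoxerPilloni} corresponds to $\pi_{\mathrm{HT}}^*\mathtt{M}^G$, which is already an $\invs{M}_G$-torsor and not merely an $\invs{M}_G^{\mathrm{an}}$-torsor. This is the step I expect to be the main obstacle. One expects it to follow from the observation that both subsets are characterised by the same integrality condition: the universal frame lands in $\invs{M}_G$ exactly when the induced trivialisation of the graded pieces of the Hodge--Tate filtration on the \emph{integral} Tate module is integral. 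This can be verified after passing to a suitable pro-\'etale cover on which the integral Tate module is trivialised, reducing the claim to a pointwise check using the explicit coordinates on $\mathtt{FL}^G$ set up earlier in the section.
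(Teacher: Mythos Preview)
Your analytic-level discussion is fine, but you have inverted where the content lies. In \cite[\S 4.6]{BoxerPilloni} the integral pro\'etale torsor $\invs{M}_{G,\mathrm{HT}}$ is not introduced via an intrinsic integrality condition on frames that one then has to match with the flag-variety side; rather, it is \emph{constructed} by taking the $\invs{M}_G$-torsor $\mathtt{M}^G$ on $\mathtt{FL}^G$, pulling back along $\pi_{\mathrm{HT}}$ to the perfectoid Shimura variety, and descending to finite level using the $K_p$-equivariant structure. The paper's one-line proof (``immediate from the proof of \cite[Proposition~4.6.3]{BoxerPilloni}'') reflects exactly this: once you know the construction, the lemma is a tautology, since the pullback of a descended object recovers the object you started with.

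So your third paragraph, which you flag as the ``main obstacle,'' is addressing a non-issue. There is no separate integrality condition to verify and no pointwise check to perform; the identification of integral structures holds \emph{by definition}. Your write-up would be improved by replacing that paragraph with a sentence pointing to the construction in \cite[\S 4.6]{BoxerPilloni} and noting that the claim is then immediate. The earlier paragraphs, while correct, are also more than is needed: they amount to re-deriving why $\pi_{\mathrm{HT}}^*\mathtt{M}^G$ descends, which is already part of the cited construction.
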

\begin{proof}
Immediate from the proof of \cite[Proposition 4.6.3]{BoxerPilloni}.
\end{proof}

Recall that we have a twisted morphism $\widehat{\iota} \colon \invs{S}_{H, \diamondsuit}(p^t) \to \invs{S}_{G, \mathrm{Iw}}(p^t)$. Also, recall that the choice of Hodge cocharacters $\mu_{\mbf{G}}$ and $\mu_{\mbf{H}}$ are compatible under the inclusion $\mbf{H} \hookrightarrow \mbf{G}$, therefore the homomorphism $\mu$ above factors through $\invs{M}_H$. The description in the above lemma gives the following reduction of structure.

\begin{proposition} \label{FirstReductionOfStructure}
One has a reduction of structure of pro\'{e}tale torsors over $\invs{S}_{H, \diamondsuit}(p^t)$
\[
\widehat{\iota}^* \invs{M}_{G,\mathrm{HT}} = \invs{M}_{H,\mathrm{HT}} \times^{[\invs{M}_{H}, u]} \invs{M}_G
\]
where the superscript means we view $\invs{M}_H$ as a subgroup of $\invs{M}_G$ via the embedding $u^{-1}\invs{M}_H u \subset \invs{M}_G$. In particular, one has a reduction of structure of \'{e}tale torsors
\[
\widehat{\iota}^* \left( {^\mu \invs{M}_{G,\mathrm{HT}}} \right) = {^\mu \invs{M}_{H,\mathrm{HT}}} \times^{[\invs{M}_{H}, u]} \invs{M}_G .
\]
\end{proposition}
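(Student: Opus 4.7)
The strategy is to first establish the analogous reduction of structure for the torsors $\mathtt{M}^G$ and $\mathtt{M}^H$ on the adic flag varieties, then transport the conclusion to the adic Shimura varieties via the Hodge--Tate period maps. By Lemma~\ref{PullbackOfTorsorDescription} applied to both $G$ and $H$, together with the commutative diagram of Hodge--Tate period maps intertwining the two incarnations of $\widehat{\iota}$, it suffices to exhibit a canonical identification
$$\widehat{\iota}^* \mathtt{M}^G \;\cong\; \mathtt{M}^H \times^{[\invs{M}_H, u]} \invs{M}_G$$
of $\invs{M}_G$-torsors on $\mathtt{FL}^H$; the resulting reduction on the perfectoid space $\invs{S}_{H, U^p}$ then descends to $\invs{S}_{H, \diamondsuit}(p^t)$ because all constructions are equivariant for the profinite residual group actions.

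The key step is the following explicit map of torsors:
$$\phi \colon \invs{H}/\invs{N}_H \;\longrightarrow\; \widehat{\iota}^*\bigl(\invs{G}/\invs{N}_G\bigr), \qquad y\invs{N}_H \;\longmapsto\; \widehat{\gamma}^{-1} y u \cdot \invs{N}_G.$$
I would then verify three properties. \emph{Well-definedness}: for $n \in \invs{N}_H$, one has $\phi(yn\invs{N}_H) = \widehat{\gamma}^{-1}yu(u^{-1}nu)\invs{N}_G = \widehat{\gamma}^{-1}yu\invs{N}_G$, using that $\invs{N}_H \subset \invs{N}_G$ (from $P_H = P_G \cap H$) and that $u \in \invs{M}_G$ normalises $\invs{N}_G$. \emph{Correct fibre}: the image of $\widehat{\gamma}^{-1}yu\invs{N}_G$ under the structure map $\invs{G}/\invs{N}_G \to \mathtt{FL}^G$ is $P_G \cdot u^{-1}y^{-1}\widehat{\gamma} = P_G y^{-1}\widehat{\gamma} = \widehat{\iota}(P_H y^{-1})$, which matches the image of $y\invs{N}_H$ through $\mathtt{M}^H$ followed by $\widehat{\iota}$. \emph{Equivariance}: for $m \in \invs{M}_H$,
$$\phi(ym\invs{N}_H) = \widehat{\gamma}^{-1} y m u \invs{N}_G = (\widehat{\gamma}^{-1}yu)(u^{-1}mu)\invs{N}_G = \phi(y\invs{N}_H) \cdot (u^{-1}mu),$$
so $\phi$ is $\invs{M}_H$-equivariant along the embedding $m \mapsto u^{-1}mu$. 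Since $\phi$ is fibrewise an inclusion of a free $\invs{M}_H$-orbit into a free $\invs{M}_G$-orbit compatible with this embedding, extension of structure group along $[\invs{M}_H, u] \hookrightarrow \invs{M}_G$ produces the claimed isomorphism.

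For the second, \'etale, assertion I would invoke Lemma~\ref{PropertiesOfTTorsorLemma}. Since $\mu_{\mbf{G}} = \iota \circ \mu_{\mbf{H}}$, the cocharacter $\mu \colon \mbb{Z}_p^\times \to \invs{M}_G$ factors through $\invs{M}_H$; moreover its image is central in $\invs{M}_G$ (because $M_G$ is the centraliser of $\mu_{\mbf{G}}$), so $u^{-1}\mu(\lambda)u = \mu(\lambda)$ and $\mu$ equally well factors through the twisted embedding $m \mapsto u^{-1}mu$. Parts~(1) and~(3) of that lemma then combine to give
$$\widehat{\iota}^*\bigl({^\mu \invs{M}_{G,\opn{HT}}}\bigr) \;=\; {^\mu\bigl(\widehat{\iota}^* \invs{M}_{G,\opn{HT}}\bigr)} \;=\; {^\mu\bigl(\invs{M}_{H,\opn{HT}} \times^{[\invs{M}_H, u]} \invs{M}_G\bigr)} \;=\; {^\mu \invs{M}_{H,\opn{HT}}} \times^{[\invs{M}_H, u]} \invs{M}_G.$$
The main subtlety is the appearance of the conjugation $m \mapsto u^{-1}mu$ rather than the naive inclusion; this is dictated by the factorisation $\widehat{\gamma} = u \cdot n \cdot w_n$ (with $n \in \invs{N}_G$ and $w_n$ the Weyl representative fixed in \S\ref{SomeImportantElementsSubsec}), which forces us to absorb only the $\invs{M}_G$-part $u$ of $\widehat{\gamma}$ into the definition of $\phi$ in order to land in a $P_G$-coset.
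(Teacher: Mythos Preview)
Your proof is correct and follows essentially the same approach as the paper: reduce to the flag varieties via Lemma~\ref{PullbackOfTorsorDescription}, exhibit the map $h\invs{N}_H \mapsto \widehat{\gamma}^{-1} h u\,\invs{N}_G$ (the paper writes this as $\widehat{\gamma}^{-1} h \gamma\,\invs{N}_G$ and then observes $\gamma \equiv u \pmod{\invs{N}_G}$), and invoke Lemma~\ref{PropertiesOfTTorsorLemma} together with centrality of $\mu$ for the twisted statement. One tiny quibble: the equality $\widehat{\iota}^*({^\mu \invs{M}_{G,\mathrm{HT}}}) = {^\mu(\widehat{\iota}^* \invs{M}_{G,\mathrm{HT}})}$ is part~(2) of that lemma, not part~(1).
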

\begin{proof}
For the first part and via the interpretation in Lemma \ref{PullbackOfTorsorDescription}, it is enough to show that $\widehat{\iota}^* \mathtt{M}^G = \mathtt{M}^G \times^{[\invs{M}_H, u]} \invs{M}_G$ on the level of flag varieties. This follows from the following commutative diagram:
\[
\begin{tikzcd}
\invs{H}/\invs{N}_H \arrow[d] \arrow[r]    & \invs{G}/\invs{N}_G \arrow[d] \\
\mathtt{FL}^H \arrow[r, "\widehat{\iota}"] & \mathtt{FL}^G                
\end{tikzcd}
\]
where the vertical arrows are the torsors $\mathtt{M}^H$ and $\mathtt{M}^G$ and the top horizontal map is given by
\[
h \invs{N}_H \mapsto \widehat{\gamma}^{-1} h \gamma \invs{N}_G  = \widehat{\gamma}^{-1} h u \invs{N}_G.
\]
where the last equality follows from the fact that $\gamma$ maps to $u$ under the projection $\invs{P}_G \twoheadrightarrow \invs{M}_G$. 

The last part of the proposition follows from the functoriality properties of twisted torsors in Lemma \ref{PropertiesOfTTorsorLemma} and the fact $\mu$ is central (so is unaffected by conjugation by $u$).
\end{proof}

\begin{remark} \label{AlternativeReductionOfStructure}
One has an alternative reduction of structure as follows. In this remark only, set $U = U^p K^H_{\diamondsuit}(p^t)$, $K = K^p K^G_{\mathrm{Iw}}(p^t)$ and $K_{\widehat{\gamma}} = \widehat{\gamma}K \widehat{\gamma}^{-1}$, and we will include the level in the notation for $\invs{M}_{\mathrm{HT}}$ and $M_{\mathrm{dR}}$. Then we obtain a twisted morphism $\widehat{\iota} \colon {^\mu \invs{M}_{H,\mathrm{HT}, U}^{\mathrm{an}} } \to {^\mu \invs{M}_{G,\mathrm{HT}, K}^{\mathrm{an}}}$ defined as the analytification of the composition
\[
M_{H,\mathrm{dR}, U} \xrightarrow{\iota} M_{G,\mathrm{dR}, K_{\widehat{\gamma}}} \xrightarrow{\widehat{\gamma}} M_{G,\mathrm{dR}, K}.
\]
This is simply the twist along $\mu$ of the morphism of torsors induced from the natural map on the level of flag varieties $\invs{H}/\invs{N}_H \to \invs{G}/\invs{N}_G$ sending $h \invs{N}_H$ to $\widehat{\gamma}^{-1}h \invs{N}_G$ (see Appendix \ref{AppendixComparisonsFamilies}), so in fact preserves the integral structure. This gives a reduction of structure 
\begin{equation} \label{Creduction}
    \widehat{\iota}^*\left( {^\mu \invs{M}_{G,\mathrm{HT}, K} } \right) = {^\mu \invs{M}_{H,\mathrm{HT}, U}} \times^{\invs{M}_H} \invs{M}_G
\end{equation}
and we have a commutative diagram
\[
\begin{tikzcd}
                                                                        &  & {{^\mu\invs{M}_{H,\mathrm{HT}, U}} \times^{\invs{M}_H} \invs{M}_G} \arrow[dd] \\
{\widehat{\iota}^*\left({^\mu \invs{M}_{G,\mathrm{HT}, K}} \right)} \arrow[rru] \arrow[rrd] &  &                                                                          \\
                                                                        &  & {{^\mu \invs{M}_{H,\mathrm{HT}, U}} \times^{[\invs{M}_H, u]} \invs{M}_G}      
\end{tikzcd}
\]
where every map is an isomorphism; the top diagonal map is the reduction of structure in (\ref{Creduction}), the bottom diagonal map is the reduction of structure in Proposition \ref{FirstReductionOfStructure}, and the vertical map is given by $[x, m] \mapsto [x, u^{-1}mu]$.

The reduction of structure in (\ref{Creduction}) will be useful for the comparison with the archimedean setting, whereas the reduction of structure in Proposition \ref{FirstReductionOfStructure} will be useful when we speak about sheaves of distributions in \S \ref{LocallyAnalyticCohomologySection}. 
\end{remark}

\subsection{Comparison with the archimedean pairing}

We can now reinterpret the pairing at the end of \S \ref{PreliminarySection} in the setting of adic Shimura varieties via rigid GAGA. For a representation $V \in \opn{Rep}(M_G)$ we let $[V]$ denote the associated bundle on $\invs{S}_{G, K}$ using the torsor ${^\mu \invs{M}_{G,\mathrm{HT}}^{\opn{an}}}= M_{G, \opn{dR}}^{\opn{an}}$; and similarly for $H$. We place ourselves in the setting of \S \ref{PrelimBranchingLawSubSec} -- in particular, we let $\lambda \in X^*(T/T_0)^+$. Then, after fixing an isomorphism $V_{\kappa_n^*} \cong V_{\kappa_n}^*$ we obtain a $M_H$-equivariant morphism
\begin{equation} \label{MapOfRepsTwistedEqn}
V_{\kappa_n} \twoheadrightarrow \sigma_n^{[j]}
\end{equation}
by pairing with the vector $u^{-1} \cdot v_{\kappa_n}^{[j]}$, where $M_H$ acts on $V_{\kappa_n}$ via the embedding $u^{-1}M_H u \subset M_G$. Via the reduction of structure in Proposition \ref{FirstReductionOfStructure}, this gives a morphism of sheaves
\[
\widehat{\iota}^*[V_{\kappa_n}] \to [\sigma_n^{[j]}]
\]
over $\invs{S}_{H, \diamondsuit}(p)$. Using this morphism, we therefore obtain a pairing 
\[
\boxed{ 
\langle , \rangle_{\mathrm{an}} \colon \opn{H}^{n-1}\left( \invs{S}_{G, \mathrm{Iw}}(p), [V_{\kappa_n}]\right) \times \opn{H}^0\left(\invs{S}_{H, \diamondsuit}(p), [\sigma_n^{[j]}]^\vee \right) \to \mbb{Q}_p
}
\]
defined as $\langle \eta, \chi \rangle_{\mathrm{an}} = \opn{tr}(\widehat{\iota}^*\eta \cup \chi)$. By the discussion in Remark \ref{AlternativeReductionOfStructure} and the fact that the analytification of $M_{\mathrm{dR}}$ is identified with ${^\mu \invs{M}_{\mathrm{HT}}^{\mathrm{an}}}$, we obtain the following proposition:

\begin{proposition} \label{GAGApairingProp}
The pairings $\langle , \rangle_{\mathrm{alg}}$ and $\langle , \rangle_{\mathrm{an}}$ correspond to each other under rigid GAGA, where we have base-changed the former to $\mbb{Q}_p$ via the embedding $F^{\mathrm{cl}} \hookrightarrow \mbb{Q}_p$ induced from the fixed isomorphism $\mbb{C} \cong \Qpb$.
\end{proposition}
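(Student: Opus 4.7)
The plan is to reduce the claimed equality to a verification at the level of the morphism of coherent sheaves $\widehat{\iota}^*[V_{\kappa_n}] \to [\sigma_n^{[j]}]$ entering each pairing, and then to unwind that comparison via the two reductions of structure in Proposition \ref{FirstReductionOfStructure} and Remark \ref{AlternativeReductionOfStructure}.

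First, since $F^+ \neq \mbb{Q}$ the Shimura varieties $S_{\mbf{G}, \mathrm{Iw}}(p)$ and $S_{\mbf{H}, \diamondsuit}(p)$ are smooth projective over $F^{\mathrm{cl}}$, so rigid analytic GAGA (K\"opf) provides natural isomorphisms between algebraic and analytic coherent cohomology, compatible with pullback along $\widehat{\iota}$, with cup products, and with the Serre duality trace map (Serre duality itself being natural with respect to analytification). Consequently, after base change to $\mbb{Q}_p$ along the embedding $F^{\mathrm{cl}} \hookrightarrow \mbb{Q}_p$, the result reduces to showing that the two morphisms of coherent sheaves $\widehat{\iota}^*[V_{\kappa_n}] \to [\sigma_n^{[j]}]$ used to define $\langle , \rangle_{\mathrm{alg}}$ and $\langle , \rangle_{\mathrm{an}}$ coincide.

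Next, I would identify the two constructions of the automorphic bundles $[V]$. The algebraic definition (Definition \ref{DefOfAutoVectorBundle}) is built from the Borel embedding and the algebraic $P_{\mbf{G}}^{\mathrm{std}}$-torsor underlying the de Rham realisation, whose analytification is canonically identified with ${^\mu \invs{M}_{G,\mathrm{HT}}^{\mathrm{an}}} = M_{G,\mathrm{dR}}^{\mathrm{an}}$ by the results of Caraiani--Scholze \cite{CS17} recalled at the start of \S \ref{PullbacksOnAdicSVs}; likewise for $\mbf{H}$. This identification is functorial in $\iota \colon \mbf{H} \hookrightarrow \mbf{G}$, and it extends to $\widehat{\iota} = \widehat{\gamma} \circ \iota$ since right translation by $\widehat{\gamma}$ preserves the de Rham torsors. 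Under this identification the analytified algebraic morphism is obtained by reducing structure along the \emph{non-conjugated} embedding $\invs{M}_H \hookrightarrow \invs{M}_G$ and pairing with $v_{\kappa_n}^{[j]}$ itself, precisely as in the branching law of Proposition \ref{ClassicalBranchingProp}.

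Finally, the analytic morphism used to define $\langle , \rangle_{\mathrm{an}}$ was instead built from the reduction of structure in Proposition \ref{FirstReductionOfStructure}, i.e.\ along the \emph{conjugated} embedding $u^{-1}\invs{M}_H u \subset \invs{M}_G$, and by pairing with $u^{-1}\cdot v_{\kappa_n}^{[j]}$. The explicit isomorphism between the two reductions of structure given in Remark \ref{AlternativeReductionOfStructure}, namely $[x, m] \mapsto [x, u^{-1}mu]$, intertwines these two prescriptions: transporting $v_{\kappa_n}^{[j]}$ under $m \mapsto u^{-1}mu$ produces $u^{-1}\cdot v_{\kappa_n}^{[j]}$. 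Thus the two morphisms of sheaves agree and the two pairings match under GAGA. The only real obstacle is the bookkeeping for the $u$-conjugation; the conceptual content lies entirely in Remark \ref{AlternativeReductionOfStructure} combined with the Caraiani--Scholze comparison for the de Rham torsor.
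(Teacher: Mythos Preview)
Your proposal is correct and follows essentially the same approach as the paper: the paper's proof is the single sentence preceding the proposition, which appeals to precisely the two ingredients you unwind, namely the identification ${^\mu \invs{M}_{\mathrm{HT}}^{\mathrm{an}}} = M_{\mathrm{dR}}^{\mathrm{an}}$ and the commutative triangle of Remark \ref{AlternativeReductionOfStructure} comparing the conjugated and non-conjugated reductions of structure. Your elaboration of how the map $[x,m]\mapsto[x,u^{-1}mu]$ transports $v_{\kappa_n}^{[j]}$ to $u^{-1}\cdot v_{\kappa_n}^{[j]}$ is exactly the bookkeeping the paper leaves implicit.
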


\subsection{Hecke operators} \label{HeckeOperators}

We would like to restrict the pairing $\langle, \rangle_{\mathrm{an}}$ to one over certain strata in the adic Shimura varieties, without losing any information. To accomplish this, we need to pass to ``small-slope'' parts of cohomology with respect to the action of certain Hecke operators, which we will now describe.

Let $T^{-} \subset T(\mbb{Q}_p)$ denote the submonoid defined as 
\[
T^{-} = \{ x \in T(\mbb{Q}_p) : v(\alpha(x)) \leq 0 \text{ for all } \alpha \in \Phi^+ \}
\]
where $\Phi^+$ is the set of positive roots of $G$ (with respect to $B_G$) and $v \colon \mbb{Q}_p^{\times} \to \mbb{Z}$ is the $p$-adic valuation, normalised so that $v(p) = 1$. We let $T^{--} \subset T^-$ be the subset of elements satisfying $v(\alpha(x)) < 0$ for all $\alpha \in \Phi^+$. For $t \geq 1$, We let $\invs{H}^{-}_{p, t}$ denote the algebra $\mbb{Q}_p[K^G_{\mathrm{Iw}}(p^t) \backslash T^{-} / K^G_{\mathrm{Iw}}(p^t)]$ with multiplication given by the double coset description in \cite[\S 4.2]{BoxerPilloni}. This is isomorphic to the algebra $\mbb{Q}_p[T^{-}]$ (with the usual definition of multiplication), with an element $x \in T^-$ corresponding to $[K^G_{\mathrm{Iw}}(p^t) x K^G_{\mathrm{Iw}}(p^t)]$.

We fix a specific choice of Hecke operator.

\begin{definition} \label{BorelHeckeOperator}
Let $\lambda$ be an algebraic character of $H_{\infty}$ (see \S \ref{DiscreteSeriesRepsSection}) and set $\lambda^* = -w_G^{\mathrm{max}}\lambda$. We let $\invs{U}_B'(p^t) \in \invs{H}^{-}_{p, t}$ denote the Hecke operator $\lambda^*(x^{-1})[K^G_{\mathrm{Iw}}(p^t) x K^G_{\mathrm{Iw}}(p^t)]$ where $x \in T^{--}$ is given by
\[
x = (1; 1, p, p^2, \dots, p^{2n-1})_{\tau \in \Psi} .
\]
\end{definition}
\begin{remark}
It will turn out that the action of $\invs{U}'_B(p^t)$ on cohomology will be independent of the level, so we will often write $\invs{U}'_B$ instead.
\end{remark}

Note that a $\mbb{Q}_p$-algebra homomorphism $\invs{H}^{-}_{p, t} \to \Qpb$ is identified with a monoid homomorphism $\theta \colon T^{-} \to (\Qpb, \times)$ via the isomorphism above. We say that $\theta$ is \emph{finite-slope} if $\theta(x) \neq 0$ for some $x \in T^{--}$ (in fact, this implies $\theta(x) \neq 0$ for all $x \in T^{-}$).

\begin{definition} \label{DefOfFiniteSlopeForM}
Let $M$ be a Banach $\mbb{Q}_p$-module (or more generally, a bounded complex of projective Banach $\mbb{Q}_p$-modules) with an action of a potent compact operator $T$ (see \cite[Definition 2.4.13]{BoxerPilloni}). Then $M$ has a slope decomposition with respect to (some power of) $T$ and we set
\[
M^{\mathrm{fs}} \defeq \opn{colim}_{h} M^{\leq h}
\]
where the colimit is over $h \in \mbb{Q}_{\geq 0}$. This is called the finite-slope part of $M$.
\end{definition}

If $M$ carries an action of $\invs{H}^{-}_{p, t}$ such that $[K^G_{\mathrm{Iw}}(p^t) x K^G_{\mathrm{Iw}}(p^t)]$ acts as a potent compact operator for some $x \in T^{--}$, then we denote the finite-slope part by $M^{-, \mathrm{fs}}$ (which is independent of $x$ by \cite[Lemma 5.1.7]{BoxerPilloni}). Furthermore, $M^{\leq h}$ can be decomposed into generalised eigenspaces for the action of $T^{-}$, for any $h \in \mbb{Q}_{\geq 0}$ (since slope decompositions are unique and $M^{\leq h}$ is finite-dimensional). This will allow us to pass to the ``small slope part'' of $M$, in the following sense.

\begin{definition}
Let $\lambda \in X^*(T/T_0)^+$. We say that a (monoid) homomorphism $\theta \colon T^- \to \Qpb^{\times}$ is \emph{small slope} (with respect to $\kappa_n$) if, for every $w \in {^MW_G} - \{w_n \}$, there exists $x \in T^-$ such that
\begin{equation} \label{sssIneq}
v(\theta(x)) < v\left((w^{-1} \star \kappa_n)(x) \right) . 
\end{equation}
If $M$ is as in the paragraph following Definition \ref{DefOfFiniteSlopeForM}, then we let $M^{-, \mathrm{ss}(\kappa_n)}$ denote the sum of generalised eigenspaces in $M^{\leq h}$ for which $T^{-}$ acts through a small slope homomorphism $\theta \colon T^{-} \to \Qpb^{\times}$ (for any sufficiently large $h$ depending on $\kappa_n$). We will write $M^{-, \mathrm{ss}}$ when $\kappa_n$ is clear from the context. 
\end{definition}

\subsection{Restriction to smaller strata}

We transfer the strata in \S \ref{TubesInFlagVar} to adic Shimura varieties via the Hodge--Tate period map.

\begin{definition}
For $m, t, k$ as in (\ref{mktTriple}), we define
\begin{itemize}
    \item $\invs{U}^G_{k}(p^t) = \pi_{\mathrm{HT},G, t}^{-1}(\mathtt{U}^G_k)$
    \item $\invs{I}^G_{m, k}(p^t) = \pi_{\mathrm{HT},G, t}^{-1}(\mathtt{I}^G_{m, k})$
    \item $\invs{Z}^G_{0}(p^t) = \pi_{\mathrm{HT},G, t}^{-1}(\mathtt{Z}^G_0)$
    \item $\invs{Z}^G_m(p^t) = \pi_{\mathrm{HT}, G, t}^{-1}(]C^G_{w_n}[_{\overline{m}, \overline{0}}\cdot K^G_{\mathrm{Iw}}(p^t))$ for $m \geq 1$
\end{itemize}
where $\pi_{\mathrm{HT}, G, t} \colon \invs{S}_{G, \mathrm{Iw}}(p^t) \to \mathtt{FL}^G/K^G_{\mathrm{Iw}}(p^t)$ is the map (of topological spaces) induced from the Hodge--Tate period map. We will write $\invs{U}^G_k$, $\invs{I}^G_{m, k}$ and $\invs{Z}^G_{m}$ when $t$ is clear from the context.
\end{definition}

Note that, by the Iwahori decompositions in \S \ref{TubesInFlagVar}, $\invs{U}^G_{k}(p^t)$ is an open subset of $\invs{S}_{G, \mathrm{Iw}}(p^t)$ which is a finite union of quasi-Stein open subsets, and $\invs{Z}^G_m(p^t)$ is a closed subset of $\invs{S}_{G, \mathrm{Iw}}(p^t)$ whose complement is a finite union of quasi-Stein open subsets. Note that we have 
\[
\invs{I}^G_{m, k}(p^t) = \invs{U}^G_{k}(p^t) \cap \invs{Z}^G_m(p^t)
\]
so by \cite[Lemma 2.5.21]{BoxerPilloni}, the cohomology complex $R\Gamma_{\invs{I}^G_{m, k}}(\invs{U}^G_{k}, [V_{\kappa_n}])$ is represented by a complex in $\opn{Pro}_{\mbb{N}}(\invs{K}^{\mathrm{proj}}(\mbf{Ban}(\mbb{Q}_p)))$. Furthermore, $R\Gamma_{\invs{I}^G_{0, 0}}(\invs{U}^G_0, [V_{\kappa_n}])$ carries an action of $\invs{H}^{-}_{p, t}$ for which $\invs{U}_B'(p^t)$ acts as a potent compact operator (see Theorem 5.4.3 in \emph{op.cit.}).

\begin{proposition} \label{ChangeOfSupportProp}
For $m, k, t$ in (\ref{mktstrong}), the complex $R\Gamma_{\invs{I}^G_{m, k}}(\invs{U}^G_k, [V_{\kappa_n}])$ carries an action of $\invs{U}_B'(p^t)^{m}$ as a potent compact operator, and the natural maps
\[
R\Gamma_{\invs{I}_{m, k}^G(p^t)}\left(\invs{U}^G_k(p^t), [V_{\kappa_n}] \right) \xleftarrow{\mathrm{res}} R\Gamma_{\invs{I}_{m, 0}^G(p^t)}\left(\invs{U}^G_0(p^t), [V_{\kappa_n}] \right) \xrightarrow{\mathrm{cores}} R\Gamma_{\invs{I}_{0, 0}^G(p^t)}\left(\invs{U}^G_0(p^t), [V_{\kappa_n}] \right)
\]
are equivariant for $\invs{U}_B'(p^t)^m$ and become quasi-isomorphisms after passing to finite-slope parts.
\end{proposition}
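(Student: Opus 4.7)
The plan is to follow the general framework of Boxer--Pilloni \cite[\S 5.4]{BoxerPilloni}, reducing the statement to a flag-variety-theoretic contraction property for the Hecke correspondence attached to $x = (1; 1, p, \dots, p^{2n-1})_{\tau \in \Psi}$. The Hecke operator $\invs{U}_B'(p^t)$ acts through a correspondence $\invs{S}_{G, \mathrm{Iw}}(p^t) \leftarrow \invs{S}' \rightarrow \invs{S}_{G, \mathrm{Iw}}(p^t)$ with finite \'etale legs, and via the Hodge--Tate period map this is compatible with right translation by $x$ on $\mathtt{FL}^G / K^G_{\mathrm{Iw}}(p^t)$. Hence everything reduces to understanding how powers of $x$ move the tubes $\mathtt{I}^G_{m, k}$ and $\mathtt{U}^G_k$.

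First I would carry out the explicit contraction estimate on $\mathtt{FL}^G$. Using the Iwahori decomposition (\ref{IWUw}) at $w = w_n$, a point of $]C^G_{w_n}[_{\overline{m}, k}$ is parameterised by coordinates $(u_\alpha)$ indexed by $\alpha \in w_n^{-1} \Phi^{-, M}$: the coordinate $u_\alpha$ lies in $\overline{\invs{B}}^{\circ}_m$ if $\alpha \in \Phi^-$, and in $\invs{B}_k$ if $\alpha \in \Phi^+$. Right translation by $x$ rescales $u_\alpha$ by $\alpha(x)$, and the explicit form of $x$ gives $v(\alpha(x)) \in \{-(2n-1), \dots, -1\}$ for $\alpha \in \Phi^+$ (and the opposite range on $\Phi^-$), so each application of $x$ shrinks $\Phi^+$-coordinates by at least one power of $p$ and dilates $\Phi^-$-coordinates by at most $2n-1$ powers. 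The normalising factor $\lambda^*(x^{-1})$ in Definition \ref{BorelHeckeOperator} is precisely what makes the Hecke operator preserve the integral structure of $[V_{\kappa_n}]$. The conditions $m > (2n-1)(k+1)$ and $t > m + k$ in (\ref{mktstrong}) are then exactly what is needed to guarantee that $m$ successive applications of $x$ produce a map landing strictly inside the original tube (without escaping through the $K^G_{\mathrm{Iw}}(p^t)$-orbit), giving a well-defined endomorphism $\invs{U}_B'(p^t)^m$ of $R\Gamma_{\invs{I}^G_{m, k}}(\invs{U}^G_k, [V_{\kappa_n}])$ which factors through cohomology with support on a strictly smaller tube; hence it is completely continuous, proving potent compactness.

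For the quasi-isomorphism statement on finite-slope parts, the same contraction produces factorisations of powers of $\invs{U}_B'(p^t)$:
\[
R\Gamma_{\invs{I}^G_{m, k}}(\invs{U}^G_k) \longrightarrow R\Gamma_{\invs{I}^G_{m, 0}}(\invs{U}^G_0) \xrightarrow{\mathrm{res}} R\Gamma_{\invs{I}^G_{m, k}}(\invs{U}^G_k),
\]
\[
R\Gamma_{\invs{I}^G_{0, 0}}(\invs{U}^G_0) \longrightarrow R\Gamma_{\invs{I}^G_{m, 0}}(\invs{U}^G_0) \xrightarrow{\mathrm{cores}} R\Gamma_{\invs{I}^G_{0, 0}}(\invs{U}^G_0),
\]
whose composites are $\invs{U}_B'(p^t)^s$ for suitable $s$, together with the analogous factorisations in the opposite order. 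Inverting the resulting power of $\invs{U}_B'(p^t)$ on finite-slope parts gives two-sided inverses of $\mathrm{res}$ and $\mathrm{cores}$, in exact parallel with the proof of \cite[Proposition 5.4.3]{BoxerPilloni}.

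The main obstacle will be the combinatorial bookkeeping: translating the action of $x$ on the roots in $w_n^{-1} \Phi^{-, M}$ into the precise inequalities (\ref{mktstrong}), and verifying that the Hecke correspondence descends correctly to the level of supports (which requires the Iwahori depth $t$ to dominate the tube radii $m$ and $k$, and also requires care in handling the $\tau_0$-factor where the Bruhat structure is nontrivial versus the other $\tau$-factors where $w_n$ acts trivially). Beyond this, defining the Hecke action on cohomology with supports and extracting potent compactness together with the finite-slope inversion is a formal adaptation of the Boxer--Pilloni machinery to the group $\mbf{G}$ and the Weyl element $w_n$.
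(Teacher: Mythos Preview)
Your approach is essentially the same as the paper's: both reduce to the Boxer--Pilloni framework by verifying that iterates of the Hecke correspondence for $x$ contract the tubes $\mathtt{I}^G_{m,k}$ and $\mathtt{U}^G_k$ in the required manner, then feed this into the general machinery to obtain potent compactness and finite-slope quasi-isomorphisms. The only difference is packaging: the paper cites \cite[Lemmas 3.3.17, 3.5.10]{BoxerPilloni} to obtain the inclusions $(T^t)^{k+1+m}(\invs{Z}^G_0) \cap \invs{U}^G_0 \subset \invs{Z}^G_m \subset (T^t)^{k+1}(\invs{Z}^G_0)$ and $T^m(\invs{U}^G_0) \cap (T^t)^{k+1}(\invs{Z}^G_0) \subset \invs{U}^G_k \subset \invs{U}^G_0$ (with $\opn{min}(x)=1$, $\opn{max}(x)=2n-1$ on the $\tau_0$-factor) and then invokes \cite[Corollary 5.3.8]{BoxerPilloni} directly, whereas you spell out the root-by-root contraction and the factorisation argument that this corollary encapsulates.
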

\begin{proof}
For this proof only, let $K = K^pK_{\mathrm{Iw}}^G(p^t)$ and $K_x = K \cap xKx^{-1}$, where $x$ is the element in Definition \ref{BorelHeckeOperator}. Let $T$ denote the correspondence 
\[
\invs{S}_{G, K} \xleftarrow{p_2} \invs{S}_{G, K_x} \xrightarrow{p_1} \invs{S}_{G, K}
\]
where $p_1$ is the forgetful map associated with the inclusion $K_x \subset K$, and $p_2$ is the composition of right-translation by $x$ and the forgetful map associated with the inclusion $x^{-1}K_x x \subset K$. For a subset $\invs{W} \subset \invs{S}_{G, K}$, we let $T(\invs{W}) = p_2p_1^{-1}(\invs{W})$ and $(T^t)(\invs{W}) = p_1p_2^{-1}(\invs{W})$. For a non-negative integer $s$, we let
\[
T^{s+1}(\invs{W}) = T(T^s(\invs{W})), \quad \quad \quad (T^t)^{s+1}(\invs{W}) = (T^t)((T^t)^s(\invs{W}))
\]
with the convention that $T^0(\invs{W}) = (T^t)^0(\invs{W}) = \invs{W}$.

By \cite[Lemma 3.3.17 and Lemma 3.5.10]{BoxerPilloni}, one has the following inclusions
\begin{eqnarray}
    (T^t)^{k+1 + m}(\invs{Z}^G_0) \cap \invs{U}^G_0  \subset & \invs{Z}^G_m & \subset  (T^t)^{k+1}(\invs{Z}^G_0) \nonumber \\
    T^m(\invs{U}^G_0) \cap (T^t)^{k+1}(\invs{Z}^G_0)  \subset & \invs{U}^G_{k} & \subset \invs{U}^G_0 \nonumber
\end{eqnarray}
so the result follows from Corollary 5.3.8 in \emph{op.cit.} (note that the action of $x$ factors through its projection to the $\tau_0$-component on the flag variety, so we can apply the cited lemmas with $\opn{min}(x) = 1$ and $\opn{max}(x) = 2n-1$).
\end{proof}

\begin{remark}
It does not seem possible to apply \cite[Corollary 5.3.8]{BoxerPilloni} for general $m, k, t$ satisfying (\ref{mktTriple}), and we do not know if there is an alternative way to show that $R\Gamma_{\invs{I}_{m, k}^G}(\invs{U}^G_k, [V_{\kappa_n}])$ carries an action of a power of $\invs{U}_B'$ as a potent compact operator such that the conclusion of Proposition \ref{ChangeOfSupportProp} holds.
\end{remark}

We also define strata for $\invs{S}_{H, \diamondsuit}(p^t)$.

\begin{definition}
Let $\pi_{\mathrm{HT}, H, t} \colon \invs{S}_{H, \diamondsuit}(p^t) \to \mathtt{FL}^H/K^H_{\diamondsuit}(p^t)$ denote the map induced from the Hodge--Tate period map.
\begin{itemize}
    \item For $m \geq 0$ and $t \geq 1$, we define 
    \[
    \invs{Z}^H_m(p^t) = \pi_{\mathrm{HT}, H, t}^{-1}(\mathtt{Z}^H_m) .
    \]
    \item For $k \geq 0$ and $t \geq 1$, we define
    \[
    \invs{U}^H_k(p^t) = \pi_{\mathrm{HT}, H, t}^{-1}(\mathtt{U}^H_k) .
    \]
\end{itemize}
We will write $\invs{Z}^H_m$ and $\invs{U}^H_k$ when $t$ is clear from the context.
\end{definition}

We now define the relevant cohomology complexes with partial compact support conditions, following \cite[\S 5.4]{BoxerPilloni}. 

\begin{definition}
Let $\lambda \in X^*(T/T_0)^+$. Then we define
\[
R\Gamma_{w_n}^G(\kappa_n)^{-, \mathrm{fs}} \defeq R\Gamma_{\invs{I}_{0, 0}^G(p)}\left(\invs{U}^G_0(p), [V_{\kappa_n}] \right)^{-,\mathrm{fs}}
\]
where $(-)^{-,\mathrm{fs}}$ denotes the finite-slope part with respect to the action of $\invs{H}^{-}_{p, 1}$ as in \S \ref{HeckeOperators}. We denote the cohomology of this complex by $\opn{H}^i_{w_n}(\kappa_n)^{-, \mathrm{fs}}$.
\end{definition}

We record some important properties.

\begin{theorem} \label{ImportantPropertiesThm}
Let $\lambda \in X^*(T/T_0)^+$. 
\begin{enumerate}
    \item (Change of level) Let $m, k, t$ be as in (\ref{mktstrong}) (resp. $m=0, k=0$ and $t \geq 1$). The trace map
    \[
    R\Gamma_{\invs{I}_{m, k}^G(p^{t+1})}\left(\invs{U}^G_k(p^{t+1}), [V_{\kappa_n}] \right) \to R\Gamma_{\invs{I}_{m, k}^G(p^t)}\left(\invs{U}^G_k(p^t), [V_{\kappa_n}] \right)
    \]
    is $(\invs{U}_B')^m$-equivariant (resp. $T^-$-equivariant) and induces a quasi-isomorphism on finite-slope parts. 
    \item (Classicality for small slope) The natural maps 
    \[
    R\Gamma_{\invs{I}_{0, 0}^G(p)}\left(\invs{U}^G_0(p), [V_{\kappa_n}]\right) \xrightarrow{\mathrm{cores}} R\Gamma\left(\invs{U}^G_0(p), [V_{\kappa_n}]\right) \xleftarrow{\mathrm{res}} R\Gamma\left(\invs{S}_{G, \mathrm{Iw}}(p), [V_{\kappa_n}]\right)
    \]
    are $\invs{H}^{-}_{p, 1}$-equivariant and induce quasi-isomorphisms on small slope parts.
    \item (Vanishing for small slope) The complex $R\Gamma\left(\invs{S}_{G, \mathrm{Iw}}(p), [V_{\kappa_n}]\right)^{-, ss}$ is concentrated in degree $n-1$.
\end{enumerate}
\end{theorem}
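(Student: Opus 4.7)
All three statements are direct specialisations of the general machinery of Boxer--Pilloni to our group $\mbf{G}$, and the plan is to port each of their theorems through the identifications we have already fixed (the tubes $\invs{U}^G_k,\invs{I}^G_{m,k}$ in the adic Shimura variety, the Hecke algebra $\invs{H}^-_{p,t}$, and the Borel operator $\invs{U}'_B$ of Definition \ref{BorelHeckeOperator}). The key computational input that we need to keep feeding in is that the element $x=(1;1,p,\dots,p^{2n-1})_{\tau\in\Psi}$ defining $\invs{U}'_B$ lies in $T^{--}$, with $\min(x)=1$ and $\max(x)=2n-1$ in the sense of \cite[\S 5.3]{BoxerPilloni}.

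For part (1), I would argue as in \cite[Theorem 5.4.3 and Corollary 5.4.4]{BoxerPilloni}. The trace map in question factors through the Hecke correspondence associated with $x$, so composing it with the restriction in the opposite direction recovers the action of $[K_{\mathrm{Iw}}^G(p^{t+1}) x K_{\mathrm{Iw}}^G(p^{t+1})]$ up to a normalisation already absorbed into $\invs{U}'_B$. Because $\invs{U}'_B$ (resp.\ some power of an element of $T^{-}$) acts as a potent compact operator on the relevant complexes, this action is invertible on finite-slope parts, and hence the trace map is a quasi-isomorphism on $(-)^{-,\mathrm{fs}}$. That the action on the $(m,k)$-truncations is by $(\invs{U}'_B)^m$ rather than $\invs{U}'_B$ is forced by the inclusions recorded in the proof of Proposition \ref{ChangeOfSupportProp}; equivariance is automatic from the fact that the correspondence preserves each tube up to a controlled shift.

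For part (2), I would combine part (1) with Proposition \ref{ChangeOfSupportProp} and the Cousin / dual BGG spectral sequence of \cite[Corollary 5.5.3]{BoxerPilloni}. Concretely, the cohomology of $[V_{\kappa_n}]$ on $\invs{S}_{G,\mathrm{Iw}}(p)$ admits a filtration whose graded pieces are the local cohomologies supported on the Bruhat strata indexed by $w\in{^MW_\mbf{G}}$, and for any finite-slope class one knows that the Hecke operator $\invs{U}'_B$ acts on the $w$-stratum piece with slope at least $v((w^{-1}\star\kappa_n)(x))$. The small slope hypothesis (\ref{sssIneq}) is precisely the statement that on the $\mathrm{ss}(\kappa_n)$-part only the $w=w_n$ contribution survives, so both maps in the displayed zig-zag are quasi-isomorphisms after passing to $(-)^{-,\mathrm{ss}}$.

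For part (3), with the quasi-isomorphisms from (2) in hand, one is reduced to checking that
\[
R\Gamma_{\invs{I}^G_{0,0}(p)}\bigl(\invs{U}^G_0(p),[V_{\kappa_n}]\bigr)^{-,\mathrm{ss}}
\]
is concentrated in a single degree. The closed subspace $\invs{I}^G_{0,0}(p)$ is (via the Hodge--Tate period map) the tube over the Bruhat cell $C^G_{w_n}$, and by the Iwahori decomposition of \S \ref{TubesInFlagVar} it is a finite disjoint union of products of closed/open adic discs of relative dimension $\ell_-(w_n)=2n-1-n=n-1$ inside a Stein open. Koszul/local duality for such an inclusion gives that $R\underline{\Gamma}_{\invs{I}^G_{0,0}}$ applied to any coherent sheaf on $\invs{U}^G_0$ is concentrated in cohomological degree $n-1$; see \cite[Proposition 5.5.1]{BoxerPilloni}. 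Passing to global sections on the (finite union of) quasi-Stein $\invs{U}^G_0$ preserves this concentration, and finite-slope/small-slope truncation is exact, giving the claim. The hardest input to this plan is the Cousin spectral sequence of step (2), because controlling the slopes stratum by stratum is what forces the shape of the small-slope condition in (\ref{sssIneq}); once that is in place, steps (1) and (3) are essentially formal consequences of the structure of the tubes and the action of $\invs{U}'_B$.
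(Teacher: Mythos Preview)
Your overall plan---reduce everything to the Boxer--Pilloni machinery---is the same as the paper's, but you are missing the one non-formal ingredient that actually does the work, and your argument for part (3) breaks because of this.

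The paper's proof cites \cite[Corollary 4.2.16, Theorem 5.4.14]{BoxerPilloni} for (1), and then for (2)--(3) invokes the small-slope classicality and vanishing theorems \cite[Theorems 5.12.3, 5.12.5]{BoxerPilloni}. Those theorems are conditional on the slope bounds of \cite[Conjecture 5.9.2]{BoxerPilloni}, and the paper explicitly notes that these bounds hold here because the Shimura variety is \emph{compact}, via \cite[Theorem 6.10.1]{BoxerPilloni}. You never invoke compactness, so your Cousin-spectral-sequence argument for (2) has no input bounding the slopes on the strata $w\neq w_n$; without Conjecture 5.9.2 the inequality (\ref{sssIneq}) has nothing to bite on.

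Your direct argument for (3) has a more concrete error. You assert that $\invs{I}^G_{0,0}(p)\subset\invs{U}^G_0(p)$ is, via $\pi_{\mathrm{HT}}$, ``a finite disjoint union of products of closed/open adic discs of relative dimension $n-1$ inside a Stein open'', and then run Koszul/local duality. That description holds for the tube $]C^G_{w_n}[\subset\mathtt{FL}^G$, but \emph{not} for its preimage in the Shimura variety: the fibres of $\pi_{\mathrm{HT}}$ are highly non-trivial (they are related to perfectoid Igusa varieties), so $\invs{I}^G_{0,0}(p)$ is not cut out by a regular sequence of length $n-1$ in $\invs{U}^G_0(p)$, and there is no reason for the local cohomology sheaf $R\underline{\Gamma}_{\invs{I}^G_{0,0}}[V_{\kappa_n}]$ to be concentrated in a single degree. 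The actual vanishing in \cite[Theorem 5.12.5]{BoxerPilloni} is again proved via the slope bounds, not by a direct geometric argument on the strata; so the fix for (3) is the same as for (2): cite compactness $\Rightarrow$ Theorem 6.10.1 $\Rightarrow$ Conjecture 5.9.2, and then quote 5.12.5.
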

\begin{proof}
Part (1) is an application of \cite[Corollary 4.2.16 and Theorem 5.4.14]{BoxerPilloni}. Because the Shimura variety is compact, Theorem 6.10.1 implies Conjecture 5.9.2 in \emph{op.cit.} (i.e. the expected slope bounds hold). Parts (2) and (3) then follow immediately from the small slope versions of Theorem 5.12.3 and Theorem 5.12.5 in \emph{op.cit.}.
\end{proof}

We define similar complexes for $\invs{S}_{H, \diamondsuit}(p^t)$, however we do not consider the finite-slope part of these complexes. 

\begin{definition}
We set
\[
R\Gamma^H_{\mathrm{id}}\left(\invs{S}_{H, \diamondsuit}(p^t), \sigma^{[j]}_n \right)^{(-, \dagger)} \defeq \varprojlim_m R\Gamma_{\invs{Z}^H_m(p^t)}\left(\invs{S}_{H, \diamondsuit}(p^t), [\sigma_n^{[j]}]\right) 
\]
where the transition maps are given by corestriction. If $t = 1$, we simply write $R\Gamma^H_{\mathrm{id}}\left(\sigma^{[j]}_n \right)^{(-, \dagger)}$ and denote the cohomology of this complex by $\opn{H}^i_{\mathrm{id}}(\sigma_n^{[j]})^{(-, \dagger)}$.
\end{definition}

\subsection{Functoriality} \label{PullbackAdicFunctoriality}

The goal of this section is to construct a map
\[
R\Gamma^G_{w_n}(\kappa_n)^{-, \mathrm{fs}} \to R\Gamma^H_{\mathrm{id}}\left(\sigma^{[j]}_n \right)^{(-, \dagger)}
\]
which is compatible with pull-back by $\widehat{\iota}$ on the usual cohomology.

\begin{definition} \label{VarThetaMap}
Let $m, k, t$ be as in (\ref{mktTriple}). Then we define a morphism
\[
\vartheta_{m, k, t} \colon R\Gamma_{\invs{I}_{m, k}^G(p^t)}\left( \invs{U}_k^G(p^t), [V_{\kappa_n}] \right) \to R\Gamma_{\invs{Z}_{m}^H(p^t)}\left( \invs{S}_{H, \diamondsuit}(p^t), [\sigma_n^{[j]}] \right) 
\]
as the composition of the following maps:
\begin{itemize}
    \item $\widehat{\iota}^* \colon R\Gamma_{\invs{I}_{m, k}^G(p^t)}\left( \invs{U}_k^G(p^t), [V_{\kappa_n}] \right) \to R\Gamma_{\invs{Z}_{m}^H(p^t)}\left( \invs{U}_k^H(p^t), \widehat{\iota}^*[V_{\kappa_n}] \right)$
    \item (Excision) $R\Gamma_{\invs{Z}_{m}^H(p^t)}\left( \invs{U}_k^H(p^t), \widehat{\iota}^*[V_{\kappa_n}] \right) \xrightarrow{\sim} R\Gamma_{\invs{Z}_{m}^H(p^t)}\left( \invs{S}_{H, \diamondsuit}(p^t), \widehat{\iota}^*[V_{\kappa_n}] \right)$
    \item $R\Gamma_{\invs{Z}_{m}^H(p^t)}\left( \invs{S}_{H, \diamondsuit}(p^t), \widehat{\iota}^*[V_{\kappa_n}] \right) \to R\Gamma_{\invs{Z}_{m}^H(p^t)}\left( \invs{S}_{H, \diamondsuit}(p^t), [\sigma_n^{[j]}] \right)$
\end{itemize}
where the last map is induced from $V_{\kappa_n} \to \sigma_n^{[j]}$ (as in (\ref{MapOfRepsTwistedEqn})). Note that $\widehat{\iota}^*$ is well-defined by the Cartesian square in Lemma \ref{CartesianSquareStrataLemma} and the fact that the strata on the level of flag varieties are independent of $t$ for $t > m$ (c.f. property (2) in \cite[\S 2.1]{BoxerPilloni}). The excision step is well-defined because $\invs{Z}_m^H(p^t)$ is closed $\invs{S}_{H, \diamondsuit}(p^t)$ (c.f. property (3) in \emph{loc.cit.}).
\end{definition}

Let $m, k, t$ and $m', k', t'$ be triples satisfying (\ref{mktTriple}), such that $m' \geq m$, $k' \geq k$ and $t' \geq t$. Then the maps in Definition \ref{VarThetaMap} fit into the following commutative diagram: 
\[
\begin{tikzcd}
{R\Gamma_{\invs{I}_{m', k'}^G(p^{t'})}\left( \invs{U}_{k'}^G(p^{t'}), [V_{\kappa_n}] \right)} \arrow[r, "{\vartheta_{m', k', t'}}"]                                                        & {R\Gamma_{\invs{Z}_{m'}^H(p^{t'})}\left( \invs{S}_{H, \diamondsuit}(p^{t'}), [\sigma_n^{[j]}] \right)} \arrow[d, equal] \\
{R\Gamma_{\invs{I}_{m', k}^G(p^{t'})}\left( \invs{U}_k^G(p^{t'}), [V_{\kappa_n}] \right)} \arrow[r, "{\vartheta_{m', k, t'}}"] \arrow[u, "\mathrm{res}"] \arrow[d, "\mathrm{cores}"'] & {R\Gamma_{\invs{Z}_{m'}^H(p^{t'})}\left( \invs{S}_{H, \diamondsuit}(p^{t'}), [\sigma_n^{[j]}] \right)} \arrow[d, "\mathrm{cores}"]    \\
{R\Gamma_{\invs{I}_{m, k}^G(p^{t'})}\left( \invs{U}_k^G(p^{t'}), [V_{\kappa_n}] \right)} \arrow[r, "{\vartheta_{m, k, t'}}"] \arrow[d, "\mathrm{tr}"']                              & {R\Gamma_{\invs{Z}_{m}^H(p^{t'})}\left( \invs{S}_{H, \diamondsuit}(p^{t'}), [\sigma_n^{[j]}] \right)} \arrow[d, "\mathrm{tr}"]       \\
{R\Gamma_{\invs{I}_{m, k}^G(p^t)}\left( \invs{U}_k^G(p^t), [V_{\kappa_n}] \right)} \arrow[r, "{\vartheta_{m, k, t}}"]                                                            & {R\Gamma_{\invs{Z}_{m}^H(p^t)}\left( \invs{S}_{H, \diamondsuit}(p^t), [\sigma_n^{[j]}] \right)}                                   
\end{tikzcd}
\]
where $\opn{tr}$ denotes the trace map (see \cite[Lemma 2.1.2]{BoxerPilloni}). The bottom square is commutative because, by Lemma \ref{IndexLevelSubgroup}, we have a Cartesian diagram of Shimura varieties:
\[
\begin{tikzcd}
{\invs{S}_{H, \diamondsuit}(p^{t+1})} \arrow[d] \arrow[r, "\widehat{\iota}"] & {\invs{S}_{G, \mathrm{Iw}}(p^{t+1})} \arrow[d] \\
{\invs{S}_{H, \diamondsuit}(p^t)} \arrow[r, "\widehat{\iota}"]               & {\invs{S}_{G, \mathrm{Iw}}(p^t)}              
\end{tikzcd}
\]
for any $t \geq 1$.

\begin{proposition} \label{MinusPullbackProp}
One has a well-defined map
\[
R\Gamma^G_{w_n}(\kappa_n)^{-, \mathrm{fs}} \to R\Gamma^H_{\mathrm{id}}\left(\sigma^{[j]}_n \right)^{(-, \dagger)}
\]
defined as the (inverse limit over $m$ of the) composition of
\begin{itemize}
    \item The inverse of the trace map followed by the inverse of corestriction 
    \[
    R\Gamma_{\invs{I}_{0, 0}^G(p)}(\invs{U}^G_0(p), [V_{\kappa_n}])^{-, \mathrm{fs}} \xrightarrow{\sim} R\Gamma_{\invs{I}^G_{m, 0}(p^t)}(\invs{U}^G_0(p^t), [V_{\kappa_n}])^{-, \mathrm{fs}}
    \]
    which makes sense by Proposition \ref{ChangeOfSupportProp} and Theorem \ref{ImportantPropertiesThm}.
    \item The morphism $\vartheta_{m, 0, t}$
    \item The trace map 
    \[
    R\Gamma_{\invs{Z}^H_m(p^t)}(\invs{S}_{H, \diamondsuit}(p^t), [\sigma_n^{[j]}]) \to R\Gamma_{\invs{Z}^H_m(p)}(\invs{S}_{H, \diamondsuit}(p), [\sigma_n^{[j]}])
    \]
\end{itemize}
for any $m \geq 0$, $t \geq 1$ satisfying $m > 2n-1$ and $t > m+1$ (i.e. the tuple $(m, 0, t)$ satisfies (\ref{mktstrong})).
\end{proposition}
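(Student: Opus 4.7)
The plan is to verify three things: (a) that each of the three composed arrows in the statement is defined on the relevant object after passing to finite-slope parts, (b) that the resulting composition is independent of the auxiliary parameter $t$ for fixed $m$, and (c) that as $m$ grows the resulting maps are compatible with the corestriction transition maps defining the inverse limit $R\Gamma^H_{\mathrm{id}}(\sigma_n^{[j]})^{(-,\dagger)}$. Once (a)--(c) are verified we obtain a map into the inverse limit by the universal property.

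For (a), fix $(m,0,t)$ satisfying \eqref{mktstrong}. By Proposition \ref{ChangeOfSupportProp}, the corestriction
\[
R\Gamma_{\invs{I}^G_{m,0}(p^t)}(\invs{U}^G_0(p^t),[V_{\kappa_n}]) \xrightarrow{\mathrm{cores}} R\Gamma_{\invs{I}^G_{0,0}(p^t)}(\invs{U}^G_0(p^t),[V_{\kappa_n}])
\]
is equivariant for $(\invs{U}_B')^m$ and a quasi-isomorphism on finite-slope parts, and by Theorem \ref{ImportantPropertiesThm}(1) the trace map relating levels $p^t$ and $p$ is equivariant and a quasi-isomorphism on finite-slope parts. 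The inverses of these maps therefore make sense on $R\Gamma^G_{w_n}(\kappa_n)^{-,\mathrm{fs}}$, and then $\vartheta_{m,0,t}$ followed by the trace down to level $p$ produces an arrow to $R\Gamma_{\invs{Z}^H_m(p)}(\invs{S}_{H,\diamondsuit}(p),[\sigma_n^{[j]}])$.

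For (b), suppose $(m,0,t)$ and $(m,0,t')$ both satisfy \eqref{mktstrong} with $t'\geq t$. Apply the commutative diagram displayed just before the proposition with $k'=k=0$ and $m'=m$: the bottom square relates $\vartheta_{m,0,t'}$ and $\vartheta_{m,0,t}$ via the trace maps on both sides, and the left-hand vertical trace becomes a quasi-isomorphism on finite-slope parts by Theorem \ref{ImportantPropertiesThm}(1). Inverting the traces (which is legal on finite-slope parts) shows that the two recipes agree after composing with the final trace down to level $p$.

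For (c), suppose $(m',0,t')$ and $(m,0,t)$ both satisfy \eqref{mktstrong} with $m'\geq m$. By (b) we may enlarge $t$ and $t'$ freely, and in particular may take $t'=t$ with $t>m'+1$. The top square of the displayed commutative diagram (with $k=k'=0$) together with the middle ``cores'' square then compares $\vartheta_{m',0,t}$ with the composition of $\vartheta_{m,0,t}$ and the appropriate cores/trace maps, modulo the restriction map $R\Gamma_{\invs{I}^G_{m,0}(p^t)}\to R\Gamma_{\invs{I}^G_{m',0}(p^t)}$ on the source, which is the inverse of the corestriction map of Proposition \ref{ChangeOfSupportProp} after passing to finite-slope parts. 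Composing with the trace down to level $p$ shows that the component map for $m'$ followed by corestriction to $m$ equals the component map for $m$; this is exactly the compatibility required for the inverse limit.

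The main subtlety is keeping track of which Hecke operators act as potent compact operators on which strata: Proposition \ref{ChangeOfSupportProp} only gives the action of $(\invs{U}_B')^m$ under the stronger hypothesis \eqref{mktstrong}, which forces us to restrict the inverse system over $m$ to those satisfying $m>2n-1$ (and always to choose $t>m+1$). Since these $m$ are cofinal, the resulting inverse limit still coincides with $R\Gamma^H_{\mathrm{id}}(\sigma_n^{[j]})^{(-,\dagger)}$, and the checks (a)--(c) then assemble the desired morphism.
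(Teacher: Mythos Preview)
Your proof is correct and follows the same approach as the paper: the paper's proof is a single sentence citing the commutative diagram displayed just before the proposition together with the fact that trace and corestriction commute (with a reference to \cite[Lemma 2.1.2]{BoxerPilloni}), and your argument is precisely an unpacking of that sentence into the three checks (a)--(c). The one point you make explicit that the paper leaves implicit is the cofinality of the indices $m>2n-1$ in the inverse system defining $R\Gamma^H_{\mathrm{id}}(\sigma_n^{[j]})^{(-,\dagger)}$; this is a fair observation and is indeed needed.
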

\begin{proof}
This map is well-defined by the above commutative diagram and the fact that trace and corestriction commute with each other (c.f. the construction in \cite[Lemma 2.1.2]{BoxerPilloni}).
\end{proof}

Set $R\Gamma_{\mathrm{id}}^H(\sigma_n^{[j], \vee})^{(+, \dagger)} \defeq \varinjlim_m R\Gamma(\invs{Z}_m^H(p), [\sigma_n^{[j]}]^\vee )$ with transition maps given by restriction, and denote the cohomology of this complex by $\opn{H}^i_{\mathrm{id}}(\sigma_n^{[j], \vee})^{(+, \dagger)}$. By \cite[Theorem 2.7.1]{BoxerPilloni} (using the fact that $\invs{Z}^H_m$ is the closure of $\invs{U}^H_m$), one has a natural pairing between $R\Gamma_{\mathrm{id}}^H(\sigma_n^{[j]})^{(-, \dagger)}$ and $R\Gamma_{\mathrm{id}}^H(\sigma_n^{[j], \vee})^{(+, \dagger)}$ built from the Serre duality pairings, which commutes with the Serre duality pairing between $R\Gamma(\invs{S}_{H, \diamondsuit}(p), [\sigma_n^{[j]}])$ and $R\Gamma(\invs{S}_{H, \diamondsuit}(p), [\sigma_n^{[j]}]^{\vee})$ via corestriction and restriction on the former and latter complex respectively.\footnote{To be more precise, one cannot directly apply \cite[Theorem 2.7.1]{BoxerPilloni} because $\invs{U}^H_m$ is not quasi-compact. However one can find quasi-compact open subsets $\invs{U}'_m$ satisfying $\invs{U}^H_{m+1} \subset \invs{U}'_m \subset \invs{U}^H_m$ and apply the theorem with these strata instead, as this does not affect the cohomology groups in the limit.} Proposition \ref{MinusPullbackProp} therefore allows us to define a pairing 
\[
\boxed{ 
\langle , \rangle_{\mathrm{an}}^{-} \colon \opn{H}^{n-1}_{w_n}( \kappa_n )^{-, \mathrm{fs}} \times \opn{H}^0_{\mathrm{id}}(\sigma^{[j], \vee}_n)^{(+, \dagger)} \to \mbb{Q}_p
}
\]
by composing the map in Proposition \ref{MinusPullbackProp} with the duality pairing between the $(-, \dagger)$ and $(+, \dagger)$ cohomologies above. Considering classes in the small slope part, we obtain the following result:

\begin{theorem} \label{RestrictionToStrataThm}
Let 
\begin{itemize} 
\item $\chi \in \opn{H}^0\left(\invs{S}_{H, \diamondsuit}(p), [\sigma_n^{[j]}]^\vee \right)$ 
\item $\eta \in \opn{H}^{n-1}_{w_n}( \kappa_n )^{-, \mathrm{ss}} \cong \opn{H}^{n-1}\left(\invs{S}_{G, \mathrm{Iw}}(p), [V_{\kappa_n}] \right)^{-, ss}$
\end{itemize}
and denote by $\opn{res}\chi$ the image of $\chi$ under the restiction map
\[
\opn{H}^0\left(\invs{S}_{H, \diamondsuit}(p), [\sigma_n^{[j]}]^\vee \right) \to \opn{H}^0_{\mathrm{id}}(\sigma^{[j], \vee}_n)^{(+, \dagger)} .
\]
Then $\langle \eta, \opn{res}\chi \rangle^-_{\mathrm{an}} = \langle \eta, \chi \rangle_{\mathrm{an}}$.
\end{theorem}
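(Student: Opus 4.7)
The strategy is to unravel both pairings and show their equality by a diagram chase, exploiting the Cartesian squares of Shimura varieties (Lemma \ref{IndexLevelSubgroup}) and strata (Lemma \ref{CartesianSquareStrataLemma}), together with the small-slope identifications of Theorem \ref{ImportantPropertiesThm}.

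Fix a triple $(m, 0, t)$ satisfying \eqref{mktstrong}. By Theorem \ref{ImportantPropertiesThm}(1),(2) and Proposition \ref{ChangeOfSupportProp}, the class $\eta$ determines uniquely on small-slope parts a class
\[
\tilde{\eta}_2 \in \opn{H}^{n-1}_{\invs{I}^G_{m, 0}(p^t)}\bigl(\invs{U}^G_0(p^t),\, [V_{\kappa_n}]\bigr)^{-, \mathrm{fs}},
\]
and by construction $\langle \eta, \opn{res}\chi \rangle^{-}_{\mathrm{an}}$ is the refined Serre duality pairing of $\opn{tr}(\vartheta_{m, 0, t}(\tilde{\eta}_2))$ with $\opn{res}\chi$. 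The commutative diagram following Definition \ref{VarThetaMap} yields $\opn{tr}(\vartheta_{m, 0, t}(\tilde{\eta}_2)) = \vartheta_{m, 0, 1}(\opn{tr}(\tilde{\eta}_2))$, and the compatibility recorded in the paragraph preceding the theorem (via \cite[Theorem 2.7.1]{BoxerPilloni}) then gives
\[
\langle \eta, \opn{res}\chi \rangle^{-}_{\mathrm{an}} \;=\; \opn{tr}_{\invs{S}_{H, \diamondsuit}(p)}\bigl(\,\opn{forget}\bigl(\vartheta_{m, 0, 1}(\opn{tr}(\tilde{\eta}_2))\bigr) \cup \chi\,\bigr),
\]
where $\opn{forget}$ denotes the forget-support map landing in $\opn{H}^{n-1}(\invs{S}_{H, \diamondsuit}(p), [\sigma^{[j]}_n])$.

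It now suffices to identify this forget image with the class $\widehat{\iota}^*(\eta)_B$ obtained by pulling back $\eta$ via $\widehat{\iota}$ and applying the branching map $V_{\kappa_n} \to \sigma_n^{[j]}$. Three observations combine to give this: (a) since $\mathtt{U}^H_0 = \mathtt{FL}^H$, we have $\invs{U}^H_0(p) = \invs{S}_{H, \diamondsuit}(p)$, so the excision step in $\vartheta_{m, 0, 1}$ is literally the identity after forgetting supports; (b) by Lemma \ref{CartesianSquareStrataLemma} the morphism $\widehat{\iota}$ factors through $\invs{U}^G_0(p)$, so $\widehat{\iota}^*(\eta) = \widehat{\iota}^*(\opn{res}\eta)$; and (c) on the small-slope part, $\opn{forget}(\opn{tr}(\tilde{\eta}_2)) = \opn{res}(\eta)$ inside $\opn{H}^{n-1}(\invs{U}^G_0(p), [V_{\kappa_n}])$, which is obtained by chasing the definition of $\tilde{\eta}_2$ through the ss quasi-isomorphisms of Theorem \ref{ImportantPropertiesThm} together with the commutativity of forget-support with both trace and corestriction. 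Since pullback by $\widehat{\iota}$ commutes with forget-support, these yield $\opn{forget}(\vartheta_{m, 0, 1}(\opn{tr}(\tilde{\eta}_2))) = \widehat{\iota}^*(\eta)_B$ in $\opn{H}^{n-1}(\invs{S}_{H, \diamondsuit}(p), [\sigma_n^{[j]}])$, whence
\[
\langle \eta, \opn{res}\chi \rangle^{-}_{\mathrm{an}} \;=\; \opn{tr}(\widehat{\iota}^*\eta \cup \chi) \;=\; \langle \eta, \chi \rangle_{\mathrm{an}}.
\]

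The principal obstacle is the careful bookkeeping of support conditions, levels, and the associated trace, restriction, corestriction, and forget-support maps. The heart of the argument is naturality: all of these operations commute with $\widehat{\iota}^*$, reducing via the Cartesian diagrams of Lemma \ref{IndexLevelSubgroup} and Lemma \ref{CartesianSquareStrataLemma} to a pure diagram chase, which becomes a genuine equality once one restricts to the small-slope part where the relevant structural maps of higher Coleman theory are invertible.
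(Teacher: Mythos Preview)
Your approach is the same as the paper's in spirit: both reduce to a diagram chase showing that the map of Proposition~\ref{MinusPullbackProp}, followed by forget-support on the $H$-side, agrees with forget-support on the $G$-side followed by $\widehat{\iota}^*$ and the branching map. The paper packages this into a single three-row commutative diagram at level $p$ (using that $\widehat{\iota}$ factors through $\invs{U}^G_0(p)$) and passes to small-slope parts; you unwind the definition of the Proposition~\ref{MinusPullbackProp} map explicitly through level $p^t$ and trace back. Your observations (a), (b), (c) are exactly the ingredients the paper's diagram encodes.

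There is one technical slip: you write $\opn{tr}(\vartheta_{m,0,t}(\tilde{\eta}_2)) = \vartheta_{m,0,1}(\opn{tr}(\tilde{\eta}_2))$, but $\vartheta_{m,0,1}$ is not defined, since the triple $(m,0,1)$ with $m>2n-1$ violates condition~\eqref{mktTriple} (which requires $m<t$), and the commutative diagram after Definition~\ref{VarThetaMap} only applies to triples satisfying~\eqref{mktTriple}. The fix is immediate: do not trace down on the $G$-side with large support. Instead, use the corestriction square of that same diagram to pass from $(m,0,t)$ to $(0,0,t)$ first (so $\opn{cores}\circ\vartheta_{m,0,t} = \vartheta_{0,0,t}\circ\opn{cores}$ after adjusting the $H$-side support), and then trace from $(0,0,t)$ to $(0,0,1)$, where both triples are legitimate. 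Equivalently, and more in line with the paper, commute forget-support with $\vartheta_{m,0,t}$ directly at level $p^t$ (which your step (a) already identifies as $\widehat{\iota}^*$ followed by the branching map), and then with trace; this avoids ever needing $\vartheta_{m,0,1}$. After this correction your argument goes through and matches the paper's.
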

\begin{proof}
Since the embedding $\widehat{\iota} \colon \invs{S}_{H, \diamondsuit}(p^t) \to \invs{S}_{G, \mathrm{Iw}}(p^t)$ factors through $\invs{U}^G_0(p^t)$, we obtain the commutative diagram
\[
\begin{tikzcd}
{R\Gamma^G_{w_n}(\kappa_n)^{-, \mathrm{fs}}} \arrow[r] \arrow[d, "\mathrm{cores}"']         & {R\Gamma^H_{\mathrm{id}}(\sigma_n^{[j]})^{(-, \dagger)}} \arrow[d, "\mathrm{cores}"] \\
{R\Gamma(\invs{U}^G_0(p), [V_{\kappa_n}])} \arrow[r]                                        & {R\Gamma(\invs{S}_{H, \diamondsuit}(p), [\sigma_n^{[j]}])} \arrow[d, equal]                 \\
{R\Gamma(\invs{S}_{G, \mathrm{Iw}}(p), [V_{\kappa_n}])} \arrow[r] \arrow[u, "\mathrm{res}"] & {R\Gamma(\invs{S}_{H, \diamondsuit}(p), [\sigma_n^{[j]}])}                          
\end{tikzcd}
\]
where the top horizontal arrow is as in Proposition \ref{MinusPullbackProp}, and the bottom two are obtained from composing $\widehat{\iota}^*$ with the map of sheaves $\widehat{\iota}^*[V_{\kappa_n}] \to [\sigma_n^{[j]}]$. Passing to small slope parts and cohomology gives the result.
\end{proof}


\section{Locally analytic cohomology} \label{LocallyAnalyticCohomologySection}

\subsection{Further reduction of structure} \label{FurtherReductionOfStructureSection}

We first consider the reduction of structure for $\invs{M}_{G,\mathrm{HT}}$. Let $U_G$, $\overline{U}_G$, $U_{M_G}$ and $\overline{U}_{M_G}$ denote the unipotent radicals of $B_G$, $\overline{B}_G$, $B_{M_G}$ and $\overline{B}_{M_G}$ respectively. For $k > 0$, let $\invs{G}^1_{k, k}$ (resp. $\invs{M}_{G, k, k}^1$) denote the subgroup of $\invs{G}$ (resp. $\invs{M}_G$) of elements which reduce to $\invs{U}_G$ (resp. $\invs{U}_{M_G}$) modulo $p^{k+\varepsilon}$ for all $\varepsilon > 0$, and to $\overline{\invs{U}}_G$ (resp. $\overline{\invs{U}}_{M_G}$) modulo $p^k$. We have similar definitions for $\invs{M}^1_{H, k, k}$ and $\invs{H}^1_{k, k}$.

We introduce the following group:

\begin{definition}
Let $\invs{M}^{\square}_{G, k, k} = \invs{M}^{1}_{G, k, k} \cdot B_{M_G}(\mbb{Z}_p)$, which is a subgroup of $\invs{M}_G$ containing the Iwahori subgroup of $M_G(\mbb{Z}_p)$ of depth $p^t$ for any $t>k$.
\end{definition}

\begin{remark}
The homomorphism $\mu \colon \mbb{Z}_p^{\times} \to \invs{M}_G$ factors through the subgroup $\invs{M}^{\square}_{G, k, k}$.
\end{remark}

\begin{remark}
Let $t>k>0$. If we let $K_{p, w_n, M_G}$ equal the projection of $w_n K^G_{\mathrm{Iw}}(p^t) w_n^{-1} \cap \invs{P}_G$ to $\invs{M}_G$, then the proof of \cite[Proposition 4.6.9]{BoxerPilloni} shows that $K_{p, w_n, M_G}$ equals the Iwahori subgroup of $M_G(\mbb{Z}_p)$ of depth $p^t$ (the proposition only treats the case $t=1$, but the proof easily generalises to arbitrary $t$). Therefore $\invs{M}^{\square}_{G, k, k} = \invs{M}^1_{G, k, k} \cdot K_{p, w_n, M_G} = K_{p, w_n, M_G} \cdot \invs{M}^1_{G, k, k}$.
\end{remark}

For $t > k > 0$, let $\mathtt{M}^G_{k,k, t}$ denote the space
\begin{align*}
    K^G_{\mathrm{Iw}}(p^t) \invs{G}^1_{k,k} / \left( K^G_{\mathrm{Iw}}(p^t) \invs{G}^1_{k,k} \cap w_n^{-1}\invs{N}_G w_n \right) &\to \invs{P}_G \backslash \invs{P}_G w_n K^G_{\mathrm{Iw}}(p^t) \invs{G}^1_{k, k} = \mathtt{U}^G_k \\
    x &\mapsto \invs{P}_G w_n x^{-1}
\end{align*}
which is a (right) torsor for the group $\invs{M}^{\square}_{G, k, k}$ via the embedding $w_n^{-1}\invs{M}^{\square}_{G, k, k}w_n \subset K^G_{\mathrm{Iw}}(p^t) \invs{G}^1_{k, k}$.

\begin{proposition} \label{MGTorsorReductionStructure}
Let $t>k>0$. The torsor $\invs{M}_{G,\mathrm{HT}}$ has a reduction of structure to a pro\'{e}tale $\invs{M}^{\square}_{G, k, k}$-torsor $\invs{M}_{G,\mathrm{HT}, k,k, t}$ over $\invs{U}^G_k(p^t)$. Furthermore, the pullback of $\invs{M}_{G,\mathrm{HT}, k,k, t}$ to the perfectoid space $\invs{S}_{G, K^p}$ is canonically isomorphic to the torsor $\pi_{\mathrm{HT}}^* \mathtt{M}^G_{k,k, t}$.

Moreover, the twisted torsor ${^\mu \invs{M}_{G,\mathrm{HT}, k,k, t}}$ defines a reduction of structure of the torsor ${^\mu \invs{M}_{G, \opn{HT}}}$ to an \'{e}tale $\invs{M}_{G, k, k}^{\square}$-torsor.
\end{proposition}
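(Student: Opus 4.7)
The plan is to construct the reduction first on the perfectoid cover $\invs{S}_{G, K^p}$ by pulling back an analogous reduction defined on the flag variety, and then to descend it to $\invs{U}^G_k(p^t)$ using the $K^G_{\mathrm{Iw}}(p^t)$-equivariance of the Hodge--Tate period map. The statement about the twist will then follow from functoriality of the twisting operation together with a direct check of \'{e}taleness.

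The key group-theoretic step is to verify that $\mathtt{M}^G_{k,k,t} \to \mathtt{U}^G_k$ realises a reduction of structure of $\mathtt{M}^G|_{\mathtt{U}^G_k}$ from $\invs{M}_G$ to $\invs{M}^{\square}_{G, k, k}$. This relies on the mild generalisation of \cite[Proposition 4.6.9]{BoxerPilloni} flagged in the preceding remark: for $t > k$, the projection
\[
w_n^{-1} \invs{P}_G w_n \;\cap\; K^G_{\mathrm{Iw}}(p^t) \invs{G}^1_{k, k} \;\twoheadrightarrow\; \invs{M}_G
\]
has image exactly $\invs{M}^{\square}_{G, k, k}$, which combined with the Iwahori decomposition of $\invs{G}^1_{k,k}$ gives the identification $\mathtt{M}^G_{k,k,t} \times^{\invs{M}^{\square}_{G, k, k}} \invs{M}_G \cong \mathtt{M}^G|_{\mathtt{U}^G_k}$. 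Once this is established, Lemma \ref{PullbackOfTorsorDescription} identifies the pullback of $\invs{M}_{G, \mathrm{HT}}$ to $\invs{S}_{G, K^p}$ with $\pi_{\mathrm{HT}}^* \mathtt{M}^G$, so pulling back the sub-torsor $\mathtt{M}^G_{k,k,t}$ provides a reduction of structure on $\pi_{\mathrm{HT}}^{-1}(\mathtt{U}^G_k)$. Since $\mathtt{U}^G_k$ is $K^G_{\mathrm{Iw}}(p^t)$-stable by construction and $K^G_{\mathrm{Iw}}(p^t) \subset K^G_{\mathrm{Iw}}(p^t) \invs{G}^1_{k, k}$ preserves $\mathtt{M}^G_{k,k,t}$ on the right, the $G(\mbb{Q}_p)$-equivariance of $\pi_{\mathrm{HT}}$ endows the pulled-back reduction with a canonical $K^G_{\mathrm{Iw}}(p^t)$-equivariant structure. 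Descent along the pro-\'{e}tale $K^G_{\mathrm{Iw}}(p^t)$-cover $\invs{S}_{G, K^p} \to \invs{S}_{G, \mathrm{Iw}}(p^t)$ then produces the desired pro-\'{e}tale $\invs{M}^{\square}_{G, k, k}$-torsor $\invs{M}_{G, \mathrm{HT}, k, k, t}$ on $\invs{U}^G_k(p^t)$, with the stated compatibility on the perfectoid cover.

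For the final claim, functoriality of twisting (Lemma \ref{PropertiesOfTTorsorLemma}) immediately gives that ${^\mu \invs{M}_{G, \mathrm{HT}, k, k, t}}$ is a reduction of structure of ${^\mu \invs{M}_{G, \mathrm{HT}}}$ to $\invs{M}^{\square}_{G, k, k}$, so it only remains to check \'{e}taleness. This is done as in the discussion preceding the proposition: the total space is realised as an open subspace of the smooth adic space ${^\mu \invs{M}_{G, \mathrm{HT}}^{\mathrm{an}}} \cong M_{G, \mathrm{dR}}^{\mathrm{an}}$, and the map to $\invs{U}^G_k(p^t)$ is surjective on geometric points, which can be verified after pulling back along the pro-\'{e}tale cover by the perfectoid $\invs{S}_{G, K^p}$. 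The main obstacle is the group-theoretic identification above: it is precisely what forces the matching between the depth $t$ of the Iwahori level and the radius of analyticity $k$, and once it is in hand the remainder of the argument is a formal consequence of the equivariance of $\pi_{\mathrm{HT}}$ and the properties of twisted torsors collected in Lemma \ref{PropertiesOfTTorsorLemma}.
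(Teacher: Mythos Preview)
Your proposal is correct and follows essentially the same approach as the paper: both arguments establish the reduction of structure first at the level of the flag variety via the commutative diagram relating $\mathtt{M}^G_{k,k,t}$ to $\mathtt{M}^G$, pull back along $\pi_{\mathrm{HT}}$, and descend using that the resulting torsor is a $K^G_{\mathrm{Iw}}(p^t)$-invariant open subset of $\pi_{\mathrm{HT}}^*\mathtt{M}^G$; the \'{e}taleness of the twist is then deduced from Lemma~\ref{PropertiesOfTTorsorLemma}(3) and the observation that ${^\mu \invs{M}_{G,\mathrm{HT}, k,k,t}}$ is open in the already-known \'{e}tale torsor ${^\mu \invs{M}_{G,\mathrm{HT}}}$. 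The paper phrases the flag-variety step by direct appeal to \cite[Proposition~4.6.12]{BoxerPilloni} (conjugated by $w_n$) rather than unpacking the Levi computation as you do, but the content is the same.
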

\begin{proof}
This is essentially \cite[Proposition 4.6.12]{BoxerPilloni}, but we have conjugated our groups by $w_n$. Note that we have a commutative diagram:

\[
\begin{tikzcd}
{K^G_{\mathrm{Iw}}(p^t) \invs{G}^1_{k,k} / \left( K^G_{\mathrm{Iw}}(p^t) \invs{G}^1_{k,k} \cap w_n^{-1}\invs{N}_G w_n \right)} \arrow[d] \arrow[r] & \invs{G}/\invs{N}_G \arrow[d] \\
\mathtt{U}^G_k \arrow[r]                                                                                                                           & \mathtt{FL}^G                
\end{tikzcd}
\]
where the vertical maps are the torsors $\mathtt{M}^G_{k,k, t}$ and $\mathtt{M}^G$, the bottom map is the natural inclusion and the top map is given by 
\[
g \left( K^G_{\mathrm{Iw}}(p^t) \invs{G}^1_k \cap w_n^{-1}\invs{N}_G w_n \right) \mapsto gw_n^{-1} \invs{N}_G.
\]
Therefore $\mathtt{M}^G_{k,k, t}$ gives a reduction of structure for $\mathtt{M}^G$, and the pro\'{e}tale torsor $\pi^*_{\opn{HT}}\mathtt{M}^G_{k, k, t}$ descends to a pro\'{e}tale torsor $\invs{M}_{G, \opn{HT}, k, k, t}$ over $\invs{U}^G_k(p^t)$ because it is a $K^G_{\opn{Iw}}(p^t)$-invariant open subset of the pro\'{e}tale torsor $\pi^*_{\opn{HT}} \mathtt{M}^G$ (which we already know descends).  The last part follows from the fact that $\mu$ factors through $\invs{M}^{\square}_{G, k, k}$, Lemma \ref{PropertiesOfTTorsorLemma}(3), and because ${^\mu \invs{M}_{G,\mathrm{HT}, k,k, t}} \to \invs{U}^G_k(p^t)$ is surjective on geometric points and smooth (as ${^\mu \invs{M}_{G,\mathrm{HT}, k,k, t}}$ is an open subset of the \'{e}tale torsor ${^\mu \invs{M}_{G, \opn{HT}}}$).
\end{proof}

We now discuss the reduction of structure for $\invs{M}_{H,\mathrm{HT}}$. Consider the following subtori of $T$ consisting of elements $(x; y_{1, \tau}, \dots, y_{2n, \tau})$ satisfying the following relations:
\begin{itemize}
    \item $T^{\clubsuit} \subset T$ is the sub-torus given by the relations $y_{i, \tau_0} = y_{2n+2-i, \tau_0}$ for $i=2, \dots, 2n$, and $y_{i, \tau} = y_{2n+1-i, \tau}$ for all $i=1, \dots, 2n$ and $\tau \neq \tau_0$
    \item $T^{\diamondsuit} \subset T^{\clubsuit}$ is the sub-torus with the additional relation that $y_{1, \tau_0} = y_{n+1, \tau_0}$.
\end{itemize}

We begin with the following lemma:

\begin{lemma}
Let $\opn{Iw}_{M_G}(p^t) \subset M_G(\mbb{Z}_p)$ denote the Iwahori subgroup of depth $p^t$, and let $M^H_{\diamondsuit}(p^t)$ denote the projection of $K^H_{\diamondsuit}(p^t) \cap \invs{P}_H$ to $\invs{M}_H$. Then
\begin{enumerate}
    \item $M^H_{\diamondsuit}(p^t)$ is the subgroup of $M_H(\mbb{Z}_p)$ of all elements which land in $T^{\diamondsuit}$ modulo $p^t$
    \item $M_{\clubsuit}^H(p^t) \defeq u \opn{Iw}_{M_G}(p^t) u^{-1} \cap \invs{M}_H$ is the subgroup of $M_H(\mbb{Z}_p)$ of all elements which land in $T^{\clubsuit}$ modulo $p^t$. It is contained in the projection of $K^H_{\opn{Iw}}(p^t) \cap \invs{P}_H$ to $\invs{M}_H$.
\end{enumerate}
In particular, one has $M^H_{\diamondsuit}(p^t) \subset M_{\clubsuit}^H(p^t)$.
\end{lemma}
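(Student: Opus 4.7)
The plan is to verify both claims by direct matrix computations, using the product structure $G = \opn{GL}_1 \times \prod_{\tau \in \Psi} \opn{GL}_{2n}$ and the corresponding factorization of $u, \gamma, \widehat{\gamma}$ over $\Psi$. Since $u$ and $\widehat{\gamma}$ are defined place by place, the conditions imposed by conjugation split into independent conditions at each $\tau$.

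For part (2), the main step is to compute $u_\tau^{-1} h_\tau u_\tau$ for $h \in \invs{M}_H$ at each place and determine when it lies in the Iwahori of $M_G$ at that place. At $\tau \neq \tau_0$, where $u_\tau = \begin{pmatrix} I_n & 0 \\ w_{\opn{GL}_n}^{\max} & I_n \end{pmatrix}$ and $h_\tau = \opn{diag}(z_{1,\tau}, z_{2,\tau})$, a block calculation gives lower-left block $z_{2,\tau} w_{\opn{GL}_n}^{\max} - w_{\opn{GL}_n}^{\max} z_{1,\tau}$. Requiring upper triangularity modulo $p^t$ together with the fact that $w_{\opn{GL}_n}^{\max}$ reverses rows and columns forces $z_{1,\tau}$ to be diagonal modulo $p^t$ and $z_{2,\tau} \equiv w_{\opn{GL}_n}^{\max} z_{1,\tau} w_{\opn{GL}_n}^{\max} \pmod{p^t}$, which are exactly the $T^\clubsuit$ relations at $\tau \neq \tau_0$. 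At $\tau_0$, the analogous computation with $u_{\tau_0} = 1 \times u'_{\tau_0}$ (where $u'_{\tau_0}$ is the explicit upper triangular unipotent matrix from \S\ref{SomeImportantElementsSubsec}) produces the required $T^\clubsuit$ conditions at $\tau_0$. The containment $M^H_\clubsuit(p^t) \subset$ (projection of $K^H_{\opn{Iw}}(p^t) \cap \invs{P}_H$) follows because the conditions derived are strictly stronger than simply asking the Levi component to be upper triangular modulo $p^t$.

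For part (1), we additionally have to account for conjugation by $w_n$. Writing $h = mn$ with $m \in \invs{M}_H$ and $n \in \invs{N}_H \subset \invs{N}_G$, and decomposing $\gamma = u\gamma'$ with $\gamma' \in \invs{N}_G$ (which is possible since $\gamma$ projects to $u$ on the Levi), we use that $M_G$ normalizes $N_G$ to obtain
\[
\widehat{\gamma}^{-1} h \widehat{\gamma} = w_n^{-1}(u^{-1} m u) w_n \cdot w_n^{-1} (\text{element of } \invs{N}_G) w_n.
\]
Requiring this to lie in $K^G_{\opn{Iw}}(p^t)$, and carefully separating the Levi and unipotent contributions, gives the condition that the $w_n$-twist of $u^{-1}mu$ is upper triangular modulo $p^t$. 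Since $w_n$ at $\tau_0$ acts by the cyclic permutation $(1, 2, \ldots, n+1)$ of the first $n+1$ coordinates, this yields the additional relation $y_{1,\tau_0} \equiv y_{n+1,\tau_0} \pmod{p^t}$ on top of the $T^\clubsuit$ relations already established, thereby producing exactly the defining relations of $T^\diamondsuit$. The containment $M^H_\diamondsuit(p^t) \subset M^H_\clubsuit(p^t)$ is then immediate from the inclusion $T^\diamondsuit \subset T^\clubsuit$.

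The hard part will be the bookkeeping at $\tau_0$ in part (1): one must track simultaneously the interaction between the unipotent factor $\gamma'$ (arising from the $x_{\tau_0}$ piece of $\gamma_{\tau_0}$), the matrix $u'_{\tau_0}$, the $w_n$-conjugation, and the freedom to adjust the $\invs{N}_H$-component $n$ of $h$. The point is to verify that, after making the best choice of $n \in \invs{N}_H$, the Iwahori condition at $\tau_0$ imposes exactly the claimed constraints on $m \in \invs{M}_H$ and no others; in particular, contributions from $\invs{N}_H$ should absorb the off-Levi terms without modifying the diagonal relations forced on $m$.
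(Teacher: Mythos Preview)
Your approach to part (2) is correct and matches the paper's: both reduce to the stabiliser computation $M_H \cap u B_{M_G} u^{-1} = T^{\clubsuit}$ already carried out in Lemma~\ref{OpenOrbitLemma}(1).

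For part (1), however, there is a genuine gap. Your key claim is that, after writing $\widehat{\gamma}^{-1}(mn)\widehat{\gamma} = w_n^{-1}(u^{-1}mu)w_n \cdot w_n^{-1} n'' w_n$ with $n'' \in N_G$, the Iwahori condition on the product forces $w_n^{-1}(u^{-1}mu)w_n$ to be upper-triangular modulo $p^t$, and that this yields the extra relation $y_{1,\tau_0}\equiv y_{n+1,\tau_0}$. But this is false: since $w_n$ is a Kostant representative, one has $w_n^{-1}(\Phi_M^{\pm}) \subset \Phi^{\pm}$, so for $m' \in M_G$ the condition $w_n^{-1}m'w_n \in B_G$ is \emph{equivalent} to $m' \in B_{M_G}$. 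Thus ``$w_n$-twist upper-triangular'' gives back exactly the $T^{\clubsuit}$ condition from part~(2), with no additional constraint. The missing relation $y_{1,\tau_0}\equiv y_{n+1,\tau_0}$ in fact comes from the second factor $w_n^{-1}n''w_n$, which is \emph{not} upper-triangular: concretely $w_n^{-1}\gamma'_{\tau_0}w_n$ has $1$'s in positions $(n+1,1),\dots,(n+1,n)$, and the requirement that the product lie in the Iwahori couples these entries to the $(1,1)$ and $(n+1,n+1)$ diagonal entries of $u^{-1}mu$. Your proposal defers exactly this computation to ``the hard part'' without verifying that it produces the claimed relation and nothing more.

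The paper sidesteps this difficulty entirely. Rather than decomposing $\widehat{\gamma}=u\gamma' w_n$, it uses the alternative factorisation
\[
\widehat{\gamma}_{\tau_0} = \tbyt{X}{}{}{1}\tbyt{1}{}{w_{\opn{GL}_n}^{\max}}{1}\tbyt{1}{Y}{}{Z}
\]
from the proof of Lemma~\ref{OpenOrbitLemma}, where $U \defeq X w_{\opn{GL}_n}^{\max}$ is block upper-triangular with Levi projection $1\times w_{\opn{GL}_{n-1}}^{\max}$. This reduces the condition $\widehat{\gamma}^{-1}h\widehat{\gamma}\in K^G_{\opn{Iw}}(p^t)$ directly to: $U^{-1}y_{1,\tau_0}U$ lower-triangular, $y_{2,\tau_0}$ upper-triangular, and $U^{-1}y_{1,\tau_0}U \equiv y_{2,\tau_0}$ modulo $p^t$. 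Reading off the diagonals (and using that conjugation by $U$ fixes the $(1,1)$ entry while reversing the remaining diagonal entries) then gives precisely the $T^{\diamondsuit}$ relations, including $y_{1,\tau_0}\equiv y_{n+1,\tau_0}$.
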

\begin{proof}
By the proof of Lemma \ref{OpenOrbitLemma}, we see that 
\[
h = x \times \tbyt{y_{1, \tau}}{}{}{y_{2, \tau}} \in H(\mbb{Z}_p)
\]
lies in $K^H_{\diamondsuit}(p^t)$ if and only if:
\begin{itemize}
    \item For all $\tau \neq \tau_0$, the block diagonal matrix $(y_{1, \tau}, y_{2, \tau})$ lies in the $\tau$-component of $T^{\diamondsuit}$ modulo $p^t$
    \item The elements $U^{-1}y_{1, \tau_0}U$ and $y_{2, \tau_0}$ are lower-triangular and upper-triangular modulo $p^t$ respectively, where $U$ is a $(n \times n)$ matrix lying the standard parabolic of $\opn{GL}_n$ with Levi $\opn{GL}_1 \times \opn{GL}_{n-1}$, whose projection to the Levi equals
    \[
    1 \times w_{\opn{GL}_{n-1}}^{\mathrm{max}} .
    \]
    \item The elements $U^{-1}y_{1, \tau_0} U$ and $y_2$ are congruent to each other modulo $p^t$.
\end{itemize}
From these properties, one then immediately obtains part (1). Part (2) follows from the stabiliser computations in Lemma \ref{OpenOrbitLemma}. It is contained in the projection of $K^H_{\opn{Iw}}(p^t) \cap \invs{P}_H$ to $\invs{M}_H$ because $T^{\clubsuit}$ is contained in $B_H$.
\end{proof}

For $t \geq 1$ and $k > 0$, we let $\invs{M}^{\diamondsuit}_{H, k,k, t} = M^H_{\diamondsuit}(p^t) \invs{M}^{1}_{H, k, k}$ and $\invs{M}^{\clubsuit}_{H, k,k, t} = M^H_{\clubsuit}(p^t) \invs{M}^{1}_{H, k, k}$. Both of these are groups by \cite[Lemma 3.3.15]{BoxerPilloni} (because $K^H_{\diamondsuit}(p^t) \subset K^H_{\mathrm{Iw}}(p^t)$). If $t > k$, then these groups don't depend on $t$; explicitly, we have
\[
\invs{M}^{\diamondsuit}_{H, k,k, t} = \invs{M}^{\diamondsuit}_{H, k, k} \defeq T^{\diamondsuit}(\mbb{Z}_p) \invs{M}^1_{H, k, k}, \quad \quad \invs{M}^{\clubsuit}_{H, k,k, t} = \invs{M}^{\clubsuit}_{H, k, k} \defeq T^{\clubsuit}(\mbb{Z}_p) \invs{M}^1_{H, k, k}. 
\]
Furthermore, we have $u^{-1}\invs{M}^{\diamondsuit}_{H, k, k} u \subset u^{-1}\invs{M}^{\clubsuit}_{H, k, k} u \subset \invs{M}^{\square}_{G, k, k}$.

\begin{remark}
The homomorphism $\mu \colon \mbb{Z}_p^{\times} \to \invs{M}_H$ induced from the Hodge cocharacter $\mu_{\mbf{H}}$ factors through $\invs{M}^{\clubsuit}_{H, k, k, t}$ for any $t \geq 1$ and $k > 0$. It doesn't factor through $\invs{M}^{\diamondsuit}_{H, k, k, t}$, although the latter group is useful for discussing the reduction of structure below.
\end{remark}

As above, we introduce the following space $\mathtt{M}^H_{k,k, t}$ (for $k>0$ and $t \geq 1$):
\begin{align*}
    K^H_{\diamondsuit}(p^t) \invs{H}^1_{k, k} /\left(  K^H_{\diamondsuit}(p^t) \invs{H}^1_{k, k} \cap \invs{N}_H \right) &\to \invs{P}_H \backslash \invs{P}_H  K^H_{\diamondsuit}(p^t) \invs{H}^1_{k, k} = \mathtt{U}^H_k \\
    x &\mapsto \invs{P}_H x^{-1}
\end{align*}
which is a (right) torsor for the group $\invs{M}^{\diamondsuit}_{H, k,k, t}$ via the embedding $\invs{M}^{\diamondsuit}_{H, k,k, t} \subset K^H_{\diamondsuit}(p^t) \invs{H}^1_{k,k}$.

\begin{proposition}
Let $t \geq 1$ and $k > 0$. 
\begin{enumerate}
\item Then the torsor $\invs{M}_{H,\mathrm{HT}}$ has a reduction of structure to a pro\'{e}tale $\invs{M}^{\diamondsuit}_{H, k,k, t}$-torsor $\invs{M}'_{H,\mathrm{HT}, k,k, t}$ over $\invs{U}^H_k(p^t)$. Furthermore, the pullback of $\invs{M}'_{H,\mathrm{HT}, k,k, t}$ to the perfectoid space $\invs{S}_{H, U^p}$ is canonically isomorphic to $\pi_{\mathrm{HT}}^*\mathtt{M}^H_{k,k, t}$.
\item If we define $\invs{M}_{H,\mathrm{HT}, k,k, t}$ as the pushout $\invs{M}'_{H,\mathrm{HT}, k,k, t} \times^{\invs{M}^{\diamondsuit}_{H, k,k, t}} \invs{M}^{\clubsuit}_{H, k,k, t}$, then the pro\'{e}tale $\invs{M}^{\clubsuit}_{H, k,k, t}$-torsor $\invs{M}_{H,\mathrm{HT}, k,k, t}$ (resp. \'{e}tale $\invs{M}^{\clubsuit}_{H, k,k, t}$-torsor ${^\mu \invs{M}_{H,\mathrm{HT}, k,k, t}}$) provides a reduction of structure of the torsor $\invs{M}_{H,\mathrm{HT}}$ (resp. ${^\mu \invs{M}_{H,\mathrm{HT}}}$).
\end{enumerate}
\end{proposition}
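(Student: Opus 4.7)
The plan is to mirror the strategy used in the proof of Proposition \ref{MGTorsorReductionStructure} (i.e.\ \cite[Proposition 4.6.12]{BoxerPilloni}), with two modifications: first, since $\mathtt{U}^H_k$ is the ``big cell'' around $C^H_{\mathrm{id}}$ rather than $C^H_{w_n}$, no Weyl-element conjugation is needed; second, because the Hodge cocharacter $\mu$ factors through the group $\invs{M}^{\clubsuit}_{H,k,k,t}$ but not through $\invs{M}^{\diamondsuit}_{H,k,k,t}$, we first build a torsor for the smaller group and then push out.

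For part (1), I would start from the commutative diagram
\[
\begin{tikzcd}
{K^H_{\diamondsuit}(p^t) \invs{H}^1_{k,k} / \left( K^H_{\diamondsuit}(p^t) \invs{H}^1_{k,k} \cap \invs{N}_H \right)} \arrow[d] \arrow[r] & \invs{H}/\invs{N}_H \arrow[d] \\
\mathtt{U}^H_k \arrow[r] & \mathtt{FL}^H
\end{tikzcd}
\]
where the bottom horizontal map is the natural inclusion, the vertical arrows are the torsors $\mathtt{M}^H_{k,k,t}$ and $\mathtt{M}^H$, and the top map is $g \mapsto g \invs{N}_H$. This exhibits $\mathtt{M}^H_{k,k,t}$ as a reduction of structure of $\mathtt{M}^H\big|_{\mathtt{U}^H_k}$ to the subgroup $\invs{M}^{\diamondsuit}_{H,k,k,t} \subset \invs{M}_H$. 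Pulling back via $\pi_{\mathrm{HT},H}$ yields a pro\'etale $\invs{M}^{\diamondsuit}_{H,k,k,t}$-torsor on $\invs{S}_{H,U^p}$ which is an open subset of $\pi_{\mathrm{HT}}^*\mathtt{M}^H$, hence $K^H_{\diamondsuit}(p^t)$-invariant (one checks this by inspection, using that $K^H_{\diamondsuit}(p^t) \invs{H}^1_{k,k} \cdot K^H_{\diamondsuit}(p^t) \subset K^H_{\diamondsuit}(p^t) \invs{H}^1_{k,k}$ via \cite[Lemma 3.3.15]{BoxerPilloni}). It therefore descends to the required pro\'etale $\invs{M}^{\diamondsuit}_{H,k,k,t}$-torsor $\invs{M}'_{H,\mathrm{HT},k,k,t}$ on $\invs{U}^H_k(p^t)$, with the canonical perfectoid-level identification built in.

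For part (2), I would form the pushout $\invs{M}_{H,\mathrm{HT},k,k,t} = \invs{M}'_{H,\mathrm{HT},k,k,t} \times^{\invs{M}^{\diamondsuit}_{H,k,k,t}} \invs{M}^{\clubsuit}_{H,k,k,t}$, which is naturally a pro\'etale $\invs{M}^{\clubsuit}_{H,k,k,t}$-torsor, and then observe that pushing out further along $\invs{M}^{\clubsuit}_{H,k,k,t} \hookrightarrow \invs{M}_H$ recovers $\invs{M}_{H,\mathrm{HT}}|_{\invs{U}^H_k(p^t)}$, so this is indeed a reduction of structure. For the twisted statement, the point is that $\mu$ factors through $\invs{M}^{\clubsuit}_{H,k,k,t}$ (since the image of $\mu$ lies in the diagonal torus and the $\tau_0$-component of $\mu$ has the form $(z,1,\dots,1;z,1,\dots,1)$ which satisfies the $T^{\clubsuit}$-relations but \emph{not} the $T^{\diamondsuit}$-relations). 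Applying Lemma \ref{PropertiesOfTTorsorLemma}(3) to the composition ``pushout then include'' gives
\[
{^\mu \invs{M}_{H,\mathrm{HT},k,k,t}} \times^{\invs{M}^{\clubsuit}_{H,k,k,t}} \invs{M}_H \;\cong\; {^\mu \invs{M}_{H,\mathrm{HT}}}\big|_{\invs{U}^H_k(p^t)},
\]
so ${^\mu \invs{M}_{H,\mathrm{HT},k,k,t}}$ identifies with an open subspace of the \'etale torsor ${^\mu \invs{M}_{H,\mathrm{HT}}}$; since it is surjective on geometric points and smooth, it is itself an \'etale $\invs{M}^{\clubsuit}_{H,k,k,t}$-torsor.

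The main technical obstacle is the bookkeeping in part (2): one must carefully justify the two-step reduction (first to $\invs{M}^{\diamondsuit}$ via the geometric diagram, then to $\invs{M}^{\clubsuit}$ via pushout) and verify that $\mu$ factors through $\invs{M}^{\clubsuit}_{H,k,k,t}$ by inspecting the explicit form of $\mu_{\mbf{H}}$ against the defining relations of $T^{\clubsuit}$; this is precisely the reason one cannot simply repeat the $G$-argument verbatim with $\invs{M}^{\square}_{G,k,k}$ replaced by the ``obvious'' analogue $T(\mbb{Z}_p) \invs{M}^1_{H,k,k}$, and also why the pushout construction is necessary rather than merely aesthetic. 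The remaining verifications (compatibility of the pullback to $\invs{S}_{H,U^p}$ with $\pi_{\mathrm{HT}}^*\mathtt{M}^H_{k,k,t}$, and the check that the descent is $K^H_{\diamondsuit}(p^t)$-invariant) proceed exactly as in \cite[Proposition 4.6.12]{BoxerPilloni}.
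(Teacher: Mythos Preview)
Your proposal is correct and follows essentially the same approach as the paper: part (1) mirrors the proof of Proposition \ref{MGTorsorReductionStructure} (invoking \cite[Proposition 4.6.12]{BoxerPilloni}), and part (2) uses the chain of inclusions $\invs{M}^{\diamondsuit}_{H,k,k,t} \subset \invs{M}^{\clubsuit}_{H,k,k,t} \subset \invs{M}_H$ together with Lemma \ref{PropertiesOfTTorsorLemma} and the smooth-plus-surjective-on-geometric-points argument. One small slip: the $\tau_0$-component of $\mu_{\mbf{H}}$ is $(z,1,\dots,1;1,\dots,1)$, not $(z,1,\dots,1;z,1,\dots,1)$ as you wrote (the latter would actually satisfy $y_{1,\tau_0}=y_{n+1,\tau_0}$ and hence lie in $T^{\diamondsuit}$); with the correct form your conclusion that $\mu$ factors through $T^{\clubsuit}$ but not $T^{\diamondsuit}$ stands.
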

\begin{proof}
For the first part, this follows from a similar argument in Proposition \ref{MGTorsorReductionStructure}. Note that the proof of \cite[Proposition 4.6.12]{BoxerPilloni} also applies in this situation, even though $K^H_{\diamondsuit}(p^t)$ is not of the form in the statement of \emph{loc.cit.}.

The second part follows immediately from the inclusions 
\[
\invs{M}^{\diamondsuit}_{H, k,k, t} \subset \invs{M}^{\clubsuit}_{H, k,k, t} \subset \invs{M}_H,
\]
the functoriality properties in Lemma \ref{PropertiesOfTTorsorLemma}, and the fact that ${^\mu \invs{M}_{H,\mathrm{HT}, k,k, t}} \to \invs{U}^H_k(p^t)$ is smooth and surjective on geometric points.
\end{proof}

We have the following proposition which relates the torsors for $G$ and $H$.

\begin{proposition} \label{MGHTorsorComparison}
Let $t > k > 0$. One has a reduction of structure of \'{e}tale torsors
\[
\widehat{\iota}^*\left( {^\mu \invs{M}_{G,\mathrm{HT}, k,k, t}} \right) = {^\mu \invs{M}_{H,\mathrm{HT}, k,k, t}} \times^{[\invs{M}^{\clubsuit}_{H, k,k}, u]} \invs{M}^{\square}_{G, k,k}
\]
where $\widehat{\iota}$ denotes the embedding $\invs{U}^H_k(p^t) \hookrightarrow \invs{U}^G_k(p^t)$.
\end{proposition}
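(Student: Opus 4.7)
The plan is to promote the reduction of structure of Proposition \ref{FirstReductionOfStructure} to the finer structure groups $\invs{M}^{\square}_{G, k,k}$ and $\invs{M}^{\clubsuit}_{H, k,k}$, following the same three-step strategy: transfer the problem to flag varieties, construct the morphism of torsors via the decomposition $\widehat{\gamma} = \gamma w_n$, and then apply the functoriality of the twist $^\mu(-)$.

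First, Proposition \ref{MGTorsorReductionStructure} (and its obvious $H$-analogue following immediately from the first part of the preceding proposition) identifies the pullbacks of $\invs{M}_{G, \opn{HT}, k, k, t}$ and $\invs{M}_{H, \opn{HT}, k, k, t}$ to the perfectoid covers with $\pi_{\opn{HT}}^*\mathtt{M}^G_{k,k,t}$ and $\pi_{\opn{HT}}^*\mathtt{M}^H_{k,k,t}$ respectively. Since the statement descends from the perfectoid level (by the same $K^p$/$U^p$-equivariant descent argument used in the cited proposition), it is enough to exhibit a canonical isomorphism
\[
\widehat{\iota}^*\mathtt{M}^G_{k,k,t} \;\cong\; \mathtt{M}^H_{k,k,t} \times^{[\invs{M}^{\clubsuit}_{H, k,k}, u]} \invs{M}^{\square}_{G, k,k}
\]
of $\invs{M}^{\square}_{G, k,k}$-torsors on $\mathtt{U}^H_k$.

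Second, I would define the morphism on total spaces by $x \mapsto \widehat{\gamma}^{-1}x\widehat{\gamma}$. Using the explicit torsor presentations in \S \ref{FurtherReductionOfStructureSection}, this fits into a commutative square refining the one appearing in the proof of Proposition \ref{FirstReductionOfStructure}. The reduction from $\invs{M}_H$ to $\invs{M}^{\clubsuit}_{H, k,k}$ becomes manifest from the inclusion $u^{-1}\invs{M}^{\clubsuit}_{H, k, k}u \subset \invs{M}^{\square}_{G, k, k}$ recorded just above the statement. Equivariance is then verified using $\widehat{\gamma} = \gamma w_n$ together with the fact that $\gamma$ projects to $u$ on the Levi $\invs{M}_G$, so that conjugation by $\widehat{\gamma}$ on $\invs{M}^{\clubsuit}_{H,k,k}$ agrees with $w_n^{-1}u^{-1}(-)uw_n$ modulo the unipotent radical $w_n^{-1}\invs{N}_Gw_n$ by which we have already quotiented in forming $\mathtt{M}^G_{k,k,t}$. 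Finally, the passage to the twisted torsors ${^\mu(-)}$ is formal: Lemma \ref{PropertiesOfTTorsorLemma}(2)(3), together with the fact that $\mu$ is central and factors through both $\invs{M}^{\clubsuit}_{H, k,k}$ and $\invs{M}^{\square}_{G, k,k}$, transports the reduction of structure through $^\mu(-)$, and the resulting pushout torsor is \'etale by the same argument used at the end of the proof of Proposition \ref{MGTorsorReductionStructure}.

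The main obstacle will be the bookkeeping verification that $x \mapsto \widehat{\gamma}^{-1}x\widehat{\gamma}$ sends $K^H_{\diamondsuit}(p^t)\invs{H}^1_{k,k}$ into $K^G_{\opn{Iw}}(p^t)\invs{G}^1_{k,k}$ modulo $w_n^{-1}\invs{N}_G w_n$, with the correct depth parameters $(k,k,t)$. The Iwahori part $\widehat{\gamma}^{-1}K^H_{\diamondsuit}(p^t)\widehat{\gamma} \subset K^G_{\opn{Iw}}(p^t)$ is built into the definition of $K^H_{\diamondsuit}(p^t)$, but controlling the thickening by $\invs{H}^1_{k,k}$ requires a direct computation using the explicit triangular decomposition of $\widehat{\gamma}_{\tau_0}$ given in the proof of Lemma \ref{OpenOrbitLemma}, combined with the observation that the blocks appearing in $\gamma$ normalise the relevant integral subgroups at depth $k$.
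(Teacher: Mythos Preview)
Your approach matches the paper's almost exactly: reduce to the flag-variety torsors via Proposition~\ref{MGTorsorReductionStructure} and its $H$-analogue, define the morphism by $h \mapsto \widehat{\gamma}^{-1}h\widehat{\gamma}$, check commutativity of the resulting square using $\gamma \in P_G$ (so that $\gamma$ projects to $u$ in $M_G$), and then twist by $\mu$ via Lemma~\ref{PropertiesOfTTorsorLemma}.

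One correction is needed. The flag-variety torsor $\mathtt{M}^H_{k,k,t}$ is an $\invs{M}^{\diamondsuit}_{H,k,k,t}$-torsor (it is built from $K^H_{\diamondsuit}(p^t)$), and what descends to it on the Shimura variety is $\invs{M}'_{H,\opn{HT},k,k,t}$, not $\invs{M}_{H,\opn{HT},k,k,t}$; the latter is \emph{defined} as the pushout of the former along $\invs{M}^{\diamondsuit}_{H,k,k,t} \subset \invs{M}^{\clubsuit}_{H,k,k,t}$. Accordingly, your displayed pushout $\mathtt{M}^H_{k,k,t} \times^{[\invs{M}^{\clubsuit}_{H,k,k}, u]} \invs{M}^{\square}_{G,k,k}$ is not well-formed as written, since $\mathtt{M}^H_{k,k,t}$ carries no $\clubsuit$-action. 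The paper therefore first establishes
\[
\widehat{\iota}^* \invs{M}_{G,\opn{HT},k,k,t} \;\cong\; \invs{M}'_{H,\opn{HT},k,k,t} \times^{[\invs{M}^{\diamondsuit}_{H,k,k}, u]} \invs{M}^{\square}_{G,k,k}
\]
at the flag-variety level and only then pushes out along $u^{-1}\invs{M}^{\diamondsuit}_{H,k,k}u \subset u^{-1}\invs{M}^{\clubsuit}_{H,k,k}u \subset \invs{M}^{\square}_{G,k,k}$ to reach the stated $\clubsuit$-version before twisting. This is a one-line fix to your outline.

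On your ``main obstacle'': the paper does not carry out any computation with the triangular decomposition of $\widehat{\gamma}_{\tau_0}$ from Lemma~\ref{OpenOrbitLemma}. It simply records that $h \mapsto \widehat{\gamma}^{-1}h\widehat{\gamma}$ defines a map $K^H_{\diamondsuit}(p^t)\invs{H}^1_{k,k} \to K^G_{\opn{Iw}}(p^t)\invs{G}^1_{k,k}$ and that the resulting square over $\widehat{\iota}$ commutes because $\gamma \in P_G$. So you are anticipating more bookkeeping than the paper actually writes down.
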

\begin{proof}
We first show that we have a reduction of structure:
\begin{equation} \label{FlagReduction}
\widehat{\iota}^* \invs{M}_{G,\mathrm{HT}, k,k, t} = \invs{M}'_{H,\mathrm{HT}, k,k, t} \times^{[\invs{M}^{\diamondsuit}_{H, k,k}, u]} \invs{M}^{\square}_{G, k,k} .
\end{equation}
It is enough to show the analogous statement for the torsors $\mathtt{M}^G_{k, k, t}$ and $\mathtt{M}^H_{k, k, t}$. In this case, we have a commutative diagram: 
\[
\begin{tikzcd}
{K^H_{\diamondsuit}(p^t) \invs{H}^1_{k, k} /\left(  K^H_{\diamondsuit}(p^t) \invs{H}^1_{k, k} \cap \invs{N}_H \right)} \arrow[d] \arrow[r] & {K^G_{\mathrm{Iw}}(p^t) \invs{G}^1_{k,k} / \left( K^G_{\mathrm{Iw}}(p^t) \invs{G}^1_{k,k} \cap w_n^{-1}\invs{N}_G w_n \right)} \arrow[d] \\
\mathtt{U}^H_k \arrow[r, "\widehat{\iota}"]                                                                                                & \mathtt{U}^G_k                                                                                                                          
\end{tikzcd}
\]
where the vertical maps are the torsors $\mathtt{M}^H_{k, k, t}$ and $\mathtt{M}^G_{k, k, t}$, and the top map is induced from the map $K^H_{\diamondsuit}(p^t)\invs{H}^1_{k, k} \to K^G_{\mathrm{Iw}}(p^t)\invs{G}^1_{k, k}$ given by $h \mapsto \widehat{\gamma}^{-1}h \widehat{\gamma}$. Note that this diagram is commutative because $\gamma \in P_G$.

Since $\mathtt{M}^G_{k, k, t}$ is a torsor for the group $\invs{M}^{\square}_{G, k, k}$ via the conjugated embedding $w_n^{-1}\invs{M}^{\square}_{G, k ,k}w_n \subset K^G_{\mathrm{Iw}}(p^t)\invs{G}^1_{k, k}$, and the projection of $\gamma$ to $M_G$ is equal to $u$, (\ref{FlagReduction}) follows. 

Since $u^{-1}\invs{M}^{\diamondsuit}_{H, k,k}u \subset u^{-1} \invs{M}^{\clubsuit}_{H, k,k} u \subset \invs{M}^{\square}_{G, k, k}$, we also obtain the reduction of structure
\[
\widehat{\iota}^* \invs{M}_{G,\mathrm{HT}, k,k, t} = \invs{M}_{H,\mathrm{HT}, k,k, t} \times^{[\invs{M}^{\clubsuit}_{H, k,k}, u]} \invs{M}^{\square}_{G, k,k}
\]
and we can twist this along $\mu$ by Lemma \ref{PropertiesOfTTorsorLemma} (and the fact $\mu$ is central, so unaffected by conjugation by $u$).
\end{proof}

\begin{remark} \label{CompatibilityInKT}
If $t' \geq t$ and $k' \geq k$, then the torsors ${^\mu \invs{M}_{G, \mathrm{HT}, k', k', t'}}$ and ${^\mu \invs{M}_{H, \mathrm{HT}, k', k', t'}}$ provide a reduction of structure for the pullbacks of ${^\mu \invs{M}_{G, \mathrm{HT}, k, k, t}}$ and ${^\mu \invs{M}_{H, \mathrm{HT}, k, k, t}}$ along the trace/inclusion maps $\invs{U}^G_{k'}(p^{t'}) \to \invs{U}^G_{k}(p^t)$ and $\invs{U}^H_{k'}(p^{t'}) \to \invs{U}^H_{k}(p^t)$ respectively (c.f. \cite[Proposition 4.6.14]{BoxerPilloni}).
\end{remark}

\begin{remark}
Let $k > 0$, and let $\invs{M}^1_{G, k}$ (resp. $\invs{M}^1_{H, k}$) denote the normal affinoid subgroup of $\invs{M}_G$ (resp. $\invs{M}_H$) consisting of elements which reduce to the identity modulo $p^k$. We set 
\[
\invs{M}^{\square}_{G, k} = \invs{M}^1_{G, k} B_{M_G}(\mbb{Z}_p), \quad \quad \invs{M}^{\clubsuit}_{H, k} = \invs{M}^1_{H, k}T^{\clubsuit}(\mbb{Z}_p), \quad \quad \invs{M}^{\clubsuit}_{H, k, t} = \invs{M}^1_{H, k} M^H_{\clubsuit}(p^t)
\]
for $k, t \geq 1$. All of these groups are \emph{open affinoid} analytic subgroups of $\invs{M}_{?}$, where $? = G, H$ according to the subscript.  

To be able to apply the results in \cite[\S 6]{BoxerPilloni}, it will be more convenient to work with the following torsors, obtained as the pushouts:
\begin{align*}
    \invs{M}_{G, \mathrm{HT}, k, t} &\defeq \invs{M}_{G, \mathrm{HT}, k,k, t} \times^{\invs{M}^{\square}_{G, k, k}} \invs{M}^{\square}_{G, k}  \\
    \invs{M}_{H, \mathrm{HT}, k, t} &\defeq \invs{M}_{H,\mathrm{HT}, k,k, t} \times^{\invs{M}^{\clubsuit}_{H, k, k, t}} \invs{M}^{\clubsuit}_{H, k, t} . 
\end{align*}
In particular, we can twist these torsors along $\mu$ and the torsors ${^\mu \invs{M}_{G, \mathrm{HT}, k, t}}$ and ${^\mu \invs{M}_{H, \mathrm{HT}, k, t}}$ are \'{e}tale torsors by Lemma \ref{PropertiesOfTTorsorLemma}. The analogous compatibility for varying $k$ and $t$ as in Remark \ref{CompatibilityInKT} still continues to hold for these torsors, and we have an analogue of Proposition \ref{MGHTorsorComparison}, namely one has a reduction of structure of \'{e}tale torsors
\[
\widehat{\iota}^*\left( {^\mu \invs{M}_{G, \mathrm{HT}, k, t}} \right) = {^\mu \invs{M}_{H, \mathrm{HT}, k, t}} \times^{[\invs{M}^{\clubsuit}_{H, k}, u]} \invs{M}^{\square}_{G, k}
\]
whenever $t > k > 0$.
\end{remark}

\subsection{Weight spaces}

For an integer $r \in \mbb{Q}_{>0}$ let $\invs{T}^1_r$ denote the subgroup of $\invs{T}$ of elements which reduce to the identity modulo $p^r$. Recall that for a Tate algebra $(A, A^+)$ over $(\mbb{Q}_p, \mbb{Z}_p)$, a character 
\[
\lambda \colon T(\mbb{Z}_p) \to (A^+)^{\times}
\]
is $r$-analytic if it extends to an analytic $A$-valued function on $T(\mbb{Z}_p) \invs{T}^1_r \subset T^{\mathrm{ad}}$.

\begin{definition}
Let $(A, A^+)$ be a Tate algebra above. We let $X^*(T; A)$ denote the space of all characters 
\[
\lambda \colon T(\mbb{Z}_p) \to (A^+)^{\times} 
\]
which are $r$-analytic, for some $r \in \mbb{Q}_{>0}$. We let $X^*(T/T_0; A) \subset X^*(T; A)$ be the subspace of all characters which are trivial on $T_0(\mbb{Z}_p)$.
\end{definition}

\begin{remark}
Note that there is a Weyl action on $X^*(T; A)$ by the usual formulae. Furthermore, even though the half sum of positive roots doesn't strictly give an element of this space, the $\star$-action of the Weyl group also still makes sense. 
\end{remark}

\begin{remark}
The functor $(A, A^+) \mapsto X^*(T/T_0; A)$ is representable by a group adic space over $\opn{Spa}(\mbb{Q}_p, \mbb{Z}_p)$, which we will denote by $\invs{W}_G$.
\end{remark}

\begin{definition}
For $i=1, \dots, n$ and $\tau \in \Psi$, let $\lambda_{i, \tau} \in X^*(T/T_0)^+$ be the character which is trivial outside the $\tau$-component, and in the $\tau$-component is given by the tuple
\[
(1, \dots, 1, 0, \dots, 0, -1, \dots, -1)
\]
where there are $i$ lots of $1$s and $-1$s.
\end{definition}

These characters give a generating set for $X^*(T/T_0; A)$ in the following sense.

\begin{lemma} \label{GeneratingSetWeightSpace}
Let $\lambda \in X^*(T/T_0; A)$ be an $r$-analytic character. Then there exist unique $r$-analytic characters $\xi_{i, \tau} \colon \mbb{Z}_p^{\times} \to (A^+)^{\times}$, for $i =1, \dots, n$ and $\tau \in \Psi$, such that
\[
\lambda = \sum_{i =1}^n \sum_{\tau \in \Psi} \xi_{i, \tau} \circ \lambda_{i, \tau}
\]
where the group structure on $X^*(T/T_0; A)$ is written additively.
\end{lemma}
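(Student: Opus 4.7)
The plan is to reduce the statement to the case of characters of $\mbb{Z}_p^{\times}$ by restricting $\lambda$ along the obvious coordinate cocharacters, and then solve a triangular linear system to recover the $\xi_{i,\tau}$.

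First I would introduce, for each $\tau \in \Psi$ and $i = 1, \dots, 2n$, the cocharacter $\mu_{i,\tau} \colon \mbb{G}_{m} \to T$ sending $y$ to the element of $T$ whose coordinates are $1$ everywhere except in the $(i,\tau)$-slot, where it is $y$. Similarly let $\mu_0 \colon \mbb{G}_m \to T$ be the cocharacter targeting the $\opn{GL}_1$-factor. Set
\[
\chi_{i,\tau} \defeq \lambda \circ \mu_{i,\tau} \colon \mbb{Z}_p^{\times} \to (A^+)^{\times}, \qquad \chi_0 \defeq \lambda \circ \mu_0.
\]
Each $\chi_{i,\tau}$ and $\chi_0$ is $r$-analytic because $\lambda$ is and composition with a group homomorphism preserves $r$-analyticity. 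The vanishing of $\lambda$ on $T_0(\mbb{Z}_p)$ translates, via the description of $T_0$, into the identities $\chi_0 = 1$ and $\chi_{2n+1-i,\tau} = \chi_{i,\tau}^{-1}$ for every $i, \tau$. Moreover, since $T$ is commutative, the natural multiplication map
\[
\opn{GL}_1(\mbb{Z}_p) \times \prod_{\tau \in \Psi}\prod_{i=1}^{2n}\mbb{Z}_p^{\times} \xrightarrow{\;\sim\;} T(\mbb{Z}_p)
\]
forces the factorisation
\[
\lambda(x; y_{1,\tau}, \dots, y_{2n,\tau})_{\tau} \;=\; \chi_0(x) \cdot \prod_{\tau \in \Psi} \prod_{i=1}^{2n}\chi_{i,\tau}(y_{i,\tau}) \;=\; \prod_{\tau \in \Psi}\prod_{i=1}^{n}\chi_{i,\tau}\!\left(y_{i,\tau}/y_{2n+1-i,\tau}\right),
\]
so $\lambda$ is completely determined by the $n \cdot [F^+ : \mbb{Q}]$ characters $\chi_{i,\tau}$ with $1 \leq i \leq n$.

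Next, for each $\tau \in \Psi$, I would compute the analogous restriction on the right-hand side: evaluating $\lambda_{i,\tau}$ at the same element gives $\prod_{j=1}^{i}\left(y_{j,\tau}/y_{2n+1-j,\tau}\right)$. Writing $z_{j,\tau} \defeq y_{j,\tau}/y_{2n+1-j,\tau}$ and $w_{i,\tau} \defeq \prod_{j=1}^{i}z_{j,\tau}$, the desired identity $\lambda = \sum_{i,\tau}\xi_{i,\tau}\circ \lambda_{i,\tau}$ becomes, $\tau$ by $\tau$,
\[
\prod_{i=1}^{n}\chi_{i,\tau}(z_{i,\tau}) \;=\; \prod_{i=1}^{n}\xi_{i,\tau}(w_{i,\tau}).
\]
Since the $z_{i,\tau}$ are independent coordinates on $T(\mbb{Z}_p)/T_0(\mbb{Z}_p)$ and $w_{i,\tau} = w_{i-1,\tau}\cdot z_{i,\tau}$ (with $w_{0,\tau} = 1$), this is a triangular system in the characters $\xi_{i,\tau}$ which is solved uniquely by the telescoping formulae
\[
\xi_{n,\tau} \;=\; \chi_{n,\tau}, \qquad \xi_{i,\tau} \;=\; \chi_{i,\tau} - \chi_{i+1,\tau} \quad (1 \leq i \leq n-1),
\]
written additively. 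Each $\xi_{i,\tau}$ is $r$-analytic because sums and differences of $r$-analytic characters of $\mbb{Z}_p^{\times}$ are $r$-analytic.

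Finally, uniqueness follows by the same argument run backwards: any decomposition $\lambda = \sum_{i,\tau}\xi_{i,\tau}'\circ \lambda_{i,\tau}$ forces the restrictions $\chi_{i,\tau}$ to satisfy the same triangular system, whose unique solution recovers $\xi_{i,\tau}' = \xi_{i,\tau}$. I do not anticipate any real obstacle here: once one commits to restricting along the coordinate cocharacters, the only point to check is that the inversion of the triangular change of coordinates is a finite sum/difference operation and therefore preserves $r$-analyticity.
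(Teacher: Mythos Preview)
Your proposal is correct and follows essentially the same approach as the paper: both decompose $\lambda$ into its coordinate characters (your $\chi_{i,\tau}$ are the paper's $\alpha_{i,\tau}$), use triviality on $T_0$ to obtain the relations $\chi_{2n+1-i,\tau} = -\chi_{i,\tau}$, and then solve the resulting triangular system to arrive at the identical formulae $\xi_{n,\tau} = \chi_{n,\tau}$ and $\xi_{i,\tau} = \chi_{i,\tau} - \chi_{i+1,\tau}$ for $i < n$. Your write-up is simply more detailed in justifying the triangular inversion and uniqueness.
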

\begin{proof}
Any such character $\lambda$ is a (unique) product of $r$-analytic characters $\alpha_{i, \tau} \colon \mbb{Z}_p^{\times} \to (A^+)^{\times}$ where $i=1, \dots, 2n$ and $\tau \in \Psi$, where $\alpha_{i, \tau}$ is determined by where it sends $y_{i, \tau}$. Since $\lambda$ is trivial on $T_0$, we have $\alpha_{i, \tau} = -\alpha_{2n+1-i, \tau}$ for all $i=1, \dots, 2n$ and $\tau \in \Psi$. One then defines 
\[
\xi_{i, \tau} = \left\{ \begin{array}{cc} \alpha_{i, \tau} - \alpha_{i+1, \tau} & \text{ for } i=1, \dots, n-1 \\ \alpha_{n, \tau} & \text{ for } i=n \end{array} \right.
\]
Uniqueness is a simple check.
\end{proof}

\begin{remark}
The above lemma implies that $\invs{W}_G$ is a finite disjoint union of $n[F^+:\mbb{Q}]$-dimensional open unit polydiscs.
\end{remark}

Let $S$ denote the torus $\prod_{\tau \neq \tau_0} \mbb{G}_m$, and for a Tate algebra $(A, A^+)$, let $X^*(S; A)$ denote the space of locally analytic characters $S(\mbb{Z}_p) \to (A^+)^{\times}$. A general element of $X^*(S; A)$ is a tuple $\beta = (\beta_{\tau})_{\tau \neq \tau_0}$, where $\beta_{\tau} \colon \mbb{Z}_p^{\times} \to (A^+)^{\times}$ are locally analytic. The functor $(A, A^+) \mapsto X^*(S; A)$ is representable by a $[F^+:\mbb{Q}]-1$-dimensional group adic space over $\opn{Spa}(\mbb{Q}_p, \mbb{Z}_p)$ denoted $\invs{W}_H$.

\begin{definition}
Let $X_0^*(T \times S; A) = X^*(T/T_0; A) \times X^*(S; A)$. The functor $(A, A^+) \mapsto X_0^*(T \times S; A)$ is then represented by $\invs{W} \defeq \invs{W}_G \times \invs{W}_H$.
\end{definition}

\subsection{Analytic and distribution modules}

We now define the relevant analytic and distribution modules. We introduce some notation:

\begin{notation}
For $\lambda \in X^*(T/T_0; A)$, we set $\kappa_n(\lambda) = w_n \star (-w_G^{\mathrm{max}}\lambda )$. We also define $\kappa_n(\lambda)^* = -w_{M_G}^{\mathrm{max}} \kappa_n(\lambda)$.
\end{notation}

\begin{definition} \label{DefinitionOfDistributions}
Let $\lambda \in X^*(T/T_0; A)$ be an $r_0$-analytic character, for some $r_0 \in \mbb{Z}_{>0}$. Set $S = \opn{Spa}(A, A^+)$. Then for any $r \geq r_0$, we define
\begin{align*} 
V_{G, \kappa_n(\lambda)^*}^{r\mathrm{-an}} &= \opn{anInd}_{\invs{M}^{\square}_{G, r} \cap \invs{B}_{M_G}}^{\invs{M}^{\square}_{G, r}} (w_{M_G}^{\mathrm{max}} \kappa_n(\lambda)^*) \\
 &\defeq \left\{ f \colon \left( \invs{M}^{\square}_{G, r} \right)_S \to \mbb{A}^{1, \mathrm{an}}_S : \begin{array}{c} f(mb) = (w_{M_G}^{\mathrm{max}} \kappa_n(\lambda)^*)(b^{-1}) f(m) \\ \text{ for all } b \in (\invs{M}^{\square}_{G, r} \cap \invs{B}_{M_G})_S \text{ and } m \in (\invs{M}^{\square}_{G, r})_S \end{array} \right\}
\end{align*}
as in \cite[\S 6.2.4]{BoxerPilloni}. This carries actions of $(\invs{M}^{\square}_{G, r})_S$ and $T^{M, +}$ by the formulae in \emph{loc.cit.}, where $T^{M, +} \subset T(\mbb{Q}_p)$ denotes the submonoid of elements $t \in T(\mbb{Q}_p)$ which satisfy $t B_{M_G}(\mbb{Z}_p) t^{-1} \subset B_{M_G}(\mbb{Z}_p)$. Note that $V_{G, \kappa_n(\lambda)^*}^{r\mathrm{-an}} \subset V_{G, \kappa_n(\lambda)^*}^{r'\mathrm{-an}}$ for $r' \geq r$, where the inclusion is given by restricting a function to $(\invs{M}^{\square}_{G, r'})_S$.

We write $\tilde{D}^{r\mathrm{-an}}_{G, \kappa_n(\lambda)}$ for the continuous $A$-dual of $V_{G, \kappa_n(\lambda)^*}^{r\mathrm{-an}}$, which carries actions of $(\invs{M}^{\square}_{G, r})_S$  and $T^{M, -} = (T^{M, +})^{-1}$ in the usual way. This is a Banach $A$-module but in general, it is not necessarily projective. To remedy this, one introduces the open subgroup $\invs{M}^{\square, \circ}_{G, r} = \invs{M}^{1, \circ}_{G, r} B_{M_G}(\mbb{Z}_p)$ where $\invs{M}^{1, \circ}_{G, r} \subset \invs{M}^1_{G, r}$ denotes the open subgroup of elements $m \equiv 1$ modulo $p^{r+\varepsilon}$ for some $\varepsilon > 0$. Note that this subgroup contains $\invs{M}^{\square}_{G, r+1}$. In \cite[\S 6.2.20]{BoxerPilloni}, the authors introduce a modification of the space of analytic functions $V_{G, \kappa_n(\lambda)^*}^{\circ, r\mathrm{-an}}$ using this open subgroup, and one has a $(\invs{M}^{\square, \circ}_{G, r}, T^{M, +})$-equivariant morphism $V_{G, \kappa_n(\lambda)^*}^{r\mathrm{-an}} \to V_{G, \kappa_n(\lambda)^*}^{\circ, r\mathrm{-an}}$ with dense image. One defines the space of $r$-analytic distributions $D^{r\mathrm{-an}}_{G, \kappa_n(\lambda)}$ to be the continuous $A$-dual of $V_{G, \kappa_n(\lambda)^*}^{\circ, r\mathrm{-an}}$, which is a projective Banach $A$-module. One has a $(\invs{M}^{\square, \circ}_{G, r}, T^{M, -})$-equivariant morphism $D^{r\mathrm{-an}}_{G, \kappa_n(\lambda)} \to \tilde{D}^{r\mathrm{-an}}_{G, \kappa_n(\lambda)}$ with dense image.
\end{definition}

We also introduce the following characters:

\begin{definition}
Let $(\lambda, \beta) \in X^*_0(T \times S; A)$ be an $r_0$-analytic character. Set $S = \opn{Spa}(A, A^+)$. Then for any $r \geq r_0$, we let $\sigma_n^{[\beta]}(\lambda) \colon (\invs{M}^{\clubsuit}_{H, r, 1})_S \to \mbb{G}_{m, S}^{\mathrm{an}}$ be the analytic character given by 
\[
(x; y_1, y_2, y_3; z_{1, \tau}, z_{2, \tau})_{\tau \neq \tau_0} \mapsto y_1^{-n-\xi_{n, \tau_0}} \opn{det}^{-\xi_{n, \tau_0}} \opn{det}y_3^{\xi_{n, \tau_0} +1} \prod_{\tau \neq \tau_0} \opn{det}z_{1, \tau}^{-\beta_{\tau}} \opn{det}z_{2, \tau}^{\beta_{\tau}}
\]
where $\xi_{i, \tau}$ are the characters associated with $\lambda$ as in Lemma \ref{GeneratingSetWeightSpace}.
\end{definition}

We obtain the following ``branching law in families'', which is an analytic version of Proposition \ref{ClassicalBranchingProp}. As the proof of this theorem is rather technical (and involves significantly changing the notation), we provide the proof in Appendix \ref{AppendixBranchingLaws}.

\begin{theorem} \label{PadicVectorThm}
Let $(A, A^+)$ be a Tate algebra over $(\mbb{Q}_p, \mbb{Z}_p)$ and $(\lambda, \beta) \in X^*_0(T \times S; A)$ which is $r_0$-analytic for some $r_0 \in \mbb{Z}_{>0}$. Then, for any $r \in \mbb{Z}$ such that $r \geq r_0$, there exists a non-zero vector $x_n^{[\beta]}(\lambda) \in V_{G, \kappa_n(\lambda)^*}^{r\mathrm{-an}}$ satisfying:
\begin{enumerate}
    \item The group $\invs{M}^{\clubsuit}_{H, r}$ acts on $x_n^{[\beta]}(\lambda)$ through the inverse of the character $\sigma_n^{[\beta]}(\lambda)$, via the embedding $u^{-1} \invs{M}^{\clubsuit}_{H, r} u \subset \invs{M}^{\square}_{G, r}$
    \item If $(B, B^+)$ denotes another Tate algebra with a morphism $(A, A^+) \to (B, B^+)$, and $(\lambda', \beta') \in X^*_0(T \times S; B)$ denotes the composition of $(\lambda, \beta)$ with this morphism, then the image of $x_n^{[\beta]}(\lambda)$ under the natural map
    \[
    V^{r\mathrm{-an}}_{G, \kappa_n(\lambda)^*} \to V^{r\mathrm{-an}}_{G, \kappa_n(\lambda')^*}
    \]
    is equal to $x_n^{[\beta']}(\lambda')$
    \item If $(\lambda, j) \in X^*(T/T_0)^+ \times X^*(S)$ is a pair of algebraic characters satisfying $0 \leq j_{\tau} \leq c_{n, \tau}$ for all $\tau \neq \tau_0$, then $x_n^{[\beta]}(\lambda)$ equals the image of $u^{-1} \cdot v_{\kappa_n}^{[j]}$ under the natural map
    \[
    V_{\kappa_n^*} \to V^{r\mathrm{-an}}_{G, \kappa_n(\lambda)^*} .
    \]
    Here any undefined notation is as in Proposition \ref{ClassicalBranchingProp}.
    \item The vector $x_n^{[\beta]}(\lambda)$ does not depend on the radius of analyticity (c.f. Theorem \ref{AppendixAMainThm}(4)).
\end{enumerate}
\end{theorem}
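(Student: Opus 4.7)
The plan is to build $x_n^{[\beta]}(\lambda)$ explicitly from its prescribed transformation law on the Zariski-dense open orbit of Lemma \ref{OpenOrbitLemma}(1). That lemma, applied to the $\mbb{Z}_p$-model $M_G$, identifies $u^{-1} M_H u \cdot B_{M_G}$ as a Zariski-open subscheme of $M_G$, with explicitly computed stabiliser. Translating to the adic/rigid-analytic setting, the subset
$$X_r \;\defeq\; (u^{-1} \invs{M}^{\clubsuit}_{H, r} u) \cdot \bigl( \invs{M}^{\square}_{G, r} \cap \invs{B}_{M_G} \bigr) \;\subset\; \invs{M}^{\square}_{G, r}$$
contains a Zariski-dense affinoid subdomain, so any analytic function on $\invs{M}^{\square}_{G, r}$ is determined by its restriction to $X_r$.

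On $X_r$ one defines
$$x_n^{[\beta]}(\lambda)\bigl((u^{-1}hu)\cdot b\bigr) \;=\; \sigma_n^{[\beta]}(\lambda)(h)\,\cdot\,(w_{M_G}^{\mathrm{max}}\kappa_n(\lambda)^*)(b^{-1}).$$
Well-definedness reduces to the character identity $\sigma_n^{[\beta]}(\lambda)(h) = (w_{M_G}^{\mathrm{max}}\kappa_n(\lambda)^*)(u^{-1}hu)$ for $h$ in the stabiliser $\invs{M}^{\clubsuit}_{H, r} \cap u(\invs{M}^{\square}_{G, r}\cap \invs{B}_{M_G})u^{-1}$. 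Using the explicit description of this stabiliser from the proof of Lemma \ref{OpenOrbitLemma} and the generating set for characters from Lemma \ref{GeneratingSetWeightSpace}, this comes down to matching two expressions in the elementary characters $\xi_{i,\tau}$ and $\beta_{\tau}$, and the definitions of $\kappa_n(\lambda)$ and $\sigma_n^{[\beta]}(\lambda)$ have been arranged precisely so that this match is the p-adic interpolation of the algebraic identity verified in Proposition \ref{ClassicalBranchingProp}.

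The principal difficulty, and the main content of Appendix \ref{AppendixBranchingLaws}, is producing an analytic extension of this function from $X_r$ to all of $\invs{M}^{\square}_{G, r}$. The approach is to write down a genuine factorisation map
$$\invs{M}^{\square}_{G, r} \dashrightarrow (u^{-1} \invs{M}^{\clubsuit}_{H, r} u) \times (\invs{M}^{\square}_{G, r} \cap \invs{B}_{M_G})$$
whose coordinates are rational functions in the matrix entries of $m$, and then to check that the formula above, once evaluated on these rational expressions, becomes a product of terms of the form $\xi_{i,\tau}(\text{unit})\cdot \beta_{\tau}(\text{unit})$ in which each argument lies in $1 + p^r A^+$. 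The congruence conditions defining $\invs{M}^{\square}_{G, r}$ for $r \geq r_0$ then ensure that each factor is well-defined and analytic by $r_0$-analyticity of the weight, and their product converges to the desired global analytic function. This is the only step that involves genuine rigid-analytic analysis rather than bookkeeping.

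Given the analytic extension, properties (1)--(4) follow essentially for free. Property (1) is built into the construction on $X_r$ and extends by Zariski density. Property (2) is immediate because the defining formula is functorial in the characters $\xi_{i,\tau}$ and $\beta_{\tau}$, which base-change trivially. For property (3), when $(\lambda, j)$ is algebraic dominant, the image of $u^{-1}\cdot v_{\kappa_n}^{[j]}$ in $V^{r\mathrm{-an}}_{G, \kappa_n(\lambda)^*}$ satisfies exactly the same transformation law on $X_r$ (using that $v_{\kappa_n}^{[j]}$ is normalised so $v_{\kappa_n}^{[j]}(u) = 1$, hence $(u^{-1}\cdot v_{\kappa_n}^{[j]})(\mathrm{id}) = 1$) and so agrees with $x_n^{[\beta]}(\lambda)$ by Zariski density. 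Finally, property (4) is automatic because the transformation law on the open orbit makes no reference to $r$; increasing $r$ only restricts the resulting function to a smaller subdomain.
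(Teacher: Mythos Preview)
Your overall strategy is close to the paper's, but you misidentify the main technical step.  You treat $X_r=(u^{-1}\invs{M}^{\clubsuit}_{H,r}u)\cdot(\invs{M}^{\square}_{G,r}\cap\invs{B}_{M_G})$ as a proper dense subdomain of $\invs{M}^{\square}_{G,r}$ and frame the hard work as ``analytic extension'' from $X_r$ to the whole group.  In fact the paper proves (Proposition \ref{BoxDecompMsquare}, built from Propositions \ref{BoxDecompositionProp} and \ref{GL2nBoxDecomp} via explicit Iwahori-decomposition manipulations) that $X_r=\invs{M}^{\square}_{G,r}$ exactly.  So there is nothing to extend to; the decomposition already exhausts the group.

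The genuine issue is that a formula of the shape $m=(u^{-1}hu)b\mapsto\sigma(h)\cdot\kappa(b^{-1})$ does not automatically define an \emph{analytic} function of $m$, because you have not exhibited an analytic section $m\mapsto(h,b)$.  Your proposed fix---write down rational coordinates for the factorisation and check everything lands in $1+p^rA^+$---would work, but the paper avoids this entirely.  Instead it uses a product formula (Theorem \ref{ClassicalAppendixBranchingThm}(3) and the definition preceding Theorem \ref{AppendixAMainThm}): one first fixes the \emph{algebraic} vectors $x_{\mu}^{[0]}$, $x_{\mu}^{[1_\tau]}$ for a finite generating set of weights $\mu$; these are honest polynomial functions on $M_G$, hence analytic on $\invs{M}^{\square}_{G,r}$ for free.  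The decomposition $\invs{M}^{\square}_{G,r}=X_r$ is then used only to check that each such $x_{\mu}$ takes values in $\mbb{Z}_p^{\times}(1+\invs{B}_r)$ (the lemma just before Theorem \ref{AppendixAMainThm}), so that composing with the $r$-analytic characters $\xi_{i,\tau}$, $\Xi_\tau$ of Lemma \ref{CoefficientRanalyticLemma} makes sense.  The $p$-adic vector is then $\prod(x_{\mu})^{\xi}$, visibly analytic as a product of analytic functions, and properties (1)--(4) follow from the corresponding algebraic product formula and the uniqueness of the $\xi$'s.  Your route and the paper's converge at ``product of characters applied to units'', but the paper reaches it without ever constructing the section.
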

\begin{proof}
This follows from Theorem \ref{AppendixAMainThm}, noting that the character $\kappa_n(\lambda)^*$ satisfies the conditions in Lemma \ref{CoefficientRanalyticLemma} (because $\lambda$ is trivial on $T_0(\mbb{Z}_p)$), and this character specialises to a $M_G$-dominant character in $\mathcal{C}$ whenever $\lambda \in X^*(T/T_0)^+$.
\end{proof}

\begin{remark}
Note that if $(\lambda, j) \in X^*(T/T_0)^+ \times X^*(S)$ is a pair of algebraic characters as in Theorem \ref{PadicVectorThm}(3), then (after fixing an isomorphism $V_{\kappa_n} \cong V_{\kappa_n^*}^*$) we have a commutative diagram:
\[
\begin{tikzcd}
{D^{r\mathrm{-an}}_{G, \kappa_n(\lambda)}} \arrow[d] \arrow[rd] &                  \\
V_{\kappa_n} \arrow[r]                                      & {\sigma_n^{[j]}}
\end{tikzcd}
\]
where the vertical map is the dual of the map in Theorem \ref{PadicVectorThm}(3) restricted to $D^{r\mathrm{-an}}_{G, \kappa_n(\lambda)}$, the bottom map is pairing with the vector $u^{-1}\cdot v_{\kappa_n}^{[j]}$, and the diagonal map is evaluation at $x_{n}^{[\beta]}(\lambda)$. All of the maps are equivariant for the action of $\invs{M}^{\clubsuit}_{H, r+1}$ via the embedding $u^{-1} \invs{M}^{\clubsuit}_{H, r+1} u \subset \invs{M}^{\square}_{G, r+1} \subset \invs{M}^{\square, \circ}_{G, r}$.
\end{remark}

\subsection{Locally analytic cohomology} \label{LocallyAnalyticCohomologySubSec}

Let $(\underline{\lambda}, \beta) \in X_0^*(T \times S; A)$ be an $r_0$-analytic character, and let $t > k > r_0$ be integers. Let ${^\mu \invs{M}^{\circ}_{G, \mathrm{HT}, k-1, t}}$ denote the pushout of ${^\mu \invs{M}_{G, \mathrm{HT}, k, t}}$ along the inclusion $\invs{M}^{\square}_{G, k} \subset \invs{M}^{\square, \circ}_{G, k-1}$, and consider the base-extension of the torsor 
\[
\pi \times 1 \colon {^\mu \invs{M}^{\circ}_{G, \mathrm{HT}, k-1, t}} \times \opn{Spa}(A, A^+) \to \invs{U}^G_k(p^t) \times \opn{Spa}(A, A^{+}) .
\]
We define $[V^{\circ, (k-1)\mathrm{-an}}_{G, \kappa_n(\underline{\lambda})^*}]$ to be the subsheaf of $(\pi \times 1)_* \ordd_{{^\mu \invs{M}^{\circ}_{G, \mathrm{HT}, k-1, t}} \times \opn{Spa}(A, A^+)}$ of bounded sections which transform as $f(mb) = (w_{M_G}^{\mathrm{max}} \kappa_n(\lambda)^*)(b^{-1}) f(m)$ for all $b \in \invs{M}^{\square, \circ}_{G, k-1} \cap \invs{B}_{M_G}$. This defines a sheaf of topological modules over $\invs{U}^G_k(p^t)$ locally modelled on $V^{\circ, (k-1)\mathrm{-an}}_{G, \kappa_n(\underline{\lambda})^*}$ by the same proof as \cite[Proposition 6.3.3]{BoxerPilloni}. We define $[D_{G, \kappa_n(\underline{\lambda})}^{(k-1)\mathrm{-an}}]$ to be the continuous dual of $[V^{\circ, (k-1)\mathrm{-an}}_{G, \kappa_n(\underline{\lambda})^*}]$ which is a locally projective Banach sheaf locally modelled on the representation $D_{G, \kappa_n(\underline{\lambda})}^{(k-1)\mathrm{-an}}$.

\begin{remark}
The sheaf $[D_{G, \kappa_n(\underline{\lambda})}^{(k-1)\mathrm{-an}}]$ can alternatively be described as 
\[
\left( (\pi \times 1)_{*} \ordd_{{^\mu \invs{M}_{G, \opn{HT}, k, t}} \times \opn{Spa}(A, A^+)} \hatot D_{G, \kappa_n(\underline{\lambda})}^{(k-1)\mathrm{-an}} \right)^{\invs{M}^{\square}_{G, k}}
\]
where the invariants are via the (left) diagonal action and 
\[
\pi \times 1 \colon {^\mu \invs{M}_{G, \opn{HT}, k, t}} \times \opn{Spa}(A, A^+) \to \invs{U}^G_k(p^t) \times \opn{Spa}(A, A^{+}) 
\]
denotes the structural map.
\end{remark}

Let $t > m > k > r_0$ satisfy (\ref{mktstrong}). We can therefore form the cohomology
\[
R\Gamma^G_{w_n, \mathrm{an}}\left(\kappa_n(\underline{\lambda}) \right)^{-, \mathrm{fs}} \defeq R\Gamma_{\invs{I}^G_{m, k}(p^t)} \left( \invs{U}^G_k(p^t), [D_{G, \kappa_n(\underline{\lambda})}^{(k-1)\mathrm{-an}}] \right)^{-, \mathrm{fs}}
\]
where the finite-slope part is with respect to a certain power of $\invs{U}'_B(p^t)$ (by \cite[Theorem 6.4.3]{BoxerPilloni} and a similar calculation in the proof of Proposition \ref{ChangeOfSupportProp}). This definition is independent of the choice of $(m, k, t)$ by \cite[Theorem 6.4.5 and Theorem 6.4.8]{BoxerPilloni}. If one has a continuous morphism $(A, A^+) \to (\mbb{Q}_p, \mbb{Z}_p)$ such that the composition of this morphism with $\underline{\lambda}$ (denoted $\lambda$) lies in $X^*(T/T_0)^+$, then one has a natural specialisation map $R\Gamma^G_{w_n, \mathrm{an}}\left(\kappa_n(\underline{\lambda}) \right)^{-, \mathrm{fs}} \to R\Gamma^G_{w_n}(\kappa_n(\lambda))^{-, \mathrm{fs}}$ (after fixing an isomorphism $V_{\kappa_n(\lambda)} \cong V_{\kappa_n(\lambda)^*}^*$) arising from the map $D^{(k-1)\mathrm{-an}}_{G, \kappa_n(\lambda)} \to V_{\kappa_n(\lambda)}$. Furthermore, if $(A, A^+) = (\mbb{Q}_p, \mbb{Z}_p)$ then this specialisation map is an isomorphism on small slope parts (\cite[Corollary 6.8.4]{BoxerPilloni} using the improved slope bounds implied by Theorem 6.10.1 in \emph{op.cit.} because the Shimura variety is compact).

Similarly, we can also form the cohomology complexes:
\begin{align*}
R\Gamma^H_{\mathrm{id}, \mathrm{an}}\left( \invs{S}_{H, \diamondsuit}(p^t), \sigma_n^{[\beta]}(\underline{\lambda}) \right)^{(-, \dagger)} &\defeq \varprojlim_m R\Gamma_{\invs{Z}^H_m(p^t)} \left( \invs{U}^H_k(p^t), [\sigma_n^{[\beta]}(\underline{\lambda})] \right) \\
R\Gamma^H_{\mathrm{id}, \mathrm{an}}\left( \invs{S}_{H, \diamondsuit}(p^t), \sigma_n^{[\beta]}(\underline{\lambda})^{\vee} \right)^{(+, \dagger)} &\defeq \varinjlim_m R\Gamma \left( \invs{Z}^H_m(p^t), [\sigma_n^{[\beta]}(\underline{\lambda})]^{\vee} \right)
\end{align*}
for $k > r_0$ and $t \geq 1$, where the sheaves are defined using the torsor ${^\mu \invs{M}_{H, \mathrm{HT}, k, t}}$. The first definition is independent of $k$ by excision and Remark \ref{CompatibilityInKT}. As before, if $t=1$ then we omit the variety from the notation. If $(A, A^+) \to (\mbb{Q}_p, \mbb{Z}_p)$ is a continuous homomorphism and the composition of this morphism with $(\underline{\lambda}, \beta)$ (denoted $(\lambda, j)$) lies in $X^*(T/T_0)^+ \times X^*(S)$, then we have specialisation maps $R\Gamma^H_{\mathrm{id}, \mathrm{an}}( \sigma^{[\beta]}_n(\underline{\lambda}) )^{(-, \dagger)} \to R\Gamma^H_{\mathrm{id}}( \sigma^{[j]}_n(\lambda) )^{(-, \dagger)}$ and $R\Gamma^H_{\mathrm{id}, \mathrm{an}}( \sigma^{[\beta]}_n(\underline{\lambda})^\vee )^{(+, \dagger)} \to R\Gamma^H_{\mathrm{id}}( \sigma^{[j]}_n(\lambda)^\vee )^{(+, \dagger)}$.

\begin{proposition}
Let $(\underline{\lambda}, \beta) \in X_0^*(T \times S; A)$ be an $r_0$-analytic character. Then we have a well-defined $A$-linear map
\begin{equation} \label{AnalyticPullbackMorphism}
R\Gamma^G_{w_n, \mathrm{an}}\left(\kappa_n(\underline{\lambda}) \right)^{-, \mathrm{fs}} \to R\Gamma^H_{\mathrm{id}, \mathrm{an}}\left( \sigma_n^{[\beta]}(\underline{\lambda}) \right)^{(-, \dagger)}
\end{equation}
which satisfies:
\begin{enumerate}
    \item If $(A, A^+) \to (B, B^+)$ is a morphism of Tate algebras over $(\mbb{Q}_p, \mbb{Z}_p)$, and $(\underline{\lambda}', \beta') \in X_0^*(T \times S; B)$ denotes the induced character, then the morphisms in (\ref{AnalyticPullbackMorphism}) for the pairs $(\underline{\lambda}, \beta)$ and $(\underline{\lambda}', \beta')$ are compatible under base-change along the morphism $(A, A^+) \to (B, B^+)$. 
    \item If $(A, A^+) = (\mbb{Q}_p, \mbb{Z}_p)$ and $(\underline{\lambda}, \beta) = (\lambda, j)$ is algebraic as in Theorem \ref{PadicVectorThm}(3), then one has a commutative diagram:
    \[
\begin{tikzcd}
{R\Gamma^G_{w_n, \mathrm{an}}\left(\kappa_n(\lambda) \right)^{-, \mathrm{fs}}} \arrow[r, "(\ref{AnalyticPullbackMorphism})"] \arrow[d] & {R\Gamma^H_{\mathrm{id}, \mathrm{an}}\left( \sigma_n^{[j]}(\lambda) \right)^{(-, \dagger)}} \arrow[d, equals] \\
{R\Gamma^G_{w_n}\left(\kappa_n(\lambda) \right)^{-, \mathrm{fs}}} \arrow[r]                                                            & {R\Gamma^H_{\mathrm{id}}\left( \sigma_n^{[j]}(\lambda) \right)^{(-, \dagger)}}                       
\end{tikzcd}
\]
    
    where the bottom map is the one in Proposition \ref{MinusPullbackProp}.
\end{enumerate}
\end{proposition}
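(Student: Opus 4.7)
The plan is to mirror the construction in Proposition \ref{MinusPullbackProp} and Definition \ref{VarThetaMap}, replacing everywhere the classical automorphic sheaves with their locally analytic counterparts, and replacing the classical branching vector $u^{-1} \cdot v_{\kappa_n}^{[j]}$ with the $p$-adically interpolated branching vector $x_n^{[\beta]}(\underline{\lambda}) \in V^{r\mathrm{-an}}_{G, \kappa_n(\underline{\lambda})^*}$ provided by Theorem \ref{PadicVectorThm}. The content of Theorem \ref{PadicVectorThm}(1) is precisely the $H$-equivariance property we need to descend the pullback of distribution sheaves on the $G$-side to the character sheaf on the $H$-side.

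First, I would construct the key morphism of sheaves on $\invs{U}^H_k(p^t) \times \opn{Spa}(A, A^+)$
\[
\widehat{\iota}^* [D^{(k-1)\mathrm{-an}}_{G, \kappa_n(\underline{\lambda})}] \longrightarrow [\sigma^{[\beta]}_n(\underline{\lambda})]
\]
by combining the reduction of structure
\[
\widehat{\iota}^*\bigl({^\mu \invs{M}_{G, \mathrm{HT}, k, t}}\bigr) = {^\mu \invs{M}_{H, \mathrm{HT}, k, t}} \times^{[\invs{M}^{\clubsuit}_{H, k}, u]} \invs{M}^{\square}_{G, k}
\]
of Proposition \ref{MGHTorsorComparison} with evaluation at the vector $x_n^{[\beta]}(\underline{\lambda})$; the transformation law from Theorem \ref{PadicVectorThm}(1) guarantees that the resulting functional descends along the $\invs{M}^{\clubsuit}_{H, k}$-torsor and transforms according to $\sigma^{[\beta]}_n(\underline{\lambda})$. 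Composing the pullback $\widehat{\iota}^*$ in cohomology with support, the excision isomorphism (which applies by the Cartesian square of Lemma \ref{CartesianSquareStrataLemma}), and the sheaf morphism just constructed yields an analytic analogue
\[
\vartheta^{\mathrm{an}}_{m, k, t} \colon R\Gamma_{\invs{I}^G_{m, k}(p^t)}\bigl(\invs{U}^G_k(p^t), [D^{(k-1)\mathrm{-an}}_{G, \kappa_n(\underline{\lambda})}]\bigr) \longrightarrow R\Gamma_{\invs{Z}^H_m(p^t)}\bigl(\invs{S}_{H, \diamondsuit}(p^t), [\sigma^{[\beta]}_n(\underline{\lambda})]\bigr)
\]
of the morphisms in Definition \ref{VarThetaMap}, compatible with restriction, corestriction, and trace as $(m, k, t)$ vary, exactly as in the diagram immediately following Definition \ref{VarThetaMap} (again using the Cartesian square of Shimura varieties from Lemma \ref{IndexLevelSubgroup}).

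Next, to obtain the desired map from $R\Gamma^G_{w_n, \mathrm{an}}(\kappa_n(\underline{\lambda}))^{-, \mathrm{fs}}$, I would invoke the analytic analogue of Proposition \ref{ChangeOfSupportProp}, whose ingredients are already in place: potent compactness of (a power of) $\invs{U}'_B(p^t)$ on $R\Gamma_{\invs{I}^G_{m, k}(p^t)}(\invs{U}^G_k(p^t), [D^{(k-1)\mathrm{-an}}_{G, \kappa_n(\underline{\lambda})}])$ and quasi-isomorphy on finite-slope parts follow from \cite[Theorem 6.4.3, Theorem 6.4.5, Theorem 6.4.8]{BoxerPilloni} together with the same correspondence-theoretic inclusions used in the proof of Proposition \ref{ChangeOfSupportProp}. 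Then the map (\ref{AnalyticPullbackMorphism}) is defined, for $(m, 0, t)$ satisfying (\ref{mktstrong}), as the composition of the inverse (on finite-slope parts) of the trace-corestriction quasi-isomorphism with $\vartheta^{\mathrm{an}}_{m, 0, t}$ and the trace $R\Gamma_{\invs{Z}^H_m(p^t)} \to R\Gamma_{\invs{Z}^H_m(p)}$, followed by the inverse limit over $m$.

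Finally, property (1) is automatic since every step of the construction is functorial in the Tate algebra $(A, A^+)$, with the one non-trivial point being the behaviour of $x_n^{[\beta]}(\underline{\lambda})$ under base-change, which is precisely Theorem \ref{PadicVectorThm}(2). Property (2) reduces to a local statement on the sheaf level: the morphism $\widehat{\iota}^*[D^{(k-1)\mathrm{-an}}_{G, \kappa_n(\lambda)}] \to [\sigma^{[j]}_n(\lambda)]$ built from $x_n^{[\beta]}(\lambda)$ must factor through the classical morphism $\widehat{\iota}^*[V_{\kappa_n(\lambda)}] \to [\sigma^{[j]}_n(\lambda)]$ after composing with the natural specialisation $[D^{(k-1)\mathrm{-an}}_{G, \kappa_n(\lambda)}] \to [V_{\kappa_n(\lambda)}]$, and this follows immediately from Theorem \ref{PadicVectorThm}(3) identifying $x_n^{[\beta]}(\lambda)$ with the image of $u^{-1} \cdot v^{[j]}_{\kappa_n}$. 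The main technical obstacle, which is actually handled by \cite[\S 6]{BoxerPilloni}, is the careful management of radii of analyticity, in particular the interplay between $\invs{M}^{\square}_{G, k}$ and $\invs{M}^{\square, \circ}_{G, k-1}$ needed for projectivity of the distribution Banach sheaves and for potent compactness of the Hecke operators; by choosing $r_0 < k < m < t$ with the constraints of (\ref{mktstrong}), everything fits together with no further work.
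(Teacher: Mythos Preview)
Your proposal is correct and follows essentially the same approach as the paper: construct the sheaf morphism $\widehat{\iota}^*[D^{(k-1)\mathrm{-an}}_{G,\kappa_n(\underline{\lambda})}]\to[\sigma_n^{[\beta]}(\underline{\lambda})]$ by evaluation at $x_n^{[\beta]}(\underline{\lambda})$, mimic Proposition~\ref{MinusPullbackProp} using a triple $(m,k,t)$ satisfying (\ref{mktstrong}), and deduce (1) and (2) from Theorem~\ref{PadicVectorThm}(2),(3). One small slip: in the analytic setting the distribution sheaf only lives over $\invs{U}^G_k(p^t)$ for $k>r_0>0$, so the map must be built with $\vartheta^{\mathrm{an}}_{m,k,t}$ for $k>0$ rather than $\vartheta^{\mathrm{an}}_{m,0,t}$ (as you yourself note at the end); the paper's proof likewise uses a general $(m,k,t)$ here rather than $k=0$.
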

\begin{proof}
This is constructed in a similar way as Proposition \ref{MinusPullbackProp}, using the morphism of sheaves $\widehat{\iota}[D^{(k-1)\mathrm{-an}}_{G, \kappa_n(\underline{\lambda})}] \to [\sigma_n^{[\beta]}(\underline{\lambda})]$ arising from evaluation at the vector $x_n^{[\beta]}(\underline{\lambda})$, i.e. the pullback is constructed using a triple $(m, k, t)$ satisfying (\ref{mktstrong}) and then one traces down to level $K^H_{\diamondsuit}(p)$. Parts (1) and (2) follow from the properties of the vector $x_n(\underline{\lambda})$ in Theorem \ref{PadicVectorThm}.
\end{proof}

We have a Serre duality pairing between the complexes $R\Gamma^H_{\mathrm{id}, \mathrm{an}}(\cdots)^{(-, \dagger)}$ and $R\Gamma^H_{\mathrm{id}, \mathrm{an}}(\cdots)^{(+, \dagger)}$ which is compatible with the duality in \S \ref{PullbackAdicFunctoriality} via the specialisation maps above. Therefore we obtain a pairing:
\[
\boxed{%
\langle\langle \cdot, \cdot \rangle\rangle^{-}_{\mathrm{an}} \colon \opn{H}^{n-1}_{w_n, \mathrm{an}}\left( \kappa_n(\underline{\lambda}) \right)^{-, \mathrm{fs}} \times \opn{H}^0_{\mathrm{id}, \mathrm{an}}\left( \sigma_n^{[\beta]}(\underline{\lambda})^{\vee} \right)^{(+, \dagger)} \to A
}
\]
which is compatible under change of coefficients. We have the following compatibility with the previously defined pairings:

\begin{corollary} \label{DistAnIsAnCorollary}
Let $f \colon (A, A^+) \to (\mbb{Q}_p, \mbb{Z}_p)$ be a homomorphism over $(\mbb{Q}_p, \mbb{Z}_p)$, and suppose that the character $(\lambda, j)$, induced from composing $(\underline{\lambda}, \beta)$ with this morphism, is algebraic as in Theorem \ref{PadicVectorThm}(3). Then for any 
\begin{itemize}
    \item $\underline{\eta} \in \opn{H}^{n-1}_{w_n, \mathrm{an}}\left( \kappa_n(\underline{\lambda}) \right)^{-, \mathrm{fs}}$
    \item $\underline{\chi} \in \opn{H}^0_{\mathrm{id}, \mathrm{an}}\left( \sigma_n^{[\beta]}(\underline{\lambda})^{\vee} \right)^{(+, \dagger)}$
\end{itemize}
one has $f(\langle\langle \underline{\eta}, \underline{\chi} \rangle\rangle^{-}_{\mathrm{an}}) = \langle \eta, \chi \rangle^{-}_{\mathrm{an}}$, where $\eta$ and $\chi$ denote the specialisations of $\underline{\eta}$ and $\underline{\chi}$ respectively under the morphism $f$.
\end{corollary}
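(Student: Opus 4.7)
The plan is to assemble the claim from two functoriality statements established just prior to the corollary. By construction, the pairing $\langle\langle \cdot, \cdot \rangle\rangle^{-}_{\opn{an}}$ is the composition of the analytic pullback (\ref{AnalyticPullbackMorphism}) with the Serre duality pairing between $R\Gamma^H_{\opn{id}, \opn{an}}(\cdots)^{(-, \dagger)}$ and $R\Gamma^H_{\opn{id}, \opn{an}}(\cdots)^{(+, \dagger)}$; similarly, $\langle \cdot, \cdot \rangle^{-}_{\opn{an}}$ is the composition of the algebraic pullback of Proposition \ref{MinusPullbackProp} with the Serre duality pairing of \S\ref{PullbackAdicFunctoriality}. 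So the task reduces to checking that both ingredients behave well under the specialization $f$.

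First I will handle the pullback map. Property (1) of the proposition preceding the corollary, applied to the ring map $f$, shows that the specialization of (\ref{AnalyticPullbackMorphism}) at the $A$-point coincides with the corresponding morphism (\ref{AnalyticPullbackMorphism}) at the $\mbb{Q}_p$-point $(\lambda, j)$. Since this latter point is algebraic, property (2) of the same proposition then identifies that analytic pullback with the algebraic pullback of Proposition \ref{MinusPullbackProp} after composing with the natural specialization maps $R\Gamma^G_{w_n, \opn{an}}(\kappa_n(\lambda))^{-, \opn{fs}} \to R\Gamma^G_{w_n}(\kappa_n(\lambda))^{-, \opn{fs}}$ and $R\Gamma^H_{\opn{id}, \opn{an}}(\sigma_n^{[j]}(\lambda))^{(-, \dagger)} \to R\Gamma^H_{\opn{id}}(\sigma_n^{[j]})^{(-, \dagger)}$. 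Stacking these two compatibilities shows that the image of $\underline{\eta}$ under (\ref{AnalyticPullbackMorphism}), specialized via $f$ and then pushed to coherent cohomology, equals the pullback of $\eta$ used to define $\langle \eta, \chi \rangle^{-}_{\opn{an}}$.

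Second, I will invoke the two compatibilities for the Serre duality pairings noted in the paragraph immediately preceding the corollary: the locally analytic duality between the $(-, \dagger)$ and $(+, \dagger)$ complexes matches the coherent Serre pairing via the natural specialization maps, and each of these pairings commutes with change of the coefficient ring. Concatenating with the first step reduces the desired equality $f(\langle\langle \underline{\eta}, \underline{\chi} \rangle\rangle^{-}_{\opn{an}}) = \langle \eta, \chi \rangle^{-}_{\opn{an}}$ to a formal diagram chase.

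The only non-formal input is Theorem \ref{PadicVectorThm}, parts (2) and (3), which is what makes the pullback morphism well-behaved under specialization: part (2) guarantees that $x_n^{[\beta]}(\underline{\lambda})$ specializes under $f$ to $x_n^{[j]}(\lambda)$, and part (3) identifies this vector with (the image of) the classical branching vector $u^{-1}\cdot v_{\kappa_n}^{[j]}$ used to construct the algebraic pullback in \S\ref{PullbackAdicFunctoriality}. With these identifications in place, I do not anticipate any genuine obstacle; the argument is an unwinding of definitions, and the main care required is bookkeeping the various normalizations so the diagrams commute on the nose rather than up to a scalar.
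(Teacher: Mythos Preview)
Your proposal is correct and follows essentially the same approach as the paper, which in fact provides no explicit proof and treats the corollary as an immediate consequence of the preceding proposition (properties (1) and (2)) together with the change-of-coefficients compatibility of the Serre duality pairing noted just before the boxed definition of $\langle\langle\cdot,\cdot\rangle\rangle^{-}_{\mathrm{an}}$. You have simply made explicit the diagram chase that the paper leaves to the reader.
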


\begin{remark}
There are analogous constructions of all the various pairings in \S \ref{PullbacksOnAdicSVs}--\ref{LocallyAnalyticCohomologySection} working over a finite extension $L/\mbb{Q}_p$ and they are related by base-change of coefficients. This will be important in the construction of the $p$-adic $L$-function, because we will have to enlarge the field of definition to include the Hecke eigenvalues of the relevant automorphic representation/character. 
\end{remark}


\section{Families of cohomology classes} \label{FamiliesOfCohomologyClassesSection}

In this section we show that, under some hypotheses on the ramification of the automorphic representation $\pi$, there exists a family of cohomology classes in $\opn{H}^{n-1}_{w_n, \mathrm{an}}(\kappa_n(\lambda_A))^{-, \mathrm{fs}}$ corresponding to a family of automorphic representations passing through $\pi$. This family of cohomology classes will be one half of the input for the pairing $\langle\langle \cdot, \cdot \rangle\rangle^{-}_{\mathrm{an}}$ when constructing the $p$-adic $L$-function in \S \ref{ConstructionOfPadicLSection}. Recall that we have assumed $F$ contains an imaginary quadratic number field $E$. This will be important when speaking about automorphic base-change for unitary similitude groups.

\subsection{Families for the group \texorpdfstring{$\mbf{G}$}{G}}

Let $\pi$ be a cuspidal automorphic representation of $\mbf{G}(\mbb{A})$ such that $\pi_{\infty}$ lies in the discrete series. We impose the following assumptions:

\begin{assumption} \label{HCassumptionOnPi}
Assume that:
\begin{enumerate}
    \item The Harish-Chandra parameter of $\pi_{\infty}$ is of the form $w_n \cdot (\lambda_{\pi} + \rho)$ for some $\lambda_{\pi} \in X^*(T/T_0)^+$ (see \S \ref{DiscreteSeriesRepsSection})
    \item Any weak base-change of $\pi$ to an automorphic representation of $\opn{GL}_1(\mbb{A}_E) \times \opn{GL}_{2n}(\mbb{A}_F)$ is cuspidal\footnote{Here by weak base-change, we mean an automorphic representation of $\opn{GL}_1(\mbb{A}_E) \times \opn{GL}_{2n}(\mbb{A}_F)$ satisfying the conditions in \cite[Theorem A.1]{ShinAppendix} (the theorem of course shows that such a base-change exists).} 
    \item There exist compact open subgroups $K_p \subset \mbf{G}(\mbb{Q}_p)$ and $K^p \subset \mbf{G}(\mbb{A}^p_f)$ with $K_p$ hyperspecial, such that $K = K^pK_p$ is sufficiently small and 
    \[
    \opn{dim}_{\mbb{C}}\pi_f^K = 1 .
    \]
\end{enumerate}
\end{assumption}

\begin{remark}
Under the additional assumptions below, Assumption \ref{HCassumptionOnPi}(3) is not a severe restriction thanks to the local newform theory for general linear groups. More precisely, under Assumption \ref{CMramificationAssumption} below, the local component of $\pi$ at any ramified prime occurs as the local component of its cuspidal base-change to $\opn{GL}_1(\mbb{A}_E) \times \opn{GL}_{2n}(\mbb{A}_F)$, and is therefore generic. In particular, by \cite{JPSS81}, there exists a compact open subgroup $K = K^pK_p$ with $K_p$ hyperspecial, such that $\opn{dim}_{\mbb{C}}\pi_f^K = 1$. If $K$ is neat then Assumption \ref{HCassumptionOnPi}(3) holds, otherwise one can use a similar strategy as in \cite[Remark 3.2.1]{LZBK21} to handle more general levels. 
\end{remark}

Fix a finite set of primes $S$ containing $p$ and all primes where $K^p$ is not a good special maximal compact open subgroup as in Lemma \ref{ParahoricRestrictionLemma}. Let $\mbb{T}^{-}$ denote the Hecke algebra (over $\mbb{Q}$) given by
\[
\mbb{T}^{-} = \opn{C}^{\infty}\left( K^S \backslash \mbf{G}(\mbb{A}^S_f) / K^S \right) \otimes \mbb{Q}[T^{-}]
\]
where the convolution product for the first factor is with respect to a fixed Haar measure on $\mbf{G}$. We fix a $\mbb{C}$-algebra homomorphism $\theta_{\pi} \colon \mbb{T}^{-}_{\mbb{C}} \to \mbb{C}$ which is an eigencharacter for the action of $\mbb{T}^{-}_{\mbb{C}}$ on $\pi_f^{K^pK^G_{\mathrm{Iw}}(p)}$. By Assumption \ref{HCassumptionOnPi}(3), this homomorphism has finite-slope at $p$, so gives rise to a monoid homomorphism $\theta_{\pi, p} \colon T^{-} \to \mbb{C}^{\times}$. We let $I_{\pi}$ denote the kernel of the morphism $\theta_{\pi}$.

\begin{lemma}
There exists a number field $\Phi$ containing $F$, such that $\theta_{\pi}$ is defined over $\Phi$.
\end{lemma}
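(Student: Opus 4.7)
The plan is to realize $\theta_\pi$ as a Hecke eigensystem on a finite-dimensional, $F^{\mathrm{cl}}$-rational coherent cohomology space, so that algebraicity of its values falls out of linear algebra.

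First, I would use Assumption \ref{HCassumptionOnPi}(1) together with the dictionary of \S\ref{DiscreteSeriesRepsSection}: the discrete series $\pi(\xi_n)$ with Harish-Chandra parameter $w_n \cdot (\lambda_\pi + \rho)$ has non-trivial $(\ide{g}, K_\infty)$-cohomology with coefficients in the algebraic representation whose weight is matched to the vector bundle weight $\kappa_n(\lambda_\pi)$, in the predicted degree $\ell_-(w_n) = n-1$. Via the Matsushima--Harris-style identification of $(\ide{g}, K_\infty)$-cohomology with coherent cohomology of automorphic vector bundles, this shows that the $\pi_f$-isotypic part of
\[
\opn{H}^{n-1}\bigl(S_{\mbf{G}, \mathrm{Iw}}(p), [V_{\kappa_n(\lambda_\pi)}]\bigr) \otimes_{F^{\mathrm{cl}}} \mbb{C}
\]
is non-zero.

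Second, because $S_{\mbf{G}, \mathrm{Iw}}(p)$ is smooth and proper over $F^{\mathrm{cl}}$ (using $F^+ \neq \mbb{Q}$) and $[V_{\kappa_n(\lambda_\pi)}]$ descends to an algebraic coherent vector bundle over $F^{\mathrm{cl}}$, this cohomology group is a finite-dimensional $F^{\mathrm{cl}}$-vector space. The Hecke algebra $\mbb{T}^-$ acts on it by $F^{\mathrm{cl}}$-linear endomorphisms, via the algebraic correspondences defining the prime-to-$S$ spherical operators and via the Iwahori double-coset operators from $\mbb{Q}[T^-]$ at $p$, all of which are defined over $F^{\mathrm{cl}}$.

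Third, the image of $\mbb{T}^- \otimes F^{\mathrm{cl}}$ in $\opn{End}_{F^{\mathrm{cl}}}$ of the above cohomology is therefore a finite-dimensional commutative $F^{\mathrm{cl}}$-algebra $A$. Since the generalised eigenspace for $\theta_\pi$ in $A \otimes_{F^{\mathrm{cl}}} \mbb{C}$ is non-zero by the first step, $\theta_\pi$ factors through $A$ and hence lands in a finite field extension $\Phi$ of $F^{\mathrm{cl}}$, which is a number field containing $F$. The main subtlety — and where I would need to be most careful — is the first step: one must verify that the dictionary of \S\ref{DiscreteSeriesRepsSection} really does place the contribution of $\pi(\xi_n)$ in a non-trivial $F^{\mathrm{cl}}$-rational class in coherent cohomology in degree $n-1$, which hinges on the rational structure on automorphic vector bundles (due to Harris and others) and on the correct matching of the Blattner/Harish-Chandra/vector-bundle parameters.
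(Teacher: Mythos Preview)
Your approach is correct but differs substantially from the paper's. The paper argues via automorphic base-change: it passes to the weak base-change $\psi \boxtimes \Pi_0$ on $\opn{GL}_1(\mbb{A}_E) \times \opn{GL}_{2n}(\mbb{A}_F)$ (using Assumptions \ref{HCassumptionOnPi}(2) and that $p \in S$ splits in $E$ so the local components at $p$ match), and then invokes the algebraicity of Hecke eigenvalues for regular algebraic cuspidal representations of $\opn{GL}_{2n}$ over a CM field, citing \cite[Proposition~3.4.3]{GR2014}. Your argument instead stays on the Shimura variety for $\mbf{G}$ and uses the $F^{\mathrm{cl}}$-rational structure on coherent cohomology directly; the non-vanishing step you flag as delicate is exactly what the paper establishes later in the proof of Corollary~\ref{EtalenessOfEigenvarietyCor} via \cite{Su19} and \cite{BHR94}, so it is available. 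Your route is more self-contained and notably does not use the cuspidality of the base-change (Assumption~\ref{HCassumptionOnPi}(2)); the paper's route has the advantage of reducing to a single black-box citation and of being consistent with its later use of base-change anyway.
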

\begin{proof}
Let $\psi \boxtimes \Pi_0$ denote the weak base-change of $\pi$ to $\opn{GL}_1(\mbb{A}_E) \times \opn{GL}_{2n}(\mbb{A}_F)$. By \cite[Theorem 5.2.1]{LabesseSchwermer}, there exists $\pi_0 \subset \pi|_{\mbf{G}_0(\mbb{Q}_\ell)}$ cuspidal automorphic such that $\Pi_0$ is the weak base-change of $\pi_0$. Since $\Pi_0$ is cuspidal, we have $\opn{BC}_{\ell}(\pi_{0, \ell}) \cong \Pi_{0, \ell}$ for all rational primes $\ell$, where $\opn{BC}_{\ell}$ denotes the local (standard) base-change map (see \cite[\S C.3]{LTXZZ19}). 

This implies that the homomorphism $\theta_{\pi}$ matches with the Hecke eigensystem for $\psi \boxtimes \Pi_0$, which is regular algebraic. The result then follows from \cite[Proposition 3.4.3]{GR2014} and the references therein (note that $F$ is taken to be a totally real field in \emph{op.cit.}, but the cited result holds in general via the same proof).  
\end{proof}

The above lemma implies that we can view $\theta_{\pi}$ as a homomorphism valued in any field extension of $\Phi$. For example, if we let $L$ denote the completion of the image of $\Phi$ under the fixed isomorphism $\mbb{C} \cong \Qpb$, then $L/\mbb{Q}_p$ is a finite extension and we can view $\theta_{\pi}$ as an $L$-algebra homomorphism $\mbb{T}^{-}_L \to L$. This leads to the following small slope assumption:

\begin{assumption} \label{SSSassumption}
We assume that the monoid homomorphism $\theta_{\pi, p} \colon T^- \to L^{\times}$ is of small slope (with respect to $\kappa_n = w_n \star (-w_G^{\mathrm{max}} \lambda_{\pi})$). 
\end{assumption}

\begin{example} \label{BorelImpliesSSExample}
Let $\lambda_{\pi}^* = -w_{G}^{\mathrm{max}} \cdot \lambda_{\pi}$ (which is in fact equal to $\lambda_{\pi}$ by Assumption \ref{HCassumptionOnPi}(1)). We say that $\pi$ is Borel ordinary if $\lambda_{\pi}^*(x)^{-1}\theta_{\pi, p}(x)$ is a $p$-adic unit, where $x \in T^{--}$ is the element in Definition \ref{BorelHeckeOperator}. As seen below, $\pi$ contributes to the coherent cohomology of $S_{\mbf{G}, \mathrm{Iw}}(p)$, and the slope bounds in \cite[Conjecture 5.9.2]{BoxerPilloni} hold because the Shimura variety is compact (see Theorem 6.48 in \emph{op.cit.}). Therefore, being Borel ordinary in fact implies that the homomorphism $(-\lambda_{\pi}^*) \cdot \theta_{\pi, p}$ is valued in $\ordd_L^{\times}$.

Suppose that $\pi$ is Borel ordinary. Then we will show that $\theta_{\pi, p}$ is of small slope. For this, it is enough to calculate, for $i\neq n$, the $\tau_0$-component of $\delta_i \defeq w_i^{-1} \star \kappa_n - \lambda_{\pi}^*$ and show that there exists $x \in T^{-}$ such that $v(\delta_i(x)) > 0$. For $1 \leq i \leq 2n-1$, let $x_i \in T^{-}$ be the element which is the identity outside the $\tau_0$-component, and equal to $(1, \dots, 1, p, \dots, p)$ in the $\tau_0$-component (where there are $i$ lots of $p$). Write $\lambda_{\pi} = (0; c_{1, \tau}, \dots, c_{2n, \tau})_{\tau \in \Psi}$. We break the analysis into two cases.

Suppose that $i < n$. Then the action of $w_i^{-1}$ only affects the first $i+1$ entries of the $\tau_0$-component of the weight. In this case, we take $x = x_n$ and find that $v(\delta_i(x)) = 2c_{n, \tau_0} + 1 > 0$ because $c_{n, \tau_0} \geq 0$ (Assumption \ref{HCassumptionOnPi}(1) ).

Suppose that $i = n + \varepsilon$ for an integer $1 \leq \varepsilon \leq n-1$. Then the last $n-\varepsilon$ entries of the $\tau_0$-component of $\delta_i$ are $c_{n-\varepsilon} - c_n + \varepsilon, 0, \dots, 0$ (using the fact that $c_{j, \tau_0} = -c_{2n+1-j, \tau_0}$). We then take $x = x_{n-\varepsilon}$ and conclude that $v(\delta_i(x)) = c_{n-\varepsilon} - c_n + \varepsilon > 0$ because $\lambda_{\pi}$ is dominant.
\end{example}

Recall that we can view $X^*(T/T_0)^+$ as a subset of $\invs{W}_G(\mbb{Q}_p)$ (we will refer to this subset as the classical weights). We now introduce the notion of a family of automorphic representations and cohomology classes.

\begin{definition} \label{DefinitionOfFamilyThroughPi}
By a family $\underline{\pi}$ over an open affinoid $U = \opn{Spa}(A, A^+) \subset \invs{W}_{G, L}$ containing $\lambda_{\pi}$, passing through $\pi$, we mean an $A$-algebra homorphism 
\[
\theta_{\underline{\pi}} \colon \mbb{T}^{-}_A \to A
\]
such that for all but finitely many classical weights $\lambda \in U \cap X^*(T/T_0)^+$, there exists a cuspidal automorphic representation $\sigma$ such that the specialisation of $\theta_{\underline{\pi}}$ at $\lambda$ is an eigencharacter for the action of $\mbb{T}^{-}_L$ on $\sigma^{K^pK^G_{\mathrm{Iw}}(p)}$ (under the identification $\mbb{C} \cong \Qpb$).

Let $\eta \in \opn{H}^{n-1}\left( \invs{S}_{G, \mathrm{Iw}}(p), [V_{\kappa_n}] \right)^{-, \mathrm{ss}}$ be an eigenvector for the action of $\mbb{T}^-_L$ with eigencharacter $\theta_{\pi}$. Let $\lambda_A \colon T(\mbb{Z}_p) \to (A^+)^{\times}$ denote the universal character associated with $U$. If such a family $\underline{\pi}$ exists then, by a family $\underline{\eta}$ of cohomology classes passing through $\eta$, we mean an eigenvector $\underline{\eta} \in \opn{H}^{n-1}_{w_n, \mathrm{an}}(\kappa_n(\lambda_A))^{-, \mathrm{fs}}$ for the action of $\mbb{T}^{-}_{A}$ with eigencharacter $\theta_{\underline{\pi}}$, whose specialisation at $\lambda_{\pi}$ equals $\eta$ under the comparison isomorphism
\[
\opn{H}^{n-1}_{w_n, \mathrm{an}}\left( \kappa_n(\lambda_{\pi}) \right)^{-, \mathrm{ss}} \cong \opn{H}^{n-1}\left( \invs{S}_{G, \mathrm{Iw}}(p), [V_{\kappa_n}] \right)^{-, \mathrm{ss}} .
\]
\end{definition}

\subsection{Existence of families} \label{ExistenceOfFamilies}

In this section, we introduce some further assumptions on $\pi$ which ensure the existence of a family passing through $\pi$ as well as a family of cohomology classes. We begin with the following ramification assumption on the representation $\pi$:

\begin{assumption} \label{CMramificationAssumption}
Assume that:
\begin{enumerate}
    \item The set $S$ above contains only primes which split in $E/\mbb{Q}$, i.e. $K^S = \prod_{\ell \not\in S} K_\ell$ where $K_{\ell} \subset \mbf{G}(\mbb{Q}_{\ell})$ is a good special maximal compact open. We further assume that $K_\ell$ is hyperspecial if $\mbf{G}_{\mbb{Q}_{\ell}}$ is unramified (for $\ell \not\in S$).
    \item The eigencharacter $\theta_{\pi, p}$ appears with multiplicity one for the action on $\pi_p^{K^G_{\mathrm{Iw}}(p)}$.
\end{enumerate}
\end{assumption}

As a consequence of this assumption, we have:

\begin{lemma} \label{InjectivityOfBaseChange}
Suppose that $\pi$ satisfies Assumption \ref{CMramificationAssumption} (as well as the assumptions in the previous section). Let $\sigma$ be a cuspidal automorphic representation of $\mbf{G}(\mbb{A})$ such that $\sigma_{\infty}$ is cohomological. Suppose that $\sigma_f^K \neq 0$ and $\pi_{\ell} \cong \sigma_{\ell}$ for all $\ell \not\in S$. Then $\pi_f \cong \sigma_f$.
\end{lemma}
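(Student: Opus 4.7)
The strategy is to use automorphic base-change to transfer the comparison from $\mbf{G}$ to $\opn{GL}_1(\mbb{A}_E)\times\opn{GL}_{2n}(\mbb{A}_F)$, where strong multiplicity one takes over, and then descend back to $\mbf{G}$ using the fact that every prime in $S$ splits in $E/\mbb{Q}$.

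First, I would invoke \cite{ShinAppendix} to produce weak automorphic base-changes $\psi\boxtimes\Pi_0$ of $\pi$ and $\psi'\boxtimes\Pi_0'$ of $\sigma$; the hypothesis that $\sigma_\infty$ is cohomological is precisely what is needed to apply the base-change results in \emph{loc.cit.} to $\sigma$. By Assumption \ref{HCassumptionOnPi}(2), $\Pi_0$ is cuspidal, and both $\psi\boxtimes\Pi_0$ and $\psi'\boxtimes\Pi_0'$ are isobaric. The hypothesis $\pi_\ell\cong\sigma_\ell$ for $\ell\notin S$, combined with the local-global compatibility of weak base-change, forces $\Pi_0$ and $\Pi_0'$ (resp.\ $\psi$ and $\psi'$) to agree at all but finitely many places. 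Strong multiplicity one for $\opn{GL}_{2n}$ in the form of Jacquet--Shalika (applied to the cuspidal $\Pi_0$ and the isobaric $\Pi_0'$) then yields $\Pi_0\cong\Pi_0'$, and analogously $\psi\cong\psi'$.

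Second, I would descend back to $\mbf{G}$. By Assumption \ref{CMramificationAssumption}(1), every $\ell\in S$ splits in $E/\mbb{Q}$, and since $F^+\cap E=\mbb{Q}$ this identifies $\mbf{G}_{\mbb{Q}_\ell}$ with a product of general linear groups (together with the similitude $\mbb{G}_m$-factor) indexed by places of $F^+$ above $\ell$. Under this identification, the local base-change map is essentially the identity on isomorphism classes of admissible representations, so the equality of global base-changes, together with the known local components of $\psi\cong\psi'$ at $\ell$, pins down $\pi_\ell$ and $\sigma_\ell$ uniquely. In particular $\pi_\ell\cong\sigma_\ell$ for every $\ell\in S$, which combined with the hypothesis at $\ell\notin S$ gives $\pi_f\cong\sigma_f$. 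The prime $p$ itself is handled automatically by this discussion, as it splits completely in $F/\mbb{Q}$ and in particular in $E/\mbb{Q}$.

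The main obstacle is the descent step at the primes in $S$: we need local base-change to be rigid enough to recover $\sigma_\ell$ from its base-change. This is precisely the purpose of Assumption \ref{CMramificationAssumption}(1), which trivialises the unitary part of $\mbf{G}_{\mbb{Q}_\ell}$ locally and eliminates any potential non-triviality of local $L$-packets arising from the twisted endoscopic side. Without this splitting assumption one would have to invoke the full local endoscopic classification of \cite{KMSW14} to carry out the descent, which would significantly complicate the argument.
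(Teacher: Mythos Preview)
Your argument is correct and follows essentially the same route as the paper. The paper's proof simply cites Proposition~\ref{GlobalMultOneProp}, whose proof (in Appendix~C) proceeds exactly as you describe: produce weak base-changes via \cite{ShinAppendix}, identify them by strong multiplicity one for $\opn{GL}_{2n}$, and then use injectivity of local base-change at the split primes $\ell\in S$ to conclude $\pi_\ell\cong\sigma_\ell$ there.

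The only difference is one of packaging. Proposition~\ref{GlobalMultOneProp} is stated under the weaker hypothesis that $\pi_\ell\cong\sigma_\ell$ merely on a cofinite set $T$ containing $2$ and all inert primes; to handle the remaining non-split primes outside $S$ it passes through $\mbf{G}_0$ via \cite{LabesseSchwermer} and invokes the local result Lemma~\ref{LocalMultOneLemma} (which uses the endoscopic classification). In your situation the hypothesis already gives $\pi_\ell\cong\sigma_\ell$ for \emph{all} $\ell\notin S$, so that extra machinery is unnecessary and your streamlined argument suffices. Your closing remark about what would be needed without the splitting assumption is thus exactly right: the paper's Lemma~\ref{LocalMultOneLemma} is precisely the tool that handles the non-split unramified case, at the cost of invoking \cite{Mok, KMSW14}.
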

\begin{proof}
This is an application of Proposition \ref{GlobalMultOneProp}.
\end{proof}

We obtain the following corollary:

\begin{corollary} \label{EtalenessOfEigenvarietyCor}
Let $\pi$ be as in Lemma \ref{InjectivityOfBaseChange} and set $\kappa_n = w_n \star (-w_{G}^{\mathrm{max}}\cdot\lambda_{\pi})$. Then the localised cohomology
\[
\opn{H}^{n-1}\left( \invs{S}_{G, \mathrm{Iw}}(p), [V_{\kappa_n}] \right)_{I_{\pi}}
\]
is one-dimensional (over $L$).\footnote{We are abusing notation slightly -- by the localisation $(\cdots )_{I_{\pi}}$ we mean first base-change to $L$ and then localise at $I_{\pi}$ (the kernel of the map $\mbb{T}^{-}_L \to L$).}
\end{corollary}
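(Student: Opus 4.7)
The plan is to reduce to a classical automorphic computation and then apply the rigidity baked into our assumptions on $\pi$. Since $\theta_{\pi,p}$ is small slope by Assumption \ref{SSSassumption}, the $I_\pi$-localisation lives entirely in the small-slope part, so Theorem \ref{ImportantPropertiesThm}(2)(3) allows me to identify the target with the $I_\pi$-localisation of the classical coherent cohomology $\opn{H}^{n-1}(\invs{S}_{G, \opn{Iw}}(p), [V_{\kappa_n}])$ (concentrated in this degree after passing to small slope parts).

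Next, via rigid GAGA and the standard decomposition of coherent cohomology of compact PEL-type Shimura varieties into $(\ide{p}^-, K_\infty)$-cohomology of cuspidal automorphic representations (combined with the $(\ide{p}^-, K_\infty)$-cohomology calculation for discrete series), I obtain a $\mbb{T}^-_{\mbb{C}}$-equivariant decomposition
\[
\opn{H}^{n-1}\bigl(\invs{S}_{G, \opn{Iw}}(p), [V_{\kappa_n}]\bigr)_{\mbb{C}} \cong \bigoplus_{\sigma} m(\sigma) \cdot \sigma_f^{K^p K^G_{\opn{Iw}}(p)} \otimes \opn{Hom}_{K_\infty}(\nu_n, \sigma_\infty),
\]
summed over cuspidal automorphic representations $\sigma$ of $\mbf{G}(\mbb{A})$ with multiplicity $m(\sigma)$. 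By item (2) of the dictionary in \S \ref{DiscreteSeriesRepsSection}, the non-vanishing of the Hom-space forces $\sigma_\infty$ to be the discrete series with Harish-Chandra parameter $w_n\cdot(\lambda_\sigma + \rho)$ for some dominant $\lambda_\sigma$.

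The main step is to show that, after localising at $I_\pi$, only representations $\sigma$ with $\sigma_f\cong\pi_f$ contribute, each with multiplicity one. Every such $\sigma$ satisfies $\sigma_\ell \cong \pi_\ell$ for all $\ell \notin S$. Applying Assumption \ref{HCassumptionOnPi}(2) to both $\pi$ and $\sigma$, each admits a cuspidal base-change to $\opn{GL}_1(\mbb{A}_E)\times \opn{GL}_{2n}(\mbb{A}_F)$, and strong multiplicity one for $\opn{GL}_{2n}$ forces these base-changes to be globally isomorphic (as they agree at all unramified places by Satake). Since $p$ splits completely in $F/\mbb{Q}$ and hence in $E/\mbb{Q}$ by Assumption \ref{CMramificationAssumption}(1), local base-change at $p$ preserves Langlands parameters, so $\sigma_p$ is unramified with Satake parameters equal to those of $\pi_p$. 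In particular $\sigma_f^K \neq 0$, and Lemma \ref{InjectivityOfBaseChange} then yields $\sigma_f \cong \pi_f$; the same base-change argument, together with multiplicity one for isobaric automorphic representations of $\opn{GL}_{2n}$, forces $m(\sigma) = 1$.

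Putting the pieces together, the $I_\pi$-localisation becomes
\[
\pi_f^{K^p}\otimes \pi_p^{K^G_{\opn{Iw}}(p)}[\theta_{\pi,p}] \otimes \opn{Hom}_{K_\infty}(\nu_n, \pi_\infty),
\]
whose three tensor factors are one-dimensional by Assumption \ref{HCassumptionOnPi}(3), Assumption \ref{CMramificationAssumption}(2), and item (2) of the dictionary in \S \ref{DiscreteSeriesRepsSection} respectively; descent from $\mbb{C}$ to $L$ is justified by the rationality of $\theta_\pi$ over $\Phi$. The genuinely hard input is the third paragraph: upgrading agreement at unramified places to a global isomorphism $\sigma_f\cong\pi_f$ is the step where the combination of Assumption \ref{HCassumptionOnPi}(2), Assumption \ref{CMramificationAssumption}(1), and Lemma \ref{InjectivityOfBaseChange} is indispensable, and this is where all of the arithmetic content of the hypotheses is used.
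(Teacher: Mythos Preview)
Your overall strategy matches the paper's, but there are two genuine gaps.

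First, and most seriously, your argument that $m(\sigma)=1$ is invalid: multiplicity one (or strong multiplicity one) for $\opn{GL}_{2n}$ constrains the \emph{base-change} $\Pi_0$, not the multiplicity of $\sigma$ in $L^2_{\opn{disc}}(\mbf{G})$. Distinct automorphic realisations of the same abstract $\sigma$ all base-change to the same $\Pi_0$, so nothing about $m(\sigma)$ is visible on the $\opn{GL}_{2n}$ side. The paper handles this by the injection $L^2_{\opn{disc}}(\mbf{G}) \hookrightarrow L^2_{\opn{disc}}(\mbf{G}_0)$ of \cite{LabesseSchwermer} together with Arthur's multiplicity formula for unitary groups (as in \cite{CZ21}, relying on \cite{Mok}, \cite{KMSW14}) to conclude $m(\pi) \leq 1$.

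Second, your decomposition replacing $\opn{H}^{n-1}_{(\ide{p}, K_\infty)}(\sigma_\infty \otimes V_{\kappa_n})$ by $\opn{Hom}_{K_\infty}(\nu_n, \sigma_\infty)$ is not valid for arbitrary $\sigma_\infty$, and even granting it, non-vanishing of that Hom-space does not force $\sigma_\infty$ to lie in the discrete series: it only detects a single $K_\infty$-type, which many non-tempered representations may contain. The paper proceeds differently: non-vanishing of the $(\ide{p}, K_\infty)$-cohomology implies, via the Hodge decomposition of $\opn{H}^{2n-1}(S(\mbb{C}), W_{\lambda_\pi})$, that $\sigma_\infty$ is cohomological; once $\sigma_f \cong \pi_f$ is established via Lemma~\ref{InjectivityOfBaseChange}, the endoscopic classification places $\sigma_\infty$ in the $L$-packet of $\pi_\infty$; finally \cite[Theorem 3.2.1]{BHR94} singles out $\pi_\infty$ as the unique packet member with non-zero degree-$(n-1)$ $(\ide{p}, K_\infty)$-cohomology. (A minor point: Assumption~\ref{HCassumptionOnPi}(2) is stated only for $\pi$, so you cannot invoke it for $\sigma$ directly; but this is easily repaired since the weak base-changes of $\pi$ and $\sigma$ agree almost everywhere and Jacquet--Shalika then forces them to coincide globally.)
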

\begin{proof}
Via the rigid GAGA comparison, this localised cohomology group has the same dimension as 
\[
\opn{H}^{n-1}\left( S(\mbb{C}), [V_{\kappa_n}] \right)_{I_{\pi}}
\]
where $S = S_{\mbf{G}, \mathrm{Iw}}(p)$ and we are considering its sheaf cohomology with coefficients in $[V_{\kappa_n}]$.

Let $A_{\mbf{G}} \cong \mbb{G}_m$ denote the maximal split torus inside the centre of $\mbf{G}$, and let $A_{\mbf{G}}(\mbb{R})^{\circ}$ denote the connected component of the identity in the analytic topology. Let $K_{\infty}^{\circ} \subset K_{\infty}$ denote the maximal compact subgroup, where $K_{\infty} = A_{\mbf{G}}(\mbb{R})^{\circ}K_{\infty}^{\circ}$ is as in \S \ref{DiscreteSeriesRepsSection}. Let $\ide{p}$ denote the Lie algebra of the \emph{opposite} of $P_{\mbf{G}}$, and we can write
\[
\ide{p} = \ide{p}^{\circ} \oplus \ide{a}_G
\]
where $\ide{a}_G$ is the Lie algebra of $A_G$ and $\ide{p}^{\circ} = \ide{p} \cap \ide{g}_0$, where $\ide{g}_0$ denotes the Lie algebra of $\mbf{G}_0$.

By \cite{Su19}, we have the following description 
\begin{equation} \label{PKcohomologyDescription}
\opn{H}^{n-1}\left( S(\mbb{C}), [V_{\kappa_n}] \right) = \bigoplus_{\sigma} \left( \opn{H}^{n-1}_{(\ide{p}^{\circ}, K_{\infty}^{\circ})}\left( \sigma_{\infty} \otimes V_{\kappa_n} \right) \otimes \sigma_f^{K^pK^G_{\mathrm{Iw}}(p)} \right)^{m(\sigma)}
\end{equation}
where the sum runs over all cuspidal automorphic representations $\sigma$ of $\mbf{G}(\mbb{A})$ which lie in the discrete spectrum (with multiplicity $m(\sigma)$), and are such that $A_{\mbf{G}}(\mbb{R})^{\circ}$ acts trivially on $\sigma_{\infty}$. Since $\ide{a}_G$ and $A_{\mbf{G}}(\mbb{R})^{\circ}$ act trivially on $\sigma_{\infty} \otimes V_{\kappa_n}$, we have
\[
\opn{H}^{n-1}_{(\ide{p}^{\circ}, K_{\infty}^{\circ})}\left( \sigma_{\infty} \otimes V_{\kappa_n} \right) = \opn{H}^{n-1}_{(\ide{p}, K_{\infty})}\left( \sigma_{\infty} \otimes V_{\kappa_n} \right).
\]
By the Hodge decomposition (see \cite{LP18} for example) of the singular cohomology $\opn{H}^{2n-1}\left(S(\mbb{C}), W_{\lambda_{\pi}} \right)$ with coefficients in the algebraic representation with highest weight $\lambda_{\pi}$, we see that $\sigma_{\infty}$ is cohomological if 
\[
\opn{H}^{n-1}_{(\ide{p}, K_{\infty})}\left( \sigma_{\infty} \otimes V_{\kappa_n} \right) \otimes \sigma_f^{K^pK^G_{\mathrm{Iw}}(p)} \neq 0.
\]
Furthermore, if this space is non-zero after localising at $I_{\pi}$, the conditions in Lemma \ref{InjectivityOfBaseChange} are satisfied for $\sigma$. 

Note that if $\sigma$ satisfies $\sigma_f \cong \pi_f$ then by the strong base-change results in \cite{Mok} and \cite{KMSW14} (and that $A_{\mbf{G}}(\mbb{R})^{\circ}$ acts trivially on $\sigma_{\infty}$), $\sigma_{\infty}$ must lie in the same $L$-packet for $\pi_{\infty}$. By \cite[Theorem 3.2.1]{BHR94}, if the vector space $\opn{H}^{n-1}_{(\ide{p}, K_{\infty})}\left( \sigma_{\infty} \otimes V_{\kappa_n} \right)$ is non-zero, then we must have $\sigma_{\infty} \cong \pi_{\infty}$ and $\opn{H}^{n-1}_{(\ide{p}, K_{\infty})}\left( \pi_{\infty} \otimes V_{\kappa_n} \right)$ is one-dimensional. Therefore, localising (\ref{PKcohomologyDescription}) at the ideal $I_{\pi}$, we see that
\[
\opn{H}^{n-1}\left( S(\mbb{C}), [V_{\kappa_n}] \right)_{I_{\pi}} = \left( \opn{H}^{n-1}_{(\ide{p}, K_{\infty})}\left( \pi_{\infty} \otimes V_{\kappa_n} \right) \otimes \pi_f^{K^pK^G_{\mathrm{Iw}}(p)}[\theta_{\pi, p}] \right)^{m(\pi)} .
\]
where $\pi_f^{K^pK^G_{\mathrm{Iw}}(p)}[\theta_{\pi, p}]$ denotes the (generalised) eigenspace for the character $\theta_{\pi, p}$.

By Assumption \ref{CMramificationAssumption}, we therefore see that the dimension of the cohomology group in the statement of the corollary is equal to $m(\pi)$. Since $m(\pi) > 0$ (by definition), it is enough to show that $m(\pi) \leq 1$. But there is an injective $\mbf{G}_0$-equivariant restriction map
\[
L^2_{\mathrm{disc}}(\mbf{G}) \hookrightarrow L^2_{\mathrm{disc}}(\mbf{G}_0)
\]
from the discrete spectrum of $\mbf{G}$ to that of $\mbf{G}_0$ (see \cite[Theorem 1.1.1]{LabesseSchwermer}), hence it is enough to show that the multiplicity of any cuspidal automorphic representation in $L^2_{\mathrm{disc}}(\mbf{G}_0)$ is at most $1$. But this follows from Arthur's multiplicity formula for unitary groups (see \cite{CZ21}).
\end{proof}

Recall that we have classicality isomorphisms on the small slope part
\[
R\Gamma^G_{w_n, \mathrm{an}}(\kappa_n)^{-, \mathrm{ss}} \cong R\Gamma^G_{w_n}(\kappa_n)^{-, \mathrm{ss}} \cong R\Gamma(\invs{S}_{G, \mathrm{Iw}}(p), [V_{\kappa_n}])^{-, \mathrm{ss}}.
\]
Note that the cohomology of the right-hand side vanishes outside degree $n-1$, and since $\theta_{\pi}$ is of small slope, we see that $R\Gamma^G_{w_n, \mathrm{an}}(\kappa_n)_{I_{\pi}}$ has cohomology concentrated in degree $n-1$ where it is free of rank one (over $L$).

The Tor-spectral sequence 
\[
E_2^{p, q} \colon \opn{Tor}_{-p}^A\left( \opn{H}^q_{w_n, \mathrm{an}}(\kappa_n(\lambda_A))^{-, \mathrm{fs}}, \lambda_{\pi} \right) \Rightarrow \opn{H}^{p+q}_{w_n, \mathrm{an}}\left( \kappa_n(\lambda_{\pi}) \right)^{-, \mathrm{fs}}
\]
therefore implies that there exists an affinoid $U = \opn{Spa}(A, A^+) \subset (\invs{W}_G)_L$ containing $\lambda_{\pi}$, such that 
\[
R\Gamma^G_{w_n, \mathrm{an}}(\kappa_n(\lambda_A))_{I_{\pi}}
\]
has cohomology concentrated in degree $n-1$ where it is free of rank one over the stalk of $A$ at $\lambda_{\pi}$. Here $\lambda_A \colon T(\mbb{Z}_p) \to (A^+)^{\times}$ denotes the universal character (which is trivial on $T_0(\mbb{Z}_p)$).

The construction in \cite[\S 6.9]{BoxerPilloni} gives rise to an eigenvariety $\invs{E} \to \invs{W}_G$ which is locally quasi-finite and partially proper, and parameterises finite-slope Hecke eigensystems appearing in the coherent cohomology of $\invs{S}_{G, \mathrm{Iw}}(p)$.\footnote{This is not the ``full eigenvariety'' but rather the pullback of the eigenvariety constructed in \cite[\S 6.9]{BoxerPilloni} along the closed embedding $\invs{W}_G \hookrightarrow \invs{W}_G^{\mathrm{full}}$, where $\invs{W}_G^{\mathrm{full}}$ is the weight space parameterising characters of $T(\mbb{Z}_p)$. Furthermore, including level subgroups which are good special maximal compact open but not hyperspecial does not affect the construction.} In particular, we have coherent sheaves $\widetilde{\invs{M}}^{\bullet, -, \mathrm{fs}}_{w_n}$ whose pushforward to $\invs{W}_G$ recovers the cohomology groups $\opn{H}^{\bullet}_{w_n, \mathrm{an}}(\cdots )^{-, \mathrm{fs}}$, and the ideal $I_{\pi}$ gives a point $x \in \invs{E}(L)$. Since $R\Gamma^G_{w_n, \mathrm{an}}(\kappa_n(\lambda_A))_{I_{\pi}}$ has cohomology concentrated in degree $n-1$ where it is free of rank one over the stalk of $A$ at $\lambda_{\pi}$, we can (after shrinking $U$) find an open affinoid neighbourhood $V \subset \invs{E}_L$ of $x$ such that the induced map $V \to U$ is an isomorphism. In particular, this implies:

\begin{theorem}
Shrinking $U$ if necessary
\begin{enumerate}
    \item There exists a unique family $\underline{\pi}$ over $U$ passing through $\pi$
    \item The generalised eigenspace $S^{n-1}(\underline{\pi}) \subset \opn{H}^{n-1}_{w_n, \mathrm{an}}(\kappa_n(\lambda_A))^{-, \mathrm{fs}}$ on which $\mbb{T}^{-}_A$ acts through the character $\theta_{\underline{\pi}}$, is a direct summand that is free of rank one over $A$. In particular, a basis $\underline{\eta}$ of $S^{n-1}(\underline{\pi})$ is a family of cohomology classes passing through a basis $\eta$ of $\opn{H}^{n-1}\left( \invs{S}_{G, \mathrm{Iw}}(p), [V_{\kappa_n}] \right)_{I_{\pi}}$.
\end{enumerate}
\end{theorem}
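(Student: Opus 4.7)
The plan is to leverage the eigenvariety $\invs{E} \to \invs{W}_G$ and the local isomorphism $V \xrightarrow{\sim} U$ established immediately before the theorem statement, and then to unwind the construction to produce both the family of automorphic representations and the family of classes.

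First I would define $\theta_{\underline{\pi}} \colon \mbb{T}^{-}_A \to A$ by pulling back the universal Hecke eigensystem on $\invs{E}$ along the section $s \colon U \xrightarrow{\sim} V \hookrightarrow \invs{E}$; by construction, its specialisation at $\lambda_\pi$ recovers $\theta_\pi$. To verify that $\theta_{\underline{\pi}}$ is a family in the sense of Definition \ref{DefinitionOfFamilyThroughPi}, I would argue that for classical weights $\lambda \in U \cap X^*(T/T_0)^+$ sufficiently close to $\lambda_\pi$ (in particular for all but finitely many after shrinking $U$), the specialised eigencharacter $\theta_{\underline{\pi}, \lambda}$ remains small slope: the valuation of $\theta_{\underline{\pi}, \lambda}(x)$ for any fixed $x \in T^-$ is locally constant on $V$ (because the corresponding Hecke eigenvalue is an analytic invertible function on $V$), while the weight-dependent bound in (\ref{sssIneq}) varies only in a controlled way with $\lambda$. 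Then Theorem \ref{ImportantPropertiesThm}(2) identifies $\theta_{\underline{\pi}, \lambda}$ with an eigencharacter in the classical coherent cohomology $\opn{H}^{n-1}(\invs{S}_{G, \mathrm{Iw}}(p), [V_{\kappa_n(\lambda)}])$, and the automorphic decomposition of coherent cohomology for the compact Shimura variety $S_{\mbf{G}, \mathrm{Iw}}(p)$ (as used in the proof of Corollary \ref{EtalenessOfEigenvarietyCor}) furnishes the required cuspidal $\sigma$. Uniqueness is immediate from the local isomorphism $V \cong U$: any family through $\pi$ produces a section of $\invs{E} \to U$ passing through $x$, and such a section is unique.

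For part (2), after further shrinking $U$ I would use the local quasi-finiteness of $\invs{E}$ to separate $V$ from the remaining points of $\invs{E} \times_{\invs{W}_G} U$ in the fibre over $\lambda_\pi$, so that $V$ becomes both open and closed inside $\invs{E} \times_{\invs{W}_G} U$. The corresponding idempotent decomposes the global sections of the coherent sheaf $\widetilde{\invs{M}}^{n-1, -, \mathrm{fs}}_{w_n}$ on $\invs{E} \times_{\invs{W}_G} U$ (which equals $\opn{H}^{n-1}_{w_n, \mathrm{an}}(\kappa_n(\lambda_A))^{-, \mathrm{fs}}$ by the eigenvariety construction) as an $A$-module, with the $V$-summand being exactly $S^{n-1}(\underline{\pi})$. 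Since this summand has one-dimensional stalk at $\lambda_\pi$ (by the Tor spectral sequence computation recalled before the theorem), Nakayama's lemma gives that it is free of rank one over $A$ after a final shrinking. The passage-through statement for $\underline{\eta}$ then follows by specialising a basis at $\lambda_\pi$.

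The main obstacle I anticipate is cleanly extracting the open-and-closed piece $V$ from $\invs{E} \times_{\invs{W}_G} U$ and verifying that the associated idempotent truly decomposes the finite-slope cohomology as an $A$-module (rather than only after localising at the maximal ideal $I_\pi$); this hinges on the local quasi-finiteness and partial properness of the eigenvariety of \cite[\S 6.9]{BoxerPilloni} together with the fact that the finite-slope cohomology is the module of global sections of a coherent sheaf on $\invs{E}$. Everything else amounts to routine unwinding of the eigenvariety machinery together with the classicality and automorphic-decomposition inputs already assembled in the excerpt.
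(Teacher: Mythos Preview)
Your proposal is correct and follows essentially the same approach as the paper: both define $\theta_{\underline{\pi}}$ via the section $U\cong V\hookrightarrow\invs{E}$, verify the family property by arguing that specialisations at classical weights in $U$ remain small slope (after shrinking) and then appealing to the automorphic description (\ref{PKcohomologyDescription}) of classical coherent cohomology, and obtain part~(2) from the \'etaleness $V\cong U$ together with the coherent sheaf $\widetilde{\invs{M}}^{\bullet,-,\mathrm{fs}}_{w_n}$ on $\invs{E}$. The paper simply packages your clopen/idempotent step into the phrase ``the above discussion implies~\dots~satisfying~(2)'', i.e.\ the Tor spectral sequence and eigenvariety discussion immediately preceding the theorem; your anticipated obstacle is not a genuine one once you pass to a slope~$\leq h$ piece of the eigenvariety (finite over $U$ after shrinking), where the decomposition into $V$ and its complement is elementary commutative algebra.
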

\begin{proof}
The above discussion implies that there exists a character $\theta_{\underline{\pi}}$ specialising to $\theta_{\pi}$ at $\lambda_{\pi}$ and satisfying (2), so we just need to show that $\theta_{\underline{\pi}}$ defines a unique family. But the fact that $\theta_{\underline{\pi}}$ arises from the eigenvariety $\invs{E}$ implies that for any $\lambda \in U \cap X^*(T/T_0)^+$, the specialisation of $\theta_{\underline{\pi}}$ is an eigencharacter for the action of $\mbb{T}^{-}_L$ on $\opn{H}^{n-1}_{w_n, \mathrm{an}}(\kappa_n(\lambda))^{-, \mathrm{fs}}$. Shrinking $U$ if necessary, we can ensure that it is of small slope, so (under the identification $\mbb{C} \cong \Qpb$) contributes to $\opn{H}^{n-1}(S_{\mbf{G}, \mathrm{Iw}}(\mbb{C}), [V_{\kappa_n(\lambda)}])$ \emph{with multiplicity one}. The description in (\ref{PKcohomologyDescription}) holds for this cohomology group, and therefore, letting $I$ denote the kernel of the specialisation $\theta$ of $\theta_{\underline{\pi}}$ at $\lambda$, we must have a Hecke-equivariant isomorphism
\[
\opn{H}^{n-1}(S_{\mbf{G}, \mathrm{Iw}}(\mbb{C}), [V_{\kappa_n(\lambda)}])_{I} \cong \sigma_f^{K^pK^G_{\mathrm{Iw}}(p)} [\theta_p] 
\]
for some cuspidal automorphic representation $\sigma$, since we know the dimension of the left-hand side is one.
\end{proof}

\begin{remark} \label{TheSpecialisationRemark}
We will refer to $\sigma$ in the above theorem as \emph{the} specialisation of $\theta_{\underline{\pi}}$ at $\lambda$, even though there will be several automorphic representations $\sigma'$ which have the same Hecke eigenvalues. Note that, by the Hodge decomposition, $\sigma_{\infty}$ is cohomological with respect to the algebraic representation of $\mbf{G}(\mbb{C})$ with highest weight $\lambda$.
\end{remark}


\section{Families of anticyclotomic characters} \label{FamiliesOfACcharsSection}

In this section we exhibit families of anticyclotomic characters in the coherent cohomology of $\invs{S}_{H, \diamondsuit}(p)$. 

\subsection{Anticyclotomic characters}

Let $\mbf{R}$ denote the unitary similitude group associated with the Hermitian space $\bigwedge^n_F W_1 \oplus \bigwedge^n_F W_2$ (with common similitude on each factor) where $W_1$ and $W_2$ are the Hermitian spaces in \S \ref{PreliminarySection}. This can be upgraded to a PEL Shimura datum via the homomorphism $h_{\mbf{R}} \defeq \opn{det} \circ h_{\mbf{H}}$ and has Hodge cocharacter $\mu_{\mbf{R}} \defeq \opn{det} \circ \mu_{\mbf{H}}$. Here $\opn{det} \colon \mbf{H} \to \mbf{R}$ denotes the homomorphism given by $(h_1, h_2) \mapsto (\opn{det}h_1, \opn{det}h_2)$. By design, one has a morphism of Shimura data $(\mbf{H}, h_{\mbf{H}}) \to (\mbf{R}, h_{\mbf{R}})$. Note that $\mu_{\mbf{R}}$ is central in $\mbf{R}_{F^{\opn{cl}}}$, so the associated parabolics and Levi are all equal to 
\[
\mbf{R}_{F^{\opn{cl}}} \cong \mbb{G}_{m, F^{\opn{cl}}} \times \prod_{\tau \in \Psi} \left( \mbb{G}_{m, F^{\opn{cl}}} \times \mbb{G}_{m, F^{\opn{cl}}} \right).
\]
Let $\opn{Res}_{F^+/\mbb{Q}} \opn{U}(1)$ be the restriction of scalars of the unitary group associated with the one-dimensional Hermitian space over $F$ (with respect to $F/F^+$). Then we have a morphism of algebraic groups:
\begin{align*}
    \invs{N} \colon \opn{Res}_{F/\mbb{Q}} \mbb{G}_m &\to \opn{Res}_{F^+/\mbb{Q}} \opn{U}(1) \\
    z &\mapsto \bar{z}/z
\end{align*}
which is open and surjective on $\mbb{A}_f$-points. On the other hand, we have a morphism
\[
\nu \colon \mbf{H} \xrightarrow{\opn{det}} \mbf{R} \to \opn{Res}_{F^+/\mbb{Q}} \opn{U}(1)
\]
where the second map is given by sending a pair $(z_1, z_2)$ to $z_2/z_1$.

\begin{notation}
Let $\ide{N}$ be the smallest ideal of $\ordd_F$ such that $\nu(U) \subset \invs{N}((\widehat{\ordd}_{F^+} + \ide{N} \widehat{\ordd}_F)^{\times})$, where $U \subset \mbf{H}(\mbb{A}_f)$ is the level of $S_{\mbf{H}, \diamondsuit}(p)$.
\end{notation}

We introduce the following space of anticyclotomic characters:

\begin{definition}
Let $\Sigma(\ide{N})$ denote the set of algebraic Hecke characters $\chi \colon \mbb{A}_F^{\times} \to \mbb{C}^{\times}$ satisfying:
\begin{enumerate}
    \item $\chi$ is anticyclotomic, i.e. its restriction to $\mbb{A}_{F^+}^{\times}$ is trivial.
    \item The infinity type of $\chi$ is $(j, -j)$ for some tuple of integers $j = (j_{\tau})_{\tau \in \Psi}$, i.e. for any $z = (z_{\tau})_{\tau \in \Psi} \in \prod_{\tau \in \Psi} F_{\tau}$ one has
    \[
    \chi(z) = \prod_{\tau \in \Psi} z_{\tau}^{-j_{\tau}} \bar{z_{\tau}}^{j_{\tau}} . 
    \]
    \item The conductor of $\chi$ divides the ideal $\ide{N}$.
\end{enumerate}
\end{definition}

\begin{remark}
Let $\chi \in \Sigma(\ide{N})$. Then, since $\chi$ is anticyclotomic, the character $\chi$ desends to a unique character
\[
\chi' \colon \left(\opn{Res}_{F^+/\mbb{Q}} \opn{U}(1)\right)(\mbb{Q}) \backslash \left(\opn{Res}_{F^+/\mbb{Q}} \opn{U}(1)\right)(\mbb{A}) \to \mbb{C}^{\times} 
\]
satisfying $\chi = \chi' \circ \invs{N}$. We consider the character $\bar{\chi} \colon \mbf{R}(\mbb{Q}) \backslash \mbf{R}(\mbb{A}) \to \mbb{C}^{\times}$ defined as $\bar{\chi}(z_1, z_2) = \chi'(z_2/z_1)$.
\end{remark}

Any character $\chi \in \Sigma(\ide{N})$ has an associated $p$-adic algebraic Hecke character, denoted $\chi_p \colon \mbb{A}_F^{\times} \to \Qpb^{\times}$, by defining 
\[
\chi_p(x) = \iota_p(\chi_f(x)) \prod_{\tau \in \Psi} x_{\ide{p}_{\tau}}^{-j_{\tau}} x_{\overline{\ide{p}}_{\tau}}^{j_{\tau}} 
\]
where $\iota_p \colon \mbb{C} \cong \Qpb$ denotes the fixed isomorphism in \S \ref{NotationSection}, and $\ide{p}_{\tau}$ is the prime ideal corresponding to $\tau$ with respect to this isomorphism. We are interested in $p$-adically interpolating algebraic $p$-adic characters of the form
\[
\chi_{0,p} \prod_{\tau \in \Psi} \chi_{\tau, p}^{m_\tau} 
\]
where $\chi_0 \in \Sigma(\ide{N})$ is an anticyclotomic Dirichlet character, $\chi_{\tau} \in \Sigma(\ide{N})$ is a fixed anticyclotomic character of infinity type $(1_{\tau}, -1_{\tau})$ ($1_{\tau}$ is the tuple which is non-zero only in the $\tau$-component, where it is equal to $1$) and $m_{\tau}$ are integers. Furthermore, we want to interpret such a family as a coherent cohomology class. 

The strategy we will use for producing such a family follows three steps:
\begin{enumerate}
    \item We will first construct a family of cohomology classes interpolating these characters in the cohomology of a Shimura set associated with the group $\mbf{R}$. 
    \item Using the results in Appendix \ref{AppendixComparisonsFamilies}, we will pull back this construction to the Shimura variety $\invs{S}_{H, \diamondsuit}(p)$ via the morphism $\opn{det} \colon \mbf{H} \to \mbf{R}$.
    \item Finally, we will construct the family and describe the interpolation property.
\end{enumerate}

\subsection{Step 1: Classes for the Shimura set} \label{Step1AC}

Let $C \subset \mbf{R}(\mbb{A}_f)$ be a sufficiently small compact open subgroup, and let $\chi \in \Sigma(\ide{N})$ be an anticyclotomic character of infinity type $(j, -j)$ such that $\bar{\chi}$ is trivial on $C$. Let $\Delta \defeq S_{\mbf{R}, C}$ denote the associated Shimura set (over $F^{\mathrm{cl}}$), which satisfies 
\[
\Delta(\mbb{C}) = \mbf{R}(\mbb{Q}) \backslash \mbf{R}(\mbb{A}_f) / C .
\]
The goal of this section is to associate to $\bar{\chi}$ a class in the coherent cohomology of $\Delta$, and explain how one can raise this class to $p$-adic powers. 

Let $R_{dR} \to \Delta$ denote the standard principal $\mbf{R}_{F^{\mathrm{cl}}}$-bundle, which satisfies
\[
R_{dR}(\mbb{C}) = \mbf{R}(\mbb{Q}) \backslash \mbf{R}(\mbb{C}) \times \mbf{R}(\mbb{A}_f) / C
\]
(via the embedding $F^{\mathrm{cl}} \subset \mbb{C}$). This bundle has a trivialisation in the following way. Fix a set of representatives $\{s_1, \dots, s_r \} \subset \mbf{R}(\mbb{A}_f)$ for each point of $\Delta(\mbb{C})$, then we have an identification of torsors
\begin{equation} \label{TorsorIdentificationOverC}
\Delta(\mbb{C}) \times \mbf{R}(\mbb{C}) = R_{dR}(\mbb{C})
\end{equation}
by sending $([s_i], \gamma)$ to $[\gamma, s_i]$. One can show that, for any number field $\Phi/F^{\mathrm{cl}}$, this identification descends to an identification $\Delta_{\Phi} \times \mbf{R}_{\Phi} = R_{dR, \Phi}$.\footnote{One should think of such a choice of representatives as a choice of canonical model for $\Delta(\mbb{C})$. Of course, canonical models are unique up to unique isomorphism, but for this identification of torsors, it is helpful to fix such a choice.}

Recall that we have a fixed prime $\ide{p}$ of $F$ lying above $p$ (corresponding to the fixed embedding $\tau_0$). We fix a choice of prime $\ide{P}$ of $\Phi$ lying above $\ide{p}$, and by passing to completions, we obtain a finite extension $L \defeq \Phi_{\ide{P}}$ of $\mbb{Q}_p$. Let $\Delta^{\mathrm{an}}_L$ denote the adic space associated with $\Delta_L$, and let $\invs{R}_{\mathrm{HT}, L}^{\mathrm{an}} \to \Delta^{\mathrm{an}}_L$ denote the $R_L^{\mathrm{an}}$-torsor parameterising frames of (the pro-\'{e}tale sheaf) $\invs{V}_{\et} \otimes_{\hat{\mbb{Q}}_p} \hat{\ordd}_{\Delta^{\mathrm{an}}_L}$ (respecting certain tensors), where $R = \mbf{R}_{\mbb{Q}_p}$ and $\invs{V}_{\et}$ is the $p$-adic local system associated with a faithful representation $V$ of $R$ (see \cite[\S 2.3]{CS17}). 

Since $\mu_{\mbf{R}}$ is central in $\mbf{R}_{F^{\opn{cl}}}$, one has an isomorphism of torsors between the analytification of $R_{dR, L}$ and ${^\mu \invs{R}_{\mathrm{HT}, L}^{\mathrm{an}}}$ (the twist of $\invs{R}_{\mathrm{HT}, L}^{\mathrm{an}}$ along $\mu_{\mbf{R}}$).

\begin{notation}
Consider the open affinoid subgroup
\[
\invs{R}_{k, L} = \ordd_L^{\times} (1 + \invs{B}_k) \times \prod_{\tau \in \Psi} \left( \ordd_L^{\times} (1 + \invs{B}_k) \times \ordd_L^{\times} (1 + \invs{B}_k) \right) \subset R_L^{\opn{an}}
\]
where $\invs{B}_k$ is the ``closed disc'' (over $L$) in \S \ref{TubesInFlagVar}. We denote a general element of this subgroup by $(x_0, x_{1, \tau}, x_{2, \tau})_{\tau \in \Psi}$.
\end{notation}

\begin{corollary}
The above identification induces an identification
\[
\Delta^{\mathrm{an}}_L \times R^{\mathrm{an}}_L = {^\mu \invs{R}_{\mathrm{HT}, L}^{\mathrm{an}}} .
\]
\end{corollary}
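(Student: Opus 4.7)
The corollary amounts to chaining three essentially formal steps, so the plan is largely book-keeping: there is no genuine obstacle, and the work has already been done in the preceding paragraphs.

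First, I would take as input the $\Phi$-rational identification of $\mathbf{R}_\Phi$-torsors
\[
\Delta_\Phi \times \mathbf{R}_\Phi = R_{dR, \Phi}
\]
asserted in the discussion preceding the corollary (which itself is the descent, to any number field $\Phi/F^{\mathrm{cl}}$, of the complex trivialisation in equation (\ref{TorsorIdentificationOverC}) and follows from the standard construction of canonical models together with the fact that $R_{dR}$ descends canonically to $F^{\mathrm{cl}}$). Base changing this identification along the fixed embedding $\Phi \hookrightarrow L = \Phi_{\ide{P}}$ gives the corresponding identification over $L$.

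Second, I would apply the analytification functor. Since $\Delta_L$ is zero-dimensional, analytification is harmless; it converts $\mathbf{R}_L$-torsors on $\Delta_L$ into $R_L^{\mathrm{an}}$-torsors on $\Delta_L^{\mathrm{an}}$ and respects products, yielding
\[
\Delta_L^{\mathrm{an}} \times R_L^{\mathrm{an}} = R_{dR, L}^{\mathrm{an}}
\]
as $R_L^{\mathrm{an}}$-torsors over $\Delta_L^{\mathrm{an}}$.

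Finally, I would invoke the canonical identification of $R_{dR, L}^{\mathrm{an}}$ with ${^\mu \invs{R}_{\mathrm{HT}, L}^{\mathrm{an}}}$ stated in the paragraph immediately preceding the corollary; this identification exists because $\mu_{\mathbf{R}}$ is central in $\mathbf{R}_{F^{\mathrm{cl}}}$, so the twisting construction does not require an extra reduction of structure. Composing the two identifications produces the desired equality. The only place where anything non-trivial happens is the descent of (\ref{TorsorIdentificationOverC}) from $\mathbb{C}$ down to $\Phi$, but this is already granted in the text via the fixed choice of representatives $\{s_1,\ldots,s_r\}$ in $\mathbf{R}(\mathbb{A}_f)$.
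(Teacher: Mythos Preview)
Your proposal is correct and matches the paper's approach exactly: the paper gives no explicit proof, treating the corollary as an immediate consequence of (i) the descent of the trivialisation (\ref{TorsorIdentificationOverC}) to $\Phi$ (and hence to $L$, then analytified) and (ii) the isomorphism $R_{dR,L}^{\mathrm{an}} \cong {^\mu \invs{R}_{\mathrm{HT}, L}^{\mathrm{an}}}$ stated just before the corollary. Your three-step chaining is precisely the intended argument.
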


It is evident from this identification that one obtains the following reduction of structure 
\[
\invs{R}_{\mathrm{HT}, L, k} \defeq \Delta^{\mathrm{an}}_L \times \invs{R}_{k, L} \hookrightarrow \Delta^{\mathrm{an}}_L \times R^{\mathrm{an}}_L = \invs{R}_{\mathrm{HT}, L}^{\mathrm{an}}
\]
for any $k \geq 1$. We can (and do) choose the set of representatives $\{s_1, \dots, s_r\}$ such that $s_i \in \mbf{R}(\mbb{A}_f^p)$.\footnote{This is possible because a finite Galois extension can be generated by Frobeniuses outside any finite set of primes.} Then we associate with $\bar{\chi}$ the global section 
\[
R_{dR}(\mbb{C}) \to \mbb{C}
\]
given by sending $[x, y] \mapsto \xi^{[j]}(x) \bar{\chi}(y)$, where $x \in \mbf{R}(\mbb{C})$ and $y \in \mbf{R}(\mbb{A}_f)$, and 
\begin{align*} 
\xi^{[j]} \colon \mbf{R}(\mbb{C}) \cong \mbb{C}^{\times} \times \prod_{\tau \in \Psi} \left(\mbb{C}^{\times} \times \mbb{C}^{\times} \right) &\to \mbb{C}^{\times} \\
 (x_0, x_{1,\tau}, x_{2, \tau})_{\tau \in \Psi} &\mapsto \prod_{\tau \in \Psi} \left( \frac{x_{2,\tau}}{x_{1, \tau}} \right)^{j_{\tau}} .
\end{align*}    
This global section is well-defined precisely because $\chi$ is an algebraic Hecke character of infinity-type $(j, -j)$, and transforms under the action of $\mbf{R}(\mbb{C})$ by the character $\xi^{[j]}$, so descends to a cohomology class
\[
[\chi]_B \in \opn{H}^0\left( \Delta(\mbb{C}), [\xi^{[j]}] \right) .
\]
Via the identification in (\ref{TorsorIdentificationOverC}), the class $[\chi]_B$ coincides with the product of the global section of $\Delta(\mbb{C})$ taking $s_i$ to $\bar{\chi}(s_i)$, and the global section $\mbf{R}(\mbb{C}) \xrightarrow{\xi^{[j]}} \mbb{C}^{\times} \subset \mbb{C}$. Since $\bar{\chi}(s_i)$ are elements of some number field, we can find a large enough $\Phi$ such that $[\chi]_B$ descends to a global section in $\opn{H}^0(\Delta_{\Phi}, [\xi^{[j]}])$. Via the rigid GAGA comparison, we therefore obtain a global section $[\chi]_{\mathrm{HT}} \in \opn{H}^0(\Delta_L^{\mathrm{an}}, [\xi^{[j]}])$ characterised by the global section
\begin{align*} 
\Delta^{\mathrm{an}}_L \times R^{\mathrm{an}}_L &\to \mbb{A}^{1, \mathrm{an}} \\
([s_i], t) &\mapsto \bar{\chi}(s_i) \xi^{[j], \mathrm{an}}(t)
\end{align*}
where we are viewing $\bar{\chi}(s_i)$ as an element of $L^{\times}$ via the identification $\mbb{C} \cong \Qpb$. 

\begin{lemma} \label{CharacterDescription}
For any integer $k \geq 1$, the global section $[\chi]_{\mathrm{HT}}$ is described by the morphism
\begin{align*} 
\Delta^{\mathrm{an}}_L \times \invs{R}_{k, L} &\to \mbb{A}^{1, \mathrm{an}} \\
([s_i], (x_0, x_{1,\tau}, x_{2, \tau})_{\tau \in \Psi}) &\mapsto \bar{\chi}(s_i) \prod_{\tau \in \Psi} \left( \frac{x_{2,\tau}}{x_{1, \tau}}\right)^{j_{\tau}} 
\end{align*}
which is valued in $\ordd_L^{\times} (1 + \invs{B}_k )$.
\end{lemma}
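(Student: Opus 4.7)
My plan is first to read off the explicit formula by restricting $[\chi]_{\mathrm{HT}}$ from the trivialisation on the full torsor to the reduction of structure $\invs{R}_{\mathrm{HT}, L, k} = \Delta^{\mathrm{an}}_L \times \invs{R}_{k, L}$, and then to verify the integrality claim in two independent steps. The formula step is essentially automatic: under the identification ${^\mu \invs{R}_{\mathrm{HT}, L}^{\mathrm{an}}} \cong \Delta^{\mathrm{an}}_L \times R_L^{\mathrm{an}}$, the section $[\chi]_{\mathrm{HT}}$ is already represented by $([s_i], t) \mapsto \bar{\chi}(s_i)\, \xi^{[j], \mathrm{an}}(t)$. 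Since $\invs{R}_{\mathrm{HT}, L, k}$ sits inside $\invs{R}_{\mathrm{HT}, L}^{\mathrm{an}}$ via the inclusion $\invs{R}_{k, L} \hookrightarrow R_L^{\mathrm{an}}$, substituting $t = (x_0, x_{1,\tau}, x_{2,\tau}) \in \invs{R}_{k, L}$ and computing $\xi^{[j], \mathrm{an}}(t) = \prod_\tau (x_{2,\tau}/x_{1,\tau})^{j_\tau}$ immediately yields the claimed expression.

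For integrality, I would first show that $\bar{\chi}(s_i) \in \ordd_L^\times$. Writing $\bar{\chi} = \chi' \circ \nu$, the character $\chi'$ is continuous on the adelic points of $\opn{Res}_{F^+/\mbb{Q}} \opn{U}(1)$, whose local points are compact at every finite place. Since $\chi'$ has finite conductor, its restriction to $\opn{Res}_{F^+/\mbb{Q}} \opn{U}(1)(\mbb{A}_f^p)$ factors through a finite quotient and so takes values in a finite group of roots of unity in $\mbb{C}^\times$. Applying $\iota_p$ places these values in $\bar{\mbb{Z}}_p^\times$; after enlarging $\Phi$ (and correspondingly $L$) once at the outset to contain the finitely many values $\bar{\chi}(s_1), \ldots, \bar{\chi}(s_r)$, we conclude that $\bar{\chi}(s_i) \in \ordd_L^\times$. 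The remaining factor $\prod_\tau (x_{2,\tau}/x_{1,\tau})^{j_\tau}$ lies in $\ordd_L^\times(1 + \invs{B}_k)$ because, for $k \geq 1$, this admissible open of $\mbb{G}_m^{\mathrm{an}}$ is a multiplicative subgroup closed under products, inverses, and integer powers, and each factor $x_{i, \tau}$ lies in it by definition of $\invs{R}_{k, L}$.

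Combining both parts yields the lemma. The only non-formal ingredient is the finiteness argument for $\bar{\chi}(s_i)$; I expect it to be the main, though entirely routine, technical step, with the remainder being formal bookkeeping around the torsor identification.
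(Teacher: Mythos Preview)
Your proposal is correct. The explicit formula follows exactly as you describe from restricting the global section along the inclusion $\invs{R}_{k,L} \hookrightarrow R_L^{\mathrm{an}}$, and your verification that $\prod_\tau (x_{2,\tau}/x_{1,\tau})^{j_\tau} \in \ordd_L^\times(1+\invs{B}_k)$ is the same as the paper's implicit reasoning.

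The one place where your route differs from the paper is the argument that $\bar{\chi}(s_i) \in \ordd_L^\times$. You observe that $\opn{Res}_{F^+/\mbb{Q}}\opn{U}(1)(\mbb{A}_f^p)$ is a profinite group and that the continuous character $\chi'$ is trivial on an open subgroup (finite conductor), so it factors through a finite quotient and hence takes values in roots of unity; transporting via $\iota_p$ lands these in $\ordd_L^\times$. The paper instead passes through the $p$-adic algebraic Hecke character $\chi_p$: since the representatives $s_i$ were chosen with trivial $p$-component, $\iota_p(\bar{\chi}(s_i))$ agrees with the value of $\chi_p$ on a suitable lift, and $\chi_p$ corresponds via class field theory to a continuous Galois character $\Gal(F^{\mathrm{ab}}/F) \to L^\times$, whose image is contained in $\ordd_L^\times$ by compactness. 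Your argument is more elementary (no class field theory needed) and in fact yields the sharper conclusion that the $\bar{\chi}(s_i)$ are roots of unity; the paper's argument has the advantage of applying uniformly to any $p$-adic continuous character arising this way, without needing to know that the archimedean values are of finite order. Either is perfectly adequate here.
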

\begin{proof}
This follows immediately from the fact that $\bar{\chi}(s_i) \in \ordd_L^{\times}$ (under the identification $\mbb{C} \cong \Qpb$). Indeed, because the representatives $s_i$ have been chosen to have no component at $p$, $\bar{\chi}(s_i)$ is in the image of the (continuous) Galois character $\Gal(F^{\mathrm{ab}}/F) \to L^{\times}$ associated with $\chi_p$ (via class field theory). But Galois groups are compact, so this is valued in $\ordd_L^{\times}$. 
\end{proof}

The description in Lemma \ref{CharacterDescription} allows us to raise this cohomology class to $p$-adic powers, in the following way. Let $(A, A^+)$ be a Tate algebra over $(L, \ordd_L)$ and let $\beta \colon \ordd_L^{\times} \to (A^+)^{\times}$ be a $k$-analytic character, i.e. it extends to a pairing
\[
\ordd_L^{\times}(1+\invs{B}_k) \times_{\opn{Spa}(L, \ordd_L)} \opn{Spa}(A, A^+) \to \mbb{G}^{\mathrm{an}}_m  .
\]
Then via the torsor $\invs{R}_{\mathrm{HT}, L, k}$, one obtains an $A$-Banach sheaf $[\beta \circ \xi^{[j]}]$ and a cohomology class
\[
[\chi]_{\mathrm{HT}}^{\beta} \in \opn{H}^0(\Delta^{\mathrm{an}}_L, [\beta \circ \xi^{[j]}])
\]
described by the morphism
\begin{align*}
    \Delta^{\mathrm{an}}_L \times \invs{R}_{k, L} &\to \mbb{A}^{1, \mathrm{an}} \times \opn{Spa}(A, A^+) \\
([s_i], (x_0, x_{1,\tau}, x_{2, \tau})_{\tau \in \Psi}) &\mapsto \beta \left( \bar{\chi}(s_i)  \prod_{\tau \in \Psi} \left( \frac{x_{2,\tau}}{x_{1, \tau}}\right)^{j_{\tau}} \right)
\end{align*}
which is well-defined by Lemma \ref{CharacterDescription}. This description is independent of the radius of analyticity $k$. 

\begin{remark} \label{ComplexComparisonRemark}
If we take $(A, A^+) = (L, \ordd_L)$ and $\beta(-) = (-)^k$ for some integer $k$, then $[\chi]_{\mathrm{HT}}^{\beta}$ is equal to the $k$-fold cup product of $[\chi]_{\mathrm{HT}}$ (which makes sense for negative integers because $[\chi]_{\mathrm{HT}}$ is an invertible section). In particular, under the rigid GAGA comparison (and the identification $\mbb{C} \cong \Qpb$)
\[
\opn{H}^0\left( \Delta^{\mathrm{an}}_L, [\beta \circ \xi^{[j]}] \right) = \opn{H}^0\left( \Delta^{\mathrm{an}}_{\mbb{Q}_p}, [\beta \circ \xi^{[j]}] \right) \otimes_{\mbb{Q}_p} L \hookrightarrow \opn{H}^0\left( \Delta(\mbb{C}), [\beta \circ \xi^{[j]}] \right) 
\]
the class $[\chi]_{\mathrm{HT}}^{\beta}$ is mapped to $[\chi^k]_B$.
\end{remark}

\subsection{Step 2: Pullback to the Shimura variety for \texorpdfstring{$\mbf{H}$}{H}}

Recall that we have a morphism $(\mbf{H}, X_{\mbf{H}}) \to (\mbf{R}, X_{\mbf{R}})$ of Shimura data induced from the homomorphism $\opn{det} \colon \mbf{H} \twoheadrightarrow \mbf{R}$. Let $U = U^p K^H_{\diamondsuit}(p)$ and let $C = \nu(U)$. By shrinking $U^p$ is necessary, we may assume that $C$ is neat. We therefore obtain a morphism
\[
S_{\mbf{H}, \diamondsuit}(p) \to S_{\mbf{R}, C} := \Delta 
\]
which we will also denote by $\opn{det}$. The fibres of this morphism (after base-changing to a sufficiently large field extension) are disjoint unions of connected components of $S_{\mbf{H}, \diamondsuit}(p)$.

Let $H_{dR} \to S_{\mbf{H}, \diamondsuit}(p)$ denote the standard principle $\mbf{H}_{F^{\mathrm{cl}}}$-bundle as in \cite[\S III.3]{MilneCanonicalMixed}, which satisfies 
\[
H_{dR}(\mbb{C}) = \mbf{H}(\mbb{Q}) \backslash X_{\mbf{H}} \times \mbf{H}(\mbb{C}) \times \mbf{H}(\mbb{A}_f) / K .
\]
One has a natural morphism $H_{dR}(\mbb{C}) \to R_{dR}(\mbb{C})$ induced from the morphism $\opn{det}$ and, as explained in \S III.4 of \emph{op.cit.}, this descends to a morphism on the canonical models\footnote{Since $(\mbf{H}, X_{\mbf{H}})$ does not satisfy axiom (SD3) in \cite[Definition B.16]{ACES}, one has to use the additional property that this Shimura--Deligne datum embeds into a Siegel datum to ensure the existence of a canonical model for $H_{dR}$.} of these standard principle bundles; i.e. we obtain a morphism (of principle bundles) $H_{dR} \to R_{dR}$. One can check on complex points that this induces an isomorphism $H_{dR} \times^{\mbf{H}_{F^{\mathrm{cl}}}} \mbf{R}_{F^{\mathrm{cl}}} \cong \opn{det}^* R_{dR}$, where the pushout is via the morphism $\opn{det} \colon \mbf{H}_{F^{\mathrm{cl}}} \to \mbf{R}_{F^{\mathrm{cl}}}$.

On the other hand, the bundle $H_{dR}$ can be expressed as the pushout $P_{H, dR} \times^{\mbf{P}_H} \mbf{H}_{F^{\mathrm{cl}}}$, and since the morphism $\nu \colon \mbf{P}_H \to \mbf{R}_{F^{\mathrm{cl}}}$ factors through the projection $\mbf{P}_{H} \twoheadrightarrow \mbf{M}_H$, one obtains an isomorphism 
\[
M_{H, dR} \times^{\mbf{M}_H} \mbf{R}_{F^{\mathrm{cl}}} \cong H_{dR} \times^{\mbf{H}_{F^{\mathrm{cl}}}} \mbf{R}_{F^{\mathrm{cl}}} \cong \opn{det}^* R_{dR} .
\]
Passing to the associated adic spaces and using the de Rham--$p$-adic comparison, one obtains an isomorphism (of $R^{\mathrm{an}}$-torsors)
\begin{equation} \label{AnalyticIso}
{^\mu \invs{M}_{H, \mathrm{HT}}^{\mathrm{an}}} \times^{M_{H}^{\mathrm{an}}} R^{\mathrm{an}} \cong \opn{det}^* \left( {^\mu \invs{R}_{\mathrm{HT}}^{\mathrm{an}}} \right) .
\end{equation}
It will be helpful to reinterpret this isomorphism in terms of flag varieties. We have a commutative diagram:
\[
\begin{tikzcd}
{\invs{S}_{H, U^p}} \arrow[d] \arrow[r, "{\pi_{H, \mathrm{HT}}}"] & \mathtt{FL}^H \arrow[d] \\
{\invs{S}_{R, C^p}} \arrow[r, "{\pi_{R,\mathrm{HT}}}"]            & \mathtt{FL}^R          
\end{tikzcd}
\]
where the vertical arrows are induced from the homomorphism $\opn{det}$.

Let $\mathtt{R}^{\mathrm{an}}$ and $\mathtt{M}^{H,\mathrm{an}}$ denote the torsors $R^{\mathrm{an}} \to \mathtt{FL}^R$ and $H^{\mathrm{an}}/N_H^{\mathrm{an}} \to \mathtt{FL}^H$ respectively (where both structural maps are given by $x \mapsto x^{-1}$ to ensure that they are right torsors). Note that the torsor $\mathtt{R}^{\mathrm{an}}$ is trivial, so $\opn{det}^* \mathtt{R}^{\mathrm{an}}$ is identified with $\mathtt{FL}^G \times R^{\mathrm{an}}$ and we have a canonical isomorphism
\[
\mathtt{M}^{H, \mathrm{an}} \times^{M_H^{\mathrm{an}}} R^{\mathrm{an}} \cong \opn{det}^* \mathtt{R}^{\mathrm{an}} .
\]
Since pull-back commutes with colimits (so in particular pushouts) and this is compatible with the $K^H_{\diamondsuit}(p)$-equivariant structure, this induces an isomorphism
\[
    \invs{M}_{H, \mathrm{HT}}^{\mathrm{an}} \times^{M_{H}^{\mathrm{an}}} R^{\mathrm{an}} = \pi_{H,\mathrm{HT}}^*( \mathtt{M}^{H, \mathrm{an}} \times^{M_H^{\mathrm{an}}} R^{\mathrm{an}} )/K^H_{\diamondsuit}(p) \cong \pi_{H, \mathrm{HT}}^*(\opn{det}^* \mathtt{R}^{\mathrm{an}})/K^H_{\diamondsuit}(p) = \opn{det}^* \invs{R}_{\mathrm{HT}}^{\mathrm{an}} .
\]
We can twist this isomorphism along $\mu \colon \mbb{Z}_p^{\times} \to M_H^{\opn{an}} \xrightarrow{\opn{det}} R^{\opn{an}}$ (induced from $\mu_{\mbf{R}} = \opn{det} \circ \mu_{\mbf{H}}$) to obtain an isomorphism
\begin{equation} \label{FlagVarIso}
{^\mu \invs{M}_{H, \mathrm{HT}}^{\mathrm{an}}} \times^{M_{H}^{\mathrm{an}}} R^{\mathrm{an}} \cong \opn{det}^* \left( {^\mu \invs{R}_{\mathrm{HT}}^{\mathrm{an}}} \right) .
\end{equation}

\begin{proposition}
The isomorphisms (\ref{AnalyticIso}) and (\ref{FlagVarIso}) coincide.
\end{proposition}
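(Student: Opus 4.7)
The strategy is to verify the agreement after pulling back to the perfectoid Shimura variety $\invs{S}_{H, U^p}$, where both torsors become explicitly described via the Hodge--Tate period map, and then use descent.

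First, I would recall (via Lemma \ref{PullbackOfTorsorDescription}, applied to both $\mbf{H}$ and $\mbf{R}$) that the pullbacks of $\invs{M}_{H, \mathrm{HT}}$ and $\invs{R}_{\mathrm{HT}}$ to $\invs{S}_{H, U^p}$ and $\invs{S}_{R, C^p}$ respectively are canonically identified with $\pi^*_{H, \mathrm{HT}} \mathtt{M}^{H}$ and $\pi^*_{R, \mathrm{HT}} \mathtt{R}$. Since the $\mu$-twist commutes with pullback along arbitrary morphisms of smooth adic spaces (Lemma \ref{PropertiesOfTTorsorLemma}(2)) and with pushout along a morphism into which $\mu$ factors (Lemma \ref{PropertiesOfTTorsorLemma}(3)), after pulling back to $\invs{S}_{H, U^p}$ both sides of both isomorphisms are identified with objects built from the flag variety torsors $\mathtt{M}^{H, \mathrm{an}}$ and $\mathtt{R}^{\mathrm{an}}$ twisted along $\mu$.

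Next, I would verify that on the perfectoid cover the two isomorphisms agree tautologically. The flag variety isomorphism (\ref{FlagVarIso}) is defined to be the $\mu$-twist of the canonical identification $\mathtt{M}^{H, \mathrm{an}} \times^{M_H^{\mathrm{an}}} R^{\mathrm{an}} \cong \opn{det}^* \mathtt{R}^{\mathrm{an}}$, itself arising from the commutative diagram of flag varieties combined with the fact that $\opn{det} \colon \invs{P}_H \to R^{\mathrm{an}}$ factors through $\invs{M}_H$. On the other hand, the de Rham--$p$-adic comparison isomorphism ${^\mu \invs{M}_{H,\mathrm{HT}}^{\mathrm{an}}} \cong M_{H, dR}^{\mathrm{an}}$ of \cite{CS17} is, after pullback to the perfectoid cover, induced by exactly the same flag variety description (this is the content of Lemma \ref{PullbackOfTorsorDescription} and the discussion following it). Since $\opn{det} \colon H_{dR} \to R_{dR}$ is defined on canonical models via the morphism of Shimura data and agrees with the flag variety morphism after analytification, the algebraic isomorphism $M_{H, dR} \times^{\mbf{M}_H} \mbf{R}_{F^{\mathrm{cl}}} \cong \opn{det}^* R_{dR}$ analytifies to the same thing as (\ref{FlagVarIso}) after pullback to $\invs{S}_{H, U^p}$.

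The main subtlety is the bookkeeping of the $\mu$-twist: one must use crucially that $\mu_{\mbf{R}} = \opn{det} \circ \mu_{\mbf{H}}$ so that twisting the pushout ${^\mu \invs{M}_{H, \mathrm{HT}}^{\mathrm{an}}} \times^{M_H^{\mathrm{an}}} R^{\mathrm{an}}$ along $\mu$ of $M_H$ matches with $\opn{det}^* \bigl({^\mu \invs{R}_{\mathrm{HT}}^{\mathrm{an}}}\bigr)$, which a priori is twisted along $\mu_{\mbf{R}}$ on $\mbf{R}$. This centrality of $\mu_{\mbf{R}}$ and the factorization through $\opn{det}$ is what allows Lemma \ref{PropertiesOfTTorsorLemma}(3) to be invoked to commute the twist past the pushout. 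Once agreement is established on the perfectoid cover, both isomorphisms are $K^H_{\diamondsuit}(p)$-equivariant and thus descend uniquely to $\invs{S}_{H, \diamondsuit}(p)$, giving the claimed equality.
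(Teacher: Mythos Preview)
Your argument has a genuine gap at its central step. You claim that the de Rham--$p$-adic comparison isomorphism ${^\mu \invs{M}_{H,\mathrm{HT}}^{\mathrm{an}}} \cong M_{H, dR}^{\mathrm{an}}$ is, after pullback to the perfectoid cover, ``induced by exactly the same flag variety description'', citing Lemma~\ref{PullbackOfTorsorDescription}. But that lemma only describes the pullback of the Hodge--Tate torsor $\invs{M}_{H,\mathrm{HT}}$ in terms of $\pi_{\mathrm{HT}}^*\mathtt{M}^H$; it says nothing about how the comparison with the \emph{de Rham} torsor looks over the perfectoid cover. The comparison isomorphism of \cite{CS17} is built from relative $p$-adic Hodge theory for the universal abelian variety together with compatibility of Hodge tensors, and it is precisely the content of the proposition that this comparison intertwines the de Rham functoriality map $\eta_{\mathrm{dR}}$ (which defines (\ref{AnalyticIso})) with the \'etale functoriality map $\eta_{\et}$ (which defines (\ref{FlagVarIso})). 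Your sentence ``the algebraic isomorphism \dots\ analytifies to the same thing as (\ref{FlagVarIso}) after pullback'' is exactly the statement you are asked to prove, not something available for free.

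The paper's proof invokes Corollary~\ref{FunctOfMTorsorsCor} from Appendix~\ref{AppendixComparisonsFamilies}, which in turn rests on Proposition~\ref{dRpadicComparisonProp}: the de Rham--$p$-adic comparison identifies $\eta_{\mathrm{dR}}$ and $\eta_{\et}$. The key non-trivial input there is Blasius's theorem \cite{Blasius94} that absolute Hodge classes for abelian varieties are de Rham, applied after checking compatibility at points defined over number fields. Without this (or an equivalent argument), there is no reason the two functoriality maps should coincide under the comparison: they arise from genuinely different constructions (canonical models of standard principal bundles versus the pro-\'etale tower), and their agreement is a theorem, not a tautology. Your descent/equivariance bookkeeping is fine, and the handling of the $\mu$-twist via Lemma~\ref{PropertiesOfTTorsorLemma} is correct, but the heart of the matter is missing.
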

\begin{proof}
With notation as in Appendix \ref{AppendixComparisonsFamilies}, the isomorphism (\ref{AnalyticIso}) (resp. (\ref{FlagVarIso}))  is induced from the natural transformation $\eta_{\opn{dR}}$ (resp. $\eta_{\et}$). The result now follows from Corollary \ref{FunctOfMTorsorsCor}.
\end{proof}

We obtain the following corollary:

\begin{corollary} \label{ReductionCommutativeDiagram}
Over $\invs{U}^H_k(p)_L$ one has a commutative diagram
\[
\begin{tikzcd}
{{^\mu \invs{M}_{H, \mathrm{HT}, L}^{\mathrm{an}}} \times^{M_{H,L}^{\mathrm{an}}} R^{\mathrm{an}}_L} \arrow[r, "(\ref{AnalyticIso})"] & \opn{det}^* \left({^\mu \invs{R}_{\mathrm{HT}, L}^{\mathrm{an}}}\right)  \\
{{^\mu \invs{M}_{H, \mathrm{HT}, k, 1, L}} \times^{\invs{M}_{H, k, 1, L}^{\clubsuit}} \invs{R}_{k, L}} \arrow[r, "\sim"] \arrow[u]           & {\opn{det}^* \left( {^ \mu \invs{R}_{\mathrm{HT}, k, L}} \right)} \arrow[u]
\end{tikzcd}
\]
for any finite extension $L/\mbb{Q}_p$, where the left-hand map is induced from the reduction of structure in \S \ref{FurtherReductionOfStructureSection}.
\end{corollary}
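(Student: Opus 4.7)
The plan is to transfer the statement to the level of flag varieties, where the picture simplifies dramatically because $\mu_{\mbf{R}}$ is central and hence $P_{\mbf{R}} = \mbf{R}$; in particular $\mathtt{FL}^R$ is a point and the torsor $\mathtt{R}^{\mathrm{an}} = R^{\mathrm{an}}_L$ is trivial. Consequently $\invs{R}_{\mathrm{HT}, L}^{\mathrm{an}} = \Delta^{\mathrm{an}}_L \times R^{\mathrm{an}}_L$ with distinguished reduction of structure $\invs{R}_{\mathrm{HT}, k, L} = \Delta^{\mathrm{an}}_L \times \invs{R}_{k, L}$. By the Proposition immediately preceding the corollary, the top horizontal arrow is equivalently described by the flag-variety construction (\ref{FlagVarIso}); I will therefore construct the bottom arrow and the commutativity on the flag-variety side, exactly as in the proof of Proposition \ref{MGHTorsorComparison}, but with the map $\opn{det} \colon \mbf{H} \to \mbf{R}$ in place of $\iota \colon \mbf{H} \hookrightarrow \mbf{G}$.

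First I would verify that $\opn{det}$ restricts to a homomorphism of affinoid analytic groups $\invs{M}^{\clubsuit}_{H, k, 1, L} \to \invs{R}_{k, L}$. On $\invs{M}^1_{H, k}$ (elements reducing to the identity mod $p^k$) the determinants lie in $1 + \invs{B}_k$ in each factor, and $\opn{det}(M^H_\clubsuit(p))$ lands in $R(\mbb{Z}_p)$; putting these together places the image inside $\invs{R}_{k, L} = \ordd_L^{\times}(1 + \invs{B}_k) \times \prod_{\tau} \bigl(\ordd_L^{\times}(1 + \invs{B}_k)\bigr)^2$. Since $\mu_{\mbf{R}} = \opn{det} \circ \mu_{\mbf{H}}$ factors through $\invs{R}_{k,L}$, and since $\mu$ is central so that twisting by it is functorial in the sense of Lemma \ref{PropertiesOfTTorsorLemma}, I may then form the pushout ${^\mu \invs{M}_{H, \mathrm{HT}, k, 1, L}} \times^{\invs{M}^{\clubsuit}_{H, k, 1, L}} \invs{R}_{k, L}$ as an \'{e}tale $\invs{R}_{k, L}$-torsor on $\invs{U}^H_k(p)_L$.

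To identify this pushout with $\opn{det}^* \bigl( {^\mu \invs{R}_{\mathrm{HT}, k, L}} \bigr)$, I would argue on flag varieties. Under the map $\mathtt{FL}^H \to \mathtt{FL}^R = \mathrm{pt}$ the pullback of $\mathtt{R}^{\mathrm{an}}$ is the trivial $R^{\mathrm{an}}_L$-torsor on $\mathtt{FL}^H$, and the trivialisation is compatible with the pushout of $\mathtt{M}^{H, \mathrm{an}}$ along $\opn{det} \colon M_{H,L}^{\mathrm{an}} \to R^{\mathrm{an}}_L$ (this is just the flag-variety incarnation of (\ref{FlagVarIso})). Restricting to $\mathtt{U}^H_k$ and using the reduction $\mathtt{M}^H_{k, k, 1}$ together with the containment $\opn{det}(\invs{M}^{\diamondsuit}_{H, k, k, 1}) \subset \invs{R}_{k}$ yields the analogous reduction-of-structure identification $\mathtt{M}^H_{k,k,1} \times^{\invs{M}^{\diamondsuit}_{H,k,k,1}} \invs{R}_{k} \cong \opn{det}^*(\invs{R}_k)|_{\mathtt{U}^H_k}$ (trivial torsor). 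Pushing this along the descent $\pi_{\mathrm{HT}}$ to $\invs{U}^H_k(p)_L$, then along the inclusion of structure groups $\invs{M}^{\diamondsuit}_{H, k, k, 1} \subset \invs{M}^{\clubsuit}_{H, k, 1, L}$, and finally twisting by $\mu$ using Lemma \ref{PropertiesOfTTorsorLemma}, produces the desired bottom isomorphism. Commutativity of the square is then immediate, since each horizontal arrow is obtained from the \emph{same} morphism of flag-variety torsors (the preceding Proposition guarantees that the de Rham and \'{e}tale incarnations agree), and the vertical arrows are the pushouts along the inclusions of structure groups into the ambient analytic groups.

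The proof involves no substantial mathematical obstacle; the only delicate point is bookkeeping the various reductions of structure (the three flavours $\invs{M}^{\diamondsuit}$, $\invs{M}^{\clubsuit}_{\cdots, k, k}$, and $\invs{M}^{\clubsuit}_{\cdots, k}$ all enter), and checking that each pushout and each twist by $\mu$ is compatible with the determinant map. All of the needed compatibilities were already set up during the proofs of Proposition \ref{MGHTorsorComparison} and the Proposition preceding the corollary, so the verification is a direct definition chase.
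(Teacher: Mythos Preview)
Your proposal is correct and follows essentially the same route as the paper: reduce to the flag-variety picture via the preceding Proposition (so that (\ref{AnalyticIso}) $=$ (\ref{FlagVarIso})), use that $\opn{det}$ carries the various $\invs{M}^{\diamondsuit}$/$\invs{M}^{\clubsuit}$ groups into $\invs{R}_k$ so that the reduction-of-structure identification holds over $\mathtt{U}^H_k$, and then (un)twist by $\mu$ via Lemma \ref{PropertiesOfTTorsorLemma}. The only cosmetic difference is that the paper untwists first and then writes down the flag-variety square, whereas you twist at the end; either order works.
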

\begin{proof}
To simplify notation, we will establish the case $L=\mbb{Q}_p$ only, as the general case follows the exact same argument.

Note that the left-hand vertical map is induced from the morphism $\invs{M}_{H, \mathrm{HT}, k, k, 1} \to \invs{M}_{H, \mathrm{HT}}^{\mathrm{an}}$ and pushing out along $\invs{M}^{\clubsuit}_{H, k, k, 1} \to \invs{R}_{k}$ factors through the affinoid group $\invs{M}_{H, k, 1}^{\clubsuit}$, so the left hand vertical map does indeed make sense.

Using the fact that the morphism (\ref{AnalyticIso}) coincides with (\ref{FlagVarIso}) and untwisting along $\mu \colon \mbb{Z}_p^{\times} \to \invs{M}_{H, k, 1}^{\clubsuit} \xrightarrow{\opn{det}} \invs{R}_k$, we can work on the level of flag varieties. In this setting we have a commutative diagram (because the morphism $\invs{M}^{\diamondsuit}_{H, k, k, 1} \to \invs{R}_k$ extends to a morphism $\mathtt{M}^{H}_{k, k, 1} \to \invs{R}_k$)
\[
\begin{tikzcd}
{\mathtt{M}^{H, \mathrm{an}}|_{\mathtt{U}^H_k} \times^{M_H^{\mathrm{an}}} R^{\mathrm{an}}} \arrow[r, "\sim"]  & \mathtt{U}^H_k \times R^{\mathrm{an}}      \\
{\mathtt{M}^H_{k, k, 1} \times^{\invs{M}^{\diamondsuit}_{H, k, k, 1}} \invs{R}_k} \arrow[r, "\sim"] \arrow[u] & \mathtt{U}^H_k \times \invs{R}_k \arrow[u]
\end{tikzcd}
\]
which gives the desired result. 
\end{proof}

\subsection{Step 3: Construction of the family}

Fix a collection $\{ \chi_{\tau} : \tau \in \Psi \} \subset \Sigma(\ide{N})$ of anticyclotomic characters, where $\chi_{\tau}$ has infinity type $(1_{\tau}, -1_{\tau})$ and let $\chi_0 \in \Sigma(\ide{N})$ be a fixed anticyclotomic Dirichlet character. Let $L'/\mbb{Q}_p$ be a sufficiently large finite extension containing the fields of definition of $\chi_{\tau, p}$, and let $L/L'$ be finite extension containing the field of definition of $\chi_{0,p}$.

\begin{theorem} \label{FamilyOfACCharMainThm}
Let $(A, A^+)$ be a Tate algebra over $(L, \ordd_L)$ and let $(\beta_{\tau})_{\tau \in \Psi}$ be a collection of locally analytic characters $\ordd_{L'}^{\times} \to (A^+)^{\times}$. Let $\xi^{[\beta]} \colon \invs{R}_{k, L'} \to \mbb{G}_m^{\mathrm{an}}$ denote the character given by sending $(x_0, x_{1, \tau}, x_{2, \tau})_{\tau \in \Psi}$ to $\prod_{\tau} \beta_{\tau}(x_{2,\tau}/x_{1, \tau})$, for any sufficiently large $k$. Then there exists a class
\[
\underline{\chi} \in \opn{H}^0_{\opn{id}, \mathrm{an}}( \xi^{[\beta]} \circ \opn{det} )^{(+, \dagger)} \defeq \varinjlim_m \opn{H}^0\left( \invs{Z}^H_m(p), [\xi^{[\beta]} \circ \opn{det}] \right)
\]
such that 
\begin{enumerate}
    \item If $(A, A^+) = (L, \ordd_L)$ and $\beta_{\tau}$ are integers, then $\underline{\chi}$ extends to a class in $\opn{H}^0\left(\invs{S}_{H, \diamondsuit}(p)_L, [\xi^{[\beta]} \circ \opn{det} ] \right)$ whose image under the map (induced from rigid GAGA and the identification $\mbb{C} \cong \Qpb$)
\[
\opn{H}^0\left(\invs{S}_{H, \diamondsuit}(p)_L, [\xi^{[\beta]} \circ \opn{det} ] \right) = \opn{H}^0\left(\invs{S}_{H, \diamondsuit}(p), [\xi^{[\beta]} \circ \opn{det} ] \right) \otimes_{\mbb{Q}_p} L \hookrightarrow \opn{H}^0\left(\invs{S}_{H, \diamondsuit}(p)(\mbb{C}), [\xi^{[\beta]} \circ \opn{det} ] \right)
\]
is equal to $\opn{det}^* ([\chi_0]_B\cdot \prod_{\tau \in \Psi} [\chi_{\tau}^{\beta_{\tau}}]_B )$. In other words, for classical weights this family specialises to the cohomology class representing the automorphic form
\begin{align*} 
\mbf{H}(\mbb{Q}) \backslash \mbf{H}(\mbb{A}) &\to \mbb{C} \\
(h_1, h_2) &\mapsto \bar{\chi}_0(\opn{det}(h_1, h_2)) \cdot \prod_{\tau \in \Psi} \bar{\chi}_{\tau}(\opn{det}(h_1, h_2))^{\beta_{\tau}} .
\end{align*}
\item For varying $(A, A^+)$, the constructions of $\underline{\chi}$ are compatible.
\end{enumerate}
\end{theorem}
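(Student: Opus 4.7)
The plan is to assemble Theorem \ref{FamilyOfACCharMainThm} directly from the three-step strategy set up in the preceding subsections: first produce the family on the Shimura set $\Delta$, then pull back via the determinant to the Shimura variety for $\mbf{H}$, and finally restrict to the strata $\invs{Z}^H_m(p)$.

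To carry out Step 1 in the multi-character setting, I would fix representatives $\{s_i\} \subset \mbf{R}(\mbb{A}_f^p)$ for $\Delta(\mbb{C})$ and, for a sufficiently large radius $k$, define a global section $\underline{\chi}_\Delta \in \opn{H}^0(\Delta^{\opn{an}}_L, [\xi^{[\beta]}])$ via the trivialisation $\Delta^{\opn{an}}_L \times \invs{R}_{k, L} \cong \invs{R}_{\opn{HT}, L, k}$ by the formula
\[
\bigl([s_i], (x_0, x_{1,\tau}, x_{2,\tau})_{\tau}\bigr) \mapsto \bar{\chi}_0(s_i) \cdot \prod_{\tau \in \Psi} \beta_{\tau}\!\left( \bar{\chi}_{\tau}(s_i) \cdot \frac{x_{2,\tau}}{x_{1,\tau}} \right).
\]
The argument of Lemma \ref{CharacterDescription} applied to each $\chi_\tau$ shows that $\bar{\chi}_\tau(s_i) \cdot x_{2,\tau}/x_{1,\tau} \in \ordd_{L'}^\times (1 + \invs{B}_k)$, so $\beta_\tau$ can indeed be evaluated on it. The recipe is manifestly $\invs{R}_{k,L}$-equivariant (for the action by $\xi^{[\beta]}$) and functorial in $(A, A^+)$, giving property (2) automatically at this stage.

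For Step 2, I would pull $\underline{\chi}_\Delta$ back to $\invs{S}_{H, \diamondsuit}(p)_L$ along $\opn{det}$. A priori this gives a section of $\opn{det}^*[\xi^{[\beta]}]$. The essential input is Corollary \ref{ReductionCommutativeDiagram}, which identifies, over $\invs{U}^H_k(p)_L$, the pulled-back trivial reduction of structure on $\Delta$ with the pushout of ${^\mu \invs{M}_{H, \opn{HT}, k, 1, L}}$ along $\opn{det}$; consequently $\opn{det}^*[\xi^{[\beta]}]$ restricts to $[\xi^{[\beta]} \circ \opn{det}]$ on $\invs{U}^H_k(p)_L$. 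Step 3 then amounts to restricting the resulting section to $\invs{Z}^H_m(p) \subset \invs{U}^H_k(p)$ for any $m \geq k$ and passing to the colimit in $m$ to define $\underline{\chi}$.

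Property (1) then follows by applying Remark \ref{ComplexComparisonRemark} character-by-character: under the rigid GAGA comparison the specialisation of $\underline{\chi}_\Delta$ at integer weights $(k_\tau)$ is precisely $[\chi_0 \prod_\tau \chi_\tau^{k_\tau}]_B$, and pulling back along $\opn{det}$ recovers the claimed Betti class, whose description as an automorphic form on $\mbf{H}(\mbb{A})$ is immediate from $\bar{\chi}(\opn{det}(h_1,h_2)) = \chi'(\nu(h_1,h_2))$. The most delicate point is therefore not the theorem itself but rather the torsor-theoretic compatibility encoded in Corollary \ref{ReductionCommutativeDiagram}; once that is in hand the remainder of the proof is a bookkeeping exercise tracing the constructed section through pullback, restriction, and the direct limit defining the $(+,\dagger)$-cohomology.
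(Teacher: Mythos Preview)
Your proposal is correct and matches the paper's argument essentially line for line: the paper defines $\underline{\chi} = \opn{det}^*[\chi_0]_{\mathrm{HT}} \cdot \prod_{\tau} \opn{det}^*[\chi_{\tau}]_{\mathrm{HT}}^{\beta_{\tau}}$, which is precisely your $\opn{det}^*\underline{\chi}_\Delta$ with the product written out, and then invokes Corollary \ref{ReductionCommutativeDiagram} and Remark \ref{ComplexComparisonRemark} for the interpolation property and notes base-change compatibility is clear from the construction. The only cosmetic difference is that you multiply the characters before pulling back whereas the paper pulls back first and then multiplies, which is of course the same.
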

\begin{proof}
Recall the definitions of $[\chi_0]_{\mathrm{HT}}$ and $[\chi_{\tau}]_{\mathrm{HT}}^{\beta_{\tau}}$ from \S \ref{Step1AC} (where we view $[\chi_{\tau}]_{\mathrm{HT}}^{\beta_{\tau}}$ as a class defined over $L$). We define 
\[
\underline{\chi} = \opn{det}^* [\chi_0]_{\mathrm{HT}} \cdot \prod_{\tau \in \Psi} \opn{det}^* [\chi_{\tau}]_{\mathrm{HT}}^{\beta_{\tau}} .
\]
The interpolation property follows from Corollary \ref{ReductionCommutativeDiagram} and Remark \ref{ComplexComparisonRemark}, and it is clear from the definition of $[\cdots]_{\mathrm{HT}}$ that this construction is compatible under base-change.
\end{proof}


\section{Construction of the \texorpdfstring{$p$-adic $L$-function}{p-adic L-function}} \label{ConstructionOfPadicLSection}

In this final section, we construct the $p$-adic $L$-function associated with a family of cohomology classes $\underline{\eta}$ and a family $\underline{\chi}$ of anticyclotomic characters. We will end by discussing its relation to unitary Friedberg--Jacquet periods.

\subsection{Definition of the \texorpdfstring{$p$-adic $L$-function}{p-adic L-function}}

Let $\pi$ be a cuspidal automorphic representation of $\mbf{G}(\mbb{A})$ satisfying Assumptions \ref{HCassumptionOnPi}, \ref{SSSassumption} and \ref{CMramificationAssumption}. Then the construction in \S \ref{ExistenceOfFamilies} implies that there exists a unique family $\theta_{\underline{\pi}}$ and family of cohomology classes $\underline{\eta} \in S^{n-1}(\underline{\pi})$ passing through $\pi$, defined over a sufficiently small affinoid $U = \opn{Spa}(A, A^+) \subset \invs{W}_{G, L}$. 

For the family of anticyclotomic characters, we make the following assumption:

\begin{assumption}
The class number of $F$ is not divisible by $p$.
\end{assumption}

By this assumption, for every $\tau \in \Psi$, we can fix an anticyclotomic character $\chi_{\tau} \in \Sigma(\mathfrak{N})$ of infinity type $(1_{\tau}, -1_{\tau})$, such that associated $p$-adic Hecke character is valued in $\mbb{Q}_p^{\times}$ (see the discussion in \cite[\S 4.2]{Collins}, for example). Fix an anticyclotomic Dirichlet character $\chi_0 \in \Sigma(\ide{N})$, and let $V = \opn{Spa}(B, B^+) \subset \invs{W}_{H, L}$ be an open affinoid subspace with universal character $\lambda_B \colon \left(\mbb{Z}_p^{\times}\right)^{[F^+:\mbb{Q}] - 1} \to (B^+)^{\times}$. We can naturally view $\lambda_A$ and $\beta \defeq \lambda_B$ as characters valued in $A \hatot B$. Then the results in \S \ref{FamiliesOfACcharsSection} imply that there exists a family $\underline{\chi} \in \opn{H}^0_{\mathrm{id}, \mathrm{an}}(\sigma_n^{[\beta]}(\lambda_A)^{\vee})^{(+, \dagger)}$ which interpolates (the coherent cohomology classes associated with) the anticyclotomic characters
\[
\chi_{(\lambda, j)} \defeq \chi_0 \cdot \chi_{\tau_0}^{-(c_{n, \tau_0} + 1)} \cdot \prod_{\tau \neq \tau_0} \chi_{\tau}^{-j_{\tau}} 
\]
where $(\lambda, j) \in X^*(T/T_0)^+ \times X^*(S)^+ \cap U \times V$ with $\lambda = (0; c_{1, \tau}, \dots, c_{2n, \tau})_{\tau \in \Psi}$ and $j = (j_{\tau})_{\tau \neq \tau_0}$ satisfying $0 \leq j_{\tau} \leq c_{n, \tau}$.

\begin{definition}
With the set-up as above, we define
\[
\mathscr{L}_p(\underline{\eta}, \underline{\chi}) \defeq \langle\langle \underline{\eta}, \underline{\chi} \rangle\rangle_{\mathrm{an}}^{-} \quad \in \ordd(U \times V)
\]
where the right-hand side is as in \S \ref{LocallyAnalyticCohomologySubSec}.
\end{definition}

\begin{remark}
Since the pairing $\langle\langle \cdot, \cdot \rangle\rangle_{\mathrm{an}}^{-}$ is compatible with change of coefficients, the $p$-adic analytic functions $\mathscr{L}_p(\underline{\eta}, \underline{\chi})$ glue as $V$ varies. Therefore, we can (and do) view 
\[
\mathscr{L}_p(\underline{\eta}, \underline{\chi} ) \in \ordd(U \times \invs{W}_{H,L}) 
\]
which makes sense because the families $\underline{\chi}$ glue for varying $V$, by Theorem \ref{FamilyOfACCharMainThm}(2) (note that we can choose an open affinoid cover of $\invs{W}_H$ such that the universal characters for each open are locally analytic -- see \cite[Lemma 4.1.5]{lz-coleman}). 
\end{remark}

\subsection{The interpolation property}

Keeping with the same set-up as in the previous section, we introduce the following ``region of interpolation'':

\begin{definition}
Let $\Sigma^{\mathrm{int}}$ denote the subset of $X^*(T/T_0)^+ \times X^*(S)^+ \cap (U \times \invs{W}_H)(L)$ of all pairs $(\lambda, j)$ with $\lambda = (0; c_{1, \tau}, \dots, c_{2n, \tau})_{\tau \in \Psi}$ and $j = (j_{\tau})_{\tau \neq \tau_0}$ satisfying $0 \leq j_{\tau} \leq c_{n, \tau}$.
\end{definition}

For $(\lambda, j) \in \Sigma^{\mathrm{int}}$ let 
\[
\eta_{\lambda} \in \opn{H}^{n-1}_{w_n, \mathrm{an}}\left( \kappa_n(\lambda) \right)^{-, \mathrm{ss}} \cong \opn{H}^{n-1}\left( \invs{S}_{G, \mathrm{Iw}}(p), [V_{\kappa_n(\lambda)}] \right)^{-, \mathrm{ss}}
\]
denote the specialisation of $\underline{\eta}$ at $(\lambda, j)$, which we can view as an element of $\opn{H}^{n-1}\left( S_{\mbf{G}, \mathrm{Iw}}(p)(\mbb{C}), [V_{\kappa_n(\lambda)}] \right)$ via rigid GAGA and the identification $\iota_p \colon \mbb{C} \cong \Qpb$. Let $\mathscr{L}_p(\eta_{\lambda}, \chi_{(\lambda, j)} )$ denote the specialisation of $\mathscr{L}_p(\underline{\eta}, \underline{\chi})$ under the map $\ordd(U \times \invs{W}_H) \to L$ induced from $(\lambda, j)$.     

We obtain the following interpolation property for $\mathscr{L}_p(\underline{\eta}, \underline{\chi} )$.

\begin{proposition} \label{PadicLequalsComplex}
After possibly shrinking $U$ around $\lambda_{\pi}$, for any $(\lambda, j) \in \Sigma^{\mathrm{int}}$ one has
\[
\iota_p^{-1} \mathscr{L}_p(\eta_{\lambda}, \chi_{(\lambda, j)} ) = \langle \eta_{\lambda}, \nu^*[\chi_{(\lambda, j)}]_B \rangle_{\mathrm{alg}}
\]
where $\iota_p \colon \mbb{C} \cong \Qpb$ denotes the fixed isomorphism, and the pairing in the right-hand side has been base-changed to $\mbb{C}$ (via the embedding $F^{\mathrm{cl}} \hookrightarrow \mbb{C}$).
\end{proposition}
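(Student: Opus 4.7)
The plan is to chain together four ingredients that are already in place: the coefficient compatibility of the distribution pairing under specialisation (Corollary~\ref{DistAnIsAnCorollary}), the classical-weight interpolation of $\underline{\chi}$ (Theorem~\ref{FamilyOfACCharMainThm}(1)), the comparison between the partially supported pairing $\langle\cdot,\cdot\rangle_{\mathrm{an}}^{-}$ and the classical analytic pairing $\langle\cdot,\cdot\rangle_{\mathrm{an}}$ on small-slope classes (Theorem~\ref{RestrictionToStrataThm}), and finally rigid GAGA to pass from $\langle\cdot,\cdot\rangle_{\mathrm{an}}$ to $\langle\cdot,\cdot\rangle_{\mathrm{alg}}$ (Proposition~\ref{GAGApairingProp}). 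The shrinking of $U$ is needed only to guarantee that for every $(\lambda,j)\in\Sigma^{\mathrm{int}}$ the Hecke eigensystem $\theta_{\underline{\pi}}$ specialises to a small-slope system with respect to $\kappa_n(\lambda)$; this is an open condition on weight space and holds at $\lambda_\pi$ by Assumption~\ref{SSSassumption}, so can be arranged after replacing $U$ by a sufficiently small affinoid neighbourhood of $\lambda_\pi$.

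First I would apply Corollary~\ref{DistAnIsAnCorollary} to the evaluation map at $(\lambda,j)$, which gives
\[
\mathscr{L}_p(\eta_\lambda,\chi_{(\lambda,j)}) \;=\; \langle\langle \underline{\eta},\underline{\chi}\rangle\rangle_{\mathrm{an}}^{-}\bigl|_{(\lambda,j)} \;=\; \langle \eta_\lambda,\,\chi_{(\lambda,j),\mathrm{spec}}\rangle_{\mathrm{an}}^{-},
\]
where $\chi_{(\lambda,j),\mathrm{spec}}\in \opn{H}^0_{\mathrm{id}}(\sigma_n^{[j]}(\lambda)^\vee)^{(+,\dagger)}$ denotes the specialisation of $\underline{\chi}$. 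Here one uses Theorem~\ref{PadicVectorThm}(3) to ensure that the specialisation of the universal character $\sigma_n^{[\beta]}(\lambda_A)^\vee$ at the integer weight $(\lambda,j)$ agrees with the character appearing in Proposition~\ref{ClassicalBranchingProp}. Then by Theorem~\ref{FamilyOfACCharMainThm}(1), $\chi_{(\lambda,j),\mathrm{spec}}$ is the image of a global section $\chi_{\mathrm{cl}}\in \opn{H}^0(\invs{S}_{H,\diamondsuit}(p),[\sigma_n^{[j]}(\lambda)^\vee])$ under the natural restriction map, and the image of $\chi_{\mathrm{cl}}$ under rigid GAGA (and $\iota_p\colon\mbb{C}\cong\overline{\mbb{Q}}_p$) is precisely the class $\nu^*[\chi_{(\lambda,j)}]_B$. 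At this point Theorem~\ref{RestrictionToStrataThm}, applied to the small-slope class $\eta_\lambda$, gives
\[
\langle \eta_\lambda,\,\chi_{(\lambda,j),\mathrm{spec}}\rangle_{\mathrm{an}}^{-} \;=\; \langle \eta_\lambda,\,\chi_{\mathrm{cl}}\rangle_{\mathrm{an}},
\]
and Proposition~\ref{GAGApairingProp} identifies the right-hand side with $\iota_p\bigl(\langle \eta_\lambda,\nu^*[\chi_{(\lambda,j)}]_B\rangle_{\mathrm{alg}}\bigr)$.

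The principal delicate point will be the bookkeeping in the middle step, namely matching the specialisation of $\underline{\chi}$ with the pullback $\nu^*[\chi_{(\lambda,j)}]_B$ on the nose. This requires unwinding: (a) the factorisation $\bar\chi = \chi'\circ\invs{N}$ used to descend $\chi$ through $\mbf{R}$, so that $\opn{det}^*\bar\chi$ on $\mbf{H}$ equals $\chi'\circ\nu$; (b) the compatibility between the exponents appearing in the character $\sigma_n^{[\beta]}(\lambda_A)^\vee$ (in particular the $\tau_0$-factor exponent $-(\xi_{n,\tau_0}+1)$) and the infinity type of the product $\chi_0\cdot\chi_{\tau_0}^{-(c_{n,\tau_0}+1)}\prod_{\tau\neq\tau_0}\chi_\tau^{-j_\tau}$; and (c) that Theorem~\ref{FamilyOfACCharMainThm}(1) really does realise this product as the Betti class $\nu^*[\chi_{(\lambda,j)}]_B$ via Remark~\ref{ComplexComparisonRemark}. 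Once the signs and infinity types are aligned, the four inputs compose without further work and the interpolation formula follows.
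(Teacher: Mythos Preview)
Your proposal is correct and follows essentially the same route as the paper: the paper's proof invokes precisely Corollary~\ref{DistAnIsAnCorollary}, Theorem~\ref{RestrictionToStrataThm}, Proposition~\ref{GAGApairingProp}, and the results of \S\ref{FamiliesOfACcharsSection} (i.e.\ Theorem~\ref{FamilyOfACCharMainThm}(1)) in the same order, though more tersely and without spelling out the bookkeeping you flag in your final paragraph. Your explanation of why $U$ must be shrunk (to force the specialised eigensystem to be small slope at every classical weight, so that Theorem~\ref{RestrictionToStrataThm} applies) is correct and matches what the paper arranges implicitly via the construction in \S\ref{ExistenceOfFamilies}.
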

\begin{proof}
If we let 
\[
\nu^*[\chi_{(\lambda, j)}]_{\mathrm{HT}} \in \opn{H}^0_{\mathrm{id}, \mathrm{an}}(\sigma^{[j]}(\lambda)^{\vee} )^{(+, \dagger)} = \opn{H}^0_{\mathrm{id}}(\sigma^{[j]}(\lambda)^{\vee} )^{(+, \dagger)}
\]
denote the specialisation of $\underline{\chi}$, then the results in \S \ref{FamiliesOfACcharsSection} imply that $\nu^*[\chi_{(\lambda, j)}]_{\mathrm{HT}}$ is in the image of the restriction map 
\[
\opn{H}^0\left( \invs{S}_{H, \diamondsuit}(p), [\sigma_n^{[j]}(\lambda)]^{\vee} \right) \to \opn{H}^0_{\mathrm{id}}(\sigma^{[j]}(\lambda)^{\vee} )^{(+, \dagger)}
\]
and its image under the rigid GAGA comparison is equal to $\nu^*[\chi_{(\lambda, j)}]_B$. The result then follows from Corollary \ref{DistAnIsAnCorollary}, Theorem \ref{RestrictionToStrataThm} and Proposition \ref{GAGApairingProp}.
\end{proof}

\begin{remark}
The equality in Proposition \ref{PadicLequalsComplex} depends on a choice of isomorphism $V_{\kappa_n(\lambda)^*}^* \cong V_{\kappa_n(\lambda)}$ over $F^{\mathrm{cl}}$.
\end{remark}

Let $[\mbf{H}] = \mbf{H}(\mbb{Q}) A_{\mbf{G}, \mbf{H}}(\mbb{A}) \backslash \mbf{H}(\mbb{A})$, where $A_{\mbf{G}}$ denotes the maximal split subtorus of the centre of $\mbf{G}$ and $A_{\mbf{G}, \mbf{H}} = A_{\mbf{G}} \cap \mbf{H}$ (which in fact equals $A_{\mbf{G}}$). By choosing a Haar measure for $\mbf{H}(\mbb{Q}) A_{\mbf{G}, \mbf{H}}(\mbb{A})$ and using a fixed Haar measure for $\mbf{H}(\mbb{A})$, one obtains a measure on the quotient $[\mbf{H}]$ which we will denote by $\bar{d}h$. We also let $[\mbf{H}]' = \mbf{H}(\mbb{Q}) A_{\mbf{G}, \mbf{H}}(\mbb{R})^{\circ} \backslash \mbf{H}(\mbb{A})$ and, similar to above, we have an induced measure $\overline{d}'h$. We choose these measures so they are compatible under the quotient map $[\mbf{H}]' \to [\mbf{H}]$. We also assume that the volume of $U_{\infty}^{\circ} U$ with respect to the Haar measure on $\mbf{H}(\mbb{A})$ is contained in $(F^{\mathrm{cl}})^{\times}$, where $U^{\circ}_{\infty}$ is the maximal compact subgroup of $U_{\infty} = K_{\infty} \cap \mbf{H}(\mbb{R})$.

\begin{corollary} \label{RelationToAutoPeriods}
Let $(\lambda, j) \in \Sigma^{\mathrm{int}}$ and $\sigma$ be the cuspidal automorphic representation of $\mbf{G}(\mbb{A})$ associated with $\eta_{\lambda}$ (see \S \ref{ExistenceOfFamilies}). Then there exists $G \in \sigma$ such that
\begin{equation} \label{InterpolationFormula}
\iota_p^{-1} \mathscr{L}_p(\eta_{\lambda}, \chi_{(\lambda, j)} ) \sim_{F^{\mathrm{cl},\times}} (2 \pi i)^{-(n-1)} \int_{[\mbf{H}]'} G(h) \cdot \chi_{(\lambda, j)} (\nu(h)) \bar{d}'h
\end{equation}
where $\sim_{F^{\mathrm{cl},\times}}$ means up to a non-zero constant in $F^{\mathrm{cl},\times}$ which only depends on $\lambda$ and the choice of Haar measures as above.

Furthermore, if the central character of $\pi$ restricted to $A_{\mbf{G}, \mbf{H}}(\mbb{A})$ is trivial, then we have the relation
\[
\iota_p^{-1} \mathscr{L}_p(\eta_{\lambda}, \chi_{(\lambda, j)} ) \sim_{F^{\mathrm{cl},\times}} (2 \pi i)^{-(n-1)} \int_{[\mbf{H}]} G(h) \cdot \chi_{(\lambda, j)} (\nu(h)) \bar{d}h
\]
after possibly shrinking $U$ around $\lambda_{\pi}$.
\end{corollary}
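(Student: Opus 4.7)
The plan is to derive the corollary from Proposition \ref{PadicLequalsComplex}, which already identifies the specialisation $\iota_p^{-1}\mathscr{L}_p(\eta_\lambda,\chi_{(\lambda,j)})$ with the algebraic pairing $\langle\eta_\lambda,\nu^*[\chi_{(\lambda,j)}]_B\rangle_{\mathrm{alg}}$, base-changed to $\mbb{C}$. Working on the complex Shimura variety via rigid GAGA, the task reduces to unfolding $\operatorname{tr}(\widehat{\iota}^*\eta_\lambda\cup\nu^*[\chi_{(\lambda,j)}]_B)$ into an adelic period integral. First, I would invoke the Hodge-theoretic description \eqref{PKcohomologyDescription} in the proof of Corollary \ref{EtalenessOfEigenvarietyCor} combined with the multiplicity-one statement there: up to a non-zero scalar depending only on $\lambda$, $\eta_\lambda$ corresponds to a distinguished $K^pK^G_{\mathrm{Iw}}(p)$-fixed cuspidal automorphic form $G\in\sigma$, viewed as an element of the one-dimensional space $\opn{Hom}_{K_\infty}(\nu_n,\sigma_\infty)\otimes\sigma_f^{K^pK^G_{\mathrm{Iw}}(p)}$ (with $\nu_n$ the Blattner parameter from \S\ref{DiscreteSeriesRepsSection}).

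Next, I would compute the pullback and cup product explicitly. The embedding $\widehat{\iota}=\widehat{\gamma}\circ\iota$ lies in the open orbit $\mbf{H}\cdot\widehat{\gamma}\cdot B_{\mbf{G}}$ of Lemma \ref{OpenOrbitLemma}, and the branching map $V_{\kappa_n}\twoheadrightarrow\sigma_n^{[j]}$ is realised by pairing with the vector $u^{-1}\cdot v_{\kappa_n}^{[j]}$ of Proposition \ref{ClassicalBranchingProp}, normalised so that $v_{\kappa_n}^{[j]}(u)=1$. Consequently, the pulled-back class $\widehat{\iota}^*\eta_\lambda$, viewed as a $\invs{C}^\infty$-section of $[\sigma_n^{[j]}]\otimes\Omega^{n-1}_{S_{\mbf{H},\diamondsuit}(p)}$, is built from the right translate of $G$ by $\widehat{\gamma}$ multiplied by the constant branching factor. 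Cupping with $\nu^*[\chi_{(\lambda,j)}]_B$ and taking the trace then reduces to an integral over $S_{\mbf{H},\diamondsuit}(p)(\mbb{C})$ of $G(h\widehat{\gamma})\cdot\chi_{(\lambda,j)}(\nu(h))$ against a top-degree form; the factor $(2\pi i)^{-(n-1)}$ enters through the de Rham--Betti comparison of this top-degree form with the orientation on the Hermitian symmetric domain. Converting this double-coset integral to an adelic one over $[\mbf{H}]'$ contributes only the algebraic volume factor of $U_\infty^\circ U$, and right-translation by $\widehat{\gamma}$ yields another form $G'\in\sigma$; after relabelling $G\leftarrow G'$ we obtain the first asserted identity.

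For the final assertion, assume the central character of $\pi$ is trivial on $A_{\mbf{G},\mbf{H}}(\mbb{A})$. Continuity of the family $\underline{\pi}$ over $U$ and the fact that the (finite part of the) central character varies analytically allow us, after possibly shrinking $U$ around $\lambda_\pi$, to transfer this triviality to every specialisation $\sigma$, so that $G(ah)=G(h)$ for $a\in A_{\mbf{G},\mbf{H}}(\mbb{A})$. Simultaneously, the character $\chi_{(\lambda,j)}\circ\nu$ is trivial on $A_{\mbf{G},\mbf{H}}(\mbb{A})$ because $\chi_{(\lambda,j)}$ is anticyclotomic and $\nu$ factors through $\opn{Res}_{F^+/\mbb{Q}}\opn{U}(1)$. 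Hence the integrand descends from $[\mbf{H}]'$ to $[\mbf{H}]$, and the two integrals differ by the algebraic volume of the compact quotient $A_{\mbf{G}}(\mbb{A})/A_{\mbf{G}}(\mbb{R})^{\circ}\cdot(\text{chosen factor})$, which is non-zero by our choice of Haar measures. The main obstacle is not any single step but the careful tracking of $F^{\mathrm{cl}}$-rationality of all the constants that appear (the de Rham--Betti period for the coefficient system $[V_{\kappa_n}]$, the normalisation of $G$ against $\eta_\lambda$, the branching constant, and the volume factors); each of these is visibly rational over a number field, and our standing conventions on Haar measures ensure that the composite constant lies in $F^{\mathrm{cl},\times}$ and depends only on $\lambda$.
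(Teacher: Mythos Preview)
Your overall strategy matches the paper's: reduce via Proposition~\ref{PadicLequalsComplex} to the algebraic pairing, identify $\eta_\lambda$ with a harmonic form via the $(\mathfrak{p},K_\infty)$-cohomology description, and unwind the cup product and trace into an adelic integral. Two points deserve correction or sharpening.

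First, the Blattner parameter is $\nu_{n-1}$, not $\nu_n$: by the dictionary in \S\ref{DiscreteSeriesRepsSection} one has $\nu_i=\alpha_i-w_{M_G}^{\mathrm{max}}\kappa_{2n-1-i}$, so the coefficient $\kappa_n$ (whose cohomology sits in degree $n-1$) pairs with $\nu_{n-1}$. Accordingly the paper works with $\opn{Hom}_{K_\infty}(\nu_{n-1},\sigma_\infty)$ and constructs $G$ by evaluating the resulting $K_\infty$-homomorphism at a specific vector $z$, namely the image of $w\otimes v_{\kappa_n(\lambda)}^{[j]}$ under the $M_{\mbf{G}}$-projection $\bigwedge^{n-1}(\mathfrak{p}/\mathfrak{m})\otimes V_{\kappa_n(\lambda)^*}\to\nu_{n-1}$, and then translating by $\widehat{\gamma}$.

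Second, the passage from ``cup with $\nu^*[\chi_{(\lambda,j)}]_B$ and take the trace'' to the integral formula with the factor $(2\pi i)^{-(n-1)}$ is not something the paper carries out by hand; it quotes \cite[Proposition~3.8]{HarrisPartial} for exactly this computation, after identifying $\widehat{\iota}^*\eta_\lambda$ with an explicit class in $\opn{H}^{n-1}_{(\mathfrak{p}_H,U_\infty)}(C^\infty([\mbf{H}]'/U)^{U_\infty\text{-fin}}\otimes\sigma_n^{[j]}(\lambda))$ via \cite{Su19}. Your sketch of this step is in the right spirit but would need that reference (or an equivalent argument) to be complete, and the $F^{\mathrm{cl}}$-rationality of the constants is handled there rather than tracked separately.

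For the second assertion your idea is correct; the paper makes it precise by observing that the restriction of the central character to $A_{\mbf{G}}(\mbb{A})$ is a Dirichlet character read off from the Hecke eigenvalues $\theta_{\underline{\pi}}([K^S a K^S])$ for $a\in A_{\mbf{G}}(\mbb{A}_f^S)$, and these take values in a discrete subgroup, so after shrinking $U$ they are constant and equal to their value at $\lambda_\pi$.
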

\begin{proof}
By Proposition \ref{PadicLequalsComplex}, it is equivalent to showing that $\langle \eta_{\lambda}, \nu^*[\chi_{(\lambda, j)}]_B \rangle_{\mathrm{alg}}$ equals the right-hand side of (\ref{InterpolationFormula}). We will freely use the notation from the proof of Corollary \ref{EtalenessOfEigenvarietyCor}. We first note that we have an morphism 
\[
\opn{Hom}_{K_{\infty}}(\nu_{n-1}, \sigma_{\infty}) \to \opn{Hom}_{K_{\infty}}(\bigwedge^{n-1} \left(\ide{p}/\ide{m}\right), \sigma_{\infty} \otimes V_{\kappa_n(\lambda)} )
\]
where notation is as in \S \ref{DiscreteSeriesRepsSection}, given by precomposing with the map of $M_{\mbf{G}}$-representations
\begin{equation} \label{BlattnerProjection}
\bigwedge^{n-1} \left(\ide{p}/\ide{m}\right) \otimes V_{\kappa_n(\lambda)^*} \to \nu_{n-1}
\end{equation}
(which is uniquely determined up to $\mbb{C}^{\times}$) and using a fixed isomorphism 
\begin{equation} \label{DualIso}
V_{\kappa_n(\lambda)^*}^* \cong V_{\kappa_n(\lambda)}.
\end{equation}
This induces an isomorphism $\opn{Hom}_{K_{\infty}}(\nu_{n-1}, \sigma_{\infty}) \cong \opn{H}^{n-1}_{(\ide{p}, K_{\infty})}(\sigma_{\infty} \otimes V_{\kappa_n(\lambda)})$, and hence we obtain an injective map
\[
\opn{Hom}_{K_{\infty}}(\nu_{n-1}, \sigma_{\infty}) \otimes \sigma_f^{K^pK^G_{\mathrm{Iw}}(p)} \hookrightarrow \opn{H}^{n-1}\left(S_{\mbf{G}, \mathrm{Iw}}(p)(\mbb{C}), [V_{\kappa_n(\lambda)}] \right)
\]
whose image is identified with the localisation of the right-hand side at the kernel of the specialisation of $\theta_{\underline{\pi}}$ at $\lambda$.

The representation $\bigwedge^{n-1} \left(\ide{p}/\ide{m}\right)$ is definable over $F^{\mathrm{cl}}$ so we choose the map (\ref{BlattnerProjection}) to be defined over $F^{\mathrm{cl}}$. We also choose the same isomorphism (\ref{DualIso}) as in Proposition \ref{PadicLequalsComplex}, which is defined over $F^{\mathrm{cl}}$. Recall from Proposition \ref{ClassicalBranchingProp} that we have a (unique up to scaling) vector $v_{\kappa_n(\lambda)}^{[j]} \in V_{\kappa_n(\lambda)^*}$ on which $M_{\mbf{H}}$ acts through the character $\sigma_n^{[j]}(\lambda)^{-1}$. Let $z$ be the image of $w \otimes v_{\kappa_n(\lambda)}^{[j]}$ under the map (\ref{BlattnerProjection}), where $w$ is a choice of highest weight vector of $\bigwedge^{n-1} \left(\ide{p}/\ide{m}\right)$ defined over $F^{\mathrm{cl}}$. This vector $z$ is non-zero because $\sigma_n^{[j]}(\lambda)^{\vee}$ appears as a direct factor with multiplicity one in both the codomain and domain of (\ref{BlattnerProjection}). 

Via the above injective map, the class $\eta_{\lambda}$ corresponds to a homomorphism $G_{\eta_{\lambda}} \otimes \varphi_f$, where $\varphi_f \in \sigma^{K^pK^G_{\mathrm{Iw}}(p)}$. We take $G$ to be $G \defeq \widehat{\gamma} \cdot \left( G_{\eta_{\lambda}}(z) \otimes \varphi_f \right) \in \sigma$ (where $\widehat{\gamma}$ is viewed as an element of $\mbf{G}(\mbb{Q}_p) \subset \mbf{G}(\mbb{A})$). If we let $\ide{p}_H$ (resp. $\ide{m}_H$) denote the Lie algebra of the opposite of $P_{\mbf{H}}$ (resp. $M_{\mbf{H}}$), then $\bigwedge^{n-1}\ide{p}_H/\ide{m}_H$ is identified with the line spanned by the vector $w$. By \cite{Su19}, we have an isomorphism
\[
\opn{H}^{n-1}\left( S_{\mbf{H}, \diamondsuit}(p)(\mbb{C}), [\sigma_n^{[j]}(\lambda)] \right) \cong \opn{H}^{n-1}_{(\ide{p}_H, U_{\infty})}( C^{\infty}([\mbf{H}]'/U)^{U_{\infty}\mathrm{-fin}} \otimes \sigma_n^{[j]}(\lambda) )
\]
where $U_{\infty} = K_{\infty} \cap \mbf{H}(\mbb{R})$ and $U \subset \mbf{H}(\mbb{A}_f)$ is the level of the Shimura variety $S_{\mbf{H}, \diamondsuit}(p)$. Under this identification, the class $\widehat{\iota}^* \eta_{\lambda}$ is represented by the homomorphism
\begin{align*}
    \bigwedge^{n-1} \ide{p}_H/\ide{m}_H &\to C^{\infty}([\mbf{H}]'/U)^{U_{\infty}\mathrm{-fin}} \otimes \sigma_n^{[j]}(\lambda) \\
     w &\mapsto G|_{\mbf{H}} .
\end{align*}
The result now follows from \cite[Proposition 3.8]{HarrisPartial}.

For the last part, note that the central character of $\pi$ restricted to $A_{\mbf{G}}(\mbb{A})$ is necessarily a Dirichlet character (because $\pi$ contributes to the coherent cohomology of $S_{\mbf{G}, \mathrm{Iw}}(p)$ and the centre acts trivially on $V_{\kappa_n}$) and is therefore determined by the image of Hecke operators $[K^S a K^S]$ under the map $\theta_{\pi}$, for $a \in A_{\mbf{G}}(\mbb{A}_f^S)$. The image of these operators under $\theta_{\pi}$ form a discrete subgroup, so we can shrink $U$ if necessary so that the images of these operators under $\theta_{\underline{\pi}}$ are constant (note that one normally normalises the Hecke operators by the weight, but because our weights are trivial on $T_0$, this normalisation is trivial). Therefore our assumption implies that the central character of $\sigma$ is trivial on $A_{\mbf{G}}(\mbb{A})$, so we can descend to $[\mbf{H}]$.
\end{proof}

\begin{remark} \label{UFJRemark}
If we define $[\mbf{H}_0] = \mbf{H}_0(\mbb{Q}) \backslash \mbf{H}_0(\mbb{A})$, then $[\mbf{H}]$ is the disjoint union of finitely many translates of $[\mbf{H}_0]$. Therefore the integral (over $[\mbf{H}]$) in Corollary \ref{RelationToAutoPeriods} is non-zero if and only if
\[
\int_{[\mbf{H}_0]} G(h) \cdot \chi_{(\lambda, j)}( \nu(h)) \bar{d} h
\]
is non-zero. This latter integral is a so-called \emph{unitary Friedberg--Jacquet period}.
\end{remark}


\appendix


\section{Branching laws} \label{AppendixBranchingLaws}

The goal of this appendix is to prove Theorem \ref{PadicVectorThm}. The idea is to $p$-adically interpolate the branching law appearing in Proposition \ref{ClassicalBranchingProp}. Since the groups $M_G$ and $M_H$ are products of general linear groups indexed by the CM type $\Psi$ (and an additional ``similitude factor''), it will be more convenient to analyse the branching law for each factor.

Unfortunately this means that we will have to use conflicting notation when performing this case-by-case analysis; therefore, we warn the reader that the notation in \S\ref{AppendixPrelim}--\ref{AppendixGL2n} is different from the rest of the article. We have however endeavoured to keep the notation uniform throughout these four subsections (e.g. the element $u$ and torus $T^{\diamondsuit}$ play the same role in the analysis, but change for each group). We hope that this change doesn't cause any confusion.

\subsection{A preliminary lemma} \label{AppendixPrelim}

For a split unramified reductive group $G$ over $\mbb{Z}_p$, let $B_G \subset G$ denote a Borel subgroup and $\overline{B}_G$ its opposite with respect to a fixed maximal torus $T \subset B_G$. Let $U_G \subset B_G$ and $\overline{U}_G \subset \overline{B}_G$ denote the unipotent radicals.

Let $\invs{G}$ denote the adic generic fibre of the completion of $G$ along its special fibre, and let $G^{\mathrm{an}}$ denote the analytification of $G_{\mbb{Q}_p}$ (so we have $\invs{G} \subset G^{\mathrm{an}}$). We use similar notation for $U_G$, $B_G$, etc. For an integer $r \geq 1$, we let $\invs{G}^1_r$ denote the subgroup of $\invs{G}$ of elements which reduce to the identity modulo $p^r$. Similarly, for $\invs{H} = \invs{U}_G$, $\overline{\invs{U}}_G$, $\invs{B}_G$, $\overline{\invs{B}}_G$, let $\invs{H}^1_r$ denote the elements in $\invs{H}$ which reduce to the identity modulo $p^r$.

Recall the notation $\invs{B}_r^{\circ} \subset \overline{\invs{B}}^{\circ}_r \subset \invs{B}_r \subset \overline{\invs{B}}_r$ for the four different ``flavours of disc'' in \S \ref{TubesInFlagVar}.

\begin{lemma} \label{AppendixGLdLemma}
Let $d, r \geq 1$ and $Y$ a $(d \times d)$-matrix with entries in $\invs{B}_r^{\circ}$. Let $\xi$ denote the antidiagonal $(d \times d)$-matrix with $1$s along the antidiagonal. Then there exist elements $R \in \invs{U}^1_{\opn{GL}_d, r}$ and $S \in \invs{B}^1_{\opn{GL}_d, r}$ such that
\[
\xi + Y = R \cdot \xi \cdot S .
\]
\end{lemma}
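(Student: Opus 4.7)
The plan is to reduce the desired factorisation to an LU decomposition of a matrix close to the identity. First, I factor
\[
\xi + Y = \xi \cdot (I + \xi^{-1} Y),
\]
and I observe that $\xi^{-1} Y$ is obtained from $Y$ by reversing the order of its rows, so its entries also lie in $\invs{B}_r^{\circ}$. It then suffices to show the following claim: every matrix $M \in I + M_d(\invs{B}_r^{\circ})$ admits a factorisation $M = L U$, where $L$ is lower unipotent with $L - I \in M_d(\invs{B}_r^{\circ})$ and $U$ is upper triangular with $U - I \in M_d(\invs{B}_r^{\circ})$. Given such a decomposition I set $S \defeq U$ and $R \defeq \xi L \xi^{-1}$. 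Since conjugation by the antidiagonal involution $\xi$ converts lower unipotent matrices into upper unipotent ones without altering the size of any entry, this produces $R \in \invs{U}^1_{\opn{GL}_d, r}$ and $S \in \invs{B}^1_{\opn{GL}_d, r}$ satisfying
\[
\xi + Y = \xi L U = (\xi L \xi^{-1})(\xi U) = R \xi S,
\]
as required.

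I will establish the LU decomposition of $M$ by induction on $d$, which amounts to a single step of Gaussian elimination at each stage. The base case $d = 1$ is trivial. For the inductive step the entry $M_{11} \in 1 + \invs{B}_r^{\circ}$ is a unit, and left-multiplication by the lower unipotent matrix whose first column has off-diagonal entries $-M_{i1}/M_{11} \in \invs{B}_r^{\circ}$ clears the first column below the diagonal. The crucial point is that the resulting $(d-1) \times (d-1)$ bottom-right block again has the shape $I_{d-1} + (\text{entries in } \invs{B}_r^{\circ})$: each modified entry acquires a correction lying in $\invs{B}_r^{\circ} \cdot \invs{B}_r^{\circ} = \invs{B}_{2r}^{\circ} \subset \invs{B}_r^{\circ}$. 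Applying the inductive hypothesis to this block and reassembling the factors yields the required decomposition.

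I do not anticipate any substantial obstacle in carrying out this argument: the entire content is bookkeeping with the non-archimedean triangle inequality. The only ingredients I need beyond Gaussian elimination are that $\invs{B}_r^{\circ}$ is closed under addition and multiplication, and that every element of $1 + \invs{B}_r^{\circ}$ is a unit whose inverse again lies in $1 + \invs{B}_r^{\circ}$ (given by the convergent geometric series), both of which are immediate ultrametric estimates.
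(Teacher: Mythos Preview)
Your proposal is correct and follows essentially the same approach as the paper: factor out $\xi$, decompose the remaining matrix $I + (\text{small})$ as a product of a unipotent and a Borel element, then conjugate one factor by $\xi$ to correct the triangularity. The only cosmetic differences are that the paper factors $\xi + Y = (I + Y\xi^{-1})\xi$ on the right and invokes the Iwahori decomposition $\invs{GL}^1_{d,r} = \invs{U}^1_{\opn{GL}_d,r} \cdot \overline{\invs{B}}^1_{\opn{GL}_d,r}$ as a known fact, whereas you factor on the left and supply the LU decomposition by hand via Gaussian elimination; the underlying content is identical.
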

\begin{proof}
The element $1+Y\xi^{-1}$ defines an element of the group $\invs{GL}_{d, r}^1$. One has an Iwahori decomposition
\[
\invs{GL}_{d, r}^1 = \invs{U}^1_{\opn{GL}_d, r} \cdot \overline{\invs{B}}^1_{\opn{GL}_d, r}
\]
so there exist elements $R \in \invs{U}^1_{\opn{GL}_d, r}$ and $S' \in \overline{\invs{B}}^1_{\opn{GL}_d, r}$ such that $1+Y\xi^{-1} = RS'$. We then take $S = \xi^{-1} S' \xi$.
\end{proof}

\subsection{The group \texorpdfstring{$\opn{GL}_{2n-1}$}{GL(2n-1)}}

We first establish the following lemma:

\begin{lemma} \label{AppendixGLn-1Lemma}
Let $\xi$ be the $(n \times n-1)$-matrix whose first row is zero and the bottom $(n-1 \times n-1)$-matrix is the antidiagonal matrix with $1$s along the antidiagonal. Let $Y$ be any $(n \times n-1)$-matrix with entries in $\invs{B}_r^{\circ}$. Then there exists $R \in \invs{U}^1_{\opn{GL}_n, r}$ and $S \in \invs{B}^1_{\opn{GL}_{n-1}, r}$ such that
\[
\xi + Y = R \cdot \xi \cdot S .
\]
\end{lemma}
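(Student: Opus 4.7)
The plan is to reduce the rectangular case to the square case already established in Lemma \ref{AppendixGLdLemma} by a block matrix calculation.

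First I would decompose both $\xi$ and $Y$ according to the splitting of rows into the top row and the bottom $(n-1) \times (n-1)$ block. Write $Y = \begin{pmatrix} y_0 \\ Y' \end{pmatrix}$ with $y_0$ an $(n-1)$-row-vector with entries in $\invs{B}_r^{\circ}$ and $Y'$ an $(n-1) \times (n-1)$ matrix with entries in $\invs{B}_r^{\circ}$, and similarly $\xi = \begin{pmatrix} 0 \\ \xi' \end{pmatrix}$ where $\xi'$ is the $(n-1) \times (n-1)$ antidiagonal matrix of Lemma \ref{AppendixGLdLemma}. Applying that lemma to $\xi' + Y'$ produces $R' \in \invs{U}^1_{\opn{GL}_{n-1}, r}$ and $S \in \invs{B}^1_{\opn{GL}_{n-1}, r}$ with $\xi' + Y' = R' \xi' S$. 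The matrix $S$ of the conclusion will be this same $S$.

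Next I would build $R$ of the shape
\[
R = \begin{pmatrix} 1 & r_0 \\ 0 & R' \end{pmatrix}
\]
for a row vector $r_0$ yet to be chosen, which automatically lies in $\invs{U}^1_{\opn{GL}_n, r}$ as soon as the entries of $r_0$ lie in $\invs{B}_r^{\circ}$. A direct block computation shows
\[
R \cdot \xi \cdot S = \begin{pmatrix} r_0 \xi' S \\ R' \xi' S \end{pmatrix} = \begin{pmatrix} r_0 \xi' S \\ \xi' + Y' \end{pmatrix},
\]
so the bottom block agrees with $\xi + Y$ by construction, and the top block agrees precisely when $r_0 \xi' S = y_0$. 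Since $\xi'$ is a permutation matrix with $(\xi')^{-1} = \xi'$, this forces $r_0 = y_0 S^{-1} \xi'$.

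Finally I would verify that $r_0 \in (\invs{B}_r^{\circ})^{n-1}$. Because $S \in \invs{B}^1_{\opn{GL}_{n-1}, r}$ has integral entries and reduces to the identity modulo $p^r$, the inverse $S^{-1}$ has the same property, and in particular has entries in $\ordd$; $\xi'$ is a permutation. Thus $y_0 S^{-1} \xi'$ has entries that are $\ordd$-linear combinations of entries of $y_0$, each of which lies in $\invs{B}_r^{\circ}$, so $r_0$ also has entries in $\invs{B}_r^{\circ}$. This completes the construction. There is no real obstacle here beyond keeping track of the sizes of the blocks; the content of the lemma sits entirely in the square case Lemma \ref{AppendixGLdLemma}, and the passage to rectangular shape is purely formal.
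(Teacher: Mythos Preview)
Your proof is correct and follows essentially the same approach as the paper: split off the top row, apply Lemma~\ref{AppendixGLdLemma} to the bottom $(n-1)\times(n-1)$ block to get $R'$ and $S$, and then set $R=\begin{pmatrix}1&r_0\\0&R'\end{pmatrix}$ with $r_0=y_0 S^{-1}(\xi')^{-1}$. You also spell out the verification that $r_0$ has entries in $\invs{B}_r^{\circ}$, which the paper leaves implicit.
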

\begin{proof}
We denote the top row of $Y$ by $y$ and the bottom $(n-1 \times n-1)$-matrix by $Y'$. Let $R' \in \invs{U}^1_{\opn{GL}_n, r}$ and $S \in \invs{B}^1_{\opn{GL}_{n-1}, r}$ be as in Lemma \ref{AppendixGLdLemma} such that
\[
\xi' + Y' = R' \cdot \xi' \cdot S
\]
where $\xi'$ is the $(n-1 \times n-1)$ antidiagonal matrix with non-zero entries equal to $1$. Then we take 
\[
R = \tbyt{1}{r}{}{R'} \in \invs{U}^1_{\opn{GL}_n, r}
\]
where $r = yS^{-1}(\xi')^{-1}$.
\end{proof}

Let $G = \opn{GL}_{2n-1}$ and $H =  \opn{GL}_{n-1} \times \opn{GL}_n$ over $\mbb{Z}_p$. We consider $H$ as a subgroup of $G$ via the block diagonal embedding (where the top left block is of size $\opn{GL}_{n-1}$). Fix the standard Borel $B_G$ and torus $T$ in $G$. Elements of the torus $T$ are given by tuples $(y_1, \dots, y_{2n-1})$ (corresponding to the entries of the diagonal matrix) and we let $T^{\diamondsuit} \subset T$ denote the subtorus of elements satisfying $y_{i} = y_{2n-i}$ for all $i=1, \dots, 2n-1$. For an integer $r \geq 1$, we set $\invs{G}^{\square}_r = \invs{G}^1_r \cdot B_G(\mbb{Z}_p)$ and $\invs{H}^{\diamondsuit}_r = \invs{H}^1_r \cdot T^{\diamondsuit}(\mbb{Z}_p)$.

Let $u \in G(\mbb{Z}_p)$ denote the block matrix
\[
u = \tbyt{1}{}{\xi}{1}
\]
where the top right block is of size $(n-1 \times n-1)$ and $\xi$ is as in Lemma \ref{AppendixGLn-1Lemma}. 

\begin{proposition} \label{BoxDecompositionProp}
One has the following equality
\[
\invs{G}^{\square}_r = \left( u^{-1} \invs{H}^{\diamondsuit}_r u \right) \cdot \left( \invs{G}^{\square}_r \cap \invs{B}_G \right) . 
\]
\end{proposition}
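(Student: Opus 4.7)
The strategy is to treat the two inclusions separately; the direction $\supseteq$ is essentially formal, while $\subseteq$ requires genuine work. For $\supseteq$: a direct block computation will show that $u$ commutes with every $t \in T^{\diamondsuit}(\mbb{Z}_p)$ --- the constraint $y_i = y_{2n-i}$ defining $T^{\diamondsuit}$ is precisely what makes $D_0 \xi = \xi A_0$ hold, where $A_0 = \opn{diag}(y_1,\ldots,y_{n-1})$ and $D_0 = \opn{diag}(y_n,y_{n-1},\ldots,y_1)$ are the two blocks of $t$. Combined with the obvious inclusion $u^{-1} \invs{H}^1_r u \subseteq \invs{G}^1_r$ (conjugation by $u$ preserves closeness to $I$ since $u, u^{-1}$ have integer entries of bounded size), this yields $u^{-1} \invs{H}^{\diamondsuit}_r u \subseteq \invs{G}^{\square}_r$; as $\invs{G}^{\square}_r$ is a group, the right-hand side of the proposition lies in $\invs{G}^{\square}_r$.

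\textbf{Reduction for $\subseteq$.} For any $g \in \invs{G}^{\square}_r$, I will first factor $g = g_0 \cdot t_0$ with $g_0 \in \invs{G}^1_r$ and $t_0 \in B_G(\mbb{Z}_p) \subseteq \invs{G}^{\square}_r \cap B_G$, reducing to the case $g = g_0$. Writing $g_0 = \bigl(\begin{smallmatrix} G_{11} & G_{12} \\ G_{21} & G_{22} \end{smallmatrix}\bigr)$ in block form (sizes $n-1$ and $n$), and positing $g_0 = u^{-1} h_0 u \cdot b$ with $h_0 = \opn{diag}(A,D) \in \invs{H}^1_r$ and $b = \bigl(\begin{smallmatrix} B_1 & B_2 \\ 0 & B_3 \end{smallmatrix}\bigr) \in B_G$, the identity $u^{-1} h_0 u = \bigl(\begin{smallmatrix} A & 0 \\ D\xi - \xi A & D \end{smallmatrix}\bigr)$ allows one to read off four block equations, which are equivalent to
\[
B_1 = A^{-1} G_{11}, \quad B_2 = A^{-1} G_{12}, \quad B_3 = D^{-1}(G_{22} - Y G_{12}), \quad D\xi = (\xi + Y) A,
\]
where $Y := G_{21} G_{11}^{-1}$ has entries in $\invs{B}^{\circ}_r$. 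It remains to arrange that $B_1$ and $B_3$ be upper triangular.

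\textbf{Solving the system.} To this end I will take Iwahori decompositions $G_{11} = N_A U_A$ and $G_{22}' := G_{22} - Y G_{12} = N_D U_D$ (lower-unipotent $\times$ upper-triangular, all four factors close to the identity in their respective $\invs{GL}^1_r$), and seek $A = N_A A'$, $D = N_D D'$ with unknowns $A' \in \invs{B}^1_{\opn{GL}_{n-1}, r}$ and $D' \in \invs{B}^1_{\opn{GL}_n, r}$. Then $B_1 = (A')^{-1} U_A$ and $B_3 = (D')^{-1} U_D$ are automatically upper triangular, while the matricial constraint becomes
\[
D' \xi (A')^{-1} = \xi + Y', \qquad Y' := N_D^{-1}(\xi + Y) N_A - \xi .
\]
A short continuity check (using that $N_A, N_D$ are close to $I$ and $Y$ has entries in $\invs{B}^{\circ}_r$) shows that $Y'$ also has entries in $\invs{B}^{\circ}_r$. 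Lemma~\ref{AppendixGLn-1Lemma} will then supply a factorisation $\xi + Y' = R' \xi S'$ with $R' \in \invs{U}^1_{\opn{GL}_n, r}$ and $S' \in \invs{B}^1_{\opn{GL}_{n-1}, r}$, and setting $D' = R'$, $A' = (S')^{-1}$ produces the required decomposition, with $b \in \invs{B}^1_{G, r} \subseteq \invs{G}^{\square}_r \cap B_G$.

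\textbf{Main obstacle.} The principal subtlety is the non-squareness of $\xi$ (it is $n \times (n-1)$), which prevents any direct inversion-based solution of the constraint $D\xi = (\xi + Y) A$ and is exactly what necessitated the preparatory Lemma~\ref{AppendixGLn-1Lemma}. Once that rectangular-factorisation input is available and the diagonal parts are seen to align automatically (via $D_0 \xi = \xi A_0$), the remainder of the argument is block-matrix bookkeeping coupled with the standard Iwahori decomposition inside $\invs{GL}^1_r$.
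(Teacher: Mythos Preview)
Your proposal is correct and uses the same essential ingredients as the paper (the Iwahori decomposition of $\invs{G}^1_r$ and Lemma~\ref{AppendixGLn-1Lemma}); only the bookkeeping differs. The paper organises the argument as two successive reductions: first from $\invs{G}^{\square}_r$ to $\overline{\invs{U}}^1_{G,r}$ by peeling off the Borel factor, then from $\overline{\invs{U}}^1_{G,r}$ to $\overline{\invs{N}}^1_r$ (the unipotent radical of the opposite parabolic with Levi $H$) by left-multiplying a lower-unipotent $x=\bigl(\begin{smallmatrix} x_1 & 0 \\ x_2 & x_3 \end{smallmatrix}\bigr)$ by $u^{-1}\opn{diag}(x_1,x_3)^{-1}u$, after which Lemma~\ref{AppendixGLn-1Lemma} applies in a single line. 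You instead solve the full four-block system for a general $g_0 \in \invs{G}^1_r$ directly, which costs a little more computation (the auxiliary Iwahori factorisations of $G_{11}$ and $G_{22}'$, and the passage from $Y$ to $Y'$) but is entirely equivalent; the paper's staged reduction is shorter, while your approach makes the constraint $D\xi = (\xi+Y)A$ more explicit.
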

\begin{proof}
By multiplying by elements of $\left(\invs{G}_r^{1} \cap \invs{B}_G \right) B_G(\mbb{Z}_p) $ on the right, we are reduced to proving the statement
\[
\overline{\invs{U}}^1_{G, r} \subset \left( u^{-1} \invs{H}^{\diamondsuit}_r u \right) \cdot \left( \invs{G}^{\square}_r \cap \invs{B}_G \right) 
\]
because one has an Iwahori decomposition $\invs{G}^1_r = \overline{\invs{U}}^1_{G, r} \cdot (\invs{G}^1_r \cap \invs{B}_G)$. Let $x \in \overline{\invs{U}}^1_{G, r}$ be a general element written as a block matrix
\[
x = \tbyt{x_1}{}{x_2}{x_3}
\]
where the top left (resp. bottom right) block has size $(n-1 \times n-1)$ (resp. $n \times n$). Then 
\[
h \defeq \tbyt{x_1}{}{}{x_3}
\]
defines an element of $\invs{H}^1_r$. Let $\overline{N}$ denote the unipotent radical of the standard opposite parabolic of $G$ with Levi $H$. Then we have 
\[
(u^{-1} h^{-1} u) \cdot x \in \overline{\invs{N}}_{r}^1   
\]
where $\overline{\invs{N}}_{r}^1$ denote the subgroup of $\overline{\invs{N}}$ of elements which reduce to the identity modulo $p^{r}$. Hence we are reduced to proving $\overline{\invs{N}}_{r}^1 \subset \left( u^{-1} \invs{H}^{\diamondsuit}_r u \right) \cdot \left( \invs{G}^{\square}_r \cap \invs{B}_G \right)$. But if 
\[
{\tbyt{1}{}{Y}{1} \in \overline{\invs{N}}_{r}^1}
\]
is a general element, then we have 
\[
\tbyt{1}{}{Y}{1} = u^{-1} \tbyt{S^{-1}}{}{}{R} u \tbyt{S}{}{}{R^{-1}}
\]
where $R, S$ are as in Lemma \ref{AppendixGLn-1Lemma}.
\end{proof}

\begin{remark} \label{BoxDecompositionRemark}
The proof of Proposition \ref{BoxDecompositionProp} in fact shows that $\invs{G}^{\square}_r = \left( u^{-1} \invs{H}^{1}_r u \right) \cdot \left( \invs{G}^{\square}_r \cap \invs{B}_G \right)$.
\end{remark}

\subsection{The group \texorpdfstring{$\opn{GL}_1 \times \opn{GL}_{2n-1}$}{GL(1) x GL(2n-1)}}

We now let $G = \opn{GL}_1 \times \opn{GL}_{2n-1}$ and $H = \opn{GL}_1 \times \opn{GL}_{n-1} \times \opn{GL}_n$ embedded block diagonally. Define $\invs{G}^{\square}_r$ and $\invs{H}^{\diamondsuit}_r$ analogously as in the previous section, where now $T^{\diamondsuit}$ is the subtorus of elements $(y_1, \dots, y_{2n})$ with $y_1 = y_{n+1}$ and $y_{i} = y_{2n+2-i}$ for all $i=2, \dots, 2n$.

We take $u \in G(\mbb{Z}_p)$ to be the element which is $1$ in the $\opn{GL}_1$-component, and equal to the element $u$ in the previous section in the $\opn{GL}_{2n-1}$-component. Then we obtain the following decomposition:

\begin{proposition}
Let $r \geq 1$. Then we have 
\[
\invs{G}^{\square}_r = \left( u^{-1} \invs{H}^{\diamondsuit}_r u \right) \cdot \left( \invs{G}^{\square}_r \cap \invs{B}_G \right) .
\]
\end{proposition}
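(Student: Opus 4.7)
The plan is to reduce this decomposition to Proposition \ref{BoxDecompositionProp} for $\opn{GL}_{2n-1}$, the only new input being how to match the $\opn{GL}_1$-component of $\invs{G}^{\square}_r$ with the extra constraint $y_1 = y_{n+1}$ imposed by $T^{\diamondsuit}$ in the present setting. First I would note that $u = (1, u_1)$, where $u_1$ is the element from the preceding section, and that both $\invs{G}^{\square}_r$ and $\invs{G}^{\square}_r \cap \invs{B}_G$ split as direct products over the two factors of $G$; in particular the $\opn{GL}_1$-component of $\invs{G}^{\square}_r$ is all of $\mbb{Z}_p^{\times}$. The key observation is that the new relation $y_1 = y_{n+1}$ couples the $\opn{GL}_1$-entry to the first diagonal entry of the $\opn{GL}_n$-block, which is precisely the unique torus coordinate of $\opn{GL}_{2n-1}$ that remained \emph{unconstrained} by the $T^{\diamondsuit}$ of the preceding section.

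Given $g = (g_0, g_1) \in \invs{G}^{\square}_r$, the first step would be to apply Proposition \ref{BoxDecompositionProp} to $g_1$ to write $g_1 = u_1^{-1} h' u_1 \cdot b_1$, with $h'$ in the $\opn{GL}_{n-1} \times \opn{GL}_n$ version of $\invs{H}^{\diamondsuit}_r$ and $b_1$ in the $\opn{GL}_{2n-1}$ version of $\invs{G}^{\square}_r \cap \invs{B}_G$. Next I would factor $h' = h'' t'$ with $h'' \in \invs{H}^1_r$ and $t' \in T^{\diamondsuit}(\mbb{Z}_p)$ in the smaller setting, and let $c' \in \mbb{Z}_p^{\times}$ denote the $(1,1)$-entry of the $\opn{GL}_n$-block of $t'$. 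Setting $h_0 := c'$ and $b_0 := (c')^{-1} g_0$, the elements $h := (h_0, h')$ and $b := (b_0, b_1)$ should then realise the desired factorisation $g = (u^{-1} h u) \cdot b$.

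To verify that $h \in \invs{H}^{\diamondsuit}_r$, I would rewrite $h = (h_0, h'' t') = (1, h'') \cdot (c', t')$: the first factor lies in $\invs{H}^1_r$ since $h'' \equiv 1$ modulo $p^r$, and $(c', t') \in T^{\diamondsuit}(\mbb{Z}_p)$ because the only extra relation to check, namely $y_1 = y_{n+1}$, reduces to the tautology $c' = t'_{n+1} = c'$. The identity $g = u^{-1} h u \cdot b$ is then immediate from $u = (1, u_1)$ and a componentwise computation, and the reverse inclusion $(u^{-1} \invs{H}^{\diamondsuit}_r u) \cdot (\invs{G}^{\square}_r \cap \invs{B}_G) \subset \invs{G}^{\square}_r$ is clear from the corresponding containments on each factor. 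I do not anticipate any serious obstacle here: the whole proof is essentially a clean componentwise reduction to the preceding proposition, enabled by the fortunate fact that the free torus coordinate in the $\opn{GL}_{2n-1}$ argument is exactly the one pinned down to $\opn{GL}_1$ by the new $T^{\diamondsuit}$-relation.
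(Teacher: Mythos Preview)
Your proposal is correct and follows essentially the same approach as the paper: reduce to the $\opn{GL}_{2n-1}$ decomposition on the second factor and handle the $\opn{GL}_1$-factor separately. The only minor difference is that the paper invokes Remark \ref{BoxDecompositionRemark} to obtain $g_1 = u_1^{-1} h'' u_1 \cdot b_1$ with $h'' \in \invs{H}^1_r$ directly, so one may simply take $h = (1, h'')$ and $b = (g_0, b_1)$ without your extra step of factoring $h' = h'' t'$ and transferring the torus entry $c'$ to the $\opn{GL}_1$-component; your version avoids relying on the Remark at the cost of this small bookkeeping.
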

\begin{proof}
This follows from Proposition \ref{BoxDecompositionProp} and Remark \ref{BoxDecompositionRemark}. 
\end{proof}

\subsection{The group \texorpdfstring{$\opn{GL}_{2n}$}{GL(2n)}} \label{AppendixGL2n}

We now let $G = \opn{GL}_{2n}$ and $H = \opn{GL}_n \times \opn{GL}_n$ embedded block diagonally. We define $\invs{G}^{\square}_r$ and $\invs{H}^{\diamondsuit}_r$ analogously as in the previous section, but now $T^{\diamondsuit}$ is the subtorus given by elements $(y_1, \dots, y_{2n})$ satisfying $y_i = y_{2n+1-i}$ for all $i=1, \dots, 2n$.

We let $u \in G(\mbb{Z}_p)$ denote the block matrix
\[
u = \tbyt{1}{}{\xi}{1}
\]
where all blocks are of size $(n \times n)$, and $\xi$ is the antidiagonal matrix with non-zero entries equal to $1$.

\begin{proposition} \label{GL2nBoxDecomp}
Let $r \geq 1$. Then we have 
\[
\invs{G}^{\square}_r = \left( u^{-1} \invs{H}^{\diamondsuit}_r u \right) \cdot \left( \invs{G}^{\square}_r \cap \invs{B}_G \right) .
\]
\end{proposition}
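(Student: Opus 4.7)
The plan is to mirror the proof of Proposition \ref{BoxDecompositionProp} essentially verbatim, with the sole modification that the rectangular matrix Lemma \ref{AppendixGLn-1Lemma} is replaced by the square matrix Lemma \ref{AppendixGLdLemma} (with $d = n$). Indeed, the element $u$ here has the same block shape as before, with the off-diagonal block now being the full antidiagonal square matrix $\xi$ of size $n$ rather than the $(n \times n-1)$ version. The $\opn{GL}_n \times \opn{GL}_n$ situation is actually simpler because both diagonal blocks have the same size.

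First I would reduce the statement to showing
\[
\overline{\invs{U}}^1_{G,r} \subset \left(u^{-1} \invs{H}^{\diamondsuit}_r u\right) \cdot \left(\invs{G}^{\square}_r \cap \invs{B}_G\right)
\]
via the Iwahori decomposition $\invs{G}^1_r = \overline{\invs{U}}^1_{G,r} \cdot (\invs{G}^1_r \cap \invs{B}_G)$ and right-multiplication by elements of $(\invs{G}^1_r \cap \invs{B}_G) B_G(\mbb{Z}_p)$. Next, for a general element $x = \begin{pmatrix} x_1 & 0 \\ x_2 & x_3 \end{pmatrix} \in \overline{\invs{U}}^1_{G,r}$ (with all blocks of size $n \times n$), set $h = \begin{pmatrix} x_1 & 0 \\ 0 & x_3 \end{pmatrix} \in \invs{H}^1_r$. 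Then $(u^{-1} h^{-1} u) \cdot x$ lies in $\overline{\invs{N}}^1_r$, where $\overline{N}$ is the unipotent radical of the standard opposite parabolic of $G$ with Levi $H$, so it suffices to show $\overline{\invs{N}}^1_r \subset (u^{-1} \invs{H}^{\diamondsuit}_r u)(\invs{G}^{\square}_r \cap \invs{B}_G)$.

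For the final step, take a general element $\begin{pmatrix} 1 & 0 \\ Y & 1 \end{pmatrix} \in \overline{\invs{N}}^1_r$, so $Y$ is an $n \times n$ matrix with entries in $\invs{B}^{\circ}_r$. Apply Lemma \ref{AppendixGLdLemma} to the matrix $Y$ to produce $R \in \invs{U}^1_{\opn{GL}_n, r}$ and $S \in \invs{B}^1_{\opn{GL}_n, r}$ with $\xi + Y = R \cdot \xi \cdot S$. A direct computation (identical to that at the end of the proof of Proposition \ref{BoxDecompositionProp}) then yields
\[
\begin{pmatrix} 1 & 0 \\ Y & 1 \end{pmatrix} = u^{-1} \begin{pmatrix} S^{-1} & 0 \\ 0 & R \end{pmatrix} u \cdot \begin{pmatrix} S & 0 \\ 0 & R^{-1} \end{pmatrix}.
\]
The first factor on the right lies in $u^{-1} \invs{H}^1_r u \subset u^{-1} \invs{H}^{\diamondsuit}_r u$ (since $S^{-1} \in \invs{B}^1_{\opn{GL}_n,r}$ and $R \in \invs{U}^1_{\opn{GL}_n,r}$ are both in $\invs{GL}_{n,r}^1$), and the second factor is upper triangular hence lies in $\invs{G}^{\square}_r \cap \invs{B}_G$.

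There is no genuine obstacle here: the argument goes through with only cosmetic changes since the relevant decomposition lemma (Lemma \ref{AppendixGLdLemma}) is already available for square matrices. The only point worth double-checking is the block computation verifying the matrix identity, which follows from a routine expansion using $u^{-1} = \begin{pmatrix} 1 & 0 \\ -\xi & 1 \end{pmatrix}$ and the relation $R \xi S = \xi + Y$.
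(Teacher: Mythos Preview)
Your proof is correct and follows exactly the approach of the paper: reduce to $\overline{\invs{N}}^1_r$ via the Iwahori decomposition as in Proposition~\ref{BoxDecompositionProp}, then apply Lemma~\ref{AppendixGLdLemma} to the $n\times n$ block $Y$. Note that the paper's proof cites Lemma~\ref{AppendixGLdLemma} ``with $d=2n$'', which is evidently a typo for $d=n$ (the matrix $Y$ is $n\times n$); your version has the correct value.
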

\begin{proof}
By reasoning as in the proof of Proposition \ref{BoxDecompositionProp}, it is enough to show
\[
\invs{\overline{N}}_{r}^1 \subset \left( u^{-1} \invs{H}^{\diamondsuit}_r u \right) \cdot \left( \invs{G}^{\square}_r \cap \invs{B}_G \right)
\]
where $\overline{N}$ denotes the unipotent radical of the standard opposite parabolic of $G$ with Levi $H$. But this follows from the same proof in Proposition \ref{BoxDecompositionProp} using Lemma \ref{AppendixGLdLemma} (with $d=2n$).
\end{proof}

\subsection{Proof of Theorem \ref{PadicVectorThm}}

We now return to the setting of \S \ref{LocallyAnalyticCohomologySection} (and return to using the notation introduced in the main body of the article). By combining the previous sections, we immediately find that:

\begin{proposition} \label{BoxDecompMsquare}
Let $r \geq 1$. Then one has equalities 
\[
\invs{M}^{\square}_{G, r} = \left( u^{-1} \invs{M}^{\diamondsuit}_{H, r} u \right) \cdot \left( \invs{M}^{\square}_{G, r} \cap \invs{B}_{M_G} \right), \quad \quad \invs{M}^{\square}_{G, r} = \left( u^{-1} \invs{M}^{\clubsuit}_{H, r} u \right) \cdot \left( \invs{M}^{\square}_{G, r} \cap \invs{B}_{M_G} \right) .
\]
\end{proposition}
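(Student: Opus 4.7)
The plan is to reduce Proposition \ref{BoxDecompMsquare} to the three factor-wise Iwahori-type decompositions established in the preceding subsections (together with the trivial statement on $\opn{GL}_1$-factors), via the product decompositions of $M_G$ and $M_H$.

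First I would record the product decompositions
\[
M_G = \opn{GL}_1 \times \left( \opn{GL}_1 \times \opn{GL}_{2n-1} \right) \times \prod_{\tau \in \Psi - \{\tau_0\}} \opn{GL}_{2n}
\]
and
\[
M_H = \opn{GL}_1 \times \left( \opn{GL}_1 \times \opn{GL}_{n-1} \times \opn{GL}_n \right) \times \prod_{\tau \in \Psi - \{\tau_0\}} \left( \opn{GL}_n \times \opn{GL}_n \right),
\]
with $M_H$ embedded block-diagonally in $M_G$ factor by factor. All of $\invs{M}^{\square}_{G, r}$, $\invs{M}^{\diamondsuit}_{H, r}$, $\invs{M}^{\clubsuit}_{H, r}$, $\invs{B}_{M_G}$ and the element $u$ defined in \S \ref{SomeImportantElementsSubsec} respect this product: $u$ is trivial on the outer $\opn{GL}_1$-factor and agrees with the element called $u$ in the corresponding earlier subsection on each of the remaining factors. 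Moreover, on the $\tau_0$-factor both $T^{\diamondsuit}$ and $T^{\clubsuit}$ restrict to the subtorus called $T^{\diamondsuit}$ in the analysis of $\opn{GL}_1 \times \opn{GL}_{2n-1}$, while on each $\tau \neq \tau_0$-factor they coincide with one another and with the $T^{\diamondsuit}$ of \S \ref{AppendixGL2n}.

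With this dictionary in hand, the first equality of the proposition is an immediate consequence of these factor-wise decompositions. The second equality then follows from the first: on each $\tau \neq \tau_0$-factor $\invs{M}^{\clubsuit}_{H, r}$ and $\invs{M}^{\diamondsuit}_{H, r}$ coincide so there is nothing more to check, while on the $\tau_0$-factor the $\subseteq$ inclusion is a consequence of the first equality together with $\invs{M}^{\diamondsuit}_{H, r} \subset \invs{M}^{\clubsuit}_{H, r}$, and the reverse inclusion follows from the containment $u^{-1} \invs{M}^{\clubsuit}_{H, r} u \subset \invs{M}^{\square}_{G, r}$ already noted in the main body.

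The only real obstacle is the bookkeeping for this product decomposition; all of the substantive content is packaged in Lemma \ref{AppendixGLdLemma} and propagates through Propositions \ref{BoxDecompositionProp} and \ref{GL2nBoxDecomp}, which are already established.
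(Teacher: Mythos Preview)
Your approach is exactly the paper's: split into the $\opn{GL}_1$, $\tau_0$, and $\tau\neq\tau_0$ factors, invoke Propositions \ref{BoxDecompositionProp} and \ref{GL2nBoxDecomp} for the first equality, and deduce the second from the chain $u^{-1}\invs{M}^{\diamondsuit}_{H,r}u \subset u^{-1}\invs{M}^{\clubsuit}_{H,r}u \subset \invs{M}^{\square}_{G,r}$. One small slip in your dictionary: on the $\tau_0$-factor $T^{\clubsuit}$ does \emph{not} restrict to the local $T^{\diamondsuit}$ of the $\opn{GL}_1 \times \opn{GL}_{2n-1}$ subsection (it imposes no relation on the $y_{1,\tau_0}$-coordinate), but this has no bearing on the argument since you correctly derive the $\clubsuit$-equality from the $\diamondsuit$-equality rather than from a separate factor-wise result.
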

\begin{proof}
For the first equality, this follows by breaking up the groups into the factors indexed by $\tau \in \Psi$. The factor corresponding to $\tau_0$ follows from Proposition \ref{BoxDecompositionProp}, and the factors for $\tau \neq \tau_0$ follow from Proposition \ref{GL2nBoxDecomp}. There is nothing to check for the extra $\opn{GL}_1$-factors in $M_G$ and $M_H$. The second equality follows from $u^{-1} \invs{M}^{\diamondsuit}_{H, r} u \subset u^{-1} \invs{M}^{\clubsuit}_{H, r} u \subset \invs{M}^{\square}_{G, r}$.
\end{proof}

We now introduce the relevant algebraic weights for representations of $M_G$. Recall any algebraic character of the torus $T$ can be represented by a tuple 
\[
\kappa = (\kappa_0; \kappa_{1, \tau}, \dots, \kappa_{2n, \tau})_{\tau \in \Psi}
\]
where $\kappa_0$ and $\kappa_{i, \tau}$ are integers. By the $\tau$-factor or $\tau$-component of $\kappa$, we mean the tuple $(\kappa_{1, \tau}, \dots, \kappa_{2n, \tau})$, and by the $\opn{GL}_1$-factor, we mean the integer $\kappa_0$. It will be helpful to use this terminology when defining certain characters below.

\begin{definition}
Let $\kappa$ be an algebraic character of $T$ as above. We say:
\begin{enumerate}
    \item $\kappa$ is $M_G$-dominant if 
    \[
    \kappa_{2, \tau_0} \geq \cdots \geq \kappa_{2n, \tau_0}
    \]
    and 
    \[
    \kappa_{1, \tau} \geq \cdots \geq \kappa_{2n, \tau} 
    \]
    for all $\tau \in \Psi - \{\tau_0\}$.
    \item $\kappa$ is pure of weight $w \in \mbb{Z}$ if 
    \[
    \kappa_{i, \tau_0} + \kappa_{2n+2-i, \tau_0} = w
    \]
    for all $i = 2, \dots, n$, and $\kappa_{i, \tau} + \kappa_{2n+1-i, \tau} = 0$ for all $i=1, \dots, 2n$ and $\tau \neq \tau_0$.
\end{enumerate}
\end{definition}

The set of characters which are pure (of some weight $w \in \mbb{Z}$) form a group, and we let $\invs{C}$ denote the submonoid of $M_G$-dominant characters which are pure of weight $w \leq 0$ satisfying $\kappa_{n+1, \tau_0} \leq w$. We will always write the group law for $\invs{C}$ additively. We consider the following special elements of $\invs{C}$:
\begin{itemize}
    \item $\mu_0 = (1; 0, \dots, 0)_{\tau \in \Psi}$
    \item $\mu_w = (\mu_{w, 0}, \mu_{w, 1, \tau}, \dots, \mu_{w, 2n, \tau})_{\tau \in \Psi}$, where $\mu_{w, 0} = \mu_{w, 1, \tau_0} = \mu_{w, i, \tau} = 0$ for all $i=1, \dots, 2n$ and $\tau \neq \tau_0$, and we have 
    \[
    \mu_{w, 2, \tau_0} = \cdots = \mu_{w, n, \tau_0} = 0 \quad \quad \mu_{w, n+1, \tau_0} = \cdots = \mu_{w, 2n, \tau_0} = -1 
    \]
    \item $\mu_{1, \tau_0}$ which is the identity in the $\opn{GL}_1$-factor and $\tau \neq \tau_0$ factors, and in the $\tau_0$-factor is given by
    \[
    (1, 0, \dots, 0) .
    \]
    \item For $i=2, \dots, n$, we let $\mu_{i, \tau_0}$ be the character which is the identity in the $\opn{GL}_1$-factor and $\tau \neq \tau_0$ factors, and in the $\tau_0$-factor is given by
    \[
    (0, 1, \dots, 1, 0, \dots, 0, -1, \dots, -1)
    \]
    where there are $i-1$ lots of $1$s and $-1$s.
    \item We let $\mu_{n+1, \tau_0}$ be the character which is the identity outside the $\tau_0$-factor, and the $\tau_0$-factor is given by
    \[
    (0, 1, \dots, 1, -1, \dots, -1)
    \]
    where there are $n-1$ lots of $1$ and $n$ lots of $-1$
    \item For $i=1, \dots, n$ and $\tau \neq \tau_0$, we let $\mu_{i, \tau}$ denote the character which is the identity outside the $\tau$-factor, and at the $\tau$-factor is 
    \[
    (1, \dots, 1, 0, \dots, 0, -1, \dots, -1)
    \]
    where there are $i$ lots of $1$s and $-1$s .
\end{itemize}
This collection of characters forms a generating set for $\invs{C}$ in the following sense: for any $\kappa \in \invs{C}$, there exist unique integers $a_0, a_{1, \tau_0} \in \mbb{Z}$ and $a_w, a_{i, \tau} \in \mbb{Z}_{\geq 0}$ for $(i, \tau) \neq (1, \tau_0)$, such that
\[
\kappa = a_0 \mu_0 + a_w \mu_w + a_{n+1, \tau_0} \mu_{n+1, \tau_0} + \sum_{i=1}^n \sum_{\tau \in \Psi} a_{i, \tau} \mu_{i, \tau} .
\]
Explicitly, the integers are given by
\begin{itemize}
    \item $a_0 = \kappa_0$
    \item $a_w = -(\kappa_{2, \tau_0} + \kappa_{2n, \tau_0})$
    \item $a_{1, \tau_0} = \kappa_{1, \tau_0}$
    \item For $i=2, \dots, n+1$, one has
    \[
    a_{i, \tau_0} = \left\{ \begin{array}{cc} \kappa_{i, \tau_0} - \kappa_{i+1, \tau_0} & \text{ if } i \leq n-1 \\ \kappa_{n+1, \tau_0} - \kappa_{n+2, \tau_0} & \text{ if } i=n \\ (\kappa_{n, \tau_0} + \kappa_{n+2, \tau_0}) - \kappa_{n+1, \tau_0} & \text{ if } i=n+1 \end{array} \right.
    \]
    \item For $i=1, \dots, n$ and $\tau \neq \tau_0$, one has
    \[
    a_{i, \tau} = \left\{ \begin{array}{cc} \kappa_{i, \tau} - \kappa_{i+1, \tau} & \text{ if } i \leq n-1 \\ \kappa_{n, \tau} & \text{ if } i=n \end{array} \right.
    \]
\end{itemize}

Let $\invs{D} = \prod_{\tau \neq \tau_0} \mbb{Z}_{\geq 0}$ equipped with the monoid structure given by component-wise addition. We will denote elements of $\invs{D}$ by tuples $j = (j_{\tau})_{\tau \neq \tau_0}$. We let $\invs{E} \subset \invs{C} \times \invs{D}$ be the collection of pairs $(\kappa, j)$ which satisfy $j_{\tau} \leq \kappa_{n, \tau}$ for all $\tau \neq \tau_0$. This forms a submonoid of $\invs{C} \times \invs{D}$. Then $\invs{E}$ has a generating set given by the pairs $(\mu_0, 0)$, $(\mu_w, 0)$, $(\mu_{i, \tau}, 0)$, and $(\mu_{n, \tau}, 1_{\tau} )$ for $\tau \neq \tau_0$, where $1_{\tau} \in \invs{D}$ is the tuple which is zero outside $\tau$ and has $1$ in the $\tau$-component. More precisely, for any $(\kappa, j) \in \invs{E}$, there exist unique integers $a_0, a_{1, \tau_0} \in \mbb{Z}$, $a_w, a_{i, \tau} \in \mbb{Z}_{\geq 0}$ for $(i, \tau) \neq (1, \tau_0)$, and $b_{\tau} \in \mbb{Z}_{\geq 0}$ for $\tau \neq \tau_0$ such that
\[
(\kappa, j) = a_0 (\mu_0, 0) + a_w (\mu_w, 0) + a_{n+1, \tau_0} (\mu_{n+1, \tau_0}, 0) + \sum_{i=1}^n \sum_{\tau \in \Psi} a_{i, \tau} (\mu_{i, \tau}, 0) + \sum_{\tau \neq \tau_0}b_{\tau} (\mu_{n, \tau}, 1_{\tau}) .
\]
Explicitly, the integers are given by
\begin{itemize}
    \item $a_0, a_w, a_{1, \tau_0}, \dots, a_{n+1, \tau_0}$ and $a_{1, \tau}, \dots, a_{n-1, \tau}$ are given by the formulae above
    \item For $\tau \neq \tau_0$, one has $a_{n, \tau} = \kappa_{n, \tau} - j_{\tau}$
    \item $b_{\tau} = j_{\tau}$ .
\end{itemize}

\begin{definition}
For any $(\kappa, j) \in \invs{E}$, we let $\sigma_{\kappa}^{[j]}$ denote the character of $M_H$ given by sending a general element $(x; y_1, y_2, y_3; z_{1, \tau}, z_{2, \tau})_{\tau \neq \tau_0}$ to 
\[
x^{-\kappa_0}y_{1}^{-\kappa_{1, \tau_0}} \opn{det}y_2^{\kappa_{n+1, \tau_0} - w} \opn{det}y_3^{-\kappa_{n+1, \tau_0}} \prod_{\tau \neq \tau_0} \opn{det}z_{1, \tau}^{-j_{\tau}} \opn{det}z_{2, \tau}^{j_{\tau}} 
\]
where $w = \kappa_{2, \tau_0} + \kappa_{2n, \tau_0}$ denotes the weight of $\kappa$.
\end{definition}

For any $\kappa \in \invs{C}$, let $V_{\kappa}$ denote the irreducible algebraic representation of $M_G$ with highest weight $\kappa$, which can be viewed as the space of algebraic functions $f \colon M_G \to \mbb{A}^1$ satisfying 
\[
f(m b) = (w_{M_G}^{\mathrm{max}}\kappa)(b^{-1})f(m)
\]
for all $b \in B_{M_G}$. The action of $M_G$ on $f$ is then given by $m \cdot f(n) = f(m^{-1} n)$. We have the following classical branching law:

\begin{theorem} \label{ClassicalAppendixBranchingThm}
Let $(\kappa, j) \in \invs{E}$. Then there exists a unique vector $x_{\kappa}^{[j]} \in V_{\kappa}$ such that:
\begin{enumerate}
    \item $x_{\kappa}^{[j]}$ is an eigenvector for the action of $u^{-1} M_H u$ with eigencharacter given by the inverse of $\sigma_{\kappa}^{[j]}$
    \item $x_{\kappa}^{[j]}(1) = 1$, where we are viewing $x_{\kappa}^{[j]} \colon M_G \to \mbb{A}^1$ as an algebraic function
    \item The vectors $x_{\mu_0}^{[0]}$ and $x_{\mu_{1, \tau_0}}^{[0]}$ are invertible in $\ordd(M_G)$, and we have
    \[
    x_{\kappa}^{[j]} = (x_{\mu_0}^{[0]})^{a_0} \cdot (x_{\mu_w}^{[0]})^{a_w} \cdot (x_{\mu_{n+1, \tau_0}}^{[0]})^{a_{n+1, \tau_0}} \cdot \prod_{\substack{i=1, \dots, n \\ \tau \in \Psi}} (x_{\mu_{i, \tau}}^{[0]})^{a_{i, \tau}} \cdot \prod_{\tau \neq \tau_0} (x_{\mu_{n, \tau}}^{[1_{\tau}]})^{b_{\tau}}
    \]
    where the product takes place in $\ordd(M_G)$ and the exponents are the integers above.
\end{enumerate}
\end{theorem}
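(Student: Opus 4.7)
The strategy has two essentially independent components: first, establish existence and uniqueness of a vector $x_\kappa^{[j]}$ satisfying conditions (1)--(2) for every $(\kappa, j) \in \invs{E}$; then, deduce the multiplicative structure (3) by combining uniqueness with the existence of Cartan products, together with a short explicit calculation of the two distinguished one-dimensional representations.

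For the uniqueness and the automatic non-vanishing at the identity, the key ingredient is Lemma \ref{OpenOrbitLemma}(1): the orbit $M_\mbf{H} \cdot u \cdot B_{M_\mbf{G}}$ is Zariski dense in $M_\mbf{G}$, and therefore so is the left translate $u^{-1} M_\mbf{H} u \cdot B_{M_\mbf{G}}$. For any $f \in V_\kappa$ satisfying (1), combining the right $B_{M_\mbf{G}}$-transformation law built into the definition of $V_\kappa$ with the $u^{-1}M_\mbf{H} u$-eigenvector property yields
\begin{equation*}
    f(u^{-1} m^{-1} u \cdot b) \;=\; \sigma_\kappa^{[j]}(m)^{-1} \cdot (w_{M_\mbf{G}}^{\max}\kappa)(b^{-1}) \cdot f(1)
\end{equation*}
for all $m \in M_\mbf{H}$ and $b \in B_{M_\mbf{G}}$. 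Hence $f$ is determined on a Zariski dense open subset by $f(1)$, so the space of such eigenvectors is at most one-dimensional and $f(1) = 0$ forces $f \equiv 0$. Existence of a nonzero eigenvector follows from Knapp's branching theorem applied factor-by-factor to the general linear components of $M_\mbf{G}$ (as used in Proposition \ref{ClassicalBranchingProp}): the conditions $\kappa \in \invs{C}$ and $0 \leq j_\tau \leq \kappa_{n, \tau}$ carving out $\invs{E}$ are exactly those under which $\sigma_\kappa^{[j]}$ occurs (with multiplicity one) in the restriction $V_\kappa|_{M_\mbf{H}}$. Normalising by $f(1) = 1$ pins down $x_\kappa^{[j]}$.

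For condition (3), the representations $V_{\mu_0}$ and $V_{\mu_{1,\tau_0}}$ are one-dimensional because $\mu_0$ and $\mu_{1, \tau_0}$ are supported on the central $\opn{GL}_1$-factors of $M_\mbf{G}$; the corresponding vectors are (up to constants pinned down by the normalisation at $1$) the inverses of the coordinate functions on these $\mathbb{G}_m$-factors, hence units in $\ordd(M_\mbf{G})$. Pointwise multiplication of regular functions on $M_\mbf{G}$ then induces a Cartan-product map $V_{\kappa_1} \otimes V_{\kappa_2} \to V_{\kappa_1+\kappa_2}$ under which a tensor product of $u^{-1}M_\mbf{H} u$-eigenvectors with characters $\sigma_1^{-1}, \sigma_2^{-1}$ maps to an eigenvector with character $(\sigma_1 \sigma_2)^{-1}$, and the values at the identity multiply. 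Applying this to the generators listed before the theorem, the right-hand side of the product formula defines an element of $V_\kappa$ which is an $u^{-1}M_\mbf{H} u$-eigenvector, takes value $1$ at the identity, and whose eigencharacter one checks is exactly $\sigma_\kappa^{[j], -1}$ by an additivity computation with the explicit formula for $\sigma_\kappa^{[j]}$ and the expressions for the integers $a_0, a_w, a_{n+1,\tau_0}, a_{i, \tau}, b_\tau$ listed immediately before the theorem. By the uniqueness established above, this product coincides with $x_\kappa^{[j]}$; the potentially-negative exponents $a_0$ and $a_{1, \tau_0}$ are handled using the invertibility of $x_{\mu_0}^{[0]}$ and $x_{\mu_{1, \tau_0}}^{[0]}$ in $\ordd(M_\mbf{G})$.

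There is no real obstacle in this argument: everything reduces to the open-orbit uniqueness principle of Lemma \ref{OpenOrbitLemma}(1), Knapp's branching theorem, and a direct additivity check of $\sigma_\kappa^{[j]}$ on the chosen generators of $\invs{E}$. The most delicate (though purely routine) part is the last calculation, which is a matter of matching the explicit formula defining $\sigma_\kappa^{[j]}$ against the explicit decomposition of $(\kappa, j)$ into generators.
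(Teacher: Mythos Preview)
Your proposal is correct and follows essentially the same approach as the paper: existence via Knapp's branching theorem applied factor-by-factor, uniqueness and normalisation via the Zariski-density of $u^{-1}M_{\mbf{H}}u \cdot B_{M_{\mbf{G}}}$ from Lemma~\ref{OpenOrbitLemma}(1), and then the product formula by checking that the right-hand side has the correct eigencharacter and value at the identity. The only cosmetic difference is that the paper pulls uniqueness up to scaling from the multiplicity-one statement in Knapp and uses the open orbit only to justify $x_\kappa^{[j]}(1)\neq 0$, whereas you derive the one-dimensionality directly from the open-orbit argument; these are equivalent.
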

\begin{proof}
By applying \cite[Theorem 2.1]{Knapp} for each general linear factor of $M_G$,\footnote{Knapp works in the setting of compact unitary groups, but the proof works verbatim for general linear groups.} there exists a unique up to scaling (non-zero) vector $x_{\kappa}^{[j]} \in V_{\kappa}$ satisfying property (1). Since $u^{-1}M_H u B_{M_G}$ is Zariski open in $M_G$ (Lemma \ref{OpenOrbitLemma}), the vector is non-vanishing on this cell, so we can normalise $x_{\kappa}^{[j]}$ as in (2) to determine the vector uniquely. The vectors $x_{\mu_0}^{[0]}$ and $x_{\mu_{1, \tau_0}}^{[0]}$ are invertible in $\ordd(M_G)$ because the corresponding representations $V_{\mu_0}$ and $V_{\mu_{1, \tau_0}}$ are one-dimensional. Property (3) then follows immediately from uniqueness, the identity
\[
\sigma_{\kappa}^{[j]} = (\sigma_{\mu_0})^{a_0} \cdot (\sigma_{\mu_w}^{[0]})^{a_w} \cdot (\sigma_{\mu_{n+1, \tau_0}}^{[0]})^{a_{n+1}, \tau_0} \cdot (\sigma_{\mu_{1, \tau_0}}^{[0]})^{a_{1, \tau_0}} \cdot \prod_{\tau \neq \tau_0} (\sigma_{\mu_{n, \tau}}^{[1_{\tau}]})^{b_{\tau}}
\]
and the fact that $\sigma_{\mu_{i, \tau}}^{[0]}$ is the trivial character for $(i, \tau) \neq (1, \tau_0), (n+1, \tau_0)$.
\end{proof}

\begin{remark}
Note that we introduced some asymmetry here -- we could have equally worked with the monoid $\invs{D} = \prod_{\tau \neq \tau_0} \mbb{Z}_{\leq 0}$ (or even more generally, products of $\mbb{Z}_{\geq 0}$ and $\mbb{Z}_{\leq 0}$) and the monoid $\invs{E}$ defined by the equations $-j_{\tau} \leq \kappa_{n, \tau}$. 
\end{remark}

To prove Theorem \ref{PadicVectorThm}, we will use a $p$-adic version of the product formula in Theorem \ref{ClassicalAppendixBranchingThm}(3). 

\begin{lemma} \label{CoefficientRanalyticLemma}
Let $(A, A^+)$ be a Tate algebra over $(\mbb{Q}_p, \mbb{Z}_p)$, and suppose that $\kappa \colon T(\mbb{Z}_p) \to (A^+)^{\times}$ is an $r$-analytic character, for some $r \in \mbb{Q}_{>0}$, which satisfies
\[
\kappa_{i, \tau_0}+\kappa_{2n+2-i, \tau_0} = \kappa_{j, \tau_0} + \kappa_{2n+2-j, \tau_0}
\]
for all $i, j = 2, \dots, n$, and $\kappa_{i, \tau} + \kappa_{2n+1-i, \tau} = 0$ for all $i=1, \dots, n$ and $\tau \neq \tau_0$. Let $\beta = (\beta_{\tau}) \colon \prod_{\tau \neq \tau_0} \mbb{Z}_p^{\times} \to (A^+)^{\times}$ be an $r$-analytic character. Then there exist unique $r$-analytic characters 
\begin{itemize}
    \item $\xi_0, \xi_w, \xi_{i, \tau} \colon \mbb{Z}_p^{\times} \to (A^+)^{\times}$ for $i=1, \dots, n$ and $\tau \in \Psi$ 
    \item $\xi_{n+1, \tau_0} \colon \mbb{Z}_p^{\times} \to (A^+)^{\times}$ 
    \item $\Xi_{\tau} \colon \mbb{Z}_p^{\times} \to (A^+)^{\times}$ for $\tau \in \Psi - \{\tau_0 \}$
\end{itemize}
such that 
\begin{align*}
(\kappa, \beta) = \xi_0 \circ (\mu_0, 0) &+ \xi_w \circ (\mu_w, 0) + \xi_{n+1, \tau_0} \circ (\mu_{n+1, \tau_0}, 0) \\ &+ \sum_{i=1}^n \sum_{\tau \in \Psi} \xi_{i, \tau} \circ (\mu_{i, \tau}, 0) + \sum_{\tau \neq \tau_0}\Xi_{\tau} \circ (\mu_{n, \tau}, 1_{\tau})
\end{align*}
where the group law is written additively. 
\end{lemma}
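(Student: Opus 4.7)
The plan is to reduce the statement to a straightforward change-of-basis computation, performed coordinate-by-coordinate on the torus, that mirrors the classical formulas for the integer coefficients $a_0, a_w, a_{i,\tau}, a_{n+1,\tau_0}, b_\tau$ given in the discussion preceding the lemma.

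First, I would observe that any $r$-analytic character $\kappa \colon T(\mbb{Z}_p) \to (A^+)^{\times}$ decomposes uniquely as a sum (written additively) $\kappa = \alpha_0 \circ \mathrm{pr}_0 + \sum_{\tau, i} \alpha_{i, \tau} \circ \mathrm{pr}_{i, \tau}$, where $\alpha_0, \alpha_{i,\tau} \colon \mbb{Z}_p^{\times} \to (A^+)^{\times}$ are $r$-analytic and $\mathrm{pr}_0, \mathrm{pr}_{i, \tau}$ denote the projections onto the respective coordinates of $T$. This is essentially the argument of Lemma \ref{GeneratingSetWeightSpace}: any such $\kappa$ is determined by its composition with the coordinate cocharacters $\mbb{G}_m \to T$, and each resulting character of $\mbb{Z}_p^{\times}$ inherits $r$-analyticity. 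The purity-type conditions on $\kappa$ translate to the identities $\alpha_{i, \tau_0} + \alpha_{2n+2-i, \tau_0} = \omega$ (some common $r$-analytic character) for all $i = 2, \dots, n$, and $\alpha_{i, \tau} + \alpha_{2n+1-i, \tau} = 0$ for $\tau \neq \tau_0$. Similarly, $\beta = (\beta_{\tau})_{\tau \neq \tau_0}$ gives the components $\Xi_{\tau} \defeq \beta_{\tau}$.

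Second, I would define the remaining characters by taking the classical formulas of \S \ref{AppendixBranchingLaws} verbatim, now interpreting sums and differences in the group of $r$-analytic characters:
\begin{align*}
\xi_0 &= \alpha_0, \quad \xi_w = -\omega = -(\alpha_{n, \tau_0} + \alpha_{n+2, \tau_0}), \quad \xi_{1, \tau_0} = \alpha_{1, \tau_0}, \\
\xi_{i, \tau_0} &= \alpha_{i, \tau_0} - \alpha_{i+1, \tau_0} \quad (i = 2, \dots, n-1), \\
\xi_{n, \tau_0} &= \alpha_{n+1, \tau_0} - \alpha_{n+2, \tau_0}, \quad \xi_{n+1, \tau_0} = \alpha_{n, \tau_0} - \alpha_{n+1, \tau_0} + \alpha_{n+2, \tau_0},\\
\xi_{i, \tau} &= \alpha_{i, \tau} - \alpha_{i+1, \tau} \quad (i = 1, \dots, n-1,\ \tau \neq \tau_0), \quad \xi_{n, \tau} = \alpha_{n, \tau} - \beta_{\tau}.
\end{align*}
Each of these is $r$-analytic because the group of $r$-analytic characters $\mbb{Z}_p^{\times} \to (A^+)^{\times}$ is closed under sums and inverses.

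Third, I would verify that the proposed decomposition reproduces $(\kappa, \beta)$ by checking equality of characters on each coordinate of $T \times \prod_{\tau \neq \tau_0} \mbb{G}_m$. On coordinates $(i, \tau_0)$ with $i = 2, \dots, n$ this amounts to the telescoping identity $\sum_{j=i}^{n} \xi_{j, \tau_0} + \xi_{n+1, \tau_0} = \alpha_{i, \tau_0}$, while coordinates in the $\tau_0$-factor with $i \geq n+1$ are handled by the purity condition $\alpha_{i, \tau_0} + \alpha_{2n+2-i, \tau_0} = -\xi_w$. The analogous telescoping identities handle the $\tau \neq \tau_0$ factors, using $\alpha_{n, \tau} + \alpha_{n+1, \tau} = 0$ to deduce the equation at position $n+1$. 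The $\mathrm{GL}_1$-component and the $\beta$-components are tautological.

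Finally, uniqueness is immediate: the map sending the tuple $(\xi_0, \xi_w, \xi_{i, \tau}, \xi_{n+1, \tau_0}, \Xi_{\tau})$ to the induced character $(\kappa, \beta)$ has an explicit inverse given by the formulas above, so it is a bijection onto the subgroup cut out by the purity relations. The main (modest) obstacle in writing this up is purely bookkeeping: one must check carefully that the purity relations on $\kappa$ match exactly the linear dependence among the generators $(\mu_0, \mu_w, \mu_{i, \tau}, \mu_{n+1, \tau_0}, (\mu_{n, \tau}, 1_{\tau}))$ of $\invs{C} \times \invs{D}$, so that no additional character appears or is constrained.
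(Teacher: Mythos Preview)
Your proposal is correct and follows exactly the approach of the paper: the paper's proof consists of a single sentence saying to define the $\xi$'s via the same formulae as in the classical integer case and that uniqueness is clear, which is precisely what you have written out in detail (with the telescoping verification and explicit inverse made explicit).
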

\begin{proof}
We define the $r$-analytic characters via the same formulae as above, i.e. $\xi_0 = \kappa_0$, $\xi_w = -(\kappa_{2, \tau_0} + \kappa_{2n, \tau_0})$, etc. It is clear that these are uniquely determined. 
\end{proof}

We will also need the following lemma:

\begin{lemma}
Let $r \in \mbb{Z}_{>0}$. Then for any $(\kappa, j) \in \invs{E}$, one has
\[
x_{\kappa}^{[j]}\left( \invs{M}^{\square}_{G, r} \right) \subset \mbb{Z}_p^{\times} (1+\invs{B}_r) 
\]
where we are viewing $x_{\kappa}^{[j]}$ as an analytic function $M_G^{\mathrm{an}} \to \mbb{A}^{1, \mathrm{an}}$.
\end{lemma}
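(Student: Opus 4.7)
My approach is to combine the Iwahori-type factorisation of $\invs{M}_{G, r}^{\square}$ provided by Proposition \ref{BoxDecompMsquare} with the two transformation laws that essentially characterise $x_\kappa^{[j]}$: the eigencharacter property under $u^{-1} M_H u$ from Theorem \ref{ClassicalAppendixBranchingThm}(1), and $B_{M_G}$-equivariance as a section of the line bundle cut out by $w_{M_G}^{\max}\kappa$. The goal is to write down an explicit formula for the value $x_\kappa^{[j]}(m)$ at a general $m \in \invs{M}_{G, r}^{\square}$ and then observe that both terms in that formula are controlled by algebraic characters of tori evaluated at integral points with $p^r$-perturbations.

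Concretely, given $m \in \invs{M}_{G, r}^{\square}$, I would apply Proposition \ref{BoxDecompMsquare} to factor $m = u^{-1} h u \cdot b$ with $h \in \invs{M}_{H, r}^{\clubsuit}$ and $b \in \invs{M}_{G, r}^{\square} \cap \invs{B}_{M_G}$. Unwinding the eigencharacter property under the convention $(g \cdot f)(n) = f(g^{-1} n)$ gives $x_\kappa^{[j]}(u^{-1} h u \cdot n) = \sigma_\kappa^{[j]}(h) \cdot x_\kappa^{[j]}(n)$ for all $n \in M_G$, and the Borel transformation rule gives $x_\kappa^{[j]}(n b) = (w_{M_G}^{\max}\kappa)(b^{-1}) \cdot x_\kappa^{[j]}(n)$. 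Combined with the normalisation $x_\kappa^{[j]}(1) = 1$ from Theorem \ref{ClassicalAppendixBranchingThm}(2), these yield
\[
x_\kappa^{[j]}(m) \;=\; \sigma_\kappa^{[j]}(h) \cdot (w_{M_G}^{\max}\kappa)(b^{-1}).
\]

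It then remains to verify that each factor lies in $\mbb{Z}_p^{\times}(1+\invs{B}_r)$. For the first factor, $\sigma_\kappa^{[j]}$ is an algebraic character of $M_H$ and therefore factors through a product of determinants; writing $h = t \cdot h_1$ with $t \in T^{\clubsuit}(\mbb{Z}_p)$ and $h_1 \in \invs{M}_{H, r}^{1}$, the value $\sigma_\kappa^{[j]}(t)$ lies in $\mbb{Z}_p^{\times}$, and $\sigma_\kappa^{[j]}(h_1)$ lies in $1 + \invs{B}_r$ because the determinant of a matrix congruent to the identity modulo $p^r$ is itself in $1 + \invs{B}_r$ (and integer powers preserve this). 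For the second factor, $w_{M_G}^{\max}\kappa$ is trivial on the unipotent radical of $B_{M_G}$, so after decomposing $b$ using $\invs{M}_{G, r}^{\square} \cap \invs{B}_{M_G} = (\invs{M}_{G, r}^{1} \cap \invs{B}_{M_G}) \cdot B_{M_G}(\mbb{Z}_p)$, the value $(w_{M_G}^{\max}\kappa)(b^{-1})$ depends only on the diagonal entries of $b$, which lie in $T(\mbb{Z}_p) \cdot (1+\invs{B}_r)$, and hence the value is in $\mbb{Z}_p^{\times}(1+\invs{B}_r)$.

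I do not anticipate a real obstruction here; once Proposition \ref{BoxDecompMsquare} is in hand, the lemma reduces to the formal fact that algebraic characters of tori send $T(\mbb{Z}_p) \cdot (1+p^r)$ into $\mbb{Z}_p^{\times}(1+\invs{B}_r)$. The only delicate bookkeeping is tracking the inverse conventions in the eigencharacter property, which is forced by the normalisation $x_\kappa^{[j]}(1) = 1$ together with the fact that the cell $u^{-1} M_H u \cdot B_{M_G}$ is Zariski open in $M_G$. Notably, the product-over-generators formula of Theorem \ref{ClassicalAppendixBranchingThm}(3) is not needed for this lemma; that factorisation plays its role later, in the $p$-adic interpolation of $x_\kappa^{[j]}$ to non-algebraic weights, where one extends each invertible factor to a locally analytic power.
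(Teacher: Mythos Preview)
Your proposal is correct and follows essentially the same approach as the paper: factor $m \in \invs{M}^{\square}_{G,r}$ via Proposition \ref{BoxDecompMsquare}, apply the two transformation laws together with the normalisation $x_{\kappa}^{[j]}(1)=1$ to obtain $x_{\kappa}^{[j]}(m)=\sigma_{\kappa}^{[j]}(h)\cdot(w_{M_G}^{\mathrm{max}}\kappa)(b^{-1})$, and then observe that algebraic characters send the relevant groups into $\mbb{Z}_p^{\times}(1+\invs{B}_r)$. Your additional remarks on the sign conventions and on why Theorem \ref{ClassicalAppendixBranchingThm}(3) is not needed here are accurate.
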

\begin{proof}
By Proposition \ref{BoxDecompMsquare}, we have $\invs{M}^{\square}_{G, r} = \left(u^{-1} \invs{M}^{\clubsuit}_{H, r} u \right) (\invs{M}^{\square}_{G, r} \cap \invs{B}_{M_G} )$, therefore the transformation properties for $x_{\kappa}^{[j]}$ imply that
\[
x_{\kappa}^{[j]}(m) = \sigma_{\kappa}^{[j]}(m_1) \cdot (w_{M_G}^{\mathrm{max}}\kappa)(m_2^{-1})
\]
for any $m \in \invs{M}^{\square}_{G, r}$ satisfying $m=u^{-1}m_1u \cdot m_2$ for $m_1 \in \invs{M}^{\clubsuit}_{H, r}$ and $m_2 \in \invs{M}^{\square}_{G, r} \cap \invs{B}_{M_G}$. But $\sigma^{[j]}_{\kappa}$ and $w_{M_G}^{\mathrm{max}}\kappa$ are algebraic characters, so their analytifications map $\invs{M}^{\clubsuit}_{H, r}$ and $\invs{M}^{\square}_{G, r} \cap \invs{B}_{M_G}$ into $\mbb{Z}_p^{\times}(1+\invs{B}_r)$, as required.
\end{proof}

\begin{remark}
The previous lemma implies that for any $r$-analytic character $\xi \colon \mbb{Z}_p^{\times} \to (A^+)^{\times}$, the composition $\xi \circ x_{\kappa}^{[j]}$ defines an analytic function
\[
\xi \circ x_{\kappa}^{[j]} \colon \invs{M}_{G, r}^{\square} \times \opn{Spa}(A, A^+) \to \mbb{G}_{m, A}^{\mathrm{an}} \subset \mbb{A}^{1, \mathrm{an}}_{A} .
\]
\end{remark}

We now introduce the $p$-adic vectors. Recall that for an $r$-analytic weight $\kappa \colon T(\mbb{Z}_p) \to (A^+)^{\times}$, we let $V_{\kappa}^{r\mathrm{-an}}$ denote the $r$-analytic induction as in Definition \ref{DefinitionOfDistributions}. 

\begin{definition}
Let $r \in \mbb{Z}_{>0}$ and let $(\kappa, \beta)$ be a pair of $r$-analytic characters as in Lemma \ref{CoefficientRanalyticLemma}. Then we define
\[
x_{\kappa}^{[\beta]} \defeq (x_{\mu_0}^{[0]})^{\xi_0} \cdot (x_{\mu_w}^{[0]})^{\xi_w} \cdot (x_{\mu_{n+1, \tau_0}}^{[0]})^{\xi_{n+1, \tau_0}} \cdot \prod_{\substack{i=1, \dots, n \\ \tau \in \Psi}} (x_{\mu_{i, \tau}}^{[0]})^{\xi_{i, \tau}} \cdot \prod_{\tau \neq \tau_0} (x_{\mu_{n, \tau}}^{[1_{\tau}]})^{\Xi_{\tau}}
\]
where the product takes place in $\ordd(\invs{M}^{\square}_{G, r}) \hatot A$ and the analytic characters $\xi_{...}$ and $\Xi_{...}$ are as in Lemma \ref{CoefficientRanalyticLemma}. Here we have written $(-)^{\xi}$ as a shorthand for $\xi \circ (-)$. This defines an element of $V^{r\mathrm{-an}}_{\kappa}$.

We let $\sigma_{\kappa}^{[\beta]}$ denote the character 
\begin{align*}
    \sigma_{\kappa}^{[\beta]} \colon \invs{M}^{\clubsuit}_{H, r} \times \opn{Spa}(A, A^+) &\to \mbb{G}_{m, A}^{\mathrm{an}} \\
    (x; y_1, y_2, y_3, z_{1, \tau}, z_{2, \tau}) &\mapsto \kappa_0(x^{-1}) \kappa_{1, \tau_0}(y_1^{-1}) (\kappa_{n+1, \tau_0}\kappa_{2, \tau_0}^{-1}\kappa_{2n, \tau_0}^{-1})(\opn{det}y_2)\kappa_{n+1, \tau_0}^{-1}(\opn{det}y_3) \cdot \\
    & \quad \quad \cdot \prod_{\tau \neq \tau_0} \beta_{\tau}(\opn{det}z_{1, \tau}^{-1}\opn{det}z_{2, \tau})
\end{align*}
which makes sense because $\kappa$ and $\beta$ are $r$-analytic.
\end{definition}

Finally, we obtain the following theorem:

\begin{theorem} \label{AppendixAMainThm}
Let $r \in \mbb{Z}_{>0}$ and let $(\kappa, \beta)$ be a pair of $r$-analytic characters as in Lemma \ref{CoefficientRanalyticLemma}. Then
\begin{enumerate}
    \item $x_{\kappa}^{[\beta]}$ is a (non-zero) eigenvector for the action of $u^{-1}\invs{M}^{\clubsuit}_{H, r} u$ with eigencharacter given by the inverse of $\sigma_{\kappa}^{[\beta]}$
    \item If $(B, B^+)$ is another Tate algebra with a morphism $(A, A^+) \to (B, B^+)$, and $(\kappa', \beta')$ denotes the composition of $(\kappa, \beta)$ with this morphism, then the image of $x_{\kappa}^{[\beta]}$ under the natural map
    \[
    V_{\kappa}^{r\mathrm{-an}} \to V_{\kappa'}^{r\mathrm{-an}}
    \]
    is equal to $x_{\kappa'}^{[\beta']}$
    \item If $(\kappa, \beta)$ arises from a pair of algebraic characters $(\kappa, j) \in \invs{E}$, then $x_{\kappa}^{[\beta]}$ is equal to the image of $x_{\kappa}^{[j]}$ under the the natural map
    \[
    V_{\kappa} \to V_{\kappa}^{r\mathrm{-an}}
    \]
    given by restricting (the analytification of) a function $M_G \to \mbb{A}^1$ to $\invs{M}^{\square}_{G, r}$. 
    \item The vector $x_{\kappa}^{[\beta]}$ does not depend on the radius of analyticity, i.e. if $r' \geq r$ is another integer, then the constructions for $x_{\kappa}^{[\beta]}$ coincide under the map
    \[
    V_{\kappa}^{r\mathrm{-an}} \to V_{\kappa}^{r'\mathrm{-an}}
    \]
    given by restriction to $\invs{M}^{\square}_{G, r'}$.
\end{enumerate}
\end{theorem}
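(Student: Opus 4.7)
The plan is to prove each of the four properties in turn, with the bulk of the work going into (1), which splits into two character identities.

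First, I would observe that each factor $x_{\mu}^{[j]}$ in the definition of $x_{\kappa}^{[\beta]}$ is, by Theorem \ref{ClassicalAppendixBranchingThm}, an algebraic function on $M_G$ taking $\invs{M}^{\square}_{G,r}$ into $\mbb{Z}_p^{\times}(1+\invs{B}_r)$; moreover it is an algebraic eigenvector for $u^{-1}M_H u$ with eigencharacter $(\sigma_{\mu}^{[j]})^{-1}$ and transforms under the algebraic Borel $B_{M_G}$ by $w_{M_G}^{\max}\mu$. Composing with an $r$-analytic character $\xi\colon \mbb{Z}_p^\times \to (A^+)^{\times}$ yields an analytic function $\xi\circ x_{\mu}^{[j]}\colon \invs{M}^{\square}_{G,r}\times \opn{Spa}(A,A^+)\to \mbb{G}_m^{\opn{an}}$. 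A direct computation using the eigenproperty shows that $\xi\circ x_{\mu}^{[j]}$ is an eigenvector for $u^{-1}\invs{M}^{\clubsuit}_{H,r}u$ with analytic eigencharacter $(\xi\circ\sigma_{\mu}^{[j]})^{-1}$, and transforms under $\invs{M}^{\square}_{G,r}\cap\invs{B}_{M_G}$ by $\xi\circ(w_{M_G}^{\max}\mu)$. Consequently $x_{\kappa}^{[\beta]}$, being a product of such functions, is an element of $V_{\kappa'}^{r\text{-an}}$ for the unique $r$-analytic character $\kappa'$ that decomposes as the sum $\sum \xi_{\bullet}\circ(w_{M_G}^{\max}\mu_{\bullet})$, and is an eigenvector for $u^{-1}\invs{M}^{\clubsuit}_{H,r}u$ with eigencharacter the inverse of $\sum \xi_{\bullet}\circ\sigma_{\mu_{\bullet}}^{[\cdot]} + \sum \Xi_{\tau}\circ\sigma_{\mu_{n,\tau}}^{[1_\tau]}$.

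The content of (1) is therefore reduced to verifying two identities of analytic characters, namely that this Borel sum equals $w_{M_G}^{\max}\kappa$ and that the $u^{-1}\invs{M}^{\clubsuit}_H u$-sum equals $\sigma_{\kappa}^{[\beta]}$. Both identities are bilinear in the decomposition data provided by Lemma \ref{CoefficientRanalyticLemma}: it suffices to verify them when the analytic character is replaced by the identity $\mbb{Z}_p^\times\to \mbb{Z}_p^\times$ and $(\kappa,\beta)$ is one of the generators $(\mu_0,0)$, $(\mu_w,0)$, $(\mu_{i,\tau},0)$, $(\mu_{n+1,\tau_0},0)$, $(\mu_{n,\tau},1_\tau)$ of $\invs{E}$, in which case both sides are algebraic characters and the identity reduces to the classical Theorem \ref{ClassicalAppendixBranchingThm} (and the explicit formulas for $\sigma_{\mu}^{[j]}$ given just before it). This bookkeeping — matching the coefficients $a_0, a_w, a_{i,\tau}, a_{n+1,\tau_0}, b_\tau$ defined entry-by-entry in the paragraph after $\invs{E}$ with the components of $\sigma_{\kappa}^{[\beta]}$ — is the only real obstacle; it is essentially the reason the particular generators $\mu_{\bullet}$ were chosen.

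Parts (2), (3), (4) are then essentially formal. For (2): each factor $\xi_\bullet\circ x_{\mu_\bullet}^{[\cdot]}$ is manifestly functorial in $(A,A^+)$, and base-changing the decomposition in Lemma \ref{CoefficientRanalyticLemma} shows the decomposition data for $(\kappa',\beta')$ is the base-change of that for $(\kappa,\beta)$, whence $x_{\kappa'}^{[\beta']}$ is the image of $x_{\kappa}^{[\beta]}$. For (3): if $(\kappa,\beta)=(\kappa,j)\in\invs{E}$, then each $\xi_\bullet$ (resp. $\Xi_\tau$) is the algebraic character $z\mapsto z^{a_\bullet}$ (resp. $z\mapsto z^{b_\tau}$), so $\xi_\bullet\circ x_{\mu_\bullet}^{[\cdot]}$ is the analytic restriction of the algebraic power $(x_{\mu_\bullet}^{[\cdot]})^{a_\bullet}$, and the product formula in Theorem \ref{ClassicalAppendixBranchingThm}(3) identifies $x_{\kappa}^{[\beta]}$ with the restriction of $x_{\kappa}^{[j]}$ to $\invs{M}^{\square}_{G,r}$. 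Part (4) is immediate from the definition: every factor is the composition of an $r$-analytic character (hence also $r'$-analytic for $r'\geq r$) with an algebraic function, so restricting from $\invs{M}^{\square}_{G,r}$ to $\invs{M}^{\square}_{G,r'}$ gives the same element of $V_{\kappa}^{r'\text{-an}}$.
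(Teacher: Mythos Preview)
Your proposal is correct and follows essentially the same approach as the paper's proof. The paper's argument is extremely terse --- it simply notes that part (1) follows from the analytic product formula for $\sigma_{\kappa}^{[\beta]}$ obtained by replacing the integer exponents $a_{\dots}$, $b_{\dots}$ in the proof of Theorem \ref{ClassicalAppendixBranchingThm} by the analytic characters $\xi_{\dots}$, $\Xi_{\dots}$, and that parts (2)--(4) are clear from construction together with the uniqueness in Lemma \ref{CoefficientRanalyticLemma} and Theorem \ref{ClassicalAppendixBranchingThm}(3) --- which is exactly the structure you have written out in detail.
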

\begin{proof}
Part (1) follows from the fact we have a similar product formula for $\sigma_{\kappa}^{[\beta]}$ as in the proof of Theorem \ref{ClassicalAppendixBranchingThm}, replacing the coefficients $a_{...}$ and $b_{...}$ by $\xi_{...}$ and $\Xi_{...}$. 

The remaining properties are clear from construction, using the fact that the characters $\xi_{...}$ and $\Xi_{...}$ are unique and Theorem \ref{ClassicalAppendixBranchingThm}(3). 
\end{proof}


\section{Comparisons in families} \label{AppendixComparisonsFamilies}

In this appendix, we describe the key ingredient needed to compare the coherent cohomology classes associated with algebraic Hecke characters and (algebraic) $p$-adic Hecke characters. We restrict ourselves to the case of PEL Shimura data which give rise to compact Shimura varieties -- more general versions of the functorial properties we describe can be found in \cite{DLLZ18}.

\subsection{Canonical constructions}

In this section we let $(\mathscr{G}, X_{\mathscr{G}})$ be a PEL-type Shimura--Deligne datum satisfying (SD5) as in \cite[\S B.3]{ACES}. Suppose that the associated Shimura variety admits a canonical model over the reflex field, which we will denote by $F$. We fix a rational prime $p > 2$ which is unramified in $F$ and for which $\mathscr{G}_{\mbb{Q}_p}$ is unramified. We fix a prime $\ide{p}$ of $F$ lying above $p$.

Let $K \subset \mathscr{G}(\mbb{A}_f)$ be a neat compact open subgroup. Then the Shimura variety $S_{\mathscr{G}, K}$ parameterises abelian varieties $A$ with PEL structure (corresponding to the PEL-data defining $(\mathscr{G}, X_{\mathscr{G}})$), such that the first relative homology of $A$ is modelled on the defining representation for $\mathscr{G}$. Let $S = S_{\mathscr{G}, U}$ and denote the universal abelian variety over $S$ by $A$. We are interested in the local systems/locally free sheaves obtained from the relative homology of $A$.

\begin{assumption} \label{AppendixCompactAssumption}
We assume that the Shimura variety $S$ is compact.
\end{assumption}

Recall that there exist ``canonical constructions'' $\xi_B$ (resp. $\xi_{dR}$, resp. $\xi_{\et}$) which are tensor functors on the category of algebraic representations of $\mathscr{G}$ valued in the category of variations of Hodge structures over $S(\mbb{C})$ (resp. locally free sheaves on $S$ with an integrable connection, resp. $p$-adic local systems on $S$). More precisely, if $V$ is an algebraic representation of $\mathscr{G}$, then
\begin{enumerate}
    \item The variation of Hodge structure $\xi_B(V)$ is constructed from the left $\mathscr{G}(\mbb{Q})$-torsor
    \[
    X_{\mathscr{G}} \times \mathscr{G}(\mbb{A}_f) /K \to \mathscr{G}(\mbb{Q}) \backslash X_{\mathscr{G}} \times \mathscr{G}(\mbb{A}_f) /K = S(\mbb{C})
    \]
    and the $\mathscr{G}(\mbb{Q})$-representation $V$ (see \cite[\S 2.3]{CS17} for example).
    \item The locally free sheaf $\xi_{dR}(V)$ arises from the $\mathscr{G}_F$-torsor 
    \[
    \mathscr{G}_{dR} \to S
    \]
    (the standard principal bundle) and the algebraic representation $V_F$ of $\mathscr{G}_F$ (see \cite[\S III.3]{MilneCanonicalMixed}).
    \item The $p$-adic local system $\xi_{\et}(V)$ can be constructed by choosing a $\mathscr{G}(\mbb{Z}_p)$-stable lattice $T \subset V_{\mbb{Q}_p}$ and using the pro-system of torsors 
    \[
    S_{\mathscr{G}, K'} \to S_{\mathscr{G}, K}
    \]
    for $K' \subset K$ (see \cite[\S 4]{ACES} and the references therein for example). One can also interpret this in terms of the perfectoid Shimura variety (see \S \ref{AppendixPerfectoidShimuraVarieties} below).
\end{enumerate}

\begin{notation}
Let $V$ be an algebraic representation of $\mathscr{G}$. We write $\invs{V}_B$, $\invs{V}_{dR}$ and $\invs{V}_{\et}$ for $\xi_B(V)$, $\xi_{dR}(V)$ and $\xi_{\et}(V)$ respectively. 
\end{notation}

\begin{remark}
The above functors are normalised so that $\invs{W}_{?}$ equals the first relative homology of $A/S$ with respect to the relevant cohomology theory, for $? = B, dR, \et$, where $W$ denotes the defining representation of $\mathscr{G}$.
\end{remark}

We have several comparisons between these sheaves/local systems. 

\begin{enumerate}
    \item (Betti--$p$-adic, \cite[Expose xi]{SGA4}) Since $S$ is smooth, one has a morphism of sites $\beta \colon S_{\opn{cl}} \to S_{\et}$ from the site of \'{e}tale coverings of $S(\mbb{C})$ to the \'{e}tale site of $S$. Then for any algebraic representation $V$ of $\mathscr{G}$, one has
    \[
    \invs{V}_{\et} \cong \beta_* \left( \invs{V}_B \otimes_{\mbb{Q}} \mbb{Q}_p \right) .
    \]
    Indeed, one has a similar map of sites for $A$, whose pushforward is exact and commutes with pushforward along $A \to S$ (in the analytic and \'{e}tale topologies).
    \item (Betti--de Rham) For an algebraic representation $V$ of $\mathscr{G}$, one has a comparison isomorphism
    \[
    \invs{V}_{dR} \otimes_{\ordd_S} \ordd_{S(\mbb{C})} \cong \invs{V}_B \otimes_{\mbb{Q}} \ordd_{S(\mbb{C})} .
    \]
    \item (de Rham--$p$-adic, \cite[\S 2.2]{CS17}) Let $L/F_{\ide{p}}$ be a finite extension and let $A^{\mathrm{an}} \to S^{\mathrm{an}}$ be the morphism of adic spaces associated with $A_L \to S_L$. Then for any algebraic representation $V$ of $\mathscr{G}$, one has an isomorphism
    \[
    \invs{V}_{dR, L}^{\mathrm{an}} \otimes_{\ordd_{S^{\mathrm{an}}}} \ordd \mathbb{B}_{dR, S^{\mathrm{an}}} \cong \invs{V}_{\et, L}^{\mathrm{an}} \otimes_{\hat{\mbb{Q}}_p} \ordd \mathbb{B}_{dR, S^{\mathrm{an}}} 
    \]
    of sheaves on the pro-\'{e}tale site of $S^{\mathrm{an}}$ compatible with filtrations and connections. Here $(-)^{\mathrm{an}}$ means pull-back to the associated adic space.
\end{enumerate}

More precisely, one has the above comparisons for $\invs{W}_?$ and the work of Ancona \cite{Ancona2015} and Torzewski \cite{Torzewski2019} shows that all of these ``canonical constructions'' factor through a functor valued in relative Chow motives over $S$, so the comparisons can be extended to all algebraic representations. In particular, since the comparisons above are functorial with respect to algebraic operations (e.g. correspondences on $A$), the above comparisons are also functorial in the algebraic representation $V$.

\subsection{Functoriality}

Let $(\mathscr{G}_1, X_{1})$ and $(\mathscr{G}_2, X_2)$ be two PEL-type Shimura--Deligne data (with a common reflex field $F$) as in the previous subsection, including Assumption \ref{AppendixCompactAssumption}. Suppose that we have a homomorphism $f \colon \mathscr{G}_1 \to \mathscr{G}_2$ inducing a morphism of Shimura data (and arising from a morphism of PEL data). Let $K_1 \subset \mathscr{G}_1(\mathbb{A}_f)$ be a neat compact open subgroup and $K_2 \subset \mathscr{G}_2(\mbb{A}_f)$ a neat compact open subgroup containing $f(K_1)$. Let $S_i = S_{\mathscr{G}_i, K_i}$ for $i=1, 2$.

The morphism $f$ induces a map of torsors 
\[
\begin{tikzcd}
X_1 \times \mathscr{G}_1(\mathbb{A}_f) / K_1  \arrow[d] \arrow[r] & X_2 \times \mathscr{G}_2(\mathbb{A}_f)/K_2 \arrow[d] \\
S_1(\mathbb{C}) \arrow[r]                                         & S_2(\mathbb{C})                                     
\end{tikzcd}
\]
and hence a natural isomorphism $\eta_B \colon \xi_{1,B} \circ f^* \to f^* \circ \xi_{2,B}$, where we have use the notation $\xi_{i, B}$ to emphasise which Shimura variety and group the construction refers to.

Similarly, the morphism $f$ induces morphisms of (finite \'{e}tale) torsors $S_{\mathscr{G}_1, K_1'} \to S_{\mathscr{G}_2, K_2'}$ over $S_1 \to S_2$, for any $K_1' \subset K_1$ and $f(K_1') \subset K_2' \subset K_2$. These are compatible with varying $K_1'$ and $K_2'$, so induce a natural isomorphism $\eta_{\et} \colon \xi_{1, \et} \circ f^* \to f^* \circ \xi_{2, \et}$. 

\begin{lemma}
The Betti--$p$-adic comparison identifies the natural isomorphisms $\eta_{B} \otimes_{\mbb{Q}} \mbb{Q}_p$ and $\eta_{\et}$. 
\end{lemma}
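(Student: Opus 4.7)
The plan is to reduce the comparison of natural isomorphisms to a statement about the first relative homology of the universal abelian schemes, and then to invoke the (classical) Betti--$p$-adic comparison in this setting.

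First I would exploit functoriality in the representation variable. Both $\eta_B$ and $\eta_{\et}$ are natural transformations of tensor functors, and by the work of Ancona and Torzewski cited above, the canonical constructions $\xi_{i,B}$, $\xi_{i,\et}$ factor through a functor valued in relative Chow motives over $S_i$; the morphism $f$ induces a compatible pullback functor on motives. Consequently it suffices to check the equality of $\eta_B\otimes_{\mbb{Q}}\mbb{Q}_p$ and $\eta_{\et}$ on a single generating representation, namely the defining representations $W_i$ of $\mathscr{G}_i$. Thus the problem reduces to showing that the Betti--$p$-adic comparison intertwines $\eta_B$ and $\eta_{\et}$ on the first relative homology of the universal abelian schemes $A_i/S_i$.

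Next I would unwind the constructions on $W_i$. Because $f$ comes from a morphism of PEL data, it induces a morphism (possibly an isogeny) $\phi\colon A_1\to f^*A_2$ of abelian schemes over $S_1$ compatible with the PEL structures, and both $\eta_B|_{W_2}$ and $\eta_{\et}|_{W_2}$ are by construction given by applying $R^1\pi_*$ (resp.\ the analytic/\'etale analogue) to $\phi$. More concretely, the pro\'etale torsor structure used to build $\xi_{2,\et}(W_2)$ arises from the tower of Shimura varieties $S_{\mathscr{G}_2,K_2'}$ trivialising the $p$-adic Tate module of $A_2$, and similarly for $\mathscr{G}_1$; the morphism of torsors used to construct $\eta_{\et}$ is precisely the map of towers induced by $\phi$. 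On the Betti side, $\eta_B$ is induced by the same morphism $\phi$ on the level of singular homology. The compatibility between the two is therefore reduced to the statement that, for a morphism $\phi\colon A_1\to f^*A_2$ of abelian schemes over $S_1$, the classical (relative) comparison isomorphism
\[
R^1\pi_{1,\et,*}\mbb{Z}_p\cong R^1\pi_{1,\mathrm{cl},*}\mbb{Z}\otimes_{\mbb{Z}}\mbb{Z}_p
\]
intertwines the pullback maps induced by $\phi$. This is a standard consequence of the compatibility of the morphism of sites $\beta\colon S_{1,\mathrm{cl}}\to S_{1,\et}$ with proper pushforward and pullback of constructible sheaves on smooth varieties (\cite[Expos\'e XI]{SGA4}), applied to the Cartesian square defined by $\phi$.

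The main obstacle I foresee is purely bookkeeping: making precise the statement that the map of \'etale torsors $S_{\mathscr{G}_1,K_1'}\to S_{\mathscr{G}_2,K_2'}$ used to define $\eta_{\et}$ really does match, under the identification with frame bundles of $T_pA_i$, the map obtained from $\phi$ on Tate modules, and similarly on the Betti side with the universal covering torsors $X_i\times\mathscr{G}_i(\mbb{A}_f)/K_i$. Once this is verified on the defining representation $W_i$ (where both sides are manifestly computed from $A_i$), the comparison on $W_i$ is the classical Artin comparison for abelian schemes, and functoriality through motives then propagates the identity to arbitrary algebraic representations $V$ of $\mathscr{G}_2$.
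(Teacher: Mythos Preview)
Your approach introduces a genuine gap where the paper's argument is almost a tautology.

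The gap is your assertion that ``$f$ comes from a morphism of PEL data, [so] it induces a morphism (possibly an isogeny) $\phi\colon A_1\to f^*A_2$ of abelian schemes.'' This fails in the very case the paper needs. For the morphism $\det\colon\mbf{H}\to\mbf{R}$ used in \S\ref{FamiliesOfACcharsSection}, the defining $\mbf{R}$-module is $\bigwedge^n_F W_1\oplus\bigwedge^n_F W_2$, so $f^*W_2$ (the restriction of the defining $\mathscr{G}_2$-representation to $\mathscr{G}_1$) is a wedge power, not the defining $\mathscr{G}_1$-module. There is no $\mathscr{G}_1$-equivariant map $W_1\to f^*W_2$ that could come from a morphism of abelian schemes, and the source of $\eta_?(W_2)$ is $\xi_{1,?}(W_2|_{\mathscr{G}_1})$, which is \emph{not} $H_1(A_1)$. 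What you flag as ``bookkeeping'' in your last paragraph is in fact the failure of your $\phi$ to exist. To rescue this route you would need to present $W_2|_{\mathscr{G}_1}$ as an idempotent summand of tensor powers of $W_1$ and argue with correspondences on products of $A_1$---precisely what the paper does for the harder de Rham--$p$-adic comparison in Proposition~\ref{dRpadicComparisonProp}, but overkill here.

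The paper's proof sidesteps all geometry of abelian schemes. It observes that $\xi_B(V)$ has an alternative description as the associated bundle
\[
\mathscr{G}_1(\mbb{Q}) \backslash X_1 \times \mathscr{G}_1(\mbb{A}_f) \times V /K_1
\]
using the \emph{right} $K_1$-module structure on $V$ (no rational group action). After choosing a $\mathscr{G}_1(\mbb{Z}_p)$-stable lattice, this is visibly the same construction as $\xi_{\et}(V)$ via the level tower $\{S_{\mathscr{G}_1,K_1'}\}_{K_1'}$. Both $\eta_B$ and $\eta_{\et}$ are then induced by the \emph{same} morphism of towers (coming from $f$ on adelic points), so they coincide directly---no reduction to generating representations, no motives, no Artin comparison.
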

\begin{proof}
Let $V$ be an algebraic representation of $\mathscr{G}_1$. Then it is well-known that one can construct $\xi_B(V)$ either by considering $V$ as a left $\mathscr{G}_1(\mbb{Q})$-module (as above) or by viewing $V$ as a right $K_1$-module (with no left $\mathscr{G}_1(\mbb{Q})$-action) and setting
\[
\xi_B(V) = \mathscr{G}_1(\mbb{Q}) \backslash X_1 \times \mathscr{G}_1(\mbb{A}_f) \times V /K_1 .
\]
In particular, choosing a $\mathscr{G}_1(\mbb{Z}_p)$-stable lattice $T \subset V_{\mbb{Q}_p}$, one easily sees that the two constructions $\xi_B(V) \otimes \mbb{Q}_p$ and $\xi_{\et}(V)$ are identified under the Betti--$p$-adic comparison. Similar calculations apply for the group $\mathscr{G}_2$.
\end{proof}

We also obtain a natural isomorphism involving the functor $\xi_{dR}$ as follows. Since $f$ induces a morphism of Shimura data, by theory of canonical models for standard principal bundles (see \cite[\S III.4]{MilneCanonicalMixed}), one obtains a morphism of torsors $\mathscr{G}_{1, dR} \to \mathscr{G}_{2, dR}$ which induces the desired natural isomorphism $\eta_{dR} \colon \xi_{1, dR} \circ f^* \to f^* \circ \xi_{2, dR}$. Pulling this back to $\mbb{C}$, this morphism of torsors is identified with the morphism
\[
\mathscr{G}_{1, dR}(\mbb{C}) = \mathscr{G}_1(\mbb{Q}) \backslash X_{1} \times \mathscr{G}_1(\mbb{C}) \times \mathscr{G}_1(\mbb{A}_f) / K_1 \to \mathscr{G}_2(\mbb{Q}) \backslash X_{2} \times \mathscr{G}_2(\mbb{C}) \times \mathscr{G}_2(\mbb{A}_f) / K_2 = \mathscr{G}_{2, dR}(\mbb{C}) 
\]
sending $[x, g, g']$ to $[f(x), f(g), f(g')]$. But $\mathscr{G}_{i, dR}(\mbb{C})$ is the pushout of the torsor $X_i \times \mathscr{G}_{i}(\mbb{A}_f)/K_i$ along the map $\mathscr{G}_{i}(\mbb{Q}) \to \mathscr{G}_i(\mbb{C})$, and it is clear that this morphism of torsors is induced from the one above. In other words, the Betti--de Rham comparison identifies $\eta_{dR} \otimes_{\ordd_S} \ordd_{S(\mbb{C})}$ and $\eta_B \otimes_{\mbb{Q}} \ordd_{S(\mbb{C})}$.

\begin{proposition} \label{dRpadicComparisonProp}
The de Rham--$p$-adic comparison identifies the natural isomorphisms $\eta_{dR}^{\mathrm{an}} \otimes_{\ordd_{S^{\mathrm{an}}}} \ordd \mathbb{B}_{dR, S^{\mathrm{an}}}$ and $\eta_{\et}^{\mathrm{an}} \otimes_{\hat{\mbb{Q}}_p} \ordd \mathbb{B}_{dR, S^{\mathrm{an}}} $.
\end{proposition}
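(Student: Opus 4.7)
The plan is to reduce the statement to functoriality of the de Rham--$p$-adic comparison for the relative first homology of the universal abelian schemes, and then invoke naturality of this (classical) comparison with respect to morphisms of abelian schemes induced by the morphism of PEL data.

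First, I would reduce to the case of the defining representations. By the work of Ancona \cite{Ancona2015} and Torzewski \cite{Torzewski2019} (cited in the paragraph preceding the proposition), each of the three canonical construction functors $\xi_{i, ?}$ factors through the category of relative Chow motives over $S_i$, applied to the motive of the universal abelian scheme $A_i$. The three comparison isomorphisms (Betti--de Rham, Betti--$p$-adic, de Rham--$p$-adic) are all natural with respect to morphisms of such motives. Consequently it suffices to prove that $\eta_{dR}^{\mathrm{an}} \otimes \ordd\mathbb{B}_{dR}$ and $\eta_{\et}^{\mathrm{an}} \otimes \ordd\mathbb{B}_{dR}$ coincide on the defining representation $W_1$ of $\mathscr{G}_1$ (equivalently, on the first relative homology of $A_1/S_1$), since both natural isomorphisms commute with the standard tensor operations used to build arbitrary representations out of $W_1$ and $W_2$.

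Next, I would pin down the geometric origin of $\eta_{dR}$ and $\eta_{\et}$ on $W_1$. Because $f \colon \mathscr{G}_1 \to \mathscr{G}_2$ arises from a morphism of PEL data, the universal family $A_1/S_1$ is equipped with a canonical $\ordd$-linear (quasi-)isogeny $\phi \colon A_1 \to f^*A_2$ of abelian schemes over $S_1$ (possibly after passing to isogeny, which is harmless as we work rationally), compatible with polarisations and level structures up to the appropriate scaling. On the level of first relative homology, both $\eta_{dR}$ and $\eta_{\et}$ on $W_1$ are, by construction of the canonical models of the standard principal bundles and of the \'etale torsors $S_{\mathscr{G}_1, K_1'} \to S_{\mathscr{G}_2, K_2'}$, induced by $\phi_*$ in de Rham and $p$-adic \'etale homology respectively. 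In other words, under the identifications
\[
\xi_{1, dR}(W_1) = H_1^{dR}(A_1/S_1), \quad f^*\xi_{2, dR}(W_2) = H_1^{dR}(f^*A_2/S_1),
\]
and similarly on the \'etale side, the maps $\eta_{dR}$ and $\eta_{\et}$ become $\phi_*$ on the respective cohomology theories.

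Finally, the conclusion follows from the naturality of the relative de Rham--$p$-adic comparison isomorphism for smooth proper morphisms with respect to morphisms of the underlying family. Applied to $\phi^{\mathrm{an}} \colon A_1^{\mathrm{an}} \to (f^*A_2)^{\mathrm{an}}$ over $S_1^{\mathrm{an}}$, this naturality gives a commutative diagram of $\ordd\mathbb{B}_{dR, S_1^{\mathrm{an}}}$-modules in which the horizontal arrows are the de Rham--$p$-adic comparisons for $A_1$ and $f^*A_2$, and the vertical arrows are $\phi_*^{dR} \otimes \opn{id}$ and $\phi_*^{\et} \otimes \opn{id}$; by the identification in the previous paragraph, these vertical arrows are precisely $\eta_{dR}^{\mathrm{an}} \otimes \opn{id}$ and $\eta_{\et}^{\mathrm{an}} \otimes \opn{id}$, giving the claim on $W_1$ and hence in general. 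The main obstacle is not a technical one but rather the bookkeeping required to identify $\eta_{dR}$ and $\eta_{\et}$ with $\phi_*$ in the two cohomology theories in a manner compatible with the canonical models of the standard principal bundles on both $S_1$ and $S_2$; once this identification is in place, the naturality of the Faltings/Scholze comparison for abelian schemes takes care of the rest.
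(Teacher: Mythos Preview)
Your reduction step has the domain wrong: the natural transformations $\eta_?$ are between the functors $\xi_{1,?}\circ f^*$ and $f^*\circ\xi_{2,?}$, which are defined on $\opn{Rep}(\mathscr{G}_2)$, so the reduction is to the defining representation $W_2$ of $\mathscr{G}_2$, not $W_1$. This is minor but worth fixing.

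The substantive gap is your second step. You assert that a morphism of PEL data furnishes a (quasi-)isogeny $\phi\colon A_1 \to f^*A_2$ over $S_1$ and that $\eta_{dR}$, $\eta_{\et}$ are simply $\phi_*$. This fails in the very case the appendix is written for, namely $f = \opn{det}\colon \mbf{H}\to\mbf{R}$: the universal abelian scheme on the $\mbf{R}$-side is associated with the $2$-dimensional Hermitian space $\bigwedge^n W_1 \oplus \bigwedge^n W_2$, whereas the one on the $\mbf{H}$-side is associated with the $2n$-dimensional space $W_1\oplus W_2$, so their relative dimensions over $S_1$ differ and no isogeny can exist. In general $f^*W_2$ is only a direct summand of a sum of tensor powers of $W_1$ and $W_1^*$, so the relation between $A_1$ and $f^*A_2$ is a motivic one realised by a correspondence, not by a morphism of abelian schemes. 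Naturality of the relative comparison isomorphism with respect to morphisms of families therefore does not apply.

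The paper's proof handles exactly this obstruction. It fixes a presentation $W_2 = e\bigl(\bigoplus_i W_1^{\otimes a_i}\otimes (W_1^*)^{\otimes b_i}\bigr)$ and identifies $\eta_?(W_2)$ with a map $\theta_?$ from the corresponding summand of tensor powers of $\invs{H}^?_1(A_1/S_1)$ to $\invs{H}^?_1(f^*A_2/S_1)$. It then checks the compatibility of $\theta_{dR}$ and $\theta_{\et}$ \emph{pointwise} at number-field points of $S_1$: there $\theta_B$ is a Hodge class on an abelian variety built from copies of $A_1$ and $f^*A_2$; by Deligne \cite{DeligneHodgeCycles} it is absolutely Hodge, and by Blasius \cite{Blasius94} absolute Hodge classes on abelian varieties are de Rham, which is precisely the statement that $\theta_{dR}$ and $\theta_{\et}$ match under the de Rham--$p$-adic comparison. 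Your proposed route via naturality for morphisms of abelian schemes would short-circuit this, but only in the special situation where $f$ is induced by an actual isogeny of the universal families; in the general (and intended) situation one really needs the Deligne--Blasius input.
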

\begin{proof}
Essentially this follows because $\eta_B$ is induced from an (absolute) Hodge cycle for a certain abelian variety, which is known to be de Rham (\cite{Blasius94}). 

Let $W_2$ denote the defining representation of $\mathscr{G}_2$. Since we already know $\eta_B$, $\eta_{dR}$, $\eta_{\et}$ are natural isomorphisms of additive tensor functors (respecting this structure and duals), and every representation $\mathscr{G}_2$ is a direct summand of tensor products of $W_2$ and $W_2^*$, it enough to check that $\eta^{\mathrm{an}}_{dR}(W_2) \otimes_{\ordd_{S^{\mathrm{an}}}} \ordd \mathbb{B}_{dR, S^{\mathrm{an}}} = \eta_{\et}^{\mathrm{an}}(W_2) \otimes_{\hat{\mbb{Q}}_p} \ordd \mathbb{B}_{dR, S^{\mathrm{an}}}$. Fix a presentation 
\[
W_2 = e \left( \bigoplus_{i=1}^k W_1^{\otimes a_i} \otimes (W_1^*)^{\otimes b_i} \right)
\]
for some positive integers $a_i, b_i$ and idempotent $e$. Since $\xi_{1,B}$, $\xi_{1, \opn{dR}}$ and $\xi_{1, \et}$ factor through a functor valued in relative Chow motives, we obtain idempotents $e_B$, $e_{\opn{dR}}$, $e_{\et}$ in the respective target categories which are all compatible under the comparison isomorphisms. 

Let $A_1$ and $A_2$ denote the universal abelian varieties over $S_1$ and $S_2$, and let $f^*A_2$ denote the pullback to $S_1$. For $? = B, \opn{dR}, \et$, the isomorphisms $\eta_{?}(W_2)$ are described by isomorphisms
\[
\xi_{1, ?}(W_2) \cong e_? \left( \bigoplus_{i=1}^k \invs{W}_{1, ?}^{\otimes a_i} \otimes (\invs{W}_{1, ?}^{\vee})^{\otimes b_i} \right) \xrightarrow{\sim} \invs{H}^{?}_1 \left( f^* A_2 / S_1 \right) \cong f^* \invs{H}^?_1 \left( A_2/S_2 \right)
\]
where $\invs{H}^?_1(\cdots)$ denotes first relative homology of the appropriate cohomology theory and the last isomorphism is proper base-change.

We just need to check the middle isomorphism is compatible under the de Rham--\'{e}tale comparison isomorphism. It is enough to check this at points of $S_1$ which are defined over number fields (c.f. the proof of \cite[Proposition 2.3.9]{CS17})) -- let $\theta_?$ denote the middle isomorphism specialised at such a point. By above, we know that $\theta_B$ and $\theta_{\opn{dR}}$ are compatible under the Betti--de Rham comparison, and that $\theta_B$ and $\theta_{\et}$ are compatible under the Betti--\'{e}tale comparison. The result now follows from the fact that $\theta_B$ can be represented as a Hodge class (by using the polarisation and K\"{u}nneth formula) for an abelian variety constructed from copies of $A_1$ and $f^*A_2$. Indeed, by \cite{DeligneHodgeCycles} it is an absolute Hodge class whose de Rham realisation is defined over the field of definition of the point (by the paragraph preceding the proposition). By \cite{Blasius94}, this Hodge class is de Rham, which precisely means that $\theta_{\opn{dR}}$ and $\theta_{\et}$ are compatible under the de Rham--\'{e}tale comparison, as required.
\end{proof}

\begin{remark} \label{FramesOfPullbackRem}
One can show that the pushout $\mathscr{G}_{1, \opn{dR}} \times^{\mathscr{G}_1} \mathscr{G}_2$ is identified with frames of $\invs{H}^{\opn{dR}}_1\left( f^*A_2 / S_1 \right)$ preserving a collection of Hodge tensors coming from a choice of $\mathscr{G}_1$-equivariant embedding $W_2^{\otimes} \subset W_1^{\otimes}$ and the isomorphism $\theta_{\opn{dR}}$ above. The isomorphism $\mathscr{G}_{1, \opn{dR}} \times^{\mathscr{G}_1} \mathscr{G}_2 \to f^*\mathscr{G}_{2, \opn{dR}}$ is then induced from the proper base-change isomorphism $\invs{H}^{\opn{dR}}_1\left( f^*A_2 / S_1 \right) \cong f^* \invs{H}^{\opn{dR}}_1\left(A_2/S_2 \right)$, which matches the Hodge tensors. A similar description also holds for the \'{e}tale and Betti constructions.
\end{remark}

Let $L/F$ be a finite extension and let $\mu_i \colon \mbb{G}_{m, L} \to \mathscr{G}_{i, L}$ be a choice of Hodge cocharacter for the Shimura datum $(\mathscr{G}_i, X_i)$, for $i=1, 2$. We assume that $\mu_2 = f \circ \mu_1$. Fix a prime $\ide{P}$ of $L$ lying above $\ide{p}$, and we base-change the Shimura varieties $S_1$ and $S_2$ to $L_{\ide{P}}$ (but omit this from the notation). For $i=1, 2$ and over $L_{\ide{P}}$, we have two parabolics $\mathscr{P}_i^{\opn{std}}$ and $\mathscr{P}_i$, with common Levi $\mathscr{M}_i$, associated with $\mu_i$. We have pro\'{e}tale torsors over $S_i$ given by:
\begin{align*}
    \invs{P}_{i, \opn{dR}}^{\opn{an}}(U) &\defeq \left\{ \hat{\ordd}_{S_i} \otimes W_i|_U \xrightarrow{\sim} \invs{W}_{i, \opn{dR}}^{\opn{an}} \otimes_{\ordd_{S_i}} \hat{\ordd}_{S_i}|_U : \begin{array}{c} \text{ preserving Hodge filtration } \\ \text{ and Hodge tensors } \end{array} \right\} \\
    \invs{P}_{i, \opn{HT}}^{\opn{an}}(U) &\defeq \left\{ \hat{\ordd}_{S_i} \otimes W_i|_U \xrightarrow{\sim} \invs{W}_{i, \et}^{\opn{an}} \otimes_{\hat{\mbb{Q}}_p} \hat{\ordd}_{S_i}|_U : \begin{array}{c} \text{ preserving Hodge--Tate filtration } \\ \text{ and Hodge tensors } \end{array} \right\}
\end{align*}
where $W_i$ is the defining representation of $\mathscr{G}_i$. These are $\mathscr{P}^{\opn{std}}_{i}$ and $\mathscr{P}_i$ torsors respectively. We denote by $\invs{M}_{i, \opn{dR}}^{\opn{an}}$ and $\invs{M}_{i, \opn{HT}}^{\opn{an}}$ their pushouts to $\mathscr{M}_i$. Then the results of \cite{CS17} imply that $\invs{M}_{i, \opn{dR}}^{\opn{an}} \cong {^\mu \invs{M}_{i, \opn{HT}}^{\opn{an}}}$, where the twist is along $\mu_i$ as in \S \ref{TwistingTorsorsSubSec}. Note that $f(\mathscr{P}_1^{\opn{std}}) \subset \mathscr{P}_2^{\opn{std}}$, $f(\mathscr{P}_1) \subset \mathscr{P}_2$ and $f(\mathscr{M}_1) \subset \mathscr{M}_2$ by the assumption that $\mu_2 = f \circ \mu_1$.

\begin{corollary} \label{FunctOfMTorsorsCor}
We have a commutative diagram of torsors:
\[
\begin{tikzcd}
{\invs{M}_{1, \opn{dR}}^{\opn{an}} \times^{\mathscr{M}_1} \mathscr{M}_2} \arrow[r] \arrow[d] & {f^* \invs{M}^{\opn{an}}_{2, \opn{dR}}} \arrow[d]        \\
{{^\mu \invs{M}_{1, \opn{HT}}^{\opn{an}}} \times^{\mathscr{M}_1} \mathscr{M}_2} \arrow[r]         & {f^* \left( {^\mu \invs{M}^{\opn{an}}_{2, \opn{HT}}} \right)}
\end{tikzcd}
\]
where the horizontal arrows are induced from the natural transformations $\eta_{\opn{dR}}^{\opn{an}}$ and $\eta_{\et}^{\opn{an}}$, and the vertical arrows are induced from the isomorphism of de Rham and twisted Hodge--Tate torsors above.
\end{corollary}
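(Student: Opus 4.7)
The plan is to reduce the assertion to Proposition \ref{dRpadicComparisonProp} by unwinding all four torsors as frames preserving Hodge tensors, and then observing that the entire commutative square is obtained from a single morphism of torsors by twisting along $\mu$ on the one hand and by the de Rham--$p$-adic comparison on the other.

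First, following Remark \ref{FramesOfPullbackRem}, I would identify $\invs{P}_{1,\opn{dR}}^{\opn{an}} \times^{\mathscr{P}^{\opn{std}}_1} \mathscr{P}^{\opn{std}}_2$ (resp.\ $\invs{P}_{1,\opn{HT}}^{\opn{an}} \times^{\mathscr{P}_1} \mathscr{P}_2$) with the torsor $f^*\invs{P}_{2,\opn{dR}}^{\opn{an}}$ (resp.\ $f^*\invs{P}_{2,\opn{HT}}^{\opn{an}}$), the identification being induced by $\theta_{\opn{dR}}^{\opn{an}}$ (resp.\ $\theta_{\et}^{\opn{an}}$) via the proper base-change isomorphism for the first relative (co)homology of the abelian variety $f^*A_2/S_1$ coming from the inclusion $W_2^{\otimes} \subset W_1^{\otimes}$ (and the identification of Hodge tensors under this inclusion). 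Pushing out from $\mathscr{P}^{\opn{std}}_i$ (resp.\ $\mathscr{P}_i$) to $\mathscr{M}_i$ yields the horizontal arrows in the diagram. The key point is that these horizontal arrows are simply the natural transformations $\eta_{\opn{dR}}^{\opn{an}}$ and $\eta_{\et}^{\opn{an}}$ translated into the torsor language, since $\eta_?$ on an arbitrary representation $V$ is obtained from the identification on the defining representation via tensor functoriality.

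Next, I would recall that the vertical identifications $\invs{M}_{i,\opn{dR}}^{\opn{an}} \cong {^\mu \invs{M}_{i,\opn{HT}}^{\opn{an}}}$ from \cite{CS17} are obtained by twisting along $\mu_i$, and by our assumption $\mu_2 = f \circ \mu_1$ so the twisting operations are compatible with pushout along $f$. More precisely, twisting commutes with pushout by Lemma \ref{PropertiesOfTTorsorLemma}(3), and this reduces the commutativity of the square to the commutativity of the corresponding (untwisted) square
\[
\begin{tikzcd}
{\invs{M}_{1,\opn{dR}}^{\opn{an}} \times^{\mathscr{M}_1} \mathscr{M}_2} \arrow[r] \arrow[d] & {f^*\invs{M}_{2,\opn{dR}}^{\opn{an}}} \arrow[d] \\
{\invs{M}_{1,\opn{HT}}^{\opn{an}} \times^{\mathscr{M}_1} \mathscr{M}_2} \arrow[r]          & {f^*\invs{M}_{2,\opn{HT}}^{\opn{an}}}
\end{tikzcd}
\]
where the vertical arrows are induced from the de Rham--$p$-adic comparison (applied after tensoring with $\ordd\mbb{B}_{\opn{dR}}$ and then extracting the torsor of trivialisations on the associated graded, or equivalently by passing to the perfectoid Shimura variety where the torsors are literally identified).

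Finally, the commutativity of this untwisted diagram is precisely the assertion of Proposition \ref{dRpadicComparisonProp} applied to the defining representation $W_2$: both $\eta_{\opn{dR}}^{\opn{an}}(W_2)$ and $\eta_{\et}^{\opn{an}}(W_2)$ are induced from the isomorphism $\theta_?$ on $\invs{H}_1^?(f^*A_2/S_1) \xrightarrow{\sim} f^*\invs{H}_1^?(A_2/S_2)$, and Proposition \ref{dRpadicComparisonProp} says that $\theta_{\opn{dR}}^{\opn{an}}$ and $\theta_{\et}^{\opn{an}}$ are identified under the de Rham--$p$-adic comparison, which in torsor-theoretic terms is exactly the commutativity we need. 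The main obstacle, or rather the bookkeeping step that requires care, will be verifying that pushout along $f$ and twisting along $\mu$ commute with each other in the correct way; but this is a direct consequence of $\mu_2 = f \circ \mu_1$ together with the functoriality properties in Lemma \ref{PropertiesOfTTorsorLemma}, so no serious new input is required beyond Proposition \ref{dRpadicComparisonProp}.
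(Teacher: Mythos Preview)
Your overall strategy---reduce to Proposition \ref{dRpadicComparisonProp} by interpreting the torsors as frames and invoking Remark \ref{FramesOfPullbackRem}---is the same as the paper's, and the endpoint is correct. However, the intermediate ``untwisting'' step is problematic as written.

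You claim that Lemma \ref{PropertiesOfTTorsorLemma}(3) reduces the twisted square to an untwisted one with vertical maps $\invs{M}_{i,\opn{dR}}^{\opn{an}} \to \invs{M}_{i,\opn{HT}}^{\opn{an}}$. But no such maps exist naturally: the de Rham--$p$-adic comparison, once you pass to associated gradeds, produces isomorphisms $\opn{gr}^j \invs{E} \cong \opn{gr}_j(\mbb{L}\otimes\hat{\ordd})(j)$, and that Tate twist $(j)$ is precisely what the $\mu$-twist records in torsor language. So the vertical arrows land in ${^\mu \invs{M}_{i,\opn{HT}}^{\opn{an}}}$ from the outset---there is nothing to untwist. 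Lemma \ref{PropertiesOfTTorsorLemma}(3) lets you rewrite the bottom row as ${^\mu}(\cdots)$, but the top row is not of that form, so the square is not ${^\mu}$ applied to a simpler square.

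The paper handles this by staying at the sheaf level rather than attempting an untwisting. It writes down the commutative square of $\ordd\mbb{B}_{\opn{dR}}$-modules coming from Proposition \ref{dRpadicComparisonProp}, takes $\nabla=0$ parts to pass from $\ordd\mbb{B}_{\opn{dR}}^+$-modules to $\mbb{B}_{\opn{dR}}^+$-modules (checking that $f^*$ commutes with this since the graded pieces are locally free), and then passes to the associated graded of the relative Hodge filtration. This yields a commutative square relating $f^*\opn{gr}^j\invs{E} \to \opn{gr}^j\invs{E}'$ with the analogous map on $\opn{gr}_j(\mbb{L}\otimes\hat{\ordd})(j)$, which is the statement of the corollary once one checks (as the paper does) that $\theta_{\opn{dR}}$ and $\theta_{\et}$ preserve the Hodge and Hodge--Tate filtrations respectively, so that the pushout torsors really are torsors of filtration-preserving frames. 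Your parenthetical ``applied after tensoring with $\ordd\mbb{B}_{\opn{dR}}$ and then extracting the torsor of trivialisations on the associated graded'' gestures at this, but it is the substance of the argument, not a side remark---and once you carry it out you will see the twist is intrinsic, not removable.
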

\begin{proof}
Let $\pi \colon A_2 \to S_2$ denote the universal abelian variety and $f^{-1}\pi \colon f^*A_2 \to S_1$ its pullback under $f$. To simplify notation, set $\invs{E} \defeq \invs{H}^1_{\opn{dR}}(A_2/S_2)$, $\invs{E}' \defeq \invs{H}^1_{\opn{dR}}(f^*A_2/S_1)$, $\mbb{L} \defeq R^1\pi_* \hat{\mbb{Z}}_{p, A_2}$ and $\mbb{L}' \defeq R^1(f^{-1}\pi)_* \hat{\mbb{Z}}_{p, f^*A_2}$. By Proposition \ref{dRpadicComparisonProp} and Remark \ref{FramesOfPullbackRem}, we know that the following diagram commutes:
\[
\begin{tikzcd}
{f^*\left(\invs{E} \otimes_{\ordd_{S_2}} \ordd\mbb{B}_{\opn{dR}, S_2} \right)} \arrow[d] \arrow[r, equal] & {f^* \invs{E} \otimes_{\ordd_{S_1}} \ordd \mbb{B}_{\opn{dR}, S_1}} \arrow[r, "\sim"]    & {\invs{E}' \otimes_{\ordd_{S_1}} \ordd\mbb{B}_{\opn{dR}, S_1}} \arrow[d] \\
{f^*\left( \mbb{L} \otimes_{\hat{\mbb{Z}}_p} \ordd\mbb{B}_{\opn{dR}, S_2} \right)} \arrow[r, equal]       & {f^* \mbb{L} \otimes_{\hat{\mbb{Z}}_p} \ordd \mbb{B}_{\opn{dR}, S_1}} \arrow[r, "\sim"] & {\mbb{L}' \otimes_{\hat{\mbb{Z}}_p} \ordd\mbb{B}_{\opn{dR}, S_1}}       
\end{tikzcd}
\]
where the horizontal arrows are the proper base-change isomorphisms and the vertical arrows (which are isomorphisms) arise from the comparisons of relative $p$-adic Hodge theory. The module $\invs{E} \otimes_{\ordd_{S_2}} \ordd\mbb{B}_{\opn{dR}, S_2}^+$ is an $\ordd \mbb{B}^+_{\opn{dR}}$-module with an integrable connection, so satisfies
\[
\invs{E} \otimes_{\ordd_{S_2}} \ordd\mbb{B}_{\opn{dR}, S_2}^+ = \mbb{M}_0 \otimes_{\mbb{B}^+_{\opn{dR, S_2}}} \ordd \mbb{B}_{\opn{dR, S_2}}
\]
where $\mbb{M}_0 = \left(\invs{E} \otimes_{\ordd_{S_2}} \ordd\mbb{B}_{\opn{dR}, S_2}^+\right)^{\nabla = 0}$ (see \cite[Theorem 7.2]{Scholze_2013}). Hence
\[
f^* \left( \invs{E} \otimes_{\ordd_{S_2}} \ordd\mbb{B}_{\opn{dR}, S_2}^+ \right)^{\nabla = 0} = \left( f^* \invs{E} \otimes_{\ordd_{S_1}} \ordd \mbb{B}_{\opn{dR}, S_1} \right)^{\nabla = 0} = f^*\mbb{M}_0 .
\]
Set $\mbb{M}_0' = \left(\invs{E}' \otimes_{\ordd_{S_1}} \ordd\mbb{B}_{\opn{dR}, S_1}\right)^{\nabla = 0}$, $\mbb{M} = \mbb{L} \otimes_{\hat{\mbb{Z}}_p} \ordd \mbb{B}^+_{\opn{dR}, S_2}$ and $\mbb{M}' = \mbb{L}' \otimes_{\hat{\mbb{Z}}_p} \ordd \mbb{B}^+_{\opn{dR}, S_1}$. Since the base-change maps are compatible with structures, they induce isomorphisms
\begin{eqnarray}
f^* \mbb{M}_0 & \xrightarrow{\sim} & \mbb{M}_0' \label{BoldM0Iso} \\
f^* \mbb{M} & \xrightarrow{\sim} & \mbb{M}' \label{BoldMIso}
\end{eqnarray}
and we have a commutative diagram:
\[
\begin{tikzcd}
{f^*\mbb{M}_0 \otimes_{\mbb{B}^+_{\opn{dR}, S_1}} \mbb{B}_{\opn{dR}, S_1}} \arrow[d] \arrow[r, "(\ref{BoldM0Iso}) \otimes 1"] & {\mbb{M}_0' \otimes_{\mbb{B}^+_{\opn{dR}, S_1}} \mbb{B}_{\opn{dR}, S_1}} \arrow[d] \\
{f^*\mbb{M} \otimes_{\mbb{B}^+_{\opn{dR}, S_1}} \mbb{B}_{\opn{dR}, S_1}} \arrow[r] \arrow[r, "(\ref{BoldMIso}) \otimes 1"]    & {\mbb{M}' \otimes_{\mbb{B}^+_{\opn{dR}, S_1}} \mbb{B}_{\opn{dR}, S_1}}            
\end{tikzcd}
\]
where the vertical arrows are as in \cite[Proposition 2.2.3]{CS17}. In particular, considering the relative Hodge filtration as in \emph{op.cit.} and passing to gradeds, we have 
\[
\begin{tikzcd}
f^* \opn{gr}^j\invs{E} \arrow[d] \arrow[r]                                                      & \opn{gr}^j \invs{E}' \arrow[d]                                     \\
f^*\left( \opn{gr}_j (\mbb{L} \otimes_{\hat{\mbb{Z}}_p} \hat{\ordd}_{S_2})(j) \right) \arrow[r] & \opn{gr}_j (\mbb{L}' \otimes_{\hat{\mbb{Z}}_p} \hat{\ordd}_{S_1} )(j)
\end{tikzcd}
\]
for all $j \geq 0$. Note that the pullback $f^*$ preserves the relevant filtrations because each graded piece is locally free. This last commutative diagram (or more precisely its dual version) describes the compatibility we desire in the statement of the corollary. Indeed, the isomorphism $\theta_{\opn{dR}}$ in the proof of Proposition \ref{dRpadicComparisonProp} preserves Hodge filtrations, and by a similar argument above, one can show $\theta_{\et}$ preserves relative Hodge--Tate filtrations. Therefore the pushouts $\invs{P}^{\opn{an}}_{1, \opn{dR}} \times^{\mathscr{P}_1^{\opn{std}}} \mathscr{P}_2^{\opn{std}}$ and $\invs{P}^{\opn{an}}_{1, \opn{HT}} \times^{\mathscr{P}_1} \mathscr{P}_2$ can be described as frames of $\invs{H}^{\opn{dR}}_1\left(f^*A_2/S_1 \right)$ and $\invs{H}^{\et}_1\left(f^*A_2/S_1 \right)$ respectively, preserving Hodge tensors and filtrations.
\end{proof}

\subsection{Perfectoid Shimura varieties} \label{AppendixPerfectoidShimuraVarieties}

Continuing with the set-up as in the previous subsection, assume that $K_i$ is of the form $K_i^p \times K_{i, p} \subset \mathscr{G}_i(\mbb{A}_f^p) \times \mathscr{G}_i(\mbb{Q}_p)$ for $i=1, 2$. Let $\invs{S}_1$ and $\invs{S}_2$ denote the adic spaces over $F_{\ide{p}}$ associated with $S_1$ and $S_2$, and let $\invs{S}_{i, \infty}$ denote the perfectoid Shimura variety (of tame level $K_i^p$), as constructed in \cite{ScholzeTorsion}. Then \cite[Theorem 1.10]{CS17}, implies that we have a commutative diagram
\[
\begin{tikzcd}
{\invs{S}_{1, \infty}} \arrow[d] \arrow[r, "{\pi_{\mathrm{HT}, 1}}"] & \mathtt{FL}_1 \arrow[d] \\
{\invs{S}_{2, \infty}} \arrow[r, "{\pi_{\mathrm{HT}, 2}}"]           & \mathtt{FL}_2          
\end{tikzcd}
\]
where $\mathtt{FL}_i$ denotes the adic flag variety associated with the Shimura datum $(\mathscr{G}_i, X_i)$ and $\pi_{\mathrm{HT}, i}$ is the corresponding Hodge--Tate period morphism. Both of the vertical maps are induced from $f$. 

For $i=1, 2$ consider the torsor $\mathtt{FL}_i \times \mathscr{G}_i(\mbb{Q}_p)$ with the right action of $\mathscr{G}_i(\mbb{Q}_p)$ given by $(x, g) \cdot g' = (xg', (g')^{-1}g)$. We then obtain a torsor
\[
\pi_{\mathrm{HT}}^*(\mathtt{FL}_i \times \mathscr{G}_i(\mbb{Q}_p)) /K_{i, p} = \invs{S}_{i, \infty} \times^{K_{i, p}} \mathscr{G}_i(\mbb{Q}_p)
\]
over $\invs{S}_i$. By the description of $\xi_{\et}$ as above, and the fact that $\invs{S}_{i, \infty}$ is essentially the limit $\varprojlim_{K_{i, p}'} \invs{S}_{\mathscr{G}_i, K_i^pK_{i, p}'}$, this torsor encodes $\xi_{\et}^{\mathrm{an}}$. 

We have a natural map of torsors $\mathtt{FL}_1 \times \mathscr{G}_1(\mbb{Q}_p) \to \mathtt{FL}_2 \times \mathscr{G}_2(\mbb{Q}_p)$ induced from $f$, which is compatible with the equivariant structure. Pulling back along the Hodge--Tate period morphism and descending, we obtain a natural transformation $\eta' \colon \xi_{1, \et} \circ f^* \to f^* \circ \xi_{2, \et}$.

\begin{lemma} \label{EtaleAndFlagCoincide}
The natural transformations $\eta_{\et}^{\mathrm{an}}$ and $\eta'$ coincide.
\end{lemma}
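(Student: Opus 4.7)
The plan is to reduce the equality of $\eta_{\et}^{\mathrm{an}}$ and $\eta'$ to the compatibility of the morphism $f \colon \mathscr{G}_1 \to \mathscr{G}_2$ at finite level versus at infinite (perfectoid) level. Both natural transformations are induced from the same morphism $f$; only the site at which the construction is carried out differs.

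First I would recall, following \cite[\S 4]{ACES} and \cite{ScholzeTorsion}, that the perfectoid Shimura variety $\invs{S}_{i,\infty}$ represents the inverse limit $\varprojlim_{K_{i,p}'} \invs{S}_{\mathscr{G}_i, K_i^p K_{i,p}'}$ as a diamond (or pro-Kummer étale, after passing to rational points of bounded level). For $V \in \opn{Rep}(\mathscr{G}_i)$ with a $\mathscr{G}_i(\mbb{Z}_p)$-stable lattice $T \subset V_{\mbb{Q}_p}$, the local system $\xi_{i,\et}(V)^{\opn{an}}$ is realised both as the descent of the constant pro-system $T$ along the pro-étale $K_{i,p}$-torsor
\[
\invs{S}_{i,\infty} \to \invs{S}_i,
\]
and as the descent of $T$ along the tower of finite étale covers $\invs{S}_{\mathscr{G}_i, K_i^p K_{i,p}'} \to \invs{S}_i$ for $K_{i,p}' \subset K_{i,p}$. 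These two descriptions are canonically identified, because the pro-étale $K_{i,p}$-torsor $\invs{S}_{i,\infty} \to \invs{S}_i$ is (by construction of $\invs{S}_{i,\infty}$) the inverse limit of the finite étale $K_{i,p}/K_{i,p}'$-torsors in the tower. Under this identification, the torsor $\invs{S}_{i,\infty} \times^{K_{i,p}} \mathscr{G}_i(\mbb{Q}_p)$ pro-étale locally trivialises $\xi_{i,\et}(V)^{\opn{an}}$ in exactly the same way.

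Next I would observe that the morphism $\invs{S}_{1,\infty} \to \invs{S}_{2,\infty}$ constructed by Caraiani--Scholze arises as the inverse limit of the morphisms $\invs{S}_{\mathscr{G}_1, K_1^p K_{1,p}'} \to \invs{S}_{\mathscr{G}_2, K_2^p K_{2,p}'}$ induced from $f$, whenever $f(K_1^p K_{1,p}') \subset K_2^p K_{2,p}'$ (see \cite[Theorem 1.10]{CS17}). Therefore the map of torsors
\[
\invs{S}_{1,\infty} \times^{K_{1,p}} \mathscr{G}_1(\mbb{Q}_p) \to f^*\bigl(\invs{S}_{2,\infty} \times^{K_{2,p}} \mathscr{G}_2(\mbb{Q}_p)\bigr)
\]
used to define $\eta'$ is, after pulling back along the natural map from the pro-étale cover $\invs{S}_{1,\infty}$ to each finite-level cover, the inverse limit (over $K_{1,p}'$ with compatible $K_{2,p}'$) of the maps of finite étale torsors induced by $f$ that define $\eta_{\et}^{\opn{an}}$.

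The main (and really only) step is to check that the descent data match: the two constructions agree on the pro-étale cover $\invs{S}_{1,\infty}$ by the above functoriality of the perfectoid variety under $f$, and both are equivariant under $K_{1,p}$ acting by right translation with the prescribed compatibility on $T$. I expect this compatibility check to be essentially formal once one writes out both torsors as pro-systems of finite étale $K_{i,p}/K_{i,p}'$-torsors; the only mild subtlety is to make sure that for every sufficiently small $K_{1,p}'$ one can choose $K_{2,p}'$ with $f(K_{1,p}') \subset K_{2,p}'$ and that the resulting system is cofinal in the tower defining $\xi_{2,\et}$, which follows because $f \colon \mathscr{G}_1(\mbb{Q}_p) \to \mathscr{G}_2(\mbb{Q}_p)$ is continuous. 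Concluding, the two natural transformations $\eta_{\et}^{\opn{an}}$ and $\eta'$ are both obtained by descending the same map of pro-systems, hence coincide.
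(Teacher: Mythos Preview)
Your proposal is correct and takes essentially the same approach as the paper: the paper's proof simply appeals to the commutative diagram with the Hodge--Tate period maps and the fact that, on underlying topological spaces, the map $\invs{S}_{1,\infty} \to \invs{S}_{2,\infty}$ is the inverse limit of the finite-level maps $\invs{S}_{\mathscr{G}_1, K_1'} \to \invs{S}_{\mathscr{G}_2, K_2'}$. You have spelled out the descent argument in more detail (identifying the two descriptions of $\xi_{i,\et}^{\mathrm{an}}$, checking cofinality, matching descent data), but the substance is identical.
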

\begin{proof}
This follows from the above commutative diagram and (on the level of topological spaces) the map $\invs{S}_{1, \infty} \to \invs{S}_{2, \infty}$ is the inverse limit of (the analytification of) maps $S_{\mathscr{G}_1, K'_1} \to S_{\mathscr{G}_2, K'_2}$. 
\end{proof}


\section{Unitary base change}

In this appendix, we describe how the results on endsocopic classification of unitary groups in \cite{Mok} and \cite{KMSW14} imply a certain strong multiplicity one theorem for automorphic representations of $\mbf{G}(\mbb{A})$. Note that these cited papers are conditional on the stabilisation of the trace formula for unitary groups. Throughout, we let $\mbf{G}$ and $\mbf{G}_0$ be as in \S \ref{PreliminarySection}, and we write $U$ for the unitary group over $F^+$ associated with $W$ (so $\mbf{G}_0 = \opn{Res}_{F^+/\mbb{Q}}U$). As usual, we assume that $F$ contains an imaginary quadratic number field $E$.

\begin{lemma} \label{ParahoricRestrictionLemma}
Let $\ell$ be any (finite) rational prime. Then there exists a good special maximal compact open subgroup $K \subset \mbf{G}(\mbb{Q}_\ell)$ (as in \cite[\S 2.1]{Minguez}) such that the intersection $K \cap \mbf{G}_0(\mbb{Q}_\ell) \subset \mbf{G}_0(\mbb{Q}_\ell)$ is a good special maximal compact open subgroup. Furthermore, if $\mbf{G}_{\mbb{Q}_\ell}$ is unramified, we can arrange it so that both $K$ and $K \cap \mbf{G}_0(\mbb{Q}_\ell)$ are hyperspecial.
\end{lemma}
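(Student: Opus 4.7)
The plan is to construct $K$ as a hybrid object: the stabiliser in $\mbf{G}(\mbb{Q}_\ell)$ of a lattice in $W\otimes_{\mbb{Q}}\mbb{Q}_\ell$, further cut out by the condition that the similitude lie in $\mbb{Z}_\ell^\times$. First I would exploit the decomposition $W\otimes_{\mbb{Q}}\mbb{Q}_\ell = \bigoplus_{v\mid\ell} W_v$, where $v$ runs over places of $F^+$ above $\ell$ and $W_v = W\otimes_{F^+}F^+_v$. Accordingly one has
\[
\mbf{G}_0(\mbb{Q}_\ell) = \prod_{v\mid\ell} U(W_v)(F^+_v), \qquad \mbf{G}(\mbb{Q}_\ell) = \left\{ g\in \mbb{Q}_\ell^\times \times \prod_{v\mid \ell} GU(W_v)(F^+_v) : c(g)\in \mbb{Q}_\ell^\times\right\},
\]
so the construction reduces to a local problem at each place $v$ together with a single global scalar condition.

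For each $v\mid\ell$, pick a lattice $L_v\subset W_v$ (self-dual if the residue characteristic permits, otherwise an almost self-dual lattice as in Mínguez's classification) whose $U(W_v)(F^+_v)$-stabiliser $K_{0,v}$ is a good special maximal compact subgroup in the sense of \cite[\S 2.1]{Minguez}, and hyperspecial when $U(W_v)$ is unramified. Let $K^\flat_v\subset GU(W_v)(F^+_v)$ be the stabiliser of $L_v$ in the similitude group. Since $GU(W_v)$ and $U(W_v)$ share their derived groups and thus their Bruhat--Tits building, $K^\flat_v$ is again a good special maximal compact subgroup (hyperspecial in the unramified situation), and by construction $K^\flat_v \cap U(W_v)(F^+_v) = K_{0,v}$, as the similitude of an element stabilising $L_v$ is automatically a unit.

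I would then define
\[
K \defeq \left( \prod_{v\mid \ell} K^\flat_v \right) \cap c^{-1}(\mbb{Z}_\ell^\times) \subset \mbf{G}(\mbb{Q}_\ell).
\]
One checks immediately that $K$ is maximal compact open (it is the preimage of the maximal compact $\mbb{Z}_\ell^\times$ in $\mbb{Q}_\ell^\times$ under the similitude, inside a compact-modulo-centre group), and that
\[
K \cap \mbf{G}_0(\mbb{Q}_\ell) = \prod_{v\mid\ell} K_{0,v}
\]
since the similitude character is trivial on $\mbf{G}_0$; this intersection is good special maximal by the choice of each $K_{0,v}$. The "good special" property for $K$ itself follows from the local goodness at each $v$, since Mínguez's definition is compatible with restriction of scalars and with the passage from $U$ to $GU$ (the extra $\mbb{Z}_\ell^\times$ factor is manifestly special in $\mbb{Q}_\ell^\times$).

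For the unramified case, $\mbf{G}_{\mbb{Q}_\ell}$ admits a reductive model $\mbf{G}_{\mbb{Z}_\ell}$, and one takes $K = \mbf{G}(\mbb{Z}_\ell)$, which is hyperspecial by definition; its intersection with $\mbf{G}_0(\mbb{Q}_\ell)$ is $\mbf{G}_0(\mbb{Z}_\ell)$, hyperspecial because $\mbf{G}_0$ inherits a reductive model as the kernel of the smooth similitude character $c\colon \mbf{G}_{\mbb{Z}_\ell}\to \mbb{G}_{m,\mbb{Z}_\ell}$. The main obstacle I expect is the bookkeeping of Mínguez's axioms for $GU$ rather than $U$: one must verify that the stabiliser $K^\flat_v$ and the scalar-constraint intersection really do satisfy the precise local goodness conditions in \emph{loc.~cit.}, which is best done by reducing to the $U$-case via the exact sequence $1\to U(W_v)\to GU(W_v)\xrightarrow{c} F^{+,\times}_v\to 1$ and noting that each step is compatible with the choice of lattice.
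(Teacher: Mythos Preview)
Your approach via explicit lattice stabilisers is genuinely different from the paper's. The paper argues abstractly through Bruhat--Tits theory: since $\mbf{G}_0$ and $\mbf{G}$ have the same adjoint group, they share the same building over $\mbb{Q}_\ell$, and the induced map on Kottwitz groups is injective (concretely $\mbb{Z}/2\mbb{Z}\hookrightarrow \mbb{Z}\times\mbb{Z}/2\mbb{Z}$, inclusion into the second factor). One then invokes the Haines--Rapoport description of parahorics as kernels of the Kottwitz homomorphism inside facet stabilisers: a special (resp.\ hyperspecial) vertex in the common building yields a good special (resp.\ hyperspecial) parahoric $K\subset\mbf{G}(\mbb{Q}_\ell)$, and injectivity of the Kottwitz map forces $K\cap\mbf{G}_0(\mbb{Q}_\ell)$ to be precisely the corresponding parahoric in $\mbf{G}_0(\mbb{Q}_\ell)$.

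Your constructive route can in principle be completed, but the step you yourself flag as the obstacle is exactly where the content lies. Two points. First, your description of $\mbf{G}(\mbb{Q}_\ell)$ carries a spurious $\mbb{Q}_\ell^\times$ factor; the correct description is the subgroup of $\prod_{v\mid\ell} GU(W_v)(F^+_v)$ where all local similitudes agree and lie in $\mbb{Q}_\ell^\times$. In particular $\mbf{G}_{\mbb{Q}_\ell}$ is not a Weil restriction of the $GU(W_v)$, so ``compatibility of good special with restriction of scalars'' does not apply directly. Second, lattice stabilisers are in general only facet stabilisers, not parahorics; the discrepancy is controlled by the Kottwitz homomorphism, so verifying that your $K$ is good special maximal amounts to the same Kottwitz computation the paper uses from the outset. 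The paper's abstract argument thus buys you a clean proof without choosing lattices or tracking M\'inguez's axioms through the similitude extension.
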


\begin{proof}
This follows from the fact that:
\begin{itemize}
    \item $\mbf{G}_0$ and $\mbf{G}$ have the same adjoint group.
    \item The induced map from the Kottwitz group of $\mbf{G}_0$ to that of $\mbf{G}$ is injective. More precisely, the Kottwitz group of the former is $\mbb{Z}/2\mbb{Z}$, of the latter is $\mbb{Z} \times  \mbb{Z}/2\mbb{Z}$ and the induced map is inclusion into the second factor.
\end{itemize}
One then applies the description of all parahoric subgroups as in \cite{HainesRapoport}.
\end{proof}

Let $\ell$ be a finite rational prime. Then the results of \cite{Mok}, \cite{KMSW14} imply that there exists a local (standard) base change map $\opn{BC}_\ell$ from irreducible admissible representations of $\mbf{G}_0(\mbb{Q}_\ell) \cong \prod_{v | \ell} U(F^+_v)$ to irreducible admissible representations of $\mbf{G}_0(\mbb{Q}_\ell \otimes_{\mbb{Q}} E) \cong U(\mbb{Q}_\ell \otimes_{\mbb{Q}} F) \cong \prod_{w | \ell} \opn{GL}_{2n}(F_w)$ (this can be defined unconditionally if all primes above $\ell$ split in $F/F^+$, or if the group and representation are both unramified).

\begin{lemma} \label{LocalMultOneLemma}
Let $\ell$ be an odd rational prime. Let $K \subset \mbf{G}(\mbb{Q}_\ell)$ be a good special maximal compact open subgroup as in Lemma \ref{ParahoricRestrictionLemma}, and let $\pi$ and $\sigma$ be irreducible admissible unitary representations of $\mbf{G}(\mbb{Q}_\ell)$. Suppose that:
\begin{itemize}
    \item There exist irreducible admissible unitary representations $\pi_0$ and $\sigma_0$ of $\mbf{G}_0(\mbb{Q}_\ell)$ such that $\pi_0 \subset \pi|_{\mbf{G}_0(\mbb{Q}_\ell)}$, $\sigma_0 \subset \sigma|_{\mbf{G}_0(\mbb{Q}_\ell)}$ and $\opn{BC}_\ell(\pi_0) \cong \opn{BC}_\ell(\sigma_0)$
    \item Both $\pi^K \neq 0$ and $\sigma^K \neq 0$.
\end{itemize}
Then $\pi \cong \sigma$ or $\pi \cong \sigma \otimes \omega$, where $\omega$ is the quadratic character associated with $E \otimes_{\mbb{Q}} \mbb{Q}_\ell / \mbb{Q}_{\ell}$. In particular, if $\ell$ ramifies or splits in $E/\mbb{Q}$, then $\pi \cong \sigma$.
\end{lemma}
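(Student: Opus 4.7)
The plan is to reduce to a question about $L$-packets for the unitary group $\mbf{G}_0$, recover information on $\mbf{G}$ via Clifford theory, and then pin down the residual twist using the $K$-fixed hypothesis together with the structure of the similitude factor. First, the fibres of the local base-change map $\opn{BC}_\ell$ are identified by the endoscopic classification of \cite{Mok, KMSW14} (together with the standard reduction from the non-tempered to the tempered case using unitarity) with local $L$-packets for $\mbf{G}_0(\mbb{Q}_\ell)$; consequently, the hypothesis $\opn{BC}_\ell(\pi_0) \cong \opn{BC}_\ell(\sigma_0)$ places $\pi_0$ and $\sigma_0$ in a common $L$-packet $\Pi_\phi$.

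Next, because $\mbf{G}_0$ is normal in $\mbf{G}$ with connected abelian quotient $\mbf{G}/\mbf{G}_0 \cong \mbb{G}_m$, Clifford--Mackey theory guarantees that $\pi|_{\mbf{G}_0(\mbb{Q}_\ell)}$ is a multiplicity-free sum whose constituents form a single $\mbf{G}(\mbb{Q}_\ell)$-orbit inside $\Pi_\phi$ (similarly for $\sigma$), and that two irreducibles of $\mbf{G}(\mbb{Q}_\ell)$ whose restrictions to $\mbf{G}_0(\mbb{Q}_\ell)$ share a common constituent differ by tensoring with a character of the form $\chi \circ c$ for some character $\chi \colon \mbb{Q}_\ell^\times \to \mbb{C}^\times$. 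Using the similitude-group refinement of the endoscopic classification (so that $\mbf{G}(\mbb{Q}_\ell)$-orbits inside $\Pi_\phi$ are controlled by twists pulled back from the similitude factor), I would twist $\sigma$ by a suitable $\chi_0 \circ c$ so that $\pi_0$ appears as a constituent of $\sigma|_{\mbf{G}_0(\mbb{Q}_\ell)}$; after this reduction, $\sigma \cong \pi \otimes (\chi \circ c)$ for some character $\chi$ that is well-defined modulo the image $c(\mbf{G}(\mbb{Q}_\ell)) \subset \mbb{Q}_\ell^\times$.

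Finally, to constrain $\chi$, I would use two observations. The $K$-fixed-vector hypothesis on both $\pi$ and $\sigma = \pi \otimes (\chi \circ c)$ forces $\chi$ to be trivial on the image $c(K) \subset \mbb{Q}_\ell^\times$, which by the choice of $K$ in Lemma~\ref{ParahoricRestrictionLemma} equals $\mbb{Z}_\ell^\times$, so $\chi$ is unramified. On the other hand, a direct calculation with the signature conditions on $W$ and the definition of the similitude identifies the cokernel of $c \colon \mbf{G}(\mbb{Q}_\ell) \to \mbb{Q}_\ell^\times$ with the group of norm classes for the quadratic étale extension $E \otimes_{\mbb{Q}} \mbb{Q}_\ell / \mbb{Q}_\ell$, so $\chi$ is defined only modulo the quadratic character $\omega$. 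Combining both constraints yields $\chi \in \{1, \omega\}$, giving $\pi \cong \sigma$ or $\pi \cong \sigma \otimes \omega$. The ``in particular'' statement is then immediate: when $\ell$ splits in $E$ we have $\omega = 1$; when $\ell$ ramifies in $E$ the character $\omega$ is itself ramified and incompatible with $\chi$ being unramified, forcing $\chi = 1$.

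The main obstacle will be the matching step in the second paragraph: one needs a precise understanding of how the $L$-packet $\Pi_\phi$ decomposes into $\mbf{G}(\mbb{Q}_\ell)$-conjugation orbits and of how these orbits correspond to twists by characters of the similitude. This is a genuine input from the similitude-group refinement of the Mok--KMSW endoscopic classification, available in the tempered setting relevant here, rather than a formal consequence of the unitary-group statement.
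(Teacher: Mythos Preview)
Your overall architecture matches the paper's: show $\pi$ and $\sigma$ share a $\mbf{G}_0(\mbb{Q}_\ell)$-constituent, deduce $\pi \cong \sigma \otimes (\chi \circ c)$ via \cite[Proposition~4.1.3]{LabesseSchwermer}, then constrain $\chi$ using $K$-sphericality and the structure of $c(\mbf{G}(\mbb{Q}_\ell))$. The substantive divergence is in the first step, and that is where the gap you flag becomes a genuine problem.

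Note first that twisting $\sigma$ by $\chi_0 \circ c$ does not change $\sigma|_{\mbf{G}_0(\mbb{Q}_\ell)}$, since $\mbf{G}_0 = \ker(c)$; so the manoeuvre ``twist $\sigma$ so that $\pi_0$ appears as a constituent'' cannot work as written. What you actually need is that $\pi_0$ and $\sigma_0$, lying in the same $L$-packet $\Pi_\phi$, fall into the same $\mbf{G}(\mbb{Q}_\ell)$-conjugation orbit. This is not a formal consequence of the unitary endoscopic classification and would require exactly the similitude-group input you identify as the obstacle.

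The paper sidesteps this entirely by deploying the $K$-spherical hypothesis at the \emph{start} rather than the end. Since $K_0 := K \cap \mbf{G}_0(\mbb{Q}_\ell)$ is a good special maximal compact (Lemma~\ref{ParahoricRestrictionLemma}), one can choose $K_0$-spherical constituents $\pi_0' \subset \pi|_{\mbf{G}_0}$ and $\sigma_0' \subset \sigma|_{\mbf{G}_0}$; their Satake parameters are determined by those of $\pi$ and $\sigma$, so $\opn{BC}_\ell(\pi_0') \cong \opn{BC}_\ell(\pi_0)$ and likewise for $\sigma$. Injectivity of local base-change on parameters, together with the fact that a $K_0$-spherical representation is uniquely determined by its Satake parameters, then gives $\pi_0' \cong \sigma_0'$ directly --- no structural result about $\mbf{G}$-orbits in $L$-packets is needed. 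A smaller point: your claim that $\pi^K \neq 0$ and $(\pi \otimes \chi \circ c)^K \neq 0$ force $\chi$ to be trivial on $c(K)$ is not immediate, since the two $K$-fixed vectors need not correspond; the paper instead argues that a $K$-spherical representation is a subquotient of an unramified principal series, and that twisting by a ramified $\chi \circ c$ produces a ramified inducing character (using that the similitude image of a minimal Levi contains $\mbb{Z}_\ell^\times$), contradicting $K$-sphericality.
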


\begin{proof}
Suppose for the moment that $\pi$ and $\sigma$ share an irreducible constituent under the action of $\mbf{G}_0(\mbb{Q}_\ell)$. Then \cite[Proposition 4.1.3]{LabesseSchwermer} implies that $\pi \cong \sigma \otimes \chi$ for some character of the quotient $\mbf{G}_0(\mbb{Q}_\ell)\mbf{Z}_{\mbf{G}}(\mbb{Q}_\ell) \backslash \mbf{G}(\mbb{Q}_\ell)$. But this quotient is contained in $N((E \otimes \mbb{Q}_\ell)^{\times}) \backslash \mbb{Q}_\ell^{\times}$ via the similitude character, where $N$ denotes the norm map, hence $\chi$ is either the trivial character or the quadratic character $\omega$. If $\ell$ is split then $\omega = 1$, otherwise if $\ell$ is ramified, then $\omega$ is ramified. But since $\pi$ and $\sigma$ are $K$-spherical, they cannot be isomorphic via a ramified twist. The latter is true because the image of the $\mbb{Q}_\ell$-points of a Levi of a minimal parabolic in $\mbf{G}_{\mbb{Q}_{\ell}}$ under the similitude map contains $\mbb{Z}_{\ell}^{\times}$ (by the structure of \emph{even-dimensional} unitary groups in \cite[Example 3.2]{Minguez} and that any non-trivial quadratic form in two or more variables represents every element of $\mbb{F}_{\ell}^{\times}$), and the fact that the intersection of the Levi with a good maximal special subgroup is the unique maximal compact open subgroup (see \S 2.1 in \emph{op.cit.}).  

We now show that $\pi$ and $\sigma$ share an irreducible constituent. Since $\pi^K \neq 0$, there exists an irreducible constituent $\pi_0' \subset \pi|_{\mbf{G}_0(\mbb{Q}_\ell)}$ which has non-trivial invariants under $K_0 \defeq K \cap \mbf{G}_0(\mbb{Q}_\ell)$. Since $K_0$ is a good special maximal compact open subgroup, $\pi_0'$ has a set of associated Satake parameters which is determined from the Satake parameters for $\pi$. Hence $\pi_0$ and $\pi_0'$ have the same set of Satake parameters (but are spherical for different choices of special subgroups). This implies that $\opn{BC}_\ell(\pi_0) \cong \opn{BC}_\ell(\pi_0')$. By a similar argument for $\sigma$, we may replace $\pi_0$ and $\sigma_0$ by $\pi_0'$ and $\sigma_0'$ respectively. Now we note that the base-change map on Langlands/Arthur parameters is injective (\cite[\S 2.2]{Mok}) hence $\pi_0'$ and $\sigma_0'$ have the same Satake parameters, as required.
\end{proof}

Now we discuss a global application of this lemma. Let $S$ be a finite set of rational primes which split in $E/\mbb{Q}$. Let $K = K^S \times K_S \subset \mbf{G}(\mbb{A}_f^S) \times \mbf{G}(\mbb{A}_S)$ be a compact open subgroup such that $K^S = \prod_{\ell \not\in S} K_\ell$ with each $K_{\ell} \subset \mbf{G}(\mbb{Q}_\ell)$ a good special maximal compact open subgroup, which is hyperspecial if $\mbf{G}_{\mbb{Q}_{\ell}}$ is unramified. Let $T$ denote a cofinite set of rational primes containing $2$ and all primes which are inert in $E/\mbb{Q}$.

\begin{proposition} \label{GlobalMultOneProp}
Let $\pi$ and $\sigma$ be cuspidal automorphic representations of $\mbf{G}(\mbb{A})$ such that $\pi_{\infty}$ and $\sigma_{\infty}$ are cohomological and $\pi_f^K \neq 0$ and $\sigma_f^K \neq 0$. Suppose that $\pi_\ell \cong \sigma_\ell$ for all $\ell \in T - (S \cap T)$. Also, suppose that the weak base-change of $\pi$ to $\opn{GL}_1(\mbb{A}_E) \times \opn{GL}_{2n}(\mbb{A}_F)$ is cuspidal. Then $\pi_f \cong \sigma_f$.
\end{proposition}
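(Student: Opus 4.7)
The plan is to descend to the unitary group $\mbf{G}_0$, use the cuspidality hypothesis to promote local agreement to a global isomorphism of $\opn{GL}_{2n}$-base changes via strong multiplicity one, and then conclude prime-by-prime using Lemma \ref{LocalMultOneLemma}. By \cite[Theorem 5.2.1]{LabesseSchwermer} I would choose cuspidal automorphic subrepresentations $\pi_0 \subset \pi|_{\mbf{G}_0(\mbb{A})}$ and $\sigma_0 \subset \sigma|_{\mbf{G}_0(\mbb{A})}$, and form the automorphic base-changes $\Pi_0 \defeq \opn{BC}(\pi_0)$ and $\Sigma_0 \defeq \opn{BC}(\sigma_0)$ to $\opn{GL}_{2n}(\mbb{A}_F)$ via the endoscopic classification of \cite{Mok}, \cite{KMSW14} and \cite{ShinAppendix}. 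Since the weak base-change of $\pi$ is cuspidal by hypothesis, $\Pi_0$ is cuspidal, and therefore its local components coincide with the local standard base changes $\opn{BC}_\ell(\pi_{0,\ell})$ at every finite prime.

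To promote this to a global identification $\Pi_0 \cong \Sigma_0$, note that at each unramified split prime $\ell \in T - (S \cap T)$ (a cofinite set), the hypothesis $\pi_\ell \cong \sigma_\ell$ forces the Satake parameters of any constituent of $\pi_\ell|_{\mbf{G}_0(\mbb{Q}_\ell)}$ and $\sigma_\ell|_{\mbf{G}_0(\mbb{Q}_\ell)}$ to coincide, whence $\Pi_{0,\ell} \cong \Sigma_{0,\ell}$. Strong multiplicity one for cuspidal representations of $\opn{GL}_{2n}(\mbb{A}_F)$ (Jacquet--Shalika) then yields $\Pi_0 \cong \Sigma_0$, and in particular $\Sigma_0$ is cuspidal, so that $\opn{BC}_\ell(\pi_{0,\ell}) \cong \opn{BC}_\ell(\sigma_{0,\ell})$ at every finite prime.

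Finally, I would verify $\pi_\ell \cong \sigma_\ell$ one prime at a time. The primes $\ell \in T - (S \cap T)$ are handled by hypothesis, and this already covers all inert primes in $E/\mbb{Q}$. For $\ell \in S$ (necessarily split in $E/\mbb{Q}$), injectivity of local base change on parameters \cite[\S 2.2]{Mok} gives $\pi_{0,\ell} \cong \sigma_{0,\ell}$; \cite[Proposition 4.1.3]{LabesseSchwermer} then provides a character $\chi \in \{1,\omega_\ell\}$ with $\pi_\ell \cong \sigma_\ell \otimes \chi$, and $\omega_\ell$ is trivial at split $\ell$. The remaining primes lie in the finite set $T^c \setminus S$, consisting of odd primes split or ramified in $E/\mbb{Q}$ at which $K_\ell$ is a good special maximal compact open with $\pi_\ell^{K_\ell}, \sigma_\ell^{K_\ell} \neq 0$; here Lemma \ref{LocalMultOneLemma} applies and gives $\pi_\ell \cong \sigma_\ell$ (its stronger split/ramified conclusion rules out the $\omega_\ell$-twist at ramified primes via sphericity). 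The main technical input will be the strong cuspidal base change of \cite{Mok} and \cite{KMSW14} together with its local--global compatibility at every finite prime (including ramified ones); once this is in hand, the remainder of the argument reduces to tracking the quadratic character $\omega_\ell$ across the split/inert/ramified trichotomy.
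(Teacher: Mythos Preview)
Your proposal is correct and follows essentially the same strategy as the paper: descend to $\mbf{G}_0$ via \cite[Theorem 5.2.1]{LabesseSchwermer}, use cuspidality together with strong multiplicity one on $\opn{GL}_{2n}(\mbb{A}_F)$ to force $\opn{BC}_\ell(\pi_{0,\ell}) \cong \opn{BC}_\ell(\sigma_{0,\ell})$ at all finite primes, and then finish with a prime-by-prime trichotomy invoking Lemma \ref{LocalMultOneLemma} at the odd split/ramified primes outside $S$. The only cosmetic differences are that the paper first applies strong multiplicity one to the weak base-changes of $\pi$ and $\sigma$ on $\opn{GL}_1(\mbb{A}_E) \times \opn{GL}_{2n}(\mbb{A}_F)$ (via \cite{ShinAppendix}) before passing to $\mbf{G}_0$, and at primes $\ell \in S$ it uses injectivity of local base change for $\mbf{G}$ directly rather than routing through $\mbf{G}_0$ and \cite[Proposition 4.1.3]{LabesseSchwermer}; both routes are valid since $S$ consists of split primes.
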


\begin{proof}
Since $\pi_{\infty}$ and $\sigma_{\infty}$ are cohomological, they admit weak base-changes by \cite{ShinAppendix}. These weak base-changes must be isomorphic by our assumptions and strong multiplicity one, and hence also cuspidal by assumption. By \cite[Theorem 5.2.1]{LabesseSchwermer}, we can find cuspidal automorphic representations $\pi_0$ and $\sigma_0$ of $\mbf{G}_0(\mbb{A})$ such that $\pi_0 \subset \pi|_{\mbf{G}_0(\mbb{A})}$ and $\sigma_0 \subset \sigma|_{\mbf{G}_0(\mbb{A})}$. By compatibility of base-change for unitary and unitary similitude groups, the weak-base changes of $\pi_0$ and $\sigma_0$ are isomorphic (and cuspidal). Call the common representation $\Pi_0$. By the theory of endoscopy (see \cite[Proposition C.3.1(2)]{LTXZZ19}), we actually have the stronger compatibility $\opn{BC}_{\ell}(\pi_{0, \ell}) \cong \Pi_{0, \ell} \cong \opn{BC}_{\ell}(\sigma_{0, \ell})$ for all rational primes $\ell$. Then
\begin{enumerate}
    \item If $\ell \in S$, then the weak base-changes of $\pi$ and $\sigma$ are locally isomorphic at $\ell$ (\cite[Theorem A.1(2)]{ShinAppendix}). Since local base-change is injective at these primes, we have $\pi_{\ell} \cong \sigma_{\ell}$.
    \item If $\ell \not\in S \cup \{ 2\}$ and ramifies or splits in $E/\mbb{Q}$, then we have $\pi_{\ell} \cong \sigma_{\ell}$ by Lemma \ref{LocalMultOneLemma}.
    \item If $\ell \not\in S$ and is inert in $E/\mbb{Q}$, or $\ell = 2$, then $\ell \in T - (S \cap T)$ and $\pi_{\ell} \cong \sigma_{\ell}$ by assumption.
\end{enumerate}
This completes the proof.
\end{proof}


\newcommand{\etalchar}[1]{$^{#1}$}
\renewcommand{\MR}[1]{}
\providecommand{\bysame}{\leavevmode\hbox to3em{\hrulefill}\thinspace}
\providecommand{\MR}{\relax\ifhmode\unskip\space\fi MR }
\providecommand{\MRhref}[2]{%
  \href{http://www.ams.org/mathscinet-getitem?mr=#1}{#2}
}
\providecommand{\href}[2]{#2}


\Addresses


\begin{thebibliography}{KMSW14}

\bibitem[AGV73]{SGA4}
Michael {Artin}, Alexander {Grothendieck}, and Jean-Louis {Verdier},
  \emph{Th\'{e}orie des topos et cohomologie \'{e}tale des sch\'{e}mas. {T}ome
  3}, Lecture Notes in Mathematics, Vol. 305, Springer-Verlag, Berlin-New York,
  1973, S\'{e}minaire de G\'{e}om\'{e}trie Alg\'{e}brique du Bois-Marie
  1963--1964 (SGA 4), Dirig\'{e} par M. Artin, A. Grothendieck et J. L.
  Verdier. Avec la collaboration de P. Deligne et B. Saint-Donat. \MR{0354654}

\bibitem[Anc15]{Ancona2015}
Giuseppe Ancona,
  \emph{\href{https://doi.org/10.1007/s00229-014-0708-4}{D{\'e}composition de
  motifs ab{\'e}liens}}, Manuscripta Mathematica \textbf{146} (2015), no.~3,
  307--328.

\bibitem[BHR94]{BHR94}
Don Blasius, Michael Harris, and Dinakar Ramakrishnan,
  \emph{\href{https://doi.org/10.1215/S0012-7094-94-07326-2}{{C}oherent
  cohomology, limits of discrete series, and {G}alois conjugation}}, Duke
  Mathematical Journal \textbf{73} (1994), no.~3, 647 -- 685.

\bibitem[Bla94]{Blasius94}
Don Blasius, \emph{A {$p$}-adic property of {H}odge classes on abelian
  varieties}, Motives ({S}eattle, {WA}, 1991), Proc. Sympos. Pure Math.,
  vol.~55, Amer. Math. Soc., Providence, RI, 1994, pp.~293--308. \MR{1265557}

\bibitem[BP21]{BoxerPilloni}
George {Boxer} and Vincent {Pilloni},
  \emph{\href{https://arxiv.org/abs/2110.10251}{{H}igher {C}oleman {T}heory}},
  arXiv e-prints (2021), arXiv:2110.10251.

\bibitem[Car12]{Caraiani12}
Ana Caraiani,
  \emph{\href{https://doi.org/10.1215/00127094-1723706}{{L}ocal-global
  compatibility and the action of monodromy on nearby cycles}}, Duke
  Mathematical Journal \textbf{161} (2012), no.~12, 2311 -- 2413.

\bibitem[CG21]{ChenGan}
Rui {Chen} and Wee~Teck {Gan},
  \emph{\href{https://arxiv.org/abs/2108.04064}{Unitary Friedberg-Jacquet
  periods}}, arXiv e-prints (2021), arXiv:2108.04064.

\bibitem[{Col}20]{Collins}
D.J. {Collins},
  \emph{\href{https://doi.org/10.1007/s40316-019-00118-1}{{A}nticyclotomic
  {$p$}-adic {$L$}-functions and {I}chino's formula}}, Ann. Math. Qu{\'{e}}bec
  \textbf{44} (2020), 27--89.

\bibitem[CS17]{CS17}
Ana Caraiani and Peter Scholze,
  \emph{\href{http://www.jstor.org/stable/26395700}{On the generic part of the
  cohomology of compact unitary {S}himura varieties}}, Annals of Mathematics
  \textbf{186} (2017), no.~3, 649--766.

\bibitem[CZ21]{CZ21}
Rui {Chen} and Jialiang {Zou},
  \emph{\href{https://arxiv.org/abs/2103.07956v1}{{A}rthur's multiplicity
  formula for even orthogonal and unitary groups}}, arXiv e-prints (2021),
  arXiv:2103.07956.

\bibitem[Del82]{DeligneHodgeCycles}
Pierre Deligne, \emph{Hodge cycles on abelian varieties}, Hodge cycles,
  motives, and {S}himura varieties, Lecture Notes in Mathematics, vol. 900,
  Springer-Verlag, Berlin-New York, 1982, pp.~ii+414.

\bibitem[DLLZ18]{DLLZ18}
Hansheng {Diao}, Kai-Wen {Lan}, Ruochuan {Liu}, and Xinwen {Zhu},
  \emph{\href{https://arxiv.org/abs/1803.05786v3}{{L}ogarithmic
  {R}iemann--{H}ilbert correspondences for rigid varieties}}, arXiv e-prints
  (2018), arXiv:1803.05786.

\bibitem[FJ93]{FJ93}
Solomon Friedberg and Herv\'{e} Jacquet,
  \emph{\href{https://doi.org/10.1515/crll.1993.443.91}{Linear periods}}, J.
  Reine Angew. Math. \textbf{443} (1993), 91--139. \MR{1241129}

\bibitem[GRG14]{GR2014}
Harald Grobner, A~Raghuram, and Wee~Teck Gan,
  \emph{\href{https://doi.org/10.1353/ajm.2014.0021}{On the arithmetic of
  {S}halika models and the critical values of {$L$}-functions for
  {$GL_{2n}$}}}, American Journal of Mathematics \textbf{136} (2014), no.~3,
  675--728.

\bibitem[GS20]{ACES}
Andrew {Graham} and Syed Waqar~Ali {Shah},
  \emph{\href{https://arxiv.org/abs/2001.07825}{Anticyclotomic Euler systems
  for unitary groups}}, arXiv e-prints (2020), arXiv:2001.07825.

\bibitem[Har90]{HarrisPartial}
Michael Harris,
  \emph{\href{https://doi.org/10.4310/jdg/1214445036}{{A}utomorphic forms of
  {$\overline{\partial}$}-cohomology type as coherent cohomology classes}}, J.
  Differential Geom. \textbf{32} (1990), no.~1, 1--63.

\bibitem[HR08]{HainesRapoport}
T.~{Haines} and M.~{Rapoport},
  \emph{\href{https://doi.org/10.1016/j.aim.2008.04.006}{{A}ppendix: {O}n
  parahoric subgroups}}, Advances in Mathematics \textbf{219} (2008), 188--198.

\bibitem[JPSS81]{JPSS81}
H.~Jacquet, I.~I. Piatetski-Shapiro, and J.~Shalika,
  \emph{\href{http://eudml.org/doc/163506}{Conducteur des r{\'{e}}presentations
  du groupe lin{\'{e}}aire}}, Mathematische Annalen \textbf{256} (1981),
  199--214.

\bibitem[KMSW14]{KMSW14}
Tasho {Kaletha}, Alberto {Minguez}, Sug~Woo {Shin}, and Paul-James {White},
  \emph{\href{https://arxiv.org/abs/1409.3731}{{E}ndoscopic {C}lassification of
  {R}epresentations: {I}nner {F}orms of {U}nitary {G}roups}}, arXiv e-prints
  (2014), arXiv:1409.3731.

\bibitem[Kna01]{Knapp}
A.~W. Knapp,
  \emph{\href{https://doi.org/10.1090/S1088-4165-01-00139-X}{{B}ranching
  theorems for compact symmetric spaces}}, Represent. Theory \textbf{5} (2001),
  404--436.

\bibitem[Lan13]{LanArithmetic}
Kai-Wen Lan, \emph{\href{http://www.jstor.org/stable/j.ctt24hpwv}{Arithmetic
  {C}ompactifications of {PEL}-type {S}himura varieties}}, no.~36, Princeton
  University Press, 2013.

\bibitem[{Les}19a]{LeslieEndoscopy}
Spencer {Leslie}, \emph{\href{https://arxiv.org/abs/1910.09685}{Endoscopy for
  unitary symmetric spaces}}, arXiv e-prints (2019), arXiv:1910.09685.

\bibitem[{Les}19b]{LeslieFL}
\bysame, \emph{\href{https://arxiv.org/abs/1911.07907}{The endoscopic
  fundamental lemma for unitary Friedberg-Jacquet periods}}, arXiv e-prints
  (2019), arXiv:1911.07907.

\bibitem[LP18]{LP18}
Kai-Wen Lan and Patrick Polo,
  \emph{\href{https://doi.org/10.4310/mrl.2018.v25.n1.a5}{Dual {BGG} complexes
  for automorphic bundles}}, Math. Res. Lett. \textbf{25} (2018), no.~1,
  85--141. \MR{3818616}

\bibitem[LPSZ21]{LPSZ}
David Loeffler, Vincent Pilloni, Christopher Skinner, and Sarah~Livia Zerbes,
  \emph{Higher {H}ida theory and {$p$}-adic {$L$}-functions for
  {$\mathrm{GSp}_4$}}, Duke Math. J. \textbf{170} (2021), no.~18, 4033--4121.
  \MR{4348233}

\bibitem[LS18]{LabesseSchwermer}
Jean-Pierre {Labesse} and Joachim {Schwermer}, \emph{{Central morphisms and
  Cuspidal automorphic Representations}}, arXiv e-prints (2018),
  \href{https://arxiv.org/abs/1812.03033v2}{arXiv:1812.03033}.

\bibitem[LTX{\etalchar{+}}19]{LTXZZ19}
Yifeng {Liu}, Yichao {Tian}, Liang {Xiao}, Wei {Zhang}, and Xinwen {Zhu},
  \emph{{On the {B}eilinson--{B}loch--{K}ato conjecture for {R}ankin--{S}elberg
  motives}}, arXiv e-prints (2019),
  \href{https://arxiv.org/abs/1912.11942}{arXiv:1912.11942}.

\bibitem[LZ16]{lz-coleman}
David Loeffler and Sarah~Livia Zerbes,
  \emph{\href{https://doi.org/10.1186/s40687-016-0077-6}{Rankin-{E}isenstein
  classes in {C}oleman families}}, Res. Math. Sci. \textbf{3} (2016), Paper No.
  29, 53. \MR{3552987}

\bibitem[LZ21]{LZBK21}
David {Loeffler} and Sarah {Zerbes},
  \emph{\href{https://arxiv.org/abs/2106.14511}{On the {B}loch--{K}ato
  conjecture for {$GSp(4) \times GL(2)$}}}, arXiv e-prints (2021),
  arXiv:2106.14511.

\bibitem[Mil90]{MilneCanonicalMixed}
J.~S. Milne, \emph{Canonical models of (mixed) {S}himura varieties and
  automorphic vector bundles}, Automorphic forms, {S}himura varieties, and
  {$L$}-functions, {V}ol. {I} ({A}nn {A}rbor, {MI}, 1988), Perspect. Math.,
  vol.~10, Academic Press, Boston, MA, 1990, pp.~283--414. \MR{1044823}

\bibitem[Min11]{Minguez}
Alberto Minguez, \emph{{{U}nramified representations of unitary groups}},
  {S}tabilisation de la formule des traces, vari{\'{e}}t{\'{e}}s de Shimura, et
  applications arithm{\'{e}}tiques, Int. Press of Boston, 2011.

\bibitem[Mok15]{Mok}
Chung~Pang Mok, \emph{\href{https://doi.org/10.1090/memo/1108}{Endoscopic
  classification of representations of quasi-split unitary groups}}, Mem. Amer.
  Math. Soc. \textbf{235} (2015), no.~1108, vi+248.

\bibitem[MR04]{MazurRubin}
Barry Mazur and Karl Rubin, \emph{Kolyvagin systems}, American Mathematical
  Society, 2004.

\bibitem[PWZ19]{PWZ19}
Aaron {Pollack}, Chen {Wan}, and Micha{\l} {Zydor}, \emph{{On the residue
  method for period integrals}}, arXiv e-prints (2019),
  \href{https://arxiv.org/abs/1903.02544}{arXiv:1903.02544}.

\bibitem[Rub00]{rubin}
Karl Rubin, \emph{\href{https://doi.org/10.1515/9781400865208}{Euler systems}},
  Annals of Mathematics Studies, vol. 147, Princeton University Press,
  Princeton, NJ, 2000. \MR{1749177}

\bibitem[Sch13]{Scholze_2013}
Peter Scholze, \emph{\href{https://www.doi.org/10.1017/fmp.2013.1}{{$p$}-adic
  Hodge theory for rigid-analytic varieties}}, Forum of Mathematics, Pi
  \textbf{1} (2013), e1.

\bibitem[Sch15]{ScholzeTorsion}
\bysame, \emph{\href{https://doi.org/10.4007/annals.2015.182.3.3}{On torsion in
  the cohomology of locally symmetric varieties}}, Ann. of Math. (2)
  \textbf{182} (2015), no.~3, 945--1066. \MR{3418533}

\bibitem[Shi14]{ShinAppendix}
Sug~Woo Shin, \emph{\href{https://doi.org/10.1112/S0010437X13007355}{{O}n the
  cohomological base change for unitary similitude groups}}, Appendix to:
  Galois representations associated to holomorphic limits of discrete series by
  Wushi Goldring, no. 2, vol. 150, Compositio Mathematica, 2014, pp.~220--228.

\bibitem[Su19]{Su19}
J.~Su, \emph{\href{https://sites.google.com/site/junsufrob/}{{C}oherent
  cohomology of {S}himura varieties and automorphic forms}}, preprint, 2019.

\bibitem[Tor19]{Torzewski2019}
Alex Torzewski,
  \emph{\href{https://doi.org/10.1007/s00229-019-01150-9}{Functoriality of
  motivic lifts of the canonical construction}}, Manuscripta Mathematica
  (2019), 1--30.

\end{thebibliography}
\end{document}